\definecolor{Green}{RGB}{0,200,0}
\newcommand{\rr}{\mathbb{R}}
\newcommand{\nn}{\mathbb{N}}
\newcommand{\cc}{\mathcal{C}}
\newcommand{\hh}{\mathcal{H}}
\newcommand{\qq}{\mathcal{Q}}
\newcommand{\zz}{\mathcal{Z}}
\newcommand{\SSS}{\mathbb{S}}
\newcommand{\WW}{\mathcal{W}}
\newcommand{\LL}{\mathcal{L}}
\newcommand{\aaa}{\mathcal{A}}
\newcommand{\CC}{\mathscr{C}}
\newcommand*{\comm}{\mathbin{\raisebox{1ex}{\rotatebox{270}{$\circlearrowright$}}}}
\newcommand{\dd}{\partial}
\newcommand{\ep}{\varepsilon}
\newtheorem{theo}{Theorem}[section]
\newtheorem{prop}[theo]{Proposition}
\newtheorem{add}[theo]{Addendum}
\newtheorem{lem}[theo]{Lemma}
\theoremstyle{definition} 
\newtheorem{defi}[theo]{Definition}
\theoremstyle{remark} 
\newtheorem{hyp1}[theo]{Hypothesis}
\newtheorem{hyp}[theo]{Hypotheses}
\newtheorem{rem}[theo]{Remark}
\newtheorem{csq}[theo]{Consequence}
\newtheorem{csqs}[theo]{Consequences}
\theoremstyle{remark}
\theoremstyle{definition} 
\newtheorem{nota}[theo]{Notation}
\theoremstyle{remark}
\title[Variational and viscosity operators for the evolutive HJ equation]{Variational and viscosity operators for the evolutive Hamilton-Jacobi equation}
\author{Valentine Roos}\thanks{The research leading to these results has received funding from the European Research Council under
		the European Union’s Seventh Framework Programme (FP/2007-2013) / ERC Grant Agreement 307062 and from the French National Research Agency via ANR-12-BLAN-WKBHJ}
\begin{document}
	\begin{abstract}
		We study the Cauchy problem for the first order evolutive Hamilton-Jacobi equation with a Lipschitz initial condition. The Hamiltonian is not necessarily convex in the momentum variable and not a priori compactly supported. We build and study an operator giving a variational solution of this problem, and get local Lipschitz estimates on this operator. Iterating this variational operator we obtain the viscosity operator and extend the estimates to the viscosity framework. We also check that the construction of the variational operator gives the Lax-Oleinik semigroup if the Hamiltonian is convex or concave in the momentum variable.\end{abstract}
	\maketitle	
	\tableofcontents
\section{Introduction}
We study the Cauchy problem associated with the evolutive Hamilton-Jacobi equation
\begin{equation}\tag{HJ}\label{HJ}
\dd_t u (t,q) + H(t,q,\dd_q u (t,q))=0,
\end{equation}
where $H : \rr \times T^\star \rr^d \to \rr $ is a $\cc^2$ Hamiltonian,
$u : \rr\times \rr^d \to \rr$ is the unknown function, and the initial condition is given by $u(0,\cdot)=u_0$ Lipschitz.

The Cauchy problem does not admit classical solutions in large time even for smooth $u_0$ and $H$. Two different types of generalized solutions, namely viscosity and variational solutions, were then defined, respectively by P.-L. Lions and M.G. Crandall (see \cite{cr&lions83}) and by J.-C. Sikorav and M. Chaperon (see \cite{sikoexp}, \cite{chap2}). T. Joukovskaia showed that the two solutions match for compactly supported fiberwise convex Hamiltonians (see \cite{jou}), which is not necessarily true in the non convex case. Examples where the solutions differ were proposed in \cite{chenciner}, \cite{viterboX}, \cite{BC} and \cite{wei}. 

In order to compare these solutions in the framework of Lipschitz initial conditions, we will define two notions of operators, denoted respectively by $V^t_s$ and $R^t_s$, defined on the space of Lipschitz functions on $\rr^d$ and giving respectively viscosity and variational solutions of the Cauchy problem. 
In \cite{wei}, Q. Wei obtains for compactly supported Hamiltonians the viscosity operator via a limiting process on the iterated variational operator, which has a simple expression when the Hamiltonian does not depend on $t$:
\[\left(R^{\frac{t}{n}}\right)^n \underset{n\to \infty}\to V^t.\]
This result is extended to the contact framework in \cite{cast}. 

The assumption of compactness on the Hamiltonian's support is due to the symplectic origin of the variational solution. This paper is aimed at removing this constraint, by replacing it with the following set of assumptions on the Hamiltonian that is more standard in the framework of viscosity solution theory and thus more natural when one is interested in making the link between variational and viscosity solutions.
\begin{hyp1}\label{esti} There is a $C>0$ such that for each $(t,q,p)$ in $\rr\times \rr^d \times \rr^d$,\[
	\|\dd^2_{(q,p)} H(t,q,p)\| < C,\; \|\dd_{(q,p)} H(t,q,p)\| < C(1+\|p\|),\; | H(t,q,p)| < C(1+\|p\|)^2,
	\]where $\dd_{(q,p)} H$ and $\dd^2_{(q,p)} H$ denote the first and second order spatial derivatives of $H$. \end{hyp1}
This hypothesis implies a finite propagation speed principle in both viscosity and variational contexts, which allows to work with non compactly supported Hamiltonians. We refer for example to \cite{barles} for the viscosity side, where in particular the uniqueness of the viscosity operator (see also Proposition \ref{viscuniq}) is studied, and to Appendix B of \cite{carvit} for the existence of variational solutions for Hamiltonians satisfying this finite propagation speed principle. 

In this paper we propose under Hypothesis \ref{esti} a complete and elementary construction of both variational and viscosity operators, and extend Joukovskaia's and Wei's results to this framework. The proof of the convergence of the iterated variational operator relies on the computation of local Lipschitz estimates for the variational operator $R^t_s$ that obey a semigroup-type property with respect to $s$ and $t$. The same Lipschitz estimates then also apply to the viscosity operator via the limiting process.

 A special care was provided in an attempt to produce a self-contained text, accessible to a reader with no specific background on symplectic geometry.
\subsection{Classical solutions: the method of characteristics}
Under Hypothesis \ref{esti}, the \emph{Hamiltonian system}
	\begin{equation}\label{HS}\tag{HS}\left\{\begin{array}{l}
	\dot{q}(t)= \dd_p H(t,q(t),p(t)),\\
	\dot{p}(t)= - \dd_q H(t,q(t),p(t))
	\end{array}\right.
	\end{equation}
	admits a complete \emph{Hamiltonian flow} $\phi^t_s$, meaning that $t\mapsto \phi^t_s(q,p)$ is the unique solution of \eqref{HS} with initial conditions $\left(q(s),p(s)\right)=(q,p)$. We denote by $\left(Q^t_s,P^t_s\right)$ the coordinates of $\phi^t_s$. 
	We call a function $t \mapsto (q(t),p(t))$ solving the Hamiltonian system \eqref{HS} a \emph{Hamiltonian trajectory}.
The \emph{Hamiltonian action} of a $\cc^1$ path $\gamma(t)=\left(q(t),p(t)\right)\in T^\star \rr^d $ is denoted by \[
	\aaa^t_s (\gamma)=\int^t_s p(\tau)\cdot \dot{q}(\tau)-H(\tau,q(\tau),p(\tau))d\tau.
	\]

The next lemma states the existence of \emph{characteristics} for $\cc^2$ solutions of the Hamilton-Jacobi equation \eqref{HJ}. 
\begin{lem}\label{class}
	If $u$ is a $\cc^2$ solution of \eqref{HJ} on $[T_-,T_+]\times\rr^d$ and $\gamma:\tau\mapsto(q(\tau),p(\tau))$ is a Hamiltonian trajectory satisfying $p(s)=\dd_q u(s,q(s))$ for some $s\in[T_-,T_+]$, then $p(t)=\dd_q u(t,q(t))$ for each $t\in[T_-,T_+]$ and \[u(t,q(t))=u(s,q(s))+\aaa^t_s(\gamma) \, \; \forall t \in [T_-,T_+].\]
\end{lem}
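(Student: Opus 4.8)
\emph{Proof proposal.} The plan is to run the classical method of characteristics. The first and main step is to show that the candidate momentum $P(\tau):=\partial_q u(\tau,q(\tau))$ agrees with $p(\tau)$ along the trajectory; once this is known, the action formula follows by integrating \eqref{HJ} along the curve. To compare $P$ and $p$, I would differentiate \eqref{HJ} with respect to $q$ -- legitimate because $u$ is $\cc^2$ -- obtaining, at every point $(\tau,x)$ of $[T_-,T_+]\times\rr^d$,
\[
\partial_t\partial_q u(\tau,x) + \partial_q H(\tau,x,\partial_q u(\tau,x)) + \partial^2_{qq}u(\tau,x)\,\partial_p H(\tau,x,\partial_q u(\tau,x)) = 0 .
\]
Evaluating this along $x=q(\tau)$ and using $\dot q(\tau)=\partial_p H(\tau,q(\tau),p(\tau))$ to compute $\dot P(\tau)$ by the chain rule, one finds that $P$ solves a differential equation differing from the equation $\dot p=-\partial_q H(\tau,q,p)$ satisfied by $p$ only through the increments $\partial_q H(\tau,q(\tau),p(\tau))-\partial_q H(\tau,q(\tau),P(\tau))$ and $\partial_p H(\tau,q(\tau),p(\tau))-\partial_p H(\tau,q(\tau),P(\tau))$. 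Writing $\delta(\tau):=p(\tau)-P(\tau)$, Hypothesis \ref{esti} makes $\partial_q H$ and $\partial_p H$ globally Lipschitz in $p$ with constant $C$, and $\partial^2_{qq}u$ is bounded by some $M$ on the compact set $\{(\tau,q(\tau)):\tau\in[T_-,T_+]\}$; this yields $|\dot\delta(\tau)|\le K|\delta(\tau)|$ with $K$ depending only on $C$ and $M$. Since $\delta(s)=0$ by hypothesis, Grönwall's lemma gives $\delta\equiv 0$, i.e. $p(t)=\partial_q u(t,q(t))$ on all of $[T_-,T_+]$. (Equivalently, this is the invariance of the graph of $q\mapsto\partial_q u(\tau,q)$ under $\phi^\tau_s$, which one could also derive from uniqueness in \eqref{HS}.)

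For the second assertion, I would differentiate $\tau\mapsto u(\tau,q(\tau))$: by the chain rule, \eqref{HJ}, and the first step,
\[
\frac{d}{d\tau}\,u(\tau,q(\tau)) = \partial_t u(\tau,q(\tau)) + \partial_q u(\tau,q(\tau))\cdot\dot q(\tau) = -H(\tau,q(\tau),p(\tau)) + p(\tau)\cdot\dot q(\tau),
\]
where $\partial_q u(\tau,q(\tau))$ was replaced by $p(\tau)$ using the first step. Integrating between $s$ and $t$ gives $u(t,q(t))-u(s,q(s))=\int_s^t p(\tau)\cdot\dot q(\tau)-H(\tau,q(\tau),p(\tau))\,d\tau=\aaa^t_s(\gamma)$, which is the claimed identity, valid for $t$ on either side of $s$.

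The only point that genuinely needs care is making the Grönwall estimate of the first step valid on the whole interval: this uses that $\tau\mapsto q(\tau)$ is defined on all of $[T_-,T_+]$ -- guaranteed by the completeness of the Hamiltonian flow under Hypothesis \ref{esti} -- and that its image is compact, so that the second derivatives of $u$ (which is only assumed $\cc^2$, not to have globally bounded Hessian) are bounded along the curve. Beyond this, everything is a direct differentiation.
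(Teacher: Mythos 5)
Your proposal is correct and follows essentially the same route as the paper's proof: identify $P(\tau)=\partial_q u(\tau,q(\tau))$ with $p(\tau)$ via an ODE uniqueness argument, then integrate the equation along the curve to obtain the action identity. A small but welcome improvement over the paper's sketch: the paper asserts that both $P$ and $p$ solve $\dot y=-\partial_q H(\tau,q(\tau),y)$, which is literally true only once $P=p$ is already known (as your computation shows, the ODE for $P$ carries the extra term $\partial^2_{qq}u\cdot\bigl[\partial_p H(\cdot,p)-\partial_p H(\cdot,P)\bigr]$), and your Gr\"onwall estimate on $\delta=p-P$, together with the observation that $\partial^2_{qq}u$ is bounded along the compact image of the trajectory, is exactly the right way to make the sketch rigorous.
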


\begin{proof}
	If $f(t)$ denotes the quantity $\dd_q u(t,q(t))$, one can show that both $f$ and $p$ solve the ODE $\dot{y}(t)=-\dd_q H(t,q(t),y(t))$ and $p(s)=f(s)$ implies that $p(t)=f(t)$ for each time $t\in[T_-,T_+]$.
	Then, differentiating the function $t \mapsto u(t,q(t))$ gives the result.
\end{proof}

Here is another formulation of the first statement of Lemma \ref{class}: if $\Gamma_s$ denotes the graph of $\dd_q u_s$, then the set $\phi^t_s \Gamma_s$ is included in the graph of $\dd_q u_t$ for each $T_-\leq s \leq t\leq T_+$. In particular, if $\phi^T_s\Gamma_s$ is not a graph for some time $T>s$, then the existence of classical solution on $[s,T]\times \rr^d$ is not possible, hence the introduction of generalized solutions.
	
\subsection{Viscosity operator} A family of operators $\left(V^t_s\right)_{s \leq t}$ mapping $\cc^{0,1}(\rr^d)$ (the space of Lipschitz functions) into itself is called a \textit{viscosity operator} if it satisfies the following conditions:
\begin{hyp}[Viscosity operator]\label{L-O}\begin{enumerate}[(1)]$ $
		\item \label{monot} Monotonicity: if $u\leq v$ are Lipschitz on $\rr^d$, then $V^t_s u \leq V^t_s v$ on $\rr^d$ for each $s \leq t$,
		\item \label{addi} Additivity: if $u$ is Lipschitz on $\rr^d$ and $c\in\rr$, then $V^t_s (c+u) =c + V^t_s u$, 
		\item \label{reg} Regularity: if $u$ is Lipschitz, then for each $\tau \leq T$, $\left\lbrace V^t_\tau u, t\in [\tau,T] \right\rbrace$ is Lipschitz in $q$  uniformly  w.r.t. $t$  and $(t,q) \mapsto V^t_\tau u(q)$ is locally Lipschitz on $(\tau,\infty)\times \rr^d$,
		\item \label{comp} Compatibility with Hamilton-Jacobi equation: if $u$ is a Lipschitz $\cc^2$ solution of the Hamilton-Jacobi equation, then $V^t_s u_s = u_t$ for each $s\leq t$,
		\item \label{mark} Markov property:  $V^t_s=V^t_\tau \circ V^\tau_s$ for all $s\leq \tau \leq t$.\end{enumerate}\end{hyp}
In Appendix \ref{C}, we state that such an operator gives the viscosity solutions as introduced by P.-L. Lions and M. G. Crandall in 1981 (see \cite{cr&lions83}), and we prove the uniqueness of such an operator when $H$ satisfies Hypothesis \ref{esti} (Consequence \ref{viscuniq}).

The existence of the viscosity operator for our framework was already granted by the work of Crandall, Lions and Ishii (see \cite{guide}) and it is proved again in this paper, where we obtain a viscosity operator by a limiting process.

\subsection{Variational operator}\label{varchapter}
A family of operators $\left(R^t_s\right)_{s \leq t}$ mapping $\cc^{0,1}(\rr^d)$  into itself is called a \textit{variational operator} if it satisfies the monotonicity, additivity and regularity properties \eqref{monot}, \eqref{addi}, \eqref{reg} of Hypotheses \ref{L-O} and the following one, requiring the existence of characteristics (see Lemma \ref{class}):
\begin{enumerate}[(4')]
		\item \label{var} Variational property:  for each Lipschitz $\cc^1$ function $u$, $Q$ in $\rr^d$ and $s\leq t$, there exists $(q,p)$ such that $p =d_q u$, $Q^t_s(q,p)=Q$ and if $\gamma$ denotes the Hamiltonian trajectory issued from $(q(s),p(s))=(q,p)$, \[R^t_s u(Q)=u(q)+ \aaa^t_s(\gamma).\]\end{enumerate}
We call \emph{variational solution} to the Cauchy problem associated with $u_0$ a function given by a variational operator as follows: $u(t,q)=R^t_0 u_0(q)$.

\begin{rem}\label{comprop} Variational property (\ref{comp}') implies Compatibility property \eqref{comp}. This implies in particular that if a variational operator satisfies the Markov property \eqref{mark} of Hypotheses \ref{L-O}, it is a viscosity operator. 
\end{rem}

\begin{proof} 
	We fix $s$ and $Q$ and a Lipschitz $\cc^2$ solution $u$ of \eqref{HJ}. Variational property (4') implies that there exists $q$ and $p = \dd_q u_s$ with $Q^t_s(q,p)=Q$ such that, if $\gamma(\tau)=\phi^\tau_{s}(q,p)$ denotes the Hamiltonian trajectory issued from $(q,p)$ at time $s$, $R^t_s u_s(Q)=u_s(q)+\aaa^t_s(\gamma)$.
	Since $p=\dd_q u_s$ and $Q=Q^t_s(q,p)$, Lemma \ref{class} can be applied and states that $u(t,Q)=u_s(q)+\aaa^t_s(\gamma)$, hence the conclusion: $R^t_s u_s(Q) =u_t(Q)$.
\end{proof}

The uniqueness of such an operator is not granted a priori. See Appendix \ref{graphselector} for a presentation of the associated notion of \emph{graph selector} and the different ways to define one. In the same Appendix is also proved the following result.
\begin{prop}\label{solpp}
	If $u_0$ is $\cc^2$ and $R^t_s$ is a variational operator, $(t,q)\mapsto R^t_0 u_0 (q)$ solves \eqref{HJ} in the classical sense for almost every $(t,q)$ in $(0,\infty)\times \rr^d$.
\end{prop}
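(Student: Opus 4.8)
The plan is to verify \eqref{HJ} pointwise almost everywhere, reading off $u(t,q):=R^t_0 u_0(q)$ from the geometric solution generated by the characteristics issued from $\Gamma_0$, the graph of $\dd_q u_0$. By the regularity property~\eqref{reg}, $u$ is locally Lipschitz on $(0,\infty)\times\rr^d$, hence differentiable almost everywhere there by Rademacher's theorem, and it suffices to establish \eqref{HJ} at each point $(t_0,Q_0)$ with $t_0>0$ at which $u$ is differentiable. Since $u_0$ is $\cc^2$, for $q\in\rr^d$ one sets $\big(x(t,q),y(t,q)\big):=\phi^t_0\big(q,\dd_q u_0(q)\big)$ and, along the corresponding Hamiltonian trajectory $\gamma_q$, the phase $\sigma(t,q):=u_0(q)+\aaa^t_0(\gamma_q)$. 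Two elementary identities will do the work of Lemma~\ref{class} for this (possibly multivalued) geometric solution: first, directly from the definition of $\aaa^t_0$ and the equation $\dd_t x=\dd_p H$,
\[
\dd_t\sigma(t,q)=y(t,q)\cdot\dd_p H\big(t,x(t,q),y(t,q)\big)-H\big(t,x(t,q),y(t,q)\big);
\]
second, the exactness relation $\dd_{q_i}\sigma(t,q)=\sum_j y_j(t,q)\,\dd_{q_i}x_j(t,q)$ for every $i$, which I would obtain by differentiating $\aaa^t_0(\gamma_q)$ under the integral sign, using both equations of \eqref{HS} to cancel the interior contributions and $\dd_q u_0=p_0\,dq_0$ on $\Gamma_0$ for the remaining boundary term — it expresses that the Hamiltonian flow transports $p\,dq$ modulo an exact form.

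The substantial step is to identify the differential of $u$ at $(t_0,Q_0)$. Applying the variational property~(4') at $s=0$ (legitimate since $u_0$ is Lipschitz $\cc^1$) yields a point $q_0$ with $x(t_0,q_0)=Q_0$ and $u(t_0,Q_0)=\sigma(t_0,q_0)$; one has to upgrade this to $\dd_q u(t_0,Q_0)=y(t_0,q_0)$ and to the local consistency $u\big(t,x(t,q)\big)=\sigma(t,q)$ for $(t,q)$ in a neighborhood of $(t_0,q_0)$. Equivalently, one must show that for each $t$ the Lipschitz function $u(t,\cdot)$ is a genuine graph selector of the Lagrangian $\phi^t_0\Gamma_0$ with primitive $\sigma$, and that the branch it selects does not jump near a differentiability point. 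This is precisely the graph selector theory developed in Appendix~\ref{graphselector}, and it is where the concrete construction of $R^t_0$ is genuinely needed: the variational property alone provides only \emph{one} characteristic, with no extremality, whereas here one uses that $u(t,Q)$ is a critical value of a finite-dimensional generating family, so that wherever $u(t,\cdot)$ is differentiable the underlying critical configuration is rigid — which, via the exactness identity and inversion of the differential of $q\mapsto x(t,q)$ away from the caustic locus where it degenerates, forces the gradient onto $\phi^t_0\Gamma_0$. I expect this to be the main obstacle, along with the measure-theoretic bookkeeping (Fubini in $t$, negligibility of caustics) needed to pass from ``at every differentiability point'' to ``for almost every $(t,q)$''.

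Granting that step, the conclusion is a one-line computation. Differentiating $t\mapsto u\big(t,x(t,q_0)\big)=\sigma(t,q_0)$ at $t_0$ gives
\[
\dd_t u(t_0,Q_0)+\dd_q u(t_0,Q_0)\cdot\dd_t x(t_0,q_0)=\dd_t\sigma(t_0,q_0),
\]
and, substituting $\dd_q u(t_0,Q_0)=y(t_0,q_0)$, $\dd_t x(t_0,q_0)=\dd_p H\big(t_0,Q_0,y(t_0,q_0)\big)$ and the first identity above, the two copies of $y(t_0,q_0)\cdot\dd_p H$ cancel and leave $\dd_t u(t_0,Q_0)=-H\big(t_0,Q_0,\dd_q u(t_0,Q_0)\big)$, which is \eqref{HJ}.
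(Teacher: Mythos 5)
Your overall strategy — Rademacher's theorem on the locally Lipschitz $u$, the variational property to place the value of $u$ on the wavefront, and a characteristic-type identity to read off the equation — is the right circle of ideas, but your route differs from the paper's in a way that creates a genuine gap. You work fiber by fiber with the Lagrangians $\phi^t_0\Gamma_0\subset T^\star\rr^d$ for fixed $t$ and then try to propagate in $t$ by differentiating the putative identity $u\bigl(t,x(t,q_0)\bigr)=\sigma(t,q_0)$ near $t_0$. The variational property only provides, for \emph{each} $(t,Q)$ separately, \emph{some} critical configuration; nothing in the axioms forces the selected branch to vary continuously with $t$, so the identity $u\bigl(t,x(t,q_0)\bigr)=\sigma(t,q_0)$ holds a priori only at $t=t_0$, and the chain rule step collapses. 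You honestly flag this as ``the main obstacle'', but your proposed remedy — invoking the concrete generating-family construction of $R^t_s$ and its extremality — is a misdiagnosis on two counts: the statement is claimed for \emph{any} variational operator, so a proof must use only the axioms, and extremality is in fact unnecessary.

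The paper bypasses the whole issue by working in the \emph{suspended} phase space. Setting $K(t,s,q,p)=s+H(t,q,p)$ on $T^\star(\rr\times\rr^d)$ and letting $\mathcal{L}=\bigcup_t\Phi^t(\Gamma_0)$ be the suspended geometric solution (with $\Gamma_0=\{(0,-H(0,q,du_0(q)),q,du_0(q))\}$), one gets a single exact Lagrangian submanifold of the space-time cotangent contained in $K^{-1}(\{0\})$. The axioms — regularity~\eqref{reg} for Lipschitz continuity in $(t,q)$ and the variational property~(4') to put the graph of $u$ in the wavefront of $\mathcal{L}$ — make $u$ a \emph{graph selector} for $\mathcal{L}$ in the sense of Proposition~\ref{graphsel}. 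That proposition, proved by a direction-by-direction limit argument plus a covering lemma on $\mathbb{S}^{d}$ combined with Rademacher on $u$ and Sard on $\pi_{|\mathcal{L}}$, and requiring no extremality whatsoever, yields that for a.e.\ $(t,q)$ the full differential $(t,\dd_t u(t,q),q,\dd_q u(t,q))$ lies on $\mathcal{L}\subset K^{-1}(\{0\})$, i.e.\ $\dd_t u+H(t,q,\dd_q u)=0$. Both of the worries you listed at the end — following a branch continuously in $t$, and the measure-theoretic bookkeeping (Fubini in $t$, negligibility of caustics slice by slice) — evaporate because Rademacher and Sard are applied once jointly in $(t,q)$. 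You should therefore import the suspension trick and prove (or cite) the space-time graph-selector lemma directly, rather than trying to separately identify $\dd_q u$ for fixed $t$ and then recover $\dd_t u$.
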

This is weaker than what happens in the viscosity case, since the viscosity solution $u$ solves the equation on its domain of differentiability (which is of full measure since the solution is Lipschitz) even for $u_0$ only Lipschitz. We do not know either if $(t,q) \mapsto R^t_0 u_0(q)$ solves the equation almost everywhere when $u_0$ is only Lipschitz.

In this paper, we present a complete construction of the variational operator under Hypothesis \ref{esti}, which comes down to build a graph selector directly for the suspended geometric solution and its wavefront introduced in Appendix \ref{graphselector}. We follow the idea of J.-C. Sikorav (see \cite{siko} or \cite{viterboX}) consisting in selecting suitable critical values of a \emph{generating family} describing this geometric solution. In order to get Lipschitz estimates for this operator, we work with the explicit generating family constructed by M. Chaperon via the \emph{broken geodesics} method (see \cite{chaperon} and \cite{chap2}), whose critical points and values are related to the Hamiltonian objects of the problem. We use a  general \emph{critical value selector} $\sigma$ defined from an axiomatic point of view (see Proposition \ref{mmax}), for functions which differ by a Lipschitz function from a nondegenerate quadratic form. 

An obstacle is that the generating family of Chaperon is of this form only for Hamiltonians that are quadratic for large $\|p\|$, so we need to modify the Hamiltonian for large $\|p\|$ into a quadratic form $Z$ to be able to use the critical value selector, and check that the choice of $Z$ does not matter in the definition of the operator. 

We denote by $R^t_s$ the obtained operator, keeping in mind that it depends a priori on the choice of a critical value selector $\sigma$. The explicit derivatives of the generating family allow to prove the estimates of the following statement.
\begin{theo}\label{lip} If $H$ satisfies Hypothesis \ref{esti} with constant $C$, there exists a variational operator, denoted by $(R^t_s)_{s\leq t}$, such that for $0\leq s \leq s' \leq t'\leq t$ and $u$ and $v$ two $L$-Lipschitz functions,	
	\begin{enumerate}\renewcommand{\labelitemi}{$\bullet$}
		\item $R^t_s u$ is Lipschitz with ${\rm Lip}(R^t_s u) \leq e^{C(t-s)}(1+L)-1$,
		\item $\|R^{t'}_s u-R^t_s u\|_\infty \leq Ce^{2C(t-s)}(1+L)^2|t'-t|$,
		\item $\|R^t_{s'} u-R^t_s u\|_\infty \leq C(1+L)^2|s'-s|$,
		\item $\forall Q \in \rr^d, \left|R^t_s u(Q) - R^t_s v(Q)\right| \leq \|u-v\|_{\bar{B}\!\left(Q,(e^{C(t-s)}-1)(1+L)\right)}$,
	\end{enumerate}
	where $\bar{B}\!\left(Q,r\right)$ denotes the closed ball of radius $r$ centered in $Q$ and $\|u\|_K:= sup_K |u|$.
\end{theo}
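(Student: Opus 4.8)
The plan is to build the variational operator explicitly through Chaperon's broken geodesics generating family and then read off each estimate from the explicit formulas for its critical points and values. First I would fix a large parameter and replace $H$ outside a ball $\{\|p\|\leq \rho\}$ by a nondegenerate quadratic form $Z$ in $p$, obtaining a modified Hamiltonian $H_Z$ that still satisfies Hypothesis \ref{esti} (up to enlarging $C$ slightly) and agrees with $H$ on the region reachable by trajectories starting from the graph of $d_q u$ when $u$ is $L$-Lipschitz; the finite propagation speed coming from $\|\partial_{(q,p)}H\|<C(1+\|p\|)$ guarantees that on a fixed time interval $[s,t]$ momenta starting bounded stay bounded by a constant depending only on $C$, $L$ and $t-s$, so a $\rho$ chosen large enough makes the modification invisible. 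Then for a subdivision $s=t_0<\dots<t_N=t$ with small steps, Chaperon's construction gives a generating family $F_u\colon \rr^d\times(\rr^d\times\rr^d)^{N}\to\rr$, quadratic at infinity in the fiber variables, whose fiber-critical points correspond exactly to broken Hamiltonian trajectories issued from the graph of $d_q u$, and whose critical values are $u(q)+\aaa^t_s(\gamma)$. Applying the axiomatic critical value selector $\sigma$ of Proposition \ref{mmax} fiberwise defines $R^t_s u(Q):=\sigma(F_u(Q,\cdot))$; the variational property (4') is immediate from this description, and monotonicity, additivity come from the corresponding axioms of $\sigma$. One must also check the definition is independent of $N$ (subdivision) and of $Z$ — the former by the standard fiber-sum stabilization invariance of generating families, the latter by the finite propagation speed argument above.

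With the operator in hand, the four estimates are obtained by differentiating the generating family. For the Lipschitz bound on $R^t_s u$: at a selected critical point $(Q,\xi)$ one has $\partial_Q F_u(Q,\xi)=P^t_s(q,d_qu(q))$, the final momentum, so $\mathrm{Lip}(R^t_s u)$ is bounded by the maximal final momentum over trajectories starting from the graph of $d_q u$; Gronwall applied to the $p$-equation $\dot p=-\partial_q H$ with $\|\partial_q H\|<C(1+\|p\|)$ gives $\|p(t)\|\leq e^{C(t-s)}(1+L)-1$, which is exactly the stated bound. For the $t$-Lipschitz estimate one differentiates $t\mapsto R^t_s u(Q)$; the derivative is essentially $-H$ evaluated along the selected trajectory at the endpoint (an envelope/Hamilton-Jacobi-type computation, since the fiber variables are at a critical point), and $|H(t,q,p)|<C(1+\|p\|)^2$ together with the momentum bound yields $Ce^{2C(t-s)}(1+L)^2$. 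For the $s$-Lipschitz estimate one instead varies the initial time; here the momentum at time $s$ is still bounded by $1+L$ (no exponential growth yet since we start at $s$), so the bound is $C(1+L)^2|s'-s|$ without the exponential factor — this asymmetry is the reason the two time-estimates look different. For the dependence on the initial condition: if $u,v$ are $L$-Lipschitz, one compares $F_u$ and $F_v$; a selected broken trajectory reaching $Q$ at time $t$ starts at some $q$ with $\|q-Q\|$ controlled by the maximal speed $\|\dot q\|=\|\partial_p H\|<C(1+\|p\|)\leq e^{C(t-s)}(1+L)$ integrated over $[s,t]$ — wait, more carefully one needs $\|q-Q\|\leq (e^{C(t-s)}-1)(1+L)$, which follows by integrating the bound on $\|\dot q(\tau)\|\leq e^{C(\tau-s)}(1+L)-1+$ (lower order) from $s$ to $t$ — and then $R^t_s u(Q)-R^t_s v(Q)$ is bounded by $\|u-v\|$ on that ball because $u$ and $v$ enter the generating family only through their value at the starting point $q$, and the selector is monotone.

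The main obstacle I expect is not any single estimate but the bookkeeping that makes the differentiation of the generating family rigorous and uniform: one must show that the critical value selector $\sigma$, which a priori only returns \emph{a} critical value, can be differentiated in the parameters $(t,s,Q)$ and that the relevant derivative equals the derivative of $F_u$ at a critical point (a Clarke-subdifferential / min-max argument, valid because $\sigma$ picks critical values continuously and the generating family is $\cc^1$ in parameters with the fiber Hessian controlled away from a neighborhood of the critical set). A second, more technical point is verifying that Chaperon's generating family really is of the form ``nondegenerate quadratic form plus a function with Lipschitz gradient'' so that Proposition \ref{mmax} applies — this is exactly where the quadratic modification $Z$ and Hypothesis \ref{esti} are used, and one needs the fiber Hessian estimate to be uniform in $(t,s,Q)$ and in the $L$-Lipschitz datum $u$, which forces the subdivision step to be chosen depending only on $C$. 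Once these uniformities are in place, each of the four bullets is a one-line consequence of Gronwall applied to \eqref{HS} together with the growth bounds in Hypothesis \ref{esti}.
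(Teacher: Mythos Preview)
Your approach is essentially the paper's: Chaperon's broken-geodesics family, quadratic modification at infinity, the axiomatic selector $\sigma$, and then Gronwall bounds on $(q,p)$ along Hamiltonian trajectories feeding into the derivatives $\partial_Q S=P^t_s$, $\partial_t S=-H(t,Q,P)$, $\partial_s S=H(s,q,p)$, $\partial_u S=u(q)$. The ``main obstacle'' you flag---differentiating the selector---is precisely what the paper sidesteps by building the finite-difference estimate
\[
\min_{\mu}\min_{x\in\mathrm{Crit}(f_\mu)}\partial_\mu f_\mu(x)\ \le\ \frac{\sigma(f_{\tilde\mu})-\sigma(f_\mu)}{\tilde\mu-\mu}\ \le\ \max_{\mu}\max_{x\in\mathrm{Crit}(f_\mu)}\partial_\mu f_\mu(x)
\]
into the axioms of $\sigma$ (Proposition~\ref{mmax}\eqref{loc}); each of the four items is then obtained by choosing the right homotopy $f_\mu$ (translate $Q$, vary $t$, vary $s$, interpolate $u\to v$) and bounding $\partial_\mu f_\mu$ at critical points via Propositions~\ref{crit}, \ref{Sder}, \ref{critloc}. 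Your Clarke/minmax workaround would amount to proving that axiom, so the content is the same. Two small corrections: for item~(4) monotonicity of $\sigma$ is not quite what is used---one needs the two-sided bound above applied to $f_\mu=S^t_s((1-\mu)u+\mu v)(Q,\cdot)$, whose $\mu$-derivative at critical points is $v(q)-u(q)$ with $q\in\bar B(Q,(e^{C(t-s)}-1)(1+L))$; and the quadratic modification is done so that $H_\delta$ satisfies Hypothesis~\ref{esti} with constant $C(1+\delta)$ rather than exactly $C$, which is why the proof concludes by letting $\delta\to0$.
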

The interest of these estimates is that they behave well with the iteration of the operator $R^t_s$, and Theorem \ref{lip} allows then to prove Theorem \ref{iteratedvisc} with no compactness assumption on $H$.

With the same method we are also able to quantify the dependence of the constructed operator $R^t_s$ with respect to the Hamiltonian:
\begin{prop}\label{Hlip}
	Let $H_0$ and $H_1$ be two $\cc^2$ Hamiltonians satisfying Hypothesis \ref{esti} with constant $C$, $u$ be a $L$-Lipschitz function, $Q$ be in $\rr^d$ and $s\leq t$. Then
	\[|R^t_{s,H_1} u(Q) -R^t_{s,H_0}u(Q)| \leq (t-s)\|H_1-H_0\|_{\bar{V}},\]
	where $\bar{V}=[s,t]\times \bar{B}\!\left(Q,(e^{C(t-s)}-1)(1+L)\right)\times \bar{B}\!\left(0,e^{C(t-s)}(1+L)-1\right)$.
\end{prop}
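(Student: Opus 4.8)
The plan is to interpolate between the two Hamiltonians and to differentiate the resulting one-parameter family of critical values, using the same envelope mechanism that underlies Theorem~\ref{lip}. Put $H_\lambda=(1-\lambda)H_0+\lambda H_1$ for $\lambda\in[0,1]$. Because the operator norm is convex and the functions $C(1+\|p\|)$ and $C(1+\|p\|)^2$ bounding the right-hand sides in Hypothesis~\ref{esti} are the same for $H_0$ and $H_1$, each of the three strict inequalities is preserved under convex combination; hence every $H_\lambda$ is a $\cc^2$ Hamiltonian satisfying Hypothesis~\ref{esti} with the same constant $C$. Consequently $R^t_{s,H_\lambda}$ is well defined (with one fixed critical value selector $\sigma$ used throughout), Theorem~\ref{lip} applies to it with the very same constants, and the set $\bar{V}$ is independent of $\lambda$. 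It therefore suffices to prove that $g\colon\lambda\mapsto R^t_{s,H_\lambda}u(Q)$ is Lipschitz on $[0,1]$ with $|g'(\lambda)|\le(t-s)\|H_1-H_0\|_{\bar{V}}$ for almost every $\lambda$, and then to integrate: $|R^t_{s,H_1}u(Q)-R^t_{s,H_0}u(Q)|=|g(1)-g(0)|\le\int_0^1|g'(\lambda)|\,d\lambda\le(t-s)\|H_1-H_0\|_{\bar{V}}$.

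To differentiate $g$, recall that by construction $g(\lambda)=\sigma(F^\lambda)$, where $F^\lambda$ is Chaperon's broken geodesic generating family attached to $H_\lambda$ (together with the quadratic form $Z$ used to modify $H_\lambda$ for large $\|p\|$, which we fix once and for all large enough — beyond the momentum ball appearing in $\bar{V}$ — so that it is admissible for all $H_\lambda$ at once and plays no role in the selected value). Equivalently, by the variational property, $g(\lambda)=u(q_\lambda)+\aaa^t_s(\gamma_\lambda)$, the action being computed with $H_\lambda$ and $\gamma_\lambda$ being the $H_\lambda$-Hamiltonian trajectory issued from $(q_\lambda,d_qu(q_\lambda))$ at time $s$ and reaching the fibre over $Q$ at time $t$. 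The finite propagation speed implied by Hypothesis~\ref{esti} — the same Gronwall estimate that yields the balls in Theorem~\ref{lip} — forces every Hamiltonian trajectory attached to a critical point of $F^\lambda$, and in particular $\gamma_\lambda$, to satisfy $(q_\lambda(\tau),p_\lambda(\tau))\in\bar{B}\!\left(Q,(e^{C(t-s)}-1)(1+L)\right)\times\bar{B}\!\left(0,e^{C(t-s)}(1+L)-1\right)$ for every $\tau\in[s,t]$, i.e.\ to remain in the spatial-momentum part of $\bar{V}$. On the other hand $F^\lambda$ depends on $\lambda$ only through its action term, it is $\cc^1$ in $\lambda$, and $\partial_\lambda F^\lambda(\xi)=-\int_s^t(H_1-H_0)\bigl(\tau,\,\cdot\,\bigr)\,d\tau$ evaluated along the broken geodesic coded by $\xi$. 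Plugging this into the regularity axioms of the critical value selector (Proposition~\ref{mmax}), together with the stationarity of critical points — so that the first-order variation of $q_\lambda$ in $\lambda$ contributes nothing — gives, exactly as in the proof of Theorem~\ref{lip}, that $g$ is Lipschitz on $[0,1]$ and that for almost every $\lambda$ one has $g'(\lambda)=-\int_s^t(H_1-H_0)(\tau,q_\lambda(\tau),p_\lambda(\tau))\,d\tau$ for some critical trajectory $\gamma_\lambda$ realizing $\sigma(F^\lambda)$. Since $\gamma_\lambda$ stays in $\bar{V}$, $|g'(\lambda)|\le\int_s^t\|H_1-H_0\|_{\bar{V}}\,d\tau=(t-s)\|H_1-H_0\|_{\bar{V}}$, as required.

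The stability of Hypothesis~\ref{esti} under convex combinations and the Gronwall confinement of critical trajectories to $\bar{V}$ are routine, the latter having already been isolated for Theorem~\ref{lip}. The step that deserves the most care — and the one I expect to be the main obstacle — is the envelope step: converting the $\cc^1$ dependence of Chaperon's family on $\lambda$ and the axioms of $\sigma$ into the Lipschitz regularity of $g$ with the pointwise derivative formula above. This is precisely where the machinery built for Theorem~\ref{lip} is reused, and it is also where one must make sure that the auxiliary quadratic form $Z$ can be chosen simultaneously admissible for all $H_\lambda$ and invisible to the selected critical value, so that the derivative is genuinely governed by $H_1-H_0$ on $\bar{V}$ only.
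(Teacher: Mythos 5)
Your proposal follows the same route as the paper: interpolate with $H_\lambda=(1-\lambda)H_0+\lambda H_1$, observe Hypothesis~\ref{esti} is preserved with the same $C$, compute $\partial_\lambda$ of Chaperon's generating family at critical points (which equals $-\int_s^t(H_1-H_0)\,d\tau$ along the critical broken geodesic by Proposition~\ref{Sder}), confine those critical trajectories to $\bar V$ via the Gronwall estimate of Proposition~\ref{critloc}, and invoke the Lipschitz axiom \ref{mmax}-\eqref{loc} for $\sigma$; the final reduction to fiberwise-compactly-supported $H$ and to $\cc^1$ data is the same as in Theorem~\ref{lip} and the paper spells it out. One small caution: you assert an envelope-type identity $g'(\lambda)=-\int_s^t(H_1-H_0)(\tau,\gamma_\lambda(\tau))\,d\tau$ a.e.\ for a selected critical trajectory, which is strictly more than axiom \ref{mmax}-\eqref{loc} delivers — that axiom gives only a two-sided bound on the difference quotient by the min/max of $\partial_\lambda f_\lambda$ over critical points — but since the min and max are both bounded in absolute value by $(t-s)\|H_1-H_0\|_{\bar V}$, this overreach does not affect the conclusion.
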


An other formulation of the two last estimates is a localized version of the monotonicity of this variational operator with respect to the initial condition or to the Hamiltonian:

\begin{prop}\label{locmon} If $H_0$ and $H_1$ are two $\cc^2$ Hamiltonians satisfying Hypothesis \ref{esti} with constant $C$, then for $s  \leq t$, $Q$ in $\rr^d$ and $u$ and $v$ two $L$-Lipschitz functions,
\begin{itemize}
\item $R^t_s u(Q) \leq R^t_s v(Q)$ if $u\leq v$ on the set $\bar{B}\!\left(Q,(e^{C(t-s)}-1)(1+L)\right)$,
\item $R^t_{s,H_1} u(Q) \leq R^t_{s,H_0}u(Q)$ if $H_1 \geq H_0$ \\ \hspace*{1cm} on the set $[s,t]\times \bar{B}\!\left(Q,(e^{C(t-s)}-1)(1+L)\right)\times \bar{B}\!\left(0,e^{C(t-s)}(1+L)-1\right)$.
\end{itemize}
\end{prop}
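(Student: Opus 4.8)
The plan is to derive Proposition \ref{locmon} directly from the quantitative estimates of Theorem \ref{lip} and Proposition \ref{Hlip}, treating it as a mere reformulation. The key point is that the last two bullets of Theorem \ref{lip} and the bound in Proposition \ref{Hlip} are themselves proved via the explicit generating family and the monotonicity properties of the critical value selector $\sigma$, so all the analytic content is already available; here I only need to repackage it.

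For the first bullet, fix $s\leq t$, a point $Q$, and two $L$-Lipschitz functions $u,v$ with $u\leq v$ on $\bar B\!\left(Q,(e^{C(t-s)}-1)(1+L)\right)$. I would like to apply the fourth estimate of Theorem \ref{lip} to the pair $(u,v)$, but that estimate is stated with $\|u-v\|_{\bar B}$, which measures $|u-v|$ rather than the one-sided difference. The clean way around this is to replace $v$ by $w:=\max(u,v)$: the function $w$ is again $L$-Lipschitz (the max of two $L$-Lipschitz functions is $L$-Lipschitz), it satisfies $w\geq v$ everywhere, hence $R^t_s w\geq R^t_s v$ by genuine (global) monotonicity, which follows from property \eqref{monot} satisfied by the variational operator; and on the ball $\bar B\!\left(Q,(e^{C(t-s)}-1)(1+L)\right)$ we have $u\leq v$, so $w=v\geq u$ there, whence $w=u$... no: rather $w\geq u$ everywhere and $w=v$ on that ball is not what I want either. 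Let me instead use $w:=\min(u,v)$, which is $L$-Lipschitz, satisfies $w\leq u$ everywhere (so $R^t_s w\leq R^t_s u$ by monotonicity), and coincides with $v$ on $\bar B\!\left(Q,(e^{C(t-s)}-1)(1+L)\right)$ because $u\leq v$ there forces $\min(u,v)=u$... that is still not $v$. The correct choice is dictated by which inequality I need: since I want $R^t_s u(Q)\leq R^t_s v(Q)$, I take $w:=\max(u,v)$, note $w\geq u$ everywhere so monotonicity gives $R^t_s u(Q)\leq R^t_s w(Q)$; and on the ball $w=v$ since $u\leq v$ there; then the fourth estimate of Theorem \ref{lip} applied to the pair $(w,v)$ gives $|R^t_s w(Q)-R^t_s v(Q)|\leq \|w-v\|_{\bar B(Q,(e^{C(t-s)}-1)(1+L))}=0$, so $R^t_s w(Q)=R^t_s v(Q)$, and combining yields $R^t_s u(Q)\leq R^t_s v(Q)$.

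For the second bullet, the argument is parallel, using Proposition \ref{Hlip} in place of the fourth estimate. Fix $s\leq t$, $Q$, and set $\bar V=[s,t]\times \bar B\!\left(Q,(e^{C(t-s)}-1)(1+L)\right)\times \bar B\!\left(0,e^{C(t-s)}(1+L)-1\right)$. Assume $H_1\geq H_0$ on $\bar V$. Introduce a $\cc^2$ Hamiltonian $\tilde H$ that agrees with $H_1$ on a neighborhood of $\bar V$, agrees with $H_0$ outside a slightly larger set, satisfies $\tilde H\geq H_0$ everywhere, and still satisfies Hypothesis \ref{esti} with constant $C$ (such an interpolation exists because $H_1\geq H_0$ on $\bar V$ and one can patch with a smooth cutoff while controlling the derivatives; this uses that the constraints in Hypothesis \ref{esti} are convex-type bounds preserved under suitable convex combinations). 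By global monotonicity of the variational operator with respect to the Hamiltonian—which, again, is the content of the construction, higher Hamiltonian giving smaller action hence smaller selected value, so $\tilde H\geq H_0$ gives $R^t_{s,\tilde H}u(Q)\leq R^t_{s,H_0}u(Q)$—and by Proposition \ref{Hlip} applied to the pair $(\tilde H,H_1)$, which coincide on $\bar V$ so that $\|\tilde H-H_1\|_{\bar V}=0$ and therefore $R^t_{s,H_1}u(Q)=R^t_{s,\tilde H}u(Q)$, we conclude $R^t_{s,H_1}u(Q)\leq R^t_{s,H_0}u(Q)$.

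The main obstacle is the second bullet: unlike the max trick for initial conditions, I cannot globally dominate $H_0$ by something equal to $H_1$ on $\bar V$ without checking that the modification still obeys Hypothesis \ref{esti} with the \emph{same} constant $C$ — the radii in $\bar V$ depend on $C$, so enlarging the constant would enlarge the region on which $H_1\geq H_0$ is needed. I expect this to require either a careful cutoff argument keeping $C$ fixed (feasible since $\max(H_0,H_1)$ is the natural candidate and one only smooths it), or, more robustly, going back into the proof of Proposition \ref{Hlip}: there the bound comes from estimating $\aaa^t_s$ along trajectories that, by the finite propagation speed built into the construction, stay inside $\bar V$, so the sign $H_1\geq H_0$ on $\bar V$ already forces the selected critical value to decrease. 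I would present the localized monotonicity with respect to $H$ as a direct consequence of that internal step rather than as a formal corollary of the stated inequality in Proposition \ref{Hlip}.
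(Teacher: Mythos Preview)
Your argument for the first bullet is correct and is genuinely different from the paper's. The paper does not treat Theorem~\ref{lip} as a black box here: it goes back inside the proof, takes the homotopy $f_\mu=S^t_s\!\left((1-\mu)u+\mu v\right)(Q,\cdot)$, observes $\dd_\mu f_\mu(q,p,\nu)=v(q)-u(q)$, and uses Proposition~\ref{critloc} to see that every critical point has $q$ in the ball, so $\dd_\mu f_\mu\geq 0$ there and property~\ref{mmax}-\eqref{loc} of $\sigma$ gives $\sigma(f_1)\geq\sigma(f_0)$ (this is Remark~\ref{locmonproof1}). Your $\max$-trick is a clean alternative: it uses only global monotonicity (Proposition~\ref{vermonot}) and the already-proved fourth estimate of Theorem~\ref{lip}, so it repackages those results without reopening the generating-family machinery. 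Both work; yours is slightly more portable once Theorem~\ref{lip} is in hand, while the paper's is what actually produces Theorem~\ref{lip} in the first place.

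For the second bullet, your initial attempt has two real gaps. First, the interpolated Hamiltonian $\tilde H$ you want (equal to $H_1$ near $\bar V$, equal to $H_0$ far away, $\tilde H\geq H_0$ everywhere, and satisfying Hypothesis~\ref{esti} with the \emph{same} $C$) is not obviously constructible: $\max(H_0,H_1)$ is not $\cc^2$, and a smooth cutoff interpolation does not in general preserve the second-derivative bound with the same constant; you yourself flag that enlarging $C$ would enlarge $\bar V$ and break the hypothesis. Second, you invoke ``global monotonicity of the variational operator with respect to the Hamiltonian'' to get $R^t_{s,\tilde H}u\leq R^t_{s,H_0}u$, but no such statement has been established independently of Proposition~\ref{locmon} itself---it is precisely the global case of what you are trying to prove. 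Your concluding paragraph correctly identifies the fix: go back into the proof of Proposition~\ref{Hlip}, where the homotopy $H_\mu=(1-\mu)H_0+\mu H_1$ gives $\dd_\mu f_\mu(\xi)=-\int_s^t (H_1-H_0)(\tau,\phi^\tau_{s,\mu}(q,p))\,d\tau$, the trajectory stays in $\bar V$ by Proposition~\ref{critloc}, so $H_1\geq H_0$ on $\bar V$ forces $\dd_\mu f_\mu\leq 0$ at critical points and hence $\sigma(f_1)\leq\sigma(f_0)$. That is exactly the paper's proof (Remark~\ref{locmonproof2}).
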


\subsection{A link between variational and viscosity operator} If the variational and viscosity operators do not coincide in general, Q. Wei showed in \cite{wei} that, for compactly supported Hamiltonians, it is possible to obtain the viscosity operator by iterating the variational operator along a subdivision of the time space and letting then the maximal step of this subsequence tend to $0$. This result fits in the approximation scheme proposed by Souganidis in \cite{souga} for a  slightly different set of assumptions, where the variational operator acts like a \emph{generator}. We also refer to \cite{barsouga} for a presentation of this approximation scheme method in a wider framework that includes second order Hamilton-Jacobi equations.

Let us fix a sequence of subdivisions of $[0,\infty)$ $\left((\tau^N_i)_{i\in\nn}\right)_{N \in \nn}$ such that for all $N$, $0=\tau^N_0$, $\tau^N_i \underset{i\to \infty}{\to} \infty$ and $i \mapsto \tau^N_i$ is increasing. Let us also assume that for all $N$, $i \mapsto \tau^N_{i+1}-\tau^N_i$ is bounded by a constant $\delta_N$ such that $\delta_N\to 0$ when $N\to \infty$. For $t$ in $\rr_+$, we denote by $i_N(t)$ the unique integer such that $t$ belongs to $[\tau_{i_N(t)}^N,\tau_{i_N(t)+1}^N)$. If $u$ is Lipschitz on $\rr^d$, and $0\leq s \leq t$, let us define the iterated operator at rank $N$ by\begin{displaymath}
	R^t_{s,N} u = R^{t}_{\tau^N_{i_N(t)}} R^{\tau^N_{i_N(t)}}_{\tau^N_{i_N(t)-1}}\cdots R^{\tau^N_{i_N(s)+1}}_{s} u,
	\end{displaymath}
where $R^t_s$ is any variational operator satisfying the Lipschitz estimate of Theorem \ref{lip}.
\begin{theo}[Wei's theorem]\label{iteratedvisc}\label{cc}
	For each Hamiltonian $H$ satisfying Hypothesis \ref{esti}, the sequence of iterated operators $(R^t_{s,N})$ converges simply when $N\to \infty$ to the viscosity operator $V^t_s$. Furthermore, for each Lipschitz function $u$, $\left\{(s,t,Q) \mapsto R^t_{s,N}u(Q)\right\}_N$ converges uniformly towards $(s,t,Q)\mapsto V^t_s u(Q)$ on every compact subset of $\left\lbrace 0\leq s \leq t\right\rbrace \times \rr^d$.
\end{theo}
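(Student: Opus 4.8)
The plan is to recognize $(R^t_{s,N})_N$ as a monotone approximation scheme in the spirit of Souganidis \cite{souga} and Barles--Souganidis \cite{barsouga}, the single operator $R^t_s$ playing the role of the one‑step solver. Its \emph{monotonicity} and \emph{additivity} are two of the defining properties of a variational operator; its \emph{stability} (Lipschitz bounds uniform in $N$) comes from the fact that the estimates of Theorem \ref{lip} are tailored to survive iteration; and its \emph{consistency} with \eqref{HJ} follows from the variational property (4') together with Hypothesis \ref{esti}. The limit is then pinned down by the comparison principle for viscosity solutions of \eqref{HJ} under Hypothesis \ref{esti} (see \cite{barles} and Consequence \ref{viscuniq}); since the mesh $\delta_N\to0$ is not constant we use the (routine) variable‑step version of this machinery. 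Concretely, if $w$ is $L$‑Lipschitz then $R^{\tau'}_\tau w$ is $\left(e^{C(\tau'-\tau)}(1+L)-1\right)$‑Lipschitz, so iterating along a subdivision the exponents multiply and $\mathrm{Lip}\!\left(R^t_{s,N}u\right)\le e^{C(t-s)}(1+L)-1$ for every $N$; combining the second and third bullets of Theorem \ref{lip} step by step — the intermediate Lipschitz constants being all $\le e^{C(t-s)}(1+L)-1$ and the step lengths summing to at most $t-s$ — gives uniform‑in‑$N$ moduli of continuity of $(s,t,Q)\mapsto R^t_{s,N}u(Q)$ in each variable together with $\left|R^t_{s,N}u(Q)-u(Q)\right|\le C'e^{2C(t-s)}(1+L)^2(t-s)$, and the fourth bullet makes each factor, hence $R^t_{s,N}$, a $1$‑Lipschitz map for the local supremum norm. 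By Arzel\`a--Ascoli the family $\left\{(s,t,Q)\mapsto R^t_{s,N}u(Q)\right\}_N$ is precompact in $\cc^0$ uniformly on compact subsets of $\left\{0\le s\le t\right\}\times\rr^d$, so it suffices to prove every subsequential locally uniform limit $w$ equals $(s,t,Q)\mapsto V^t_s u(Q)$.

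\textbf{Consistency.} One checks that for every $\cc^2$ function $\phi$ with bounded derivatives, every $a\ge0$ and $Q$, one has $R^{a+h}_a\phi(Q)=\phi(Q)-h\,H\!\left(a,Q,\dd_q\phi(Q)\right)+o(h)$ as $h\to0^+$, the remainder uniform for $Q$ in a compact set. Indeed (4') provides $(q,p)$ with $p=\dd_q\phi(q)$, $Q^{a+h}_a(q,p)=Q$ and $R^{a+h}_a\phi(Q)=\phi(q)+\aaa^{a+h}_a(\gamma)$ for $\gamma(\tau)=\phi^\tau_a(q,p)$; Hypothesis \ref{esti} forces $q(\tau)=Q+O(h)$ and $p(\tau)=\dd_q\phi(Q)+O(h)$ on $[a,a+h]$, so writing $q=q(a)=Q-\int_a^{a+h}\dd_pH\,d\tau$ and expanding, the first‑order contribution $-\dd_q\phi(Q)\cdot\int_a^{a+h}\dd_pH\,d\tau$ of $\phi(q)-\phi(Q)$ cancels the term $\int_a^{a+h}p\cdot\dd_pH\,d\tau$ in $\aaa^{a+h}_a(\gamma)=\int_a^{a+h}\left(p\cdot\dd_pH-H\right)d\tau$, leaving $-h\,H(a,Q,\dd_q\phi(Q))$ up to $O(h^2)$. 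This holds for \emph{any} trajectory furnished by (4'), so the possible non‑uniqueness of the selected critical point is harmless. Applying this to $\phi=\psi(a,\cdot)$ for a space‑time $\cc^2$ function $\psi$ and expanding once more in time gives the consistency relation: $\bigl(\psi(t,Q)-R^{t}_{t-\rho}[\psi(t-\rho,\cdot)](Q)\bigr)/\rho\to\dd_t\psi(t,Q)+H(t,Q,\dd_q\psi(t,Q))$ as $\rho\to0^+$, uniformly on compact sets.

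\textbf{Identification of the limit.} Fix a subsequence with $R^t_{s,N}u\to w$ locally uniformly and fix $s\ge0$; we show $(t,Q)\mapsto w(s,t,Q)$ is a viscosity solution of \eqref{HJ} on $(s,\infty)\times\rr^d$ with value $u$ at time $s$ (the latter from equicontinuity and $R^s_{s,N}u=u$). Let $\psi$ be $\cc^2$, extended off a neighbourhood to be globally Lipschitz — by the locality estimate (fourth bullet of Theorem \ref{lip}) this does not affect what follows — and suppose $w(s,\cdot,\cdot)-\psi$ has a strict local maximum at $(t_0,Q_0)$ with $t_0>s$. Along the subsequence there are local maxima $(t_n,Q_n)\to(t_0,Q_0)$ of $R^\cdot_{s,N}u-\psi$; perturbing we may assume $t_n$ is not a subdivision point and set $s_n:=\tau^N_{i_N(t_n)}$, so that $\rho_n:=t_n-s_n\in(0,\delta_N)\to0$ and, from the defining formula together with $R^a_a=\mathrm{Id}$ (a consequence of (4')), $R^{t_n}_{s,N}u=R^{t_n}_{s_n}\!\left(R^{s_n}_{s,N}u\right)$. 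Near $Q_n$ one has $R^{s_n}_{s,N}u\le\psi(s_n,\cdot)+\xi_n$ with $\xi_n:=\left(R^{\cdot}_{s,N}u-\psi\right)(t_n,Q_n)$; since for $n$ large the fourth bullet of Theorem \ref{lip} makes $R^{t_n}_{s_n}$ at $Q_n$ read only values on a ball of radius $(e^{C\rho_n}-1)(1+\mathrm{Lip}(R^{s_n}_{s,N}u))\to0$, monotonicity and additivity yield, after cancelling $\xi_n$, $\psi(t_n,Q_n)\le R^{t_n}_{s_n}[\psi(s_n,\cdot)](Q_n)$, i.e. $\bigl(\psi(t_n,Q_n)-R^{t_n}_{t_n-\rho_n}[\psi(t_n-\rho_n,\cdot)](Q_n)\bigr)/\rho_n\le0$; by the uniform consistency above the left‑hand side converges to $\dd_t\psi(t_0,Q_0)+H(t_0,Q_0,\dd_q\psi(t_0,Q_0))$, which is therefore $\le0$. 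The supersolution inequality is obtained symmetrically (local minimum, reversed inequalities). By the comparison principle under Hypothesis \ref{esti}, $w(s,t,\cdot)=V^t_s u$ for every $t\ge s$; as this holds for all $s$ and all subsequential limits, $(s,t,Q)\mapsto R^t_{s,N}u(Q)$ converges to $(s,t,Q)\mapsto V^t_s u(Q)$, uniformly on compact subsets of $\left\{0\le s\le t\right\}\times\rr^d$ by precompactness — which also contains the simple convergence.

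\textbf{Main difficulty.} The delicate point is the consistency estimate: upgrading the mere \emph{existence} of a realizing trajectory in (4') to the sharp asymptotics $R^{a+h}_a\phi=\phi-h\,H(a,\cdot,\dd_q\phi)+o(h)$ with a remainder uniform on compact sets — this is exactly where the quantitative control of the Hamiltonian flow granted by Hypothesis \ref{esti} is used. The rest is careful bookkeeping with the non‑constant subdivision, mainly verifying the exact node identity $R^{t_n}_{s,N}u=R^{t_n}_{s_n}(R^{s_n}_{s,N}u)$ with $0<\rho_n=t_n-s_n\to0$, so that the locality estimate of Theorem \ref{lip} can legitimately localize the monotonicity argument.
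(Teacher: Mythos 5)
Your proof is correct, but it follows a genuinely different route from the paper's. The paper first extracts a subsequential limit $\bar{R}^t_s$ via Arzelà--Ascoli (Theorem~\ref{sscv}, just as you do), then identifies it by checking \emph{all five} axioms of Hypotheses~\ref{L-O} and invoking uniqueness of the viscosity \emph{operator} (Consequence~\ref{viscuniq}). The crux of that route is the Markov property, obtained from the quantitative ``failure-of-Markov'' estimate $\|R^t_{s,N}u - R^t_{r,N}R^r_{s,N}u\|_\infty \leq 2Ce^{2CT}(1+L)^2\delta_N$ (Proposition~\ref{nonmarkov}), which is in turn assembled from the $s$- and $t$-Lipschitz bounds of Theorem~\ref{lip}. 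You instead run the scheme through the Souganidis/Barles--Souganidis machinery: the $s$- and $t$-bounds are used only for equicontinuity, and the limit is identified via a first-order \emph{consistency} expansion $R^{a+h}_a\phi = \phi - hH(a,\cdot,\dd_q\phi)+o(h)$ derived from the variational property~(4') together with the trajectory-localization of Lemma~\ref{trajesti}, followed by the usual touching-by-test-function argument and the comparison principle. Both routes lean on the uniform Lipschitz estimates, precompactness, and uniqueness of Lipschitz viscosity solutions under Hypothesis~\ref{esti}; what distinguishes them is the substitute for ``Markov'': the paper proves it quantitatively for the limit, you replace it by consistency. Your route is more standard in the PDE literature and a bit more self-contained on the viscosity side; the paper's route delivers the non-Markov defect estimate as a by-product, which is of independent interest. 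Two small points to tighten: (i) the remainder $o(h)$ in your consistency expansion must be uniform jointly in $(a,Q)$ on compact sets, not merely in $Q$ --- this is true because Hypothesis~\ref{esti} confines the selected trajectory to a compact $(q,p)$-region and $H$ is continuous, but it should be stated; (ii) the ``perturbing we may assume $t_n$ is not a subdivision point'' is unnecessary and slightly hand-wavy; it is cleaner to define $s_n$ as the largest subdivision point \emph{strictly} below $t_n$ (so that when $t_n=\tau^N_{i_N(t_n)}$ one takes $s_n=\tau^N_{i_N(t_n)-1}$, giving $\rho_n\in(0,\delta_N]$), which still yields the factorization $R^{t_n}_{s,N}u = R^{t_n}_{s_n}\bigl(R^{s_n}_{s,N}u\bigr)$ because $R^a_a=\mathrm{Id}$.
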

Theorem \ref{iteratedvisc} implies amongst other things the existence of the viscosity operator, and the local uniform convergence allows to transfer Lipschitz estimates to the viscosity framework:
\begin{prop}\label{visclip} If $H$ satisfies Hypothesis \ref{esti} with constant $C$, the viscosity operator $(V^t_{s})_{s\leq t}$ satisfies the following estimates: 
	for $0\leq s \leq s' \leq t'\leq t$ and $u$ and $v$ two Lipschitz functions with Lipschitz constant $L$,
	\begin{enumerate}\renewcommand{\labelitemi}{$\bullet$}
		\item $V^t_s u$ is Lipschitz with ${\rm Lip}(V^t_s u) \leq e^{C(t-s)}(1+L)-1$,
		\item $\|V^{t'}_s u-V^t_s u\|_\infty \leq Ce^{2C(t-s)}(1+L)^2|t'-t|$,
		\item $\|V^t_{s'} u-V^t_s u\|_\infty \leq C(1+L)^2|s'-s|$,
		\item $\forall Q \in \rr^d, \left|V^t_s u(Q) - V^t_s v(Q)\right| \leq \|u-v\|_{\bar{B}\!\left(Q,(e^{C(t-s)}-1)(1+L)\right)}$.\end{enumerate}
	
	Moreover, if $H_0$ and $H_1$ are two Hamiltonians satisfying Hypothesis \ref{esti} with constant $C$, $u$ is a $L$-Lipschitz function, $Q$ is in $\rr^d$ and $s\leq t$, the associated operators satisfy
	\[|V^t_{s,H_1} u(Q) -V^t_{s,H_0}u(Q)| \leq (t-s)\|H_1-H_0\|_{\bar{V}},\]
	where $\bar{V}=[s,t]\times \bar{B}\!\left(Q,(e^{C(t-s)}-1)(1+L)\right)\times \bar{B}\!\left(0,e^{C(t-s)}(1+L)-1\right)$.
	
	Furthermore, 
	\begin{itemize}
		\item $V^t_s u(Q) \leq V^t_s v(Q)$ if $u\leq v$ on the set $\bar{B}\!\left(Q,(e^{C(t-s)}-1)(1+L)\right)$,
		\item $V^t_{s,H_1} u(Q) \leq V^t_{s,H_0}u(Q)$ if $H_1 \geq H_0$\\ \hspace*{1cm} on the set $[s,t]\times \bar{B}\!\left(Q,(e^{C(t-s)}-1)(1+L)\right)\times \bar{B}\!\left(0,e^{C(t-s)}(1+L)-1\right)$.
	\end{itemize}
\end{prop}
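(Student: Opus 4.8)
The whole statement will be deduced from Theorem \ref{iteratedvisc}: each estimate or inequality listed is a closed condition under pointwise limits, and Theorem \ref{iteratedvisc} exhibits $V^t_s$ (resp. $V^t_{s,H}$) as the pointwise limit of the iterated operators $R^t_{s,N}$ (resp. $R^t_{s,H,N}$), built from the variational operator $R^t_s$ of Theorem \ref{lip}, which also satisfies Propositions \ref{Hlip} and \ref{locmon}. So the plan has two parts: first, check that the iterated operator $R^t_{s,N}$ obeys the same estimates, with constants independent of $N$ up to corrections that are $o(1)$ as $N\to\infty$ (they come from the partial, non grid-aligned subintervals at $s,s',t,t'$, of length $\le\delta_N$); then pass to the limit.

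The first part is exactly the \emph{semigroup-type} behaviour of the estimates of Theorem \ref{lip}. The Lipschitz bound composes exactly, since $e^{C(t-\tau)}e^{C(\tau-s)}=e^{C(t-s)}$, so $R^t_{s,N}u$ is $(e^{C(t-s)}(1+L)-1)$-Lipschitz for every $N$. For the contraction estimate (fourth bullet) and for the localized monotonicity of Proposition \ref{locmon} one argues by induction on the number of factors: peeling off the last operator $R^t_\tau$ and applying the single-step statement produces around $Q$ a ball of radius $(e^{C(t-\tau)}-1)e^{C(\tau-s)}(1+L)=(e^{C(t-s)}-e^{C(\tau-s)})(1+L)$, because the function it is applied to, $R^\tau_{s,N}u$, has Lipschitz constant at most $e^{C(\tau-s)}(1+L)-1$; running through all subdivision points, these radii telescope to $(e^{C(t-s)}-1)(1+L)$. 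One also uses repeatedly that a single variational operator is $1$-Lipschitz for $\|\cdot\|_\infty$ (an immediate consequence of the fourth bullet, since $\|u-v\|_{\bar B(Q,r)}\le\|u-v\|_\infty$), hence so is any composition of such operators. For the time-regularity bounds (second and third bullets) and for the Hamiltonian-dependence estimate of Proposition \ref{Hlip} one observes that $R^t_{s,N}$ and the perturbed operator ($R^{t'}_{s,N}$, $R^t_{s',N}$, or $R^t_{s,H',N}$) share all but their last few factors; one peels off the differing factors, bounds the elementary discrepancy by the single-operator estimate of Theorem \ref{lip} or Proposition \ref{Hlip}, propagates it through the remaining factors by the $\|\cdot\|_\infty$-non-expansiveness just mentioned, and sums over the subdivision. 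In the second bullet this summation, together with the bound $e^{C(\cdot-s)}(1+L)-1$ on the Lipschitz constants of the intermediate iterates, is precisely what produces the prefactor $e^{2C(t-s)}$.

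For the second part one lets $N\to\infty$. By Theorem \ref{iteratedvisc}, $R^t_{s,N}u(Q)\to V^t_s u(Q)$ for every $(s,t,Q)$. A pointwise limit of $\ell$-Lipschitz functions is $\ell$-Lipschitz, so the first bullet passes to $V^t_s$; a pointwise inequality $|R^{t'}_{s,N}u(Q)-R^t_{s,N}u(Q)|\le c_N$ with $c_N$ converging to the constant in the statement passes to $|V^{t'}_s u(Q)-V^t_s u(Q)|\le c$, and likewise for the third and fourth bullets and for the localized monotonicity in $u$. Since the sequence of subdivisions does not depend on the Hamiltonian, one may form $R^t_{s,H_0,N}$ and $R^t_{s,H_1,N}$ along the same subdivisions and let $N\to\infty$ in their difference, which yields the Hamiltonian-dependence estimate and the localized monotonicity in $H$ for the viscosity operators. (Once the first and fourth bullets are known for $V$, one may alternatively invoke that, by Theorem \ref{iteratedvisc}, $V$ is a genuine viscosity operator, hence satisfies the Markov property of Hypotheses \ref{L-O}, and reduce the second and third bullets to the case where the lower time equals $s$.)

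The main obstacle is the $N$-uniformity in the first part, i.e.\ establishing the semigroup-type property of the estimates of Theorem \ref{lip}. For the Lipschitz bound, the contraction estimate and the monotonicity statements this is essentially bookkeeping — the exponentials multiply, the ball radii telescope, compositions stay non-expansive — but for the time-regularity estimates one must track carefully that $R^t_{s,N}$ and the perturbed operator are composed of different numbers of elementary operators, and that the Lipschitz constants of the intermediate iterates grow like $e^{C(\cdot-s)}(1+L)-1$, which is what creates the exponential prefactor; everything else is a routine passage to the limit.
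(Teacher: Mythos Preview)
Your proposal is correct and follows essentially the same route as the paper: establish the estimates uniformly in $N$ for the iterated operator $R^t_{s,N}$ by exploiting the semigroup-type form of the constants in Theorem \ref{lip} (this is precisely the content of Proposition \ref{ulip} and the subsequent unnamed proposition on Hamiltonian dependence), then pass to the limit via the uniform convergence of Theorem \ref{iteratedvisc}. The only minor remark is that your ``$o(1)$ corrections'' are in fact not needed: as the proof of Proposition \ref{ulip} shows, the telescoping is exact and the iterated estimates hold with the very constants of Theorem \ref{lip} (not merely up to vanishing errors), so the limit step is a straightforward passage through closed inequalities.
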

These estimates are not a priori very surprising since they are satisfied for classical solutions, but due to their dynamical origin they are likely to be sharper than the ones obtained using viscosity arguments. For example, the Lipschitz estimate with respect to $u$ gives a better speed of propagation than the one obtained in Proposition \ref{viscspeed} with $e^{CT}(1+L)-1$ as uniform Lipschitz constant.

\subsection{The convex case: Joukovskaia's theorem and the Lax-Oleinik semigroup}
If $H$ is strictly convex w.r.t. $p$, the Lagrangian function, defined on the tangent bundle, is the Legendre transform of $H$:
\[L(t,q,v)= \sup_{p \in \left(\rr^d\right)^\star} p\cdot v - H(t,q,p).\]
The Legendre inequality writes
\[L(t,q,v)+ H(t,q,p) \geq p\cdot v\]
for all $t$, $q$, $p$ and $v$, and is an equality if and only if $p= \dd_v L(t,q,v)$ or equivalently $v=\dd_p H(t,q,p)$. In particular, if $(q(\tau),p(\tau))$ is a Hamiltonian trajectory, $\dot{q}(\tau)=\dd_pH(\tau,q(\tau),p(\tau)$ and \[
\int^t_s L(\tau,q(\tau),\dot{q}(\tau))d\tau = \int^t_s p(\tau)\cdot \dot{q}(\tau) -H(\tau,q(\tau),p(\tau) d\tau. 
\]
In other words, the Hamiltonian action of a Hamiltonian trajectory coincides with the so-called Lagrangian action of its projection on the position space.

The Lax-Oleinik semigroup $\left(T^t_s\right)_{s\leq t}$ is usually expressed with this Lagrangian action:
 if $u$ is a Lipschitz function on $\rr^d$, then $T^t_s u$ is defined by
	\begin{equation}
	T^t_s u (q)= \inf_{c} u(c(s)) + \int^t_s L\left(\tau,c(\tau),\dot{c}(\tau)\right) d\tau,
	\label{defLO}
\end{equation} 
	where the infimum is taken over all the Lipschitz curves $c:[s,t]\to\rr^d$ such that $c(t)=q$.
Under this form, it is straightforward that the Markov property \eqref{mark} is satisfied by the operator. The Lax-Oleinik semigroup is known to be the viscosity operator when the Hamiltonian is Tonelli, \emph{i.e.} strictly convex and superlinear w.r.t. $p$, and also to satisfy the Variational property (\ref{comp}'),  see for example \cite{fathi}, \cite{bernard}. It is hence both a variational and a viscosity operator for Tonelli Hamiltonians:\[T^t_s=V^t_s.\]

The following theorem states that the variational operator construction of this paper gives effectively the Lax-Oleinik semigroup for uniformly strictly convex Hamiltonian, and the viscosity operator in the convex case. We assume for this result that the critical value selector $\sigma$ satisfies two additional assumptions, presented in Proposition \ref{mmaxbis}.
\begin{theo}[Joukovskaia's theorem]\label{jouk} If $p \mapsto H(t,q,p)$ is convex for each $(t,q)$ or concave for each $(t,q)$, the variational operator ${R}^t_s$ associated with the critical value selector $\sigma$ is the viscosity operator. In particular, it coincides with the Lax-Oleinik semigroup if $H$ is uniformly strictly convex w.r.t. $p$.\end{theo}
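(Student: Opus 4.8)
The plan is to prove Joukovskaia's theorem by showing that in the (say) convex case the variational operator $R^t_s$ constructed via Chaperon's broken-geodesics generating family coincides with the Lax-Oleinik semigroup $T^t_s$, and then invoke the already-cited fact that $T^t_s=V^t_s$ when $H$ is Tonelli, together with a perturbation argument using Proposition~\ref{Hlip} (and its viscosity counterpart in Proposition~\ref{visclip}) to pass from the uniformly strictly convex case to the merely convex case, and finally the symmetry $H\leftrightarrow -H$, $u\leftrightarrow -u$ to handle the concave case. The heart of the matter is the first reduction, so I will assume $H$ is uniformly strictly convex with the extra growth needed to modify it into a quadratic form $Z$ at infinity (which one can arrange since the localized estimates mean only bounded momenta matter, by the finite propagation speed already used in the construction).

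First I would recall the structure of Chaperon's generating family: fixing the terminal point $Q$ and time $t$, a subdivision $s=t_0<t_1<\dots<t_N=t$ of $[s,t]$ with small steps, the generating family is a function $F(q;q_1,p_1,\dots,q_{N-1},p_{N-1})$ (the auxiliary variables being intermediate positions and momenta of the broken Hamiltonian trajectory) whose value at a critical point equals $u(q)+\aaa^t_s(\gamma)$ along the corresponding broken/genuine trajectory ending at $Q$. The key computation, which uses the Legendre inequality recalled in the excerpt, is that when $p\mapsto H(t,q,p)$ is convex the function $F$ is, after the change of variables eliminating the momenta $p_i$ by the stationarity conditions $p_i=\dd_v L$, equal to a discretized Lagrangian action $u(q_0)+\sum_i \ell(t_i,q_i,q_{i+1})$ where each $\ell$ is a convex function of $(q_i,q_{i+1})$ — so the auxiliary quadratic form at infinity is \emph{positive} definite in the relevant block, and $F$ differs from a positive definite quadratic form by a Lipschitz function. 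Hence the only critical value selected by $\sigma$ is the $\min$: by the additional assumptions on $\sigma$ stated in Proposition~\ref{mmaxbis}, $\sigma$ of such a function equals its minimum. Therefore
\[
R^t_s u(Q)=\min_{q_0,q_1,\dots,q_{N-1}} u(q_0)+\sum_{i=0}^{N-1}\ell(t_i,q_i,q_{i+1}),
\]
which is exactly the value of the broken-geodesic approximation of the Lax-Oleinik infimum \eqref{defLO}, and one checks it is independent of the subdivision (refinement only lowers nothing because the continuous minimizer is already attained among broken trajectories for $H$ Tonelli) so it equals $T^t_s u(Q)$.

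Once $R^t_s=T^t_s=V^t_s$ is established for uniformly strictly convex $H$, I would handle a general convex $H$ by approximation: set $H_\ep=H+\ep\|p\|^2$, which is uniformly strictly convex and still satisfies Hypothesis~\ref{esti} with a constant $C$ uniform in $\ep\leq 1$ (the bound on $\dd^2 H$ changes by $2\ep$). Proposition~\ref{Hlip} gives $|R^t_{s,H_\ep}u(Q)-R^t_{s,H}u(Q)|\leq (t-s)\|H_\ep-H\|_{\bar V}\leq (t-s)\,\ep\, r^2$ with $r=e^{C(t-s)}(1+L)-1$, and the analogous statement in Proposition~\ref{visclip} gives the same bound for $V$; since $R^t_{s,H_\ep}u=V^t_{s,H_\ep}u$ for every $\ep>0$, letting $\ep\to 0$ yields $R^t_{s,H}u=V^t_{s,H}u$. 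The concave case follows by noting that $\tilde u(t,q):=-u(t,q)$ solves the Hamilton-Jacobi equation for $\tilde H(t,q,p):=-H(t,q,-p)$, that $\tilde H$ is convex when $H$ is concave and still satisfies Hypothesis~\ref{esti}, and that both the variational operator (through the $\sigma$ axiom $\sigma(-F)=-\sigma(F)$ when $\min/\max$ are swapped, part of Proposition~\ref{mmaxbis}) and the viscosity operator are equivariant under this symmetry, so the convex case transfers.

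The main obstacle I anticipate is the first step: verifying cleanly that Chaperon's generating family, for convex $H$, genuinely reduces to a discretized Lagrangian action that is a Lipschitz perturbation of a \emph{definite} (not merely nondegenerate) quadratic form, so that the critical value selector is forced to return the minimum — this requires carefully tracking the Legendre duality through the broken-trajectory construction and controlling the modification of $H$ into the quadratic form $Z$ at infinity so that it does not spoil convexity on the momentum range actually visited. The approximation and symmetry arguments are then routine given Propositions~\ref{Hlip} and~\ref{visclip}.
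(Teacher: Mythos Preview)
Your proposal is essentially correct and follows the paper's own three-step strategy: reduce the generating family to the Lagrangian action in the uniformly strictly convex case so that $\sigma=\min=T^t_s$, extend by the modification-at-infinity device, approximate convex $H$ by $H_\ep=H+\tfrac{\ep}{2}\|p\|^2$ via Propositions~\ref{Hlip} and~\ref{visclip}, and treat the concave case by the symmetry $\sigma(-f)=-\sigma(f)$.

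One point deserves sharpening, and you half-anticipate it in your final paragraph. You write that ``$F$ differs from a positive definite quadratic form by a Lipschitz function'', but this is false for the full generating family $S^t_su(Q,\cdot)$: the quadratic form $\zz$ of Proposition~\ref{Squad} is indefinite (it mixes $p$- and $Q$-blocks through the off-diagonal identities). What is true, and what the paper uses, is that $S^t_su(Q,\cdot)$ is uniformly strictly \emph{concave} in the momentum variables $(p,p_1,\dots,p_N)$ (Proposition~\ref{Gccve}), and that the partial maximum in those variables is the Lagrangian family $\aaa^t_su$, which \emph{is} coercive in $\mathcal{Q}_{\tilde m}$ (Proposition~\ref{AQm}). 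The passage from $\sigma(S)$ to $\sigma(\aaa)$ is not a change of variables but a genuine reduction, and it is exactly here that Proposition~\ref{mmaxbis}\eqref{cvx} (combined with \eqref{opp} to turn the convex-$\min$ statement into a concave-$\max$ one) is needed: $\sigma(f)=\sigma(\max_y f)$ when $\dd^2_y f\le -c\,\mathrm{id}$. Only after this reduction does Consequence~\ref{coercmin} apply to give $\sigma(\aaa)=\min\aaa=T^t_su(Q)$. Your phrase ``change of variables eliminating the momenta $p_i$ by the stationarity conditions'' blurs this: stationarity in $p_i$ is optimization, not diffeomorphism, and invariance under the latter (Property~\ref{mmax}\eqref{transl}) would not suffice.

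A second, smaller remark: for uniformly strictly convex $H$ not equal to a quadratic form outside a band, the paper does not argue directly that $R^t_s=T^t_s$; instead it uses Addendum~\ref{defcvx} to keep the modified Hamiltonian $H_{\delta,Z}$ uniformly strictly convex, applies the first step to $H_{\delta,Z}$, and then observes that $R^t_{s,H}=R^t_{s,H_{\delta,Z}}=T^t_{s,H_{\delta,Z}}$ inherits the Markov property from the Lax--Oleinik semigroup, which forces $R^t_s=V^t_s$ by uniqueness. Your localization-by-finite-propagation idea would also work here, but the Markov route is cleaner.
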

The last part of this statement was proved by T. Joukovskaia in the case of a compact manifold, see \cite{jou}.

This theorem was generalized to convex-concave type Hamiltonians, see \cite{weithz} and \cite{BC}, but only when both the Hamiltonian and the initial condition are in the form of splitting variables: 
\[H(t,q,p)=H_1(t,q_1,p_1)+H_2(t,q_2,p_2) \textrm{ and } u_0(q)=u_1(q_1)+u_2(q_2)
\]
where $d=d_1+d_2$, $(q_i,p_i)$ denotes the variables in $T^\star \rr^{d_i}$, $H_1$ (resp. $H_2$) is a  Hamiltonian on $\rr\times \rr^{d_1}$ (resp. on $\rr\times \rr^{d_2}$) convex in $p_1$ (resp. concave in $p_2$), and
$u_1$ and $u_2$ are Lipschitz functions on $\rr^{d_1}$ and $\rr^{d_2}$.
\vspace*{1cm}

The paper is organized as follows: in Section \ref{buildvar} we build the variational operator and prove Theorem \ref{lip}. We first describe the construction of Chaperon's generating family  and its properties (\S\ref{chapgen}), and introduce the notion of critical value selector and its properties (\S \ref{sectcritval}). Then, we address carefully the difficulty related to the bahaviour of the Hamiltonian for large $p$ in order to define the variational operator without compactness assumption  (\S \ref{defR}). We finally collect some properties of the variational operator and its Lipschitz estimates, proving Theorem \ref{lip} and Propositions \ref{Hlip} and \ref{locmon} (\S \ref{varop}). 

In Section \ref{iteration} we prove Theorem \ref{iteratedvisc}. We study the uniform Lipschitz estimates of the iterated operator  $R^t_{s,N}$ (\S\ref{iterlip}), and then show that the limit of any subsequence is the viscosity operator (\S\ref{AA}). Section \ref{iteration} can be read independently from Section \ref{buildvar}, once the Lipschitz constants of Theorem \ref{lip} are granted.

In Section \ref{convex} we give a direct proof of Joukovskaia's theorem, while describing the Lax-Oleinik semigroup with the broken geodesics method (\S \ref{chapgencv}). 

Appendix \ref{chap} details the construction and properties of Chaperon's generating families for the Hamiltonian flow, both in the  general (\S \ref{gfgc}) and in the convex case (\S \ref{gfcc}). Appendix \ref{minmax} proposes a functorial construction of a critical value selector as needed in the construction of the variational operator.  It requires two deformation lemmas proved in Appendix \ref{deformation}. Appendix \ref{C} is about viscosity solutions, and gives an elementary proof of the uniqueness for Lipschitz initial data and under Hypothesis \ref{esti}, via a standard doubling variables argument. Appendix \ref{graphselector} presents the notion of graph selector and contains a proof of Proposition \ref{solpp}.\\
$ $\\
\emph{Acknowledgement.} I am grateful to my supervisor Patrick Bernard for his advices and careful reading. I also thank Qiaoling Wei, Marc Chaperon and Alain Chenciner for fruitful discussions in the cheerful Observatoire de Paris. This paper was improved by many suggestions of the referees of my PhD thesis, Jean-Claude Sikorav and Guy Barles, and by the anonymous referee of the paper.

\section{Building a variational operator}\label{buildvar}

In this section we present the complete construction of the variational operator, following the idea proposed by  J.-C. Sikorav in \cite{sikoexp} and M. Chaperon in \cite{chap2}. We work with an explicit generating family of the geometric solution defined by Chaperon via the \emph{broken geodesics} method (see \cite{chapgeo}). We gather its properties in the next paragraph, referring to Appendix \ref{chap} for some of the proofs. Then we apply on this generating family a \emph{critical value selector}, which we handle only via a few axioms, see Proposition \ref{mmax}. The existence of a selector satisfying these axioms is proved in Appendix \ref{minmax}. This selector can only be directly applied to generating families associated with Hamiltonians coinciding with a quadratic form at infinity, so we need to handle this difficulty by modifying the Hamiltonian for large $p$, see Proposition \ref{propladefi} and Definition \ref{ladefinition}. The rest of the chapter consists in verifying that the obtained operator is a variational operator, and that it satisfies the Lipschitz estimates of Theorem \ref{lip}.

\subsection{Chaperon's generating families}\label{chapgen}
We first build a \emph{generating family} of the Hamiltonian flow, following Chaperon's \emph{broken geodesics} method introduced in \cite{chapgeo} and detailed in \cite{chaperon}, and then adapt it to the Cauchy problem. The results of this section are detailed and proved in Appendix \ref{chap}.
	
Under Hypothesis \ref{esti}, it is possible to find a $\delta_1 >0$ depending only on $C$ (for example $\delta_1=\frac{\ln(3/2)}{C}$) such that $\phi^t_s-{\rm id}$ is $\frac{1}{2}$-Lipschitz (see Proposition \ref{gron}), and as a consequence $(q,p) \mapsto (Q^t_s(q,p),p)$ is a $\cc^1$-diffeomorphism for each $|t-s| \leq \delta_1$, where $(Q^t_s,P^t_s)$ denotes the components of the Hamiltonian flow $\phi^t_s$. 

For a Hamiltonian $H$ satisfying Hypothesis \ref{esti} and $0\leq t-s\leq \delta_1$, let $F^t_s:\rr^{2d} \to \rr$ be the $\cc^1$ function defined by\begin{equation}
	F^t_s(Q,p)=\int^t_s \left(P^\tau_s(q,p)-p\right)\cdot\dd_\tau Q^\tau_s(q,p)-H(\tau,\phi^\tau_s(q,p))\;d\tau,
	\end{equation}
	where $q$ is the only point satisfying $Q^t_s(q,p)=Q$. The function $F^t_s$ is called a \emph{generating function} for the flow $\phi^t_s$, meaning that\begin{displaymath}
	(Q,P) = \phi^t_s (q,p) \iff  \left\{\begin{array}{rcl}
	\dd_p F^t_s (Q,p)&=&q-Q,\\
	\dd_Q F^t_s(Q,p)&=&P-p,
	\end{array}\right.
	\end{displaymath}
	which is proved in Proposition \ref{F}.

	Note that if $H(t,q,p)=H(p)$ is integrable, Hamiltonian trajectories have constant impulsion $p$ and $F^t_s(Q,p)=-(t-s)H(p)$ does not depend on $Q$.  
	
When $t-s$ is large, we choose a subdivision of the time interval with steps smaller than $\delta_1$ and add intermediate coordinates along this trajectory.	For each $s\leq t$ and $(t_i)$ such that $t_0=s\leq t_1 \leq \cdots \leq t_{N+1}=t$ and $t_{i+1}-t_i\leq \delta_1$ for each $i$, let $G^t_s:\rr^{2d(1+N)}\to \rr$ be the function defined by\begin{equation}\label{Gfam}
	G^t_s(p_0,Q_0,p_1,Q_1,\cdots ,Q_{N-1}, p_N,Q_N)=\sum_{i=0}^N F^{t_{i+1}}_{t_i}(Q_i,p_i) +p_{i+1}\cdot(Q_{i+1}-Q_i)
	\end{equation}
	where indices are taken modulo $N+1$.

In Proposition \ref{Ggen} we prove that $G^t_s$ is a \emph{generating function} for the flow $\phi^t_s$, meaning that if $(Q,p)=(Q_N,p_0)$ and $\nu=(Q_0,p_1,\cdots, Q_{N-1}, p_N)$, \begin{displaymath}
(Q,P) = \phi^t_s (q,p) \iff  \exists \nu \in \rr^{2dN},
\left\{\begin{array}{rcl}
\dd_p G^t_s (p,\nu,Q)&=&q-Q,\\
\dd_Q G^t_s(p,\nu,Q)&=&P-p,\\
\dd_\nu G^t_s(p,\nu,Q)&=&0,
\end{array}\right.
\end{displaymath}
and in this case $(Q_i,p_{i+1})=\phi^{t_{i+1}}_s(q,p)$ for all $0\leq i\leq N-1$. Furthermore, if $Q=Q^t_s (q,p)$ and $\gamma$ denotes the Hamiltonian trajectory issued from $(q,p)$,\begin{equation}\label{critG} G^t_s(p,\nu,Q)=\aaa^t_s(\gamma)-p\cdot(Q-q)\end{equation}
for critical points $\nu$ of $\nu \mapsto G^t_s(p,\nu,Q)$.

This is called the \emph{broken geodesics method}: $G^t_s$ is actually the sum of the actions of the unique Hamiltonian trajectories $\gamma_i$ such that $\gamma_i(t_i)=(\star,p_i)$ and $\gamma_i(t_{i+1})=(Q_i,\star)$ and of boundary terms (of the form $p_{i+1}\cdot(q_{i+1}-Q_i)$) smartly arranged in order that taking critical values for $G^t_s$ is equivalent to sew the pieces of trajectories $\gamma_i$ at the intermediate points into a nonbroken geodesic on the whole time interval.

Note that if $H(t,q,p)=H(p)$, this function is quite simple: \begin{equation}\label{Gquad} G^t_s(p_0,Q_0,p_1,Q_1,\cdots, Q_{N-1}, p_N,Q_N)=\sum_{i=0}^N -(t_{i+1}-t_i)H(p_i) +p_{i+1}\cdot(Q_{i+1}-Q_i).\end{equation}

Now let us use the generating family $G^t_s$ of the flow to build what is called a \emph{generating family} for the Cauchy problem associated with the Hamilton-Jacobi equation \eqref{HJ} and an initial condition $u$, using a composition formula proposed by Chekanov.
 If $u : \rr^d \to \rr$ is Lipschitz and $s\leq t$, let us define $S^t_s u$ by\begin{equation}\label{S}
	\begin{array}{r c c l}  S^t_s u : &\rr^d \times \rr^d \times \rr^d\times \rr^{2dN}  &\to &\rr\\
	& (Q,\underbrace{q,p,\nu}_{\xi}) & \mapsto & u(q)+G^t_s(p,\nu,Q) + p\cdot(Q-q).\end{array}
	\end{equation}

\begin{prop}\label{crit} Let $u:\rr^d \to \rr$ be a Lipschitz $\cc^1$  initial condition and $0\leq t-s \leq T$.
	If $Q$ is fixed in $\rr^d$, $(q,p,Q_0,p_1,\cdots,p_N)$ is a critical point of $S^t_s u(Q,\cdot)$ if and only if  \[
	\left\{ \begin{array}{l}
	p=du(q),\\
	Q^t_s(q,p)=Q,\\
	(Q_{i-1},p_i)=\phi^{t_i}_s(q,p) \;\, \forall\, 1\leq i\leq N,\end{array}\right.
	\]
	and in that case, $\dd_Q S^t_s u(Q,q,p,Q_0,\cdots,p_N)=P^t_s(q,p)$.
	
	Furthermore, the critical value of $S^t_s u(Q,\cdot)$ associated with a critical point $(q,p,\nu)$ is equal to $u(q)+\aaa^t_s(\gamma)$, where $\gamma$ denotes the Hamiltonian trajectory $\tau \mapsto \phi^\tau_s(q,p)$.
	\end{prop}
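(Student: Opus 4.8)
The plan is to compute the differential of $S^t_s u(Q,\cdot)$ in the variables $\xi=(q,p,\nu)$ with $\nu=(Q_0,p_1,\dots,Q_{N-1},p_N)$, and to match the vanishing of each block of partial derivatives against the characterization of the flow by $G^t_s$ recalled just before the statement. Writing out $S^t_s u(Q,\xi)=u(q)+G^t_s(p,\nu,Q)+p\cdot(Q-q)$, the partial derivatives split naturally into three groups: the $\nu$-derivatives, the $p$-derivative, and the $q$-derivative. First I would note $\dd_\nu S^t_s u = \dd_\nu G^t_s(p,\nu,Q)$, so the equation $\dd_\nu S^t_s u=0$ is exactly the condition $\dd_\nu G^t_s(p,\nu,Q)=0$ appearing in the generating-function property of $G^t_s$; by that property this forces $(Q_{i-1},p_i)=\phi^{t_i}_s(q,p)$ for $1\le i\le N$ once we also know the relation between $q$ and $Q$. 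Second, $\dd_p S^t_s u = \dd_p G^t_s(p,\nu,Q) + (Q-q)$, and since the generating-function property gives $\dd_p G^t_s(p,\nu,Q)=q-Q$ precisely when $Q^t_s(q,p)=Q$ (with $P^t_s(q,p)$ recovered from $\dd_Q G^t_s$), the equation $\dd_p S^t_s u=0$ is equivalent to $Q^t_s(q,p)=Q$. Third, $\dd_q S^t_s u = du(q) - p$, so $\dd_q S^t_s u=0$ is equivalent to $p=du(q)$. Assembling the three, a point is critical iff $p=du(q)$, $Q^t_s(q,p)=Q$ and the intermediate points are the iterates of the flow, which is the claimed system.

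For the formula $\dd_Q S^t_s u(Q,q,p,\nu)=P^t_s(q,p)$ at a critical point, I would compute $\dd_Q S^t_s u = \dd_Q G^t_s(p,\nu,Q)+p$; by the generating-function property of $G^t_s$, at a point where $\dd_\nu G^t_s=0$ and $Q=Q^t_s(q,p)$ one has $\dd_Q G^t_s(p,\nu,Q)=P^t_s(q,p)-p$, hence the sum is $P^t_s(q,p)$. For the critical value, evaluate $S^t_s u$ at the critical point: $S^t_s u = u(q)+G^t_s(p,\nu,Q)+p\cdot(Q-q)$, and identity \eqref{critG} gives $G^t_s(p,\nu,Q)=\aaa^t_s(\gamma)-p\cdot(Q-q)$ at critical $\nu$, so the two $p\cdot(Q-q)$ terms cancel and the critical value equals $u(q)+\aaa^t_s(\gamma)$, where $\gamma(\tau)=\phi^\tau_s(q,p)$.

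The only genuine content here is bookkeeping: all three structural facts needed — $\dd_\nu G^t_s=0 \Leftrightarrow$ sewing into a single trajectory, $\dd_p G^t_s = q-Q$ and $\dd_Q G^t_s = P-p$ under $Q=Q^t_s(q,p)$, and the value identity \eqref{critG} — are already established for $G^t_s$ in Propositions \ref{F} and \ref{Ggen}. The main (mild) obstacle is organizational: one must be careful that the "$Q=Q^t_s(q,p)$" condition that makes the $G^t_s$ generating-function identities hold is itself produced by the $\dd_p$-equation rather than assumed, so the equivalence should be argued by first using $\dd_p S^t_s u=0$ to get $Q=Q^t_s(q,p)$, then feeding that into the $\dd_\nu$-equation to identify the intermediate points, and finally reading off $p=du(q)$ from the $\dd_q$-equation; the converse direction is immediate by substituting the flow relations back in. The differentiability of $S^t_s u$ in $\xi$ is clear since $u$ is $\cc^1$ and $G^t_s$ is $\cc^1$.
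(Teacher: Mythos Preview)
Your proposal is correct and follows essentially the same approach as the paper: compute the three blocks of partial derivatives of $S^t_s u(Q,\cdot)$, identify the vanishing of the $\dd_p$ and $\dd_\nu$ blocks with the generating-family characterization of the flow from Proposition~\ref{Ggen}, read off $p=du(q)$ from the $\dd_q$ block, and then use \eqref{critG} for the critical value and $\dd_Q G^t_s = P-p$ for the $Q$-derivative. The paper's proof is a terser version of exactly this computation.
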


\begin{proof} 	The point $(q,p,\nu)$ is a critical point of $S^t_s u(Q,\cdot)$, if and only if
	\[\left\{\begin{array}{rl}
	0=\dd_q S^t_s u(Q,q,p,\nu)=& du(q)-p,\\
	0=\dd_p S^t_s u(Q,q,p,\nu)=& \dd_p G^t_s(p,\nu,Q)+Q-q,\\
	0=\dd_\nu S^t_s u(Q,q,p,\nu)=& \dd_\nu G^t_s(p,\nu,Q).
	\end{array}\right.
	\]
	Since $G$ is a generating family of the flow, the two last lines implies that $Q^t_s(q,p)=Q$ and $\phi^{t_i}_s(q,p)=(Q_{i-1},p_i)$, hence $P^t_s(q,p)=\dd_Q G^t_s u(p,\nu,Q)+p=\dd_Q S^t_s u(Q,\xi)$. The form of the critical values directly follows from the form of the critical values of $G$, see \eqref{critG}.
\end{proof}

In other words, if $\Gamma$ denotes the graph of $du$ $\{(q,du(q)),q\in \rr^d\}$, the generating family that we built describes the so-called \emph{geometric solution} $\phi^t_s \!\left(\Gamma\right)$ as follows:
\begin{displaymath}
\phi^t_s\! \left(\Gamma\right)  = \left\lbrace (Q, \dd_q S^t_s u(Q,\xi))|Q\in \rr^d,\dd_\xi S^t_s u(Q,\xi)=0\right\rbrace,
\end{displaymath}
meaning that above each point $Q$, a point $(Q,P)$ is in $\phi^t_s \!\left(\Gamma\right) $ if and only if there is a critical point $\xi$ of $\xi \mapsto S^t_s u(Q,\xi)$ such that $P=\dd_Q S^t_s u(Q,\xi)$.

Let us state the values of the other derivatives of $S^t_s u$ at the points of interest:
\begin{prop}\label{Sder} Let $u$ a $\cc^1$ $L$-Lipschitz function and $Q$ in $\rr^d$ be fixed. \begin{enumerate}
		\item \label{temps} If $\xi=(q,p,\nu)$ is a critical point of $\xi \mapsto S^t_s u (Q,\xi)$, then		
			\[
			\left\{ \begin{array}{rcl}
			\dd_t S^t_s u(Q,\xi)&=&-H(t,Q,P^t_s(q,p)),\\
			\dd_s S^t_s u(Q,\xi)&=& H(s,q,p).\end{array}\right.
			\]	
		\item \label{SHdep} If $H_\mu$ is a $\cc^2$ family of Hamiltonians satisfying Hypothesis \ref{esti} with constant $C$, the same subdivision can be chosen to build the associated generating families $S^t_{s,\mu} u$, and then $\mu\mapsto S^t_{s,\mu} u(Q,\xi)$ is $\cc^1$ and	
		if $\xi = (q,p,\nu)$ is a critical point of $\xi \mapsto S^t_{s,\mu} u (Q,\xi)$, \[
		\dd_\mu S^t_{s,\mu} u (Q,\xi) = - \int^t_s \dd_\mu H_\mu(\tau,\phi^\tau_s(q,p)) \;d\tau.
		\]		
	\end{enumerate}
\end{prop}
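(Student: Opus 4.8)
\emph{Proof proposal.} The plan is to differentiate $S^t_s u(Q,\xi)$ directly, holding the critical point $\xi=(q,p,\nu)$ fixed, and to use that the dependence of $S^t_s u$ on $t$, $s$ or $\mu$ enters only through a few of the summands $F^{t_{i+1}}_{t_i}(Q_i,p_i)$ of $G^t_s$ in \eqref{Gfam}--\eqref{S}. The one auxiliary fact I would use consists of the following identities for a single generating function $F^t_s$ of the flow: for $0\le t-s\le\delta_1$ and $q$ the unique point with $Q^t_s(q,p)=Q$,
\[
\dd_t F^t_s(Q,p)=-H\bigl(t,Q,P^t_s(q,p)\bigr),\qquad \dd_s F^t_s(Q,p)=H(s,q,p),\qquad \dd_\mu F^t_{s,\mu}(Q,p)=-\int_s^t\dd_\mu H_\mu\bigl(\tau,\phi^\tau_s(q,p)\bigr)\,d\tau.
\]
These sit naturally with Proposition \ref{F} in Appendix \ref{chap}; if one proves them by hand, one writes $F^t_s(Q,p)=\aaa^t_s(\gamma_q)-p\cdot(Q-q)$ with $\gamma_q(\tau)=\phi^\tau_s(q,p)$, and observes that upon differentiation in a parameter the terms produced by the implicit dependence of $q$ cancel, since the first variation of the Hamiltonian action along trajectories with fixed initial point (the computation behind Lemma \ref{class}) gives $\dd_x\bigl(\aaa^t_s(\gamma_x)-p\cdot(Q^t_s(x,p)-x)\bigr)=\bigl(P^t_s(x,p)-p\bigr)\,\dd_x Q^t_s(x,p)$, which is exactly killed by differentiating the constraint $Q^t_s(q,p)=Q$.

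For $(\ref{temps})$, neither $u(q)$ nor $p\cdot(Q-q)$ depends on $t$ or $s$, so one differentiates $G^t_s(p,\nu,Q)$. With the intermediate times $t_1,\dots,t_N$ frozen, the only summand of \eqref{Gfam} involving $t=t_{N+1}$ is $F^t_{t_N}(Q,p_N)$ (recall $Q_N=Q$) and the only one involving $s=t_0$ is $F^{t_1}_s(Q_0,p)$ (recall $p_0=p$). The formulas above then give $\dd_t S^t_s u(Q,\xi)=-H\bigl(t,Q,P^t_{t_N}(\tilde q,p_N)\bigr)$ with $Q^t_{t_N}(\tilde q,p_N)=Q$, and $\dd_s S^t_s u(Q,\xi)=H(s,\tilde q_0,p)$ with $Q^{t_1}_s(\tilde q_0,p)=Q_0$. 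It remains to identify these local points with the global data. At the critical point $\xi$, Proposition \ref{crit} gives $Q^t_s(q,p)=Q$ and $\phi^{t_i}_s(q,p)=(Q_{i-1},p_i)$ for $1\le i\le N$ (with the convention $Q_{-1}=q$); by the flow property $\phi^t_{t_N}(Q_{N-1},p_N)=\phi^t_s(q,p)$, hence $\tilde q=Q_{N-1}$ and $P^t_{t_N}(Q_{N-1},p_N)=P^t_s(q,p)$, while $Q^{t_1}_s(q,p)=Q_0$ forces $\tilde q_0=q$. This establishes both identities.

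For $(\ref{SHdep})$, one subdivision serves the whole family since the threshold $\delta_1$ depends only on the common constant $C$ of Hypothesis \ref{esti}, and $\mu\mapsto S^t_{s,\mu}u(Q,\xi)$ is $\cc^1$ because each $\mu\mapsto F^{t_{i+1}}_{t_i,\mu}(Q_i,p_i)$ is, by the smooth dependence of the flow on $\mu$. Now every summand carries $\mu$, so with the third formula
\[
\dd_\mu S^t_{s,\mu}u(Q,\xi)=\sum_{i=0}^N\dd_\mu F^{t_{i+1}}_{t_i,\mu}(Q_i,p_i)=-\sum_{i=0}^N\int_{t_i}^{t_{i+1}}\dd_\mu H_\mu\bigl(\tau,\phi^\tau_{t_i}(\tilde q_i,p_i)\bigr)\,d\tau,
\]
where $\tilde q_i$ solves $Q^{t_{i+1}}_{t_i}(\tilde q_i,p_i)=Q_i$. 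With $Q_{-1}:=q$ again, Proposition \ref{crit} and the flow property give $\phi^{t_{i+1}}_{t_i}(Q_{i-1},p_i)=\phi^{t_{i+1}}_s(q,p)$, so $\tilde q_i=Q_{i-1}$ and $\phi^\tau_{t_i}(Q_{i-1},p_i)=\phi^\tau_s(q,p)$ on $[t_i,t_{i+1}]$; adding the integrals over the subdivision yields $-\int_s^t\dd_\mu H_\mu(\tau,\phi^\tau_s(q,p))\,d\tau$, which is the claim.

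The only genuinely delicate point is the cancellation inside the auxiliary formulas for $\dd_\bullet F^t_s$: it rests on the first-variation identity of the first paragraph, and is the main obstacle if those formulas are not simply imported from Appendix \ref{chap}. The remaining work — matching each argument $(Q_i,p_i)$ inside $F^{t_{i+1}}_{t_i}$ with the corresponding arc of the global trajectory $\tau\mapsto\phi^\tau_s(q,p)$ through flow composition and Proposition \ref{crit} — is bookkeeping of the same kind as in Lemma \ref{class}.
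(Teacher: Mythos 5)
Your proposal is correct and follows the same route as the paper: the paper's proof is a one-line appeal to Propositions~\ref{F} and \ref{FHdep} for the derivatives of the single-step generating functions $F^{t_{i+1}}_{t_i}$, combined with Proposition~\ref{crit} to identify the arguments $(Q_{i-1},p_i)$ with the nonbroken trajectory $\tau\mapsto\phi^\tau_s(q,p)$. You have simply spelled out the bookkeeping (isolating the summands of \eqref{Gfam} that depend on $s$, $t$, or $\mu$, and using flow composition to rewrite $\phi^\tau_{t_i}(Q_{i-1},p_i)=\phi^\tau_s(q,p)$) that the paper leaves implicit.
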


\begin{proof} We obtain these derivatives using Proposition \ref{F} and \ref{FHdep}, and the fact that a critical point $\xi=(q,p,\nu)$ of the generating family $\xi \mapsto S^t_s u(Q,\xi)$ describes steps of a nonbroken Hamiltonian trajectory  from $(q,p)$ to $(Q,P^t_s(q,p))$ (Proposition \ref{crit}).
\end{proof}

Propositions \ref{crit} and \ref{Sder} imply that if $\xi$ is a critical point of $S^t_s u(Q,\cdot)$, 
the Hamilton-Jacobi equation is satisfied at this one point: $\dd_t S^t_s u(Q,\xi)+H(t,Q,\dd_Q S^t_s u(Q,\xi))=0$. In particular if $(t,Q)\mapsto \xi(t,Q)$ is a differentiable function giving for each $(t,Q)$ a critical point of $S^t_su(Q,\cdot)$, then $(t,Q)\mapsto S^t_s u(Q,\xi(t,Q))$ is a differentiable solution of the Cauchy problem. An idea to build a generalized solution is then to select adequatly critical values of $S^t_s u(Q,\cdot)$, which we are going to do in the next paragraphs.

Until now, we only used the part of Hypothesis \ref{esti} stating that $\|\dd^2_{(q,p)} H\|$ is uniformly bounded. The two next propositions requires the fact that $\|\dd_{(q,p)}H(t,q,p)\|\leq C(1+\|p\|)$. The first one states that if $H$ is nearly quadratic at infinity, so is $\xi \mapsto S^t_s u (Q,\xi)$, and the second one allows to localize the critical points of $S^t_s u$.

\begin{prop}\label{Squad} Let $Z$ be a (possibly degenerate) quadratic form on $\rr^d$. If both
	 $H$ and $(t,q,p)\mapsto Z(p)$ satisfy Hypothesis \ref{esti} with the same constant $C$, and $H(t,q,p)=Z(p)$ for all $\|p\|\geq R$, then $S^t_s u(Q,\xi)=\zz(\xi)+\ell(Q,\xi)$, where $\xi\mapsto \ell(Q,\xi)$ is a Lipschitz function with constant $\|Q\|+{\rm Lip}(u)+4(1+R)$ and $\zz$ is the nondegenerate quadratic form with associated matrix\begin{displaymath}
	 \frac{1}{2}\left(\begin{array}{c c c c c | c c c c c }
	 2\tau_0 Z & 0 & 0  & \cdots & 0                  & -{\rm Id}  &  {\rm Id}   & 0      & \cdots   & 0 \\
	 0 & 2\tau_1 Z & 0  & \cdots & 0                & 0      & - {\rm Id}  &  {\rm Id}   &  \ddots  & \vdots \\
	 0 & 0 & 2\tau_2 Z &  \ddots & 0                & 0      & 0      & \ddots & \ddots   & 0\\
	 \vdots & \vdots & \ddots & \ddots & \vdots           & \vdots & \vdots & \ddots & - {\rm Id}    & {\rm Id}\\
	 0 & 0 & \cdots & 0 & 2\tau_N Z            & 0      &0       &\cdots  &0         & - {\rm Id}\\ \hline
	 -{\rm Id} & 0 & 0 & \cdots & 0 & 0 & &\cdots & &0 \\
	 {\rm Id} & - {\rm Id}& 0 & \cdots & 0 &  & & & &  \\
	 0 &  {\rm Id}& \ddots & \ddots & \vdots & \vdots  & & 0 & & \vdots\\
	 \vdots & \ddots & \ddots & - {\rm Id}& 0 & & & & &  \\
	 0 & \cdots & 0 &  {\rm Id}& -{\rm Id} & 0 & &\cdots & &0 
	 \end{array}\right)
	 \end{displaymath}
	 when written in the basis $(p,p_1,\cdots,p_N,q,Q_0,\cdots, Q_{N-1})$, where $\tau_i=t_{i+1}-t_i$.
\end{prop}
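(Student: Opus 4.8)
The plan is to analyze the quadratic part of $S^t_s u$ term by term and peel off everything that is Lipschitz in $\xi$, leaving exactly the quadratic form $\zz$ described by the matrix. Recall from \eqref{S} that $S^t_s u(Q,\xi) = u(q) + G^t_s(p,\nu,Q) + p\cdot(Q-q)$, and from \eqref{Gfam} that $G^t_s$ is built from the functions $F^{t_{i+1}}_{t_i}(Q_i,p_i)$ and bilinear boundary terms $p_{i+1}\cdot(Q_{i+1}-Q_i)$. The term $u(q)$ is $\mathrm{Lip}(u)$-Lipschitz, and the term $p\cdot(Q-q) = p\cdot Q - p\cdot q$ contributes a bilinear piece $-p\cdot q$ (which, together with the $-\mathrm{Id}$ blocks coupling $p$ and $q$ in the matrix, is part of $\zz$) plus a piece $p\cdot Q$ that is $\|Q\|$-Lipschitz in $\xi$. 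Likewise each boundary term $p_{i+1}\cdot(Q_{i+1}-Q_i)$ is exactly bilinear, hence a clean contribution to $\zz$; one just has to check the bookkeeping with indices modulo $N+1$ (so that $p_{N+1}=p_0=p$ and $Q_N=Q$, the latter again producing a Lipschitz-in-$\xi$ term rather than a quadratic one), which accounts precisely for the off-diagonal $\pm\mathrm{Id}$ pattern in the matrix written in the basis $(p,p_1,\dots,p_N,q,Q_0,\dots,Q_{N-1})$.

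The heart of the argument is the structure of $F^{t_{i+1}}_{t_i}(Q_i,p_i)$ under the hypothesis $H(t,q,p)=Z(p)$ for $\|p\|\geq R$. When $H(t,q,p)=Z(p)$ identically, formula \eqref{Gquad} shows $F^{t_{i+1}}_{t_i}(Q_i,p_i) = -(t_{i+1}-t_i)Z(p_i) = -\tau_i Z(p_i)$, which is exactly the diagonal quadratic block $\tau_i Z$ (up to the factor $\frac12$ and sign conventions in how $\zz$ is read off its matrix). So the plan is to write $F^{t_{i+1}}_{t_i}(Q_i,p_i) = -\tau_i Z(p_i) + r_i(Q_i,p_i)$ and prove that the remainder $r_i$ is Lipschitz in $(Q_i,p_i)$ with a controlled constant. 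For this I would use Proposition~\ref{F} (the generating-function identities $\dd_p F = q-Q$, $\dd_Q F = P-p$) to compute $\dd F^{t_{i+1}}_{t_i}$ explicitly: since $Q = Q^{t_{i+1}}_{t_i}(q,p)$ and $\phi^{t_{i+1}}_{t_i}-\mathrm{id}$ is $\tfrac12$-Lipschitz (Proposition~\ref{gron}, valid because $t_{i+1}-t_i\leq \delta_1$), the map $q\mapsto Q$ differs from the identity by a $\tfrac12$-Lipschitz term and hence $q-Q$ is bounded in norm by something like $\tfrac12(1+\|p\|)$-type estimates coming from $\|\dd_{(q,p)}H\|\leq C(1+\|p\|)$. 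The key point is that when $\|p\|\geq R$ the dynamics is linear (the flow of $\dot q = \dd Z(p)$, $\dot p = 0$), so $F^{t_{i+1}}_{t_i}(Q_i,p_i) + \tau_i Z(p_i)$ has $\xi$-derivative that stays bounded: for $\|p\|\geq R$ the derivative of the remainder vanishes, and for $\|p\|\leq R$ one bounds it using Hypothesis~\ref{esti} with the common constant $C$ together with $R$. Tracking the constants carefully — each $F$-term contributes a Lipschitz constant of order $(1+R)$, and there are $N+1$ of them but the estimate is meant to be uniform — gives the claimed bound $\|Q\|+\mathrm{Lip}(u)+4(1+R)$ for $\mathrm{Lip}_\xi \ell$.

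Finally, I would verify nondegeneracy of $\zz$. The matrix has the block form $\begin{pmatrix} A & B \\ B^\top & 0\end{pmatrix}$ with $B$ the bidiagonal $\pm\mathrm{Id}$ block coupling the $p$-variables $(p,p_1,\dots,p_N)$ to the $q$-variables $(q,Q_0,\dots,Q_{N-1})$; this $B$ is (up to sign) lower-triangular with $-\mathrm{Id}$ on the diagonal, hence invertible, and a block matrix $\begin{pmatrix} A & B \\ B^\top & 0\end{pmatrix}$ with $B$ square and invertible is always nonsingular (its determinant is $\pm(\det B)^2$, independent of $A$). So $\zz$ is a nondegenerate quadratic form regardless of whether $Z$ is degenerate, which is exactly what the statement asserts.

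The main obstacle I expect is the constant-tracking in the second paragraph: getting the Lipschitz remainder from each $F^{t_{i+1}}_{t_i}$ to be bounded by a fixed multiple of $(1+R)$ \emph{independently of $N$ and of the subdivision}, since a naive triangle-inequality bound would accumulate a factor $N$. The resolution must use that the boundary terms $p_{i+1}\cdot(Q_{i+1}-Q_i)$ telescope against the $Q$-dependence of the $F$-terms, so that after rearrangement only the genuinely quadratic pieces survive and the leftover Lipschitz part does not grow with $N$ — this is the "smart arrangement" of the broken-geodesics construction alluded to after \eqref{Gquad}. I would organize the computation so that this cancellation is visible before estimating anything.
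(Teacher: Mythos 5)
Your plan follows the same route as the paper: set $\tilde H(t,q,p)=Z(p)$, observe (via \eqref{Gquad}) that its generating family $\tilde S^t_s u$ is explicitly $u(q)+\zz(\xi)+p_N\cdot Q$ with $\zz$ the displayed quadratic form, and then show that $S^t_s u-\tilde S^t_s u=G^t_s-\tilde G^t_s$ is Lipschitz. The paper does the last step by citing Proposition~\ref{lipfam} with $K=2C(1+R)$; your $F$-by-$F$ decomposition $F^{t_{i+1}}_{t_i}=-\tau_iZ(p_i)+r_i$ is exactly the computation carried out inside that proposition, so you would be re-deriving it rather than taking a different path. Your nondegeneracy argument (the coupling block $B$ is triangular with $\pm\mathrm{Id}$ on the diagonal, hence the full matrix has determinant $\pm(\det B)^2\neq 0$) is also what the paper means by ``the associated matrix is invertible.''

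Where the proposal goes wrong is the diagnosis and cure of the $N$-accumulation worry. You suggest that ``the boundary terms $p_{i+1}\cdot(Q_{i+1}-Q_i)$ telescope against the $Q$-dependence of the $F$-terms'' and that this cancellation is what keeps the Lipschitz constant bounded. That is not what happens: the bilinear boundary terms do not depend on the Hamiltonian at all, so in the difference $G^t_s-\tilde G^t_s$ they simply cancel \emph{exactly}, leaving $\sum_i r_i(Q_i,p_i)$ with $r_i=F^{t_{i+1}}_{t_i}-\tilde F^{t_{i+1}}_{t_i}$ --- there is nothing to telescope against. The mechanism that controls the constant is rather that (i) each $r_i$ depends only on the disjoint pair $(Q_i,p_i)$, so the gradient of the sum is block-structured and its norm is a Pythagorean, not additive, combination $\bigl(\sum_i\|\nabla r_i\|^2\bigr)^{1/2}$; and (ii) by Proposition~\ref{gron} and Lemma~\ref{lipliplip} (used inside \ref{lipfam}) each $\|\nabla r_i\|$ is bounded by $4\tfrac{K}{C}(e^{C\tau_i}-1)$, which scales with the step $\tau_i$ rather than being an $O(1)$ constant. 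This per-step smallness, combined with the block structure, is what keeps $\mathrm{Lip}(G-\tilde G)$ bounded independently of $N$. Your write-up as drafted would set up the computation ``so that the cancellation is visible'' and then look for a cancellation that is not there; you should instead isolate the disjoint-variables block structure and the $\tau_i$-scaling of each $r_i$.
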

\begin{proof} Let us denote $\tilde{H}(t,q,p)=Z(p)$, and apply Proposition \ref{lipfam}, noticing that since $H=\tilde{H}$ for $\|p\|\geq R$, $\|d_{q,p}(H-\tilde{H})(t,q,p)\|\leq 2C(1+\|p\|)\leq 2C(1+R)$. It gives that a subdivision can be chosen for both $H$ and $\tilde{H}$ and that $\tilde{G}^t_s- G^t_s $ is then $4(1+R)$-Lipschitz.
	
For $\tilde{H}$, it directly follows from \eqref{Gquad} that $\tilde{S}^t_s u(Q,q,p,\nu)= u(q)+\zz(\xi)+p_N\cdot Q$. The quadratic form $\zz$ is nondegenerate as the associated matrix is invertible.
	
Since $\xi\mapsto \tilde{S}^t_s u(Q,\xi)-{S}^t_s u(Q,\xi)= \tilde{G}^t_s (Q,p,\nu)- G^t_s u(Q,p,\nu)$, it is $4(1+R)$-Lipschitz, which proves the point.
\end{proof}

\begin{prop}\label{critloc} Let $H$ be a Hamiltonian satisfying Hypothesis \ref{esti} with constant $C$, $u$ be a $\cc^1$ $L$-Lipschitz function, $s< t$ and $Q$ be in $\rr^d$. If $\xi=(q,p,\nu)$ is a critical point of $\xi \mapsto S^t_s u (Q,\xi)$, then for all $\tau$ in $[s,t]$, \[\phi^\tau_s(q,p) \in B\!\left(Q,(e^{C(t-s)}-1)(1+L)\right)\times  B\!\left(0,e^{C(t-s)}(1+L)-1\right),\]
where $B(x,r)$ denotes the open ball of radius $r$ centered on $x$.	

As a consequence, if $H$ and $\tilde{H}$ are two Hamiltonians satisfying Hypothesis \ref{esti} with constant $C$ and coinciding on $[s,t]\times B\!\left(Q,(e^{C(t-s)}-1)(1+L)\right)\times  B\!\left(0,e^{C(t-s)}(1+L)-1\right)$, the functions $\xi \mapsto S^t_{s,H} u (Q,\xi)$ and $\xi \mapsto S^t_{s,\tilde{H}} u (Q,\xi)$ have the same critical points and the same associated critical values.
\end{prop}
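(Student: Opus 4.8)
The plan is to reduce everything to Proposition \ref{crit} and then run Grönwall-type estimates driven by Hypothesis \ref{esti}. By Proposition \ref{crit}, a critical point $\xi=(q,p,\nu)$ of $\xi\mapsto S^t_s u(Q,\xi)$ satisfies $p=du(q)$, hence $\|p\|\leq L$ since $u$ is $L$-Lipschitz, and the curve $(q(\tau),p(\tau)):=\phi^\tau_s(q,p)$ is a Hamiltonian trajectory with $q(t)=Q^t_s(q,p)=Q$ and $\|p(s)\|\leq L$. So the statement becomes a purely dynamical estimate for a trajectory pinned (in norm) at $p(s)$ and (exactly) at $q(t)$, and it suffices to prove it for $\tau\mapsto\phi^\tau_s(q,p)$.

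First I would bound the momentum. Since $\dot p(\tau)=-\dd_q H(\tau,q(\tau),p(\tau))$, Hypothesis \ref{esti} gives $\|\dot p(\tau)\|< C(1+\|p(\tau)\|)$, so the function $g(\tau)=1+\|p(\tau)\|$ satisfies $g(\tau)\leq g(s)+C\int_s^\tau g$, and Grönwall's inequality yields $1+\|p(\tau)\|\leq(1+L)e^{C(\tau-s)}\leq(1+L)e^{C(t-s)}$, i.e. $\|p(\tau)\|< e^{C(t-s)}(1+L)-1$ (strict for $\tau>s$ thanks to the strict bound in Hypothesis \ref{esti}, and for $\tau=s$ because $L< e^{C(t-s)}(1+L)-1$ whenever $t>s$). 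For the position I would integrate \emph{backward} from $t$: using $\dot q(\tau)=\dd_p H(\tau,q(\tau),p(\tau))$ and the momentum bound, $\|\dot q(\sigma)\|< C(1+L)e^{C(\sigma-s)}$, so for $\tau\in[s,t]$,
\[
\|q(\tau)-Q\|=\Bigl\|\int_\tau^t\dot q(\sigma)\,d\sigma\Bigr\|\leq\int_\tau^t C(1+L)e^{C(\sigma-s)}\,d\sigma=(1+L)\bigl(e^{C(t-s)}-e^{C(\tau-s)}\bigr)<(1+L)\bigl(e^{C(t-s)}-1\bigr),
\]
which is exactly the first coordinate of the announced ball. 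Together these two estimates give the claimed inclusion.

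For the ``as a consequence'' part, note first that since $\delta_1$ in \S\ref{chapgen} depends only on $C$, the same subdivision $(t_i)$ can be used to build $S^t_{s,H}u$ and $S^t_{s,\tilde H}u$, so both are functions on the same space. If $\xi=(q,p,\nu)$ is critical for $S^t_{s,H}u(Q,\cdot)$, the first part shows the trajectory $\tau\mapsto\phi^\tau_s(q,p)$ of $H$ stays in the region where $H=\tilde H$; there the Hamiltonian vector fields of $H$ and $\tilde H$ coincide, so by uniqueness of solutions of \eqref{HS} this curve is also the Hamiltonian trajectory of $\tilde H$ issued from $(q,p)$. Hence $\tilde Q^t_s(q,p)=Q$ and $\tilde\phi^{t_i}_s(q,p)=(Q_{i-1},p_i)$ for all $i$, and Proposition \ref{crit} applied to $\tilde H$ shows $\xi$ is a critical point of $S^t_{s,\tilde H}u(Q,\cdot)$ as well; applying the first part to $\tilde H$ (which also satisfies Hypothesis \ref{esti} with constant $C$) gives the reverse inclusion, so the two critical sets coincide. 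Finally the critical value at $\xi$ equals $u(q)+\aaa^t_s(\gamma)=u(q)+\int_s^t p(\tau)\cdot\dot q(\tau)-H(\tau,q(\tau),p(\tau))\,d\tau$ by Proposition \ref{crit}, and since $H=\tilde H$ along $\gamma$ this is the same number computed with $\tilde H$.

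I do not expect a serious obstacle here: the content is Grönwall plus uniqueness for ODEs. The only points that require care are integrating the position estimate \emph{backward} from $t$ (where the trajectory is pinned) rather than forward, so as to land on the sharp constant $(e^{C(t-s)}-1)(1+L)$ rather than something larger, and, in the consequence, insisting on a common subdivision so that the two generating families literally share critical points (and critical values) rather than being merely in bijection.
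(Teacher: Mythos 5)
Your proof is correct and follows essentially the same path as the paper's: Proposition \ref{crit} localizes $p=du(q)$ in $\bar{B}(0,L)$, a forward Grönwall estimate on $1+\|p(\tau)\|$ gives the momentum bound, and the position bound follows by integrating the velocity bound over $[\tau,t]$ (the paper packages this last step as an application of a trajectory estimate Lemma \ref{trajesti} between $\tau$ and $t$, which produces the identical quantity $(1+L)(e^{C(t-s)}-e^{C(\tau-s)})$). The second half is argued identically via uniqueness of ODE solutions; your observation that a common subdivision must be used — so that the two generating families share the same domain and hence literally the same critical points — is a point the paper leaves implicit.
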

\begin{proof}
We need to quantify the maximal distance covered by Hamiltonian trajectories. Hypothesis \ref{esti} gives an estimate which is uniform with respect to the initial position $q$:
\begin{lem}\label{trajesti} 
	If $H$ satisfies Hypothesis \ref{esti} with constant $C$, then for each $(q,p)$, $s\leq t$, \[
	\|P^t_s(q,p)-p\| < (1+\|p\|)(e^{C(t-s)} -1), \; \; \|Q^t_s(q,p)-q\|< (1+\|p\|)(e^{C(t-s)} -1).
	\]
	In other words, $\phi^t_s(q,p)$ belongs to $B(q,(1+\|p\|)(e^{C(t-s)} -1))\times B(p,(1+\|p\|)(e^{C(t-s)} -1))$.	
\end{lem}
\begin{proof}
	The Hamiltonian system gives that $\|P^t_s(q,p)-p\|\leq  \int^t_s \|\dd_q H(\tau,\phi^\tau_s(q,p))\|d\tau$ and using the hypothesis, we get \begin{equation}\label{eq1}
	\|P^t_s(q,p)-p\|<  C\int^t_s\left(1+ \|P^\tau_s(q,p)\|\right)d\tau\leq C\int^t_s(\|P^\tau_s(q,p)-p\|+1+\|p\|) \;d\tau.
	\end{equation}
	Lemma \ref{gronlem} applied to $f(t)=\|P^t_s(q,p)-p\|$ with $K=C(1+\|p\|)$ gives the first estimate. Since $\|Q^t_s(q,p)-q\|$ is bounded by the same inequality \eqref{eq1}, it is easy to check the second one.
\end{proof}
Now, if $\xi=(q,p,\nu)$ is a critical point, Proposition \ref{crit} states that $p=du(q)$, whence $\|p\|\leq L$. Lemma \ref{trajesti} hence implies that for all $s\leq \tau\leq t$,
\[\|P^\tau_s(q,p)\| \leq \|p\|+ (1+\|p\|)(e^{C(\tau-s)} -1) \leq e^{C(\tau-s)}(1+L)-1.\]
Now using Lemma \ref{trajesti} between $\tau$ and $t$ gives, since $Q=Q^t_\tau(Q^\tau_s(q,p),P^\tau_s(q,p))$:
\[\|Q-Q^\tau_s(q,p) \| \leq (1+\|P^\tau_s(q,p)\|)(e^{C(t-\tau)}-1),\]
and since $1+\|P^\tau_s(q,p)\| \leq e^{C(\tau-s)}(1+L)$, we get
\[\|Q-Q^\tau_s(q,p) \| \leq (1+L)(e^{C(t-s)}-e^{C(\tau-s)})\leq (1+L)(e^{C(t-s)}-1).\]

To prove the second statement, let us recall that if $\tilde{\phi}^t_s=(\tilde{Q}^t_s,\tilde{P}^t_s)$ denotes the Hamiltonian flow for $\tilde{H}$, Proposition \ref{crit} states that $\xi=(q,p,Q_0,p_1,\cdots,p_N)$ is a critical point of $\xi \mapsto S^t_{s,H} u(Q,\xi)$ (resp. of $\xi \mapsto S^t_{s,\tilde{H}} u(Q,\xi)$) if and only if
\[\left\{ \begin{array}{l}
	p=du(q),\\
	Q^t_s(q,p)=Q, \,(\textrm{resp. } \tilde{Q}^t_s(q,p)=Q,)\\
	(Q_{i-1},p_i)=\phi^{t_i}_s(q,p) \,(\textrm{resp. } (Q_{i-1},p_i)=\tilde{\phi}^{t_i}_s(q,p)) \;\, \forall\, 1\leq i\leq N.\end{array}\right.
\]
But if $\xi$ is a critical point of $\xi \mapsto S^t_{s,H} u(Q,\xi)$, the previous work shows that the trajectory $\gamma(\tau)=\phi^\tau_s(q,p)$ stays in $B\!\left(Q,(e^{C(t-s)}-1)(1+L)\right)\times  B\!\left(0,e^{C(t-s)}(1+L)-1\right)$. It is hence a Hamiltonian trajectory both for $H$ and $\tilde{H}$ and $\tilde{\phi}^\tau_s(q,p)=\phi^\tau_s(q,p)$ for all $s \leq \tau \leq t$, which hence shows that $\xi$ is a critical point of  $\xi \mapsto S^t_{s,\tilde{H}} u(Q,\xi)$. The associated critical value $u(q)+\aaa^t_s(\gamma)$ is also the same for $H$ and $\tilde{H}$ since $\gamma$ stays in the set where $H$ and $\tilde{H}$ coincide.
\end{proof}
\begin{rem}\label{estint}
	If $H(p)$ is an integrable Hamiltonian satisfying Hypothesis \ref{esti} with constant $C$, then for each $(q,p)$, $s\leq t$, $P^t_s(q,p)=p$ and Lemma \ref{trajesti} may be improved: \[\|Q^t_s(q,p)-q\|< C(t-s)(1+\|p\|).\] As a consequence, if $u$ is a $\cc^1$ $L$-Lipschitz function, $s< t$ and $Q$ is in $\rr^d$, and $\xi=(q,p,\nu)$ is a critical point of $\xi \mapsto S^t_s u (Q,\xi)$, then for all $\tau$ in $[s,t]$, \[\phi^\tau_s(q,p) \in B\!\left(Q,C(t-s)(1+L)\right)\times  B\!\left(0,L\right).\]
\end{rem}

\subsection{Critical value selector}\label{sectcritval}
Let us denote by $\mathcal{Q}_m$ the set of functions on $\rr^m$ that can be written as the sum of a nondegenerate quadratic form and of a Lipschitz function.

\begin{prop} \label{mmax} There exists a function $\sigma:\bigcup_{m\in \nn}\mathcal{Q}_m\to \rr$ that satisfies:
	\begin{enumerate}
		\item \label{critval} if $f$ is $\cc^1$, then $\sigma(f)$ is a critical value of $f$,
		\item \label{add} if $c$ is a real constant, then $\sigma(c+f)=c+\sigma(f)$,
		\item \label{transl} if $\phi$ is a Lipschitz $\cc^\infty$-diffeomorphism of $\rr^m$ such that $f\circ \phi$ is in $\mathcal{Q}_m$, then \[\sigma(f \circ \phi)=\sigma(f),\]	
		\item \label{monmin} if $f_0-f_1$ is Lipschitz and $f_0 \leq f_1$ on $\rr^d$, then $\sigma(f_0)\leq \sigma(f_1)$,
		\item \label{loc} if $(f_\mu)_{\mu\in[s,t]}$ is a $\cc^1$ family of $\mathcal{Q}_m$ with $(\zz-f_\mu)_\mu$ equi-Lipschitz for some nondegenerate quadratic form $\zz$, then for all $\mu \neq \tilde{\mu}\in [s,t]$,
\[ \min_{\mu \in [s,t]} \min_{x \in Crit(f_\mu)} \dd_\mu f_\mu(x) \leq  \frac{\sigma(f_{\tilde{\mu}})-\sigma(f_\mu)}{\tilde{\mu}-\mu}\leq \max_{\mu \in [s,t]} \max_{x \in Crit(f_\mu)}  \dd_\mu f_\mu(x). \] 
		\item \label{stab} if $g(x,\eta)=f(x)+\zz(\eta)$ where $f$ is in $\mathcal{Q}_m$ and $\zz$ is a nondegenerate quadratic form, then $\sigma(g)=\sigma(f)$.
	\end{enumerate}
	We call such an object a \emph{critical value selector}.
\end{prop}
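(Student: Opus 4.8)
The plan is to construct $\sigma$ by minimax on the sublevel sets of functions in $\mathcal{Q}_m$, following the classical Lyusternik--Schnirelmann and Viterbo-type approach, and then to verify the six axioms one by one. First I would fix a function $f = \zz + \ell$ in $\mathcal{Q}_m$, where $\zz$ is a nondegenerate quadratic form and $\ell$ is Lipschitz. Since $\ell$ is bounded-to-linear and $\zz$ is nondegenerate, the pair $(f^a, f^b)$ of sublevel sets $f^a = \{x : f(x) \le a\}$ for $a \ll 0 \ll b$ has the homotopy type dictated by the negative eigenspace of $\zz$: more precisely $f^b$ is a deformation retract of $\rr^m$ (hence contractible, since $f$ has no critical value above some threshold by a Palais--Smale-type argument using the Lipschitz bound on $\ell$), and $H^*(\rr^m, f^a)$ is one-dimensional, generated in degree $\mathrm{ind}(\zz)$ equal to the Morse index of $\zz$. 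Let $\alpha$ be the generator of this relative cohomology group (with, say, $\mathbb{Z}/2$ coefficients). I would then set
\[
\sigma(f) = \inf\{c \in \rr : \alpha \text{ lies in the image of } H^*(\rr^m, f^c) \to H^*(\rr^m, f^a)\}.
\]
The standard deformation argument — pushing down along the gradient-like flow of $f$ across a regular level — shows $\sigma(f)$ is finite and is a critical value when $f$ is $\cc^1$, giving axiom \eqref{critval}; this is where the two deformation lemmas from Appendix \ref{deformation} enter, to handle the fact that $f$ is only Lipschitz-perturbed and not everywhere smooth, and to guarantee that one can deform without crossing $\sigma(f)$.

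The remaining axioms are then checked by exploiting naturality of this minimax. Axiom \eqref{add} (additivity under constants) is immediate since $(c+f)^{a} = f^{a-c}$, so the selected value simply shifts by $c$. Axiom \eqref{transl} (invariance under Lipschitz diffeomorphisms $\phi$) follows because $\phi$ induces an isomorphism on each sublevel-set pair carrying the canonical generator to the canonical generator — here one must check that $f\circ\phi$ and $f$ have the \emph{same} index data, which holds because $\phi$ is a homeomorphism of $\rr^m$ isotopic to the identity through the linear structure at infinity (the quadratic part is unchanged up to a coordinate change that does not alter the Morse index). Axiom \eqref{monmin} (monotonicity) is the order-preserving property of infima of nested families: if $f_0 \le f_1$ then $f_0^c \supseteq f_1^c$, so the inclusion-induced map factors and $\sigma(f_0) \le \sigma(f_1)$; one only needs $f_0 - f_1$ Lipschitz so that both functions lie in $\mathcal{Q}_m$ with comparable sublevel topology. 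Axiom \eqref{stab} (stabilization by a nondegenerate quadratic form $\zz$) is the Thom-isomorphism / suspension statement: $(f\oplus\zz)^c$ retracts fiberwise onto $f^c$ across the negative eigenspace of $\zz$, and the generator of the stabilized pair is the exterior product of $\alpha$ with the Thom class, so the selected value is unchanged.

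The main obstacle, and the step requiring genuine work, is axiom \eqref{loc}: the quantitative derivative bound on $\mu \mapsto \sigma(f_\mu)$. The idea is that for a $\cc^1$ family $(f_\mu)$ with $(\zz - f_\mu)_\mu$ equi-Lipschitz, the minimax value $\sigma(f_\mu)$ is Lipschitz in $\mu$, and at a point of differentiability its derivative is realized by $\dd_\mu f_\mu$ evaluated at a critical point of $f_\mu$ sitting at the critical level $\sigma(f_\mu)$ — this is the minimax analogue of the envelope theorem. Concretely I would show that for $\tilde\mu > \mu$, the sublevel set $f_{\tilde\mu}^{\,c}$ contains $f_\mu^{\,c - (\tilde\mu-\mu)\max \dd_\mu f_\mu}$ and is contained in $f_\mu^{\,c - (\tilde\mu-\mu)\min \dd_\mu f_\mu}$ (integrating the fundamental theorem of calculus in $\mu$ along the family), then apply monotonicity of $\sigma$ with respect to inclusion of sublevel sets and additivity under constants to sandwich $\sigma(f_{\tilde\mu})$ between $\sigma(f_\mu) + (\tilde\mu-\mu)\min \dd_\mu f_\mu$ and $\sigma(f_\mu) + (\tilde\mu-\mu)\max \dd_\mu f_\mu$. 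Dividing by $\tilde\mu - \mu$ gives the claimed two-sided bound. The delicate point is that the max and min of $\dd_\mu f_\mu$ over $\mathrm{Crit}(f_\mu)$ must be finite and must control the sublevel-set shift uniformly; this is where the equi-Lipschitz hypothesis on $(\zz - f_\mu)_\mu$ is used, to confine critical points to a compact set uniformly in $\mu$ and to ensure the deformation lemmas apply with $\mu$-independent constants. Finally, the existence of $\sigma$ as an honest function on $\bigcup_m \mathcal{Q}_m$ — rather than a $\mathbb{Z}/2$-valued or set-valued gadget — is guaranteed because the canonical generator $\alpha$ is uniquely determined (the relative cohomology is one-dimensional), so no choice is involved once the coefficient field and the orientation convention for $\zz$ are fixed.
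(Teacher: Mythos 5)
Your minimax construction is essentially the paper's (a cohomological minmax on sublevel-set pairs, verified via the deformation lemmas), modulo a sign slip in the definition: with the restriction map $H^*(\rr^m, f^c) \to H^*(\rr^m, f^a)$ for $a < c$, the set $\{c : \alpha \text{ lies in the image}\}$ contains $c = a$ (where the map is the identity) and excludes large $c$ (where the source vanishes), so it is downward closed and you want the supremum, not the infimum. The paper sidesteps this by fixing both ends at $\pm c$ and moving the middle level $a$, taking $\sigma(f) = \inf\{a : i^{c\star}_a \neq 0\}$ for $i^{c\star}_a : H^\bullet(f^c, f^{-c}) \to H^\bullet(f^a, f^{-c})$, which is an upward-closed set.

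The substantive gap is in axiom \eqref{loc}. Integrating $\dd_\mu f_\mu$ in $\mu$ and comparing sublevel sets by monotonicity and additivity gives a bound in terms of $\sup_{x\in\rr^m} \dd_\mu f_\mu(x)$, not the claimed maximum over $Crit(f_\mu)$. The equi-Lipschitz hypothesis on $(\zz - f_\mu)_\mu$ bounds $\dd_x f_\mu$, not $\dd_\mu f_\mu$; the latter can be unbounded in $x$ (take $f_\mu = \zz + \mu g$ with $g$ Lipschitz unbounded), so your sandwich bound may be $+\infty$, and in any case it is weaker than the assertion. What the argument actually requires is a \emph{localized} monotonicity: if $f_0 \geq f_1$ holds only on a neighborhood $U$ of the critical set $\CC_{f_0,f_1}$ of the homotopy $f_t = (1-t)f_0 + t f_1$, then still $\sigma(f_0) \geq \sigma(f_1)$. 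The paper establishes this (Proposition \ref{mon}) via a deformation lemma (Lemma \ref{deff}) that builds a diffeomorphism $\Psi$ with $\Psi(f_0^a) \subset f_1^a$ for all $a$, by flowing along a vector field intertwining the level sets of $f_0$ and $f_1$ outside a compact neighborhood of $\CC_{f_0,f_1}$; only then do a compactness and modulus-of-continuity argument (Proposition \ref{locloc}) yield the two-sided derivative estimate over critical points. Separately, the cohomological definition requires smoothness for the deformation arguments: the paper first defines $\sigma$ on $\cc^\infty$ functions in $\mathcal{Q}_m$ and extends to $\cc^1$ and then to merely Lipschitz-plus-quadratic functions by uniform approximation, using $|\sigma(f)-\sigma(g)| \leq \|f-g\|_\infty$ (from additivity plus monotonicity). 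You should make that density step explicit rather than folding it into the deformation lemmas.
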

Such a critical value selector, named \emph{minmax}, was introduced by Chaperon in 1991, see \cite{chap2}. Its construction and properties are detailed in Appendix \ref{minmax}, which proves Proposition \ref{mmax}. The uniqueness of such a selector is not guaranteed, see \cite{wei}.

\begin{rem} Additional assumptions, which are satisfied by the minmax, will be made on the critical value selector (see Proposition \ref{mmaxbis}) in order to prove Joukovskaia's theorem. They are not needed to prove Theorems \ref{lip} and \ref{iteratedvisc}, so we choose not to require them until then. 
\end{rem}	
	
\begin{rem}
Properties \ref{mmax}-\eqref{add}, \ref{mmax}-\eqref{transl} and \ref{mmax}-\eqref{stab} coupled with Viterbo's uniqueness theorem on generating functions (see \cite{viterbouniq} and \cite{theret}) imply that the variational operator we are going to obtain does not depend on the choice of generating family. See Remark \ref{discvit} for more details. Property \ref{mmax}-\eqref{transl} implies in particular that $\sigma(f\circ \tau)=\sigma(f)$ for each affine transformation $\tau$ of $\rr^d$, which would be sufficient to prove Theorems \ref{lip} and \ref{iteratedvisc}.
\end{rem}

Let us fix a critical value selector $\sigma$ for the rest of the discussion. We gather here three consequences of the properties of the critical value selector.
\begin{csq}\label{conti} If $f$ and $g$ are two functions of $\mathcal{Q}_m$ with difference bounded and Lipschitz on $\rr^m$, then\begin{displaymath}
	|\sigma(f)-\sigma(g)| \leq \|f-g\|_\infty. \end{displaymath}
	This is obtained by combining \ref{mmax}-\eqref{monmin} and \ref{mmax}-\eqref{add}.
\end{csq}
\begin{csq}\label{coercmin}
	If $f$ is a coercive function of $\mathcal{Q}_m$, then $\sigma(f)=\min(f)$.
\end{csq}
\begin{proof}
	Since $f$ is in $\mathcal{Q}_m$, there exist a nondegenerate quadratic form $\zz$ and an $L$-Lipschitz function $\ell$ on $\rr^m$ such that $f=\zz+\ell$. 	Since $f$ is coercive, it attains a global minimum at some point $x_0$, and necessarily $\zz$ is coercive, hence convex. Without loss of generality, we assume that $x_0=0$.
	
	We are going to use the following regularization of the norm: for each $\ep>0$, the function $x \mapsto \|x\|+ \ep e^{-\|x\|/\ep}$ is $\cc^1$, strictly convex, $1$-Lipschitz and attains its global minimum $\ep$ at $0$ which is its only critical point.
	
	We have necessarily $\sigma(f)\geq \min(f)=f(0)$ (if $f$ is $\cc^1$, this is true because $\sigma(f)$ is a critical value of $f$ - see Proposition \ref{mmax}-\eqref{critval} - and we get the result for a general $f$ by continuity - see Consequence \ref{conti}). Let us prove the other inequality.
	For each $x$, \[f(x)=\zz(x)+\ell(x) \leq \zz(x) + \ell(0)+ L\|x\| \leq \zz(x)+\ell(0)+ L\left(\|x\|+ \ep e^{-\|x\|/\ep}\right).\]
	The function $x \mapsto \zz(x)+ \ell(0)+L\left(\|x\|+ \ep e^{-\|x\|/\ep}\right)$ is convex as a sum of convex functions and admits $0$ as a critical point, hence its only critical value is $ \ell(0)+\ep$. Since the difference with $f$ is $2L$-Lipschitz, we may apply the Monotonicity property (Proposition \ref{mmax}-\eqref{monmin}) which gives $\sigma(f)\leq \ell(0)+\ep=f(0)+\ep$. Letting $\ep$ tend to $0$ gives the wanted inequality.
\end{proof}

\begin{csq} \label{mmaxquad}
	If $f_\mu=\zz_\mu + \ell_\mu$ is a $\cc^1$ family of $\qq_m$ with $\ell_\mu$ equi-Lipschitz, such that the set of critical points $f_\mu$ does not depend on $\mu$ and such that $\mu\mapsto f_\mu$ is constant on this set, then $\mu \mapsto \sigma(f_\mu)$ is constant.
\end{csq}
\begin{proof} Let us take $\mu$ in some bounded set $[s,t]$.
	Since $\mu \mapsto \zz_\mu$ is $\cc^1$ and $\zz_\mu$ is non degenerate for all $\mu$, the index of $\zz_\mu$ does not depend on $\mu$ and for all $\mu$ there exists a linear isomorphism $\phi_\mu:\rr^m \to \rr^m$ such that $\zz_\mu\circ \phi_\mu=\zz_s$, and $\mu \mapsto \phi_\mu$ is $\cc^1$. Let us define $\tilde{f}_\mu=f_\mu \circ \phi_\mu=\zz_s + \ell_\mu\circ \phi_\mu$ and observe that $\tilde{f}_\mu$ satisfies the hypotheses of Proposition \ref{mmax}-\eqref{loc}: to do so, we only need to check that $\ell_\mu \circ \phi_\mu$ is equi-Lipschitz, which follows from the fact that $\phi_\mu$ is equi-Lipschitz for $\mu$ in the compact set $[s,t]$.
	
	Now, let us check that $\dd_\mu \tilde{f}_\mu(x)=0$ for each critical point $x$ of $\tilde{f}_\mu$, so that both bounds of Proposition \ref{mmax}-\eqref{loc} are zero. Since $\phi_\mu$ is a $\cc^1$-diffeomorphism, $x$ is a critical point of $\tilde{f}_\mu$ if and only if $\phi_\mu(x)$ is a critical point of $f_\mu$, \emph{i.e.} $df_\mu(\phi_\mu(x))=0$. Then since $\mu\mapsto f_\mu$ is constant on its critical points, $\dd_\mu f_\mu(\phi_\mu(x)) = 0$. As a consequence, $\dd_\mu \tilde{f}_\mu(x)= \dd_\mu f_\mu(\phi_\mu(x)) + \dd_\mu \phi_\mu(x) df_\mu(\phi_\mu(x))=0$ and $\mu \mapsto \sigma(\tilde{f}_\mu)$ is constant by  Proposition \ref{mmax}-\eqref{loc}. Proposition \ref{mmax}-\eqref{transl} ends the proof, stating that for all $\mu$, $\sigma(\tilde{f}_\mu)=\sigma(f_\mu\circ \phi_\mu)=\sigma(f_\mu)$.
\end{proof}

\subsection[Definition]{Definition of $R^t_s$}\label{defR}
In this section, we will say that a Hamiltonian is \emph{fiberwise compactly supported} if there exists a $R>0$ such that $H(t,q,p)=0$ for $\|p\|\geq R$. If $Z(p)$ is a quadratic form, we denote by $\hh^C_Z$ the set of $\cc^2$ Hamiltonians $H$ satisfying Hypothesis \ref{esti} with constant $C$ and such that $H(t,q,p)-Z(p)$ is fiberwise compactly supported. 

If $Z$ is a (possibly degenerate) quadratic form, Proposition \ref{Squad} proves that the generating family associated with a Hamiltonian in $\hh^C_Z$ differs by a Lipschitz function from a nondegenerate quadratic form. For Hamiltonians in $\hh^C_Z$, we are then able to define the operator $R^t_s$ directly by applying the critical value selector $\sigma$ on the generating family. The localization of the critical points of the generating family (Proposition \ref{critloc}) allows then to show that the value of the operator does only depend on the behaviour of $H$ on a large enough strip $\rr \times \rr^d \times B(0,R)$.

For general Hamiltonians satisfying Hypothesis \ref{esti}, the generating family is a priori not in any $\mathcal{Q}_m$, so we cannot select a critical value with the selector $\sigma$. To get aound this difficulty, we modify the Hamiltonian outside a large enough strip into some $Z(p)$. It is remarkable that the choice of $Z$ has no incidence on the value of the operator: we hence obtain exactly the same operator by making the Hamiltonian compactly supported with respect to $p$ or by setting it on $\|p\|^2$, for example. To prove Theorems \ref{lip} and \ref{iteratedvisc}, we will simply use $Z=0$, but when dealing with fiberwise convex Hamiltonians, for example to prove Theorem \ref{jouk}, the choice of a convex nondegenerate quadratic form will be more adequate.

\begin{defi} If $H$ is in $\hh^C_Z$ and $s\leq t$, let the operator $(R^t_s)$ be defined for Lipschitz functions $u$ on $\rr^d$ by\begin{displaymath}
R^t_s u (Q)=\sigma(S^t_s u(Q,\cdot)) \,\; \forall Q \in \rr^d,
	\end{displaymath}
	where $S^t_s u(Q,\cdot)$ is the function $\xi \mapsto S^t_s u(Q, \xi)$ and $S$ is the generating family defined at \eqref{S}. In particular, if $u$ is $\cc^1$, $R^t_s u(Q)$ is a critical value of $\xi \mapsto S^t_s u(Q,\xi)$.
\end{defi}

\begin{proof} Proposition \ref{Squad} states that $\xi\mapsto S^t_s u(Q,\xi)$ is in some $\mathcal{Q}_{m}$.
\end{proof}
\begin{prop}\label{indsub}
The operator $R^t_s$ does not depend on the choice of subdivision of $[s,t]$ in the definition of $G$, see \eqref{Gfam}.
\end{prop}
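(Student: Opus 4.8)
The plan is to compare the operators built from two different subdivisions of $[s,t]$ by interpolating through a common refinement. Given two admissible subdivisions $(t_i)_{i=0}^{N+1}$ and $(t'_j)_{j=0}^{M+1}$ of $[s,t]$, both with steps $\leq \delta_1$, their union is again an admissible subdivision. Hence it suffices to treat the case where one subdivision is obtained from another by adding a single intermediate point $t^\ast \in (t_k, t_{k+1})$, and then iterate. So fix such a pair and let $G^t_s$ and $\widehat{G}^t_s$ be the generating functions of the flow from \eqref{Gfam} associated respectively with the coarse and the refined subdivision, and $S^t_s u$, $\widehat{S}^t_s u$ the corresponding generating families for the Cauchy problem from \eqref{S}. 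We must show $\sigma(S^t_s u(Q,\cdot)) = \sigma(\widehat{S}^t_s u(Q,\cdot))$ for every $Q$.

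The key step is to exhibit $\widehat{S}^t_s u(Q,\cdot)$ as, up to a Lipschitz $\cc^\infty$-diffeomorphism and the addition of a nondegenerate quadratic form in the extra pair of variables $(Q_k^{\mathrm{new}}, p^{\mathrm{new}})$, exactly the function $S^t_s u(Q,\cdot)$. Concretely, on the interval $[t_k,t_{k+1}]$ the refined family replaces the single term $F^{t_{k+1}}_{t_k}(Q_k,p_k)$ by $F^{t^\ast}_{t_k}(Q_k,p_k) + p^{\mathrm{new}}\cdot(Q^{\mathrm{new}}-Q_k) + F^{t_{k+1}}_{t^\ast}(Q^{\mathrm{new}},p^{\mathrm{new}})$; the composition formula behind Proposition \ref{Ggen} is precisely designed so that partial minimization/critical reduction in $(Q^{\mathrm{new}},p^{\mathrm{new}})$ recovers the coarse term. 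Since all the $F$'s differ from the quadratic model $-(t_{i+1}-t_i)Z(p_i)$ (or, with $Z=0$, from $0$) by a Lipschitz function with controlled constant, by Proposition \ref{Squad} both $S^t_s u(Q,\cdot)$ and $\widehat S^t_s u(Q,\cdot)$ lie in the relevant $\mathcal{Q}_m$, and more importantly the quadratic part $\widehat{\zz}$ of the refined family is the quadratic part $\zz$ of the coarse one plus a nondegenerate quadratic form $\zz_0$ in the two new variables (read off from the matrix in Proposition \ref{Squad}: the new diagonal block $2\tau Z$ together with the off-diagonal $\pm\mathrm{Id}$ blocks coupling $p^{\mathrm{new}}$ to $Q_k, Q^{\mathrm{new}}$ is invertible). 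After a linear change of coordinates straightening out this block — a Lipschitz $\cc^\infty$-diffeomorphism — we are in the situation $\widehat{g}(x,\eta) = g(x) + \zz_0(\eta)$ with $g = S^t_s u(Q,\cdot)$ (up to the same diffeomorphism). Then properties \ref{mmax}-\eqref{stab} (stabilization) and \ref{mmax}-\eqref{transl} (invariance under Lipschitz diffeomorphism) of the critical value selector give $\sigma(\widehat S^t_s u(Q,\cdot)) = \sigma(\widehat g) = \sigma(g) = \sigma(S^t_s u(Q,\cdot))$.

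The main obstacle is bookkeeping: one must write down the linear map realizing $\widehat{S}^t_s u(Q,\cdot) = S^t_s u(Q,\cdot)\circ \Phi + (\text{quadratic in }\eta)$ explicitly and check that it is a Lipschitz linear isomorphism, i.e. that completing the square in $(Q^{\mathrm{new}},p^{\mathrm{new}})$ is legitimate. This hinges on the invertibility of the small block, which in turn follows from the fact that $(q,p)\mapsto(Q^t_s(q,p),p)$ is a diffeomorphism for $|t-s|\leq\delta_1$ — the same nondegeneracy already used to define $F^t_s$ and proved in Proposition \ref{F}. One should also note that the set of critical values is genuinely unchanged under this reduction — which is automatic from Proposition \ref{Ggen}, since critical points of both families correspond to the same broken-then-glued Hamiltonian trajectories and the critical values are $u(q)+\aaa^t_s(\gamma)$ in both cases by Proposition \ref{crit} — so no continuity argument beyond \ref{mmax}-\eqref{stab} and \ref{mmax}-\eqref{transl} is needed. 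Finally, iterating over the finitely many points added to pass from either subdivision to the common refinement concludes the proof.
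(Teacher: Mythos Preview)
Your reduction to a common refinement and then to the insertion of a single point $t^\ast$ is correct, and it is also the paper's starting point. The gap is in the step where you claim that after a \emph{linear} change of coordinates the refined family splits as $\widehat{g}(x,\eta)=g(x)+\zz_0(\eta)$. This is true for the nondegenerate quadratic parts $\widehat{\zz}$ and $\zz$ displayed in Proposition~\ref{Squad}, but it is \emph{not} true for the full generating families once $t^\ast$ lies strictly between $t_k$ and $t_{k+1}$: the Lipschitz remainders $\ell$ and $\widehat{\ell}$ depend on the Hamiltonian through the nonlinear functions $F^{t^\ast}_{t_k}$ and $F^{t_{k+1}}_{t^\ast}$, and these couple the new variables $(Q^{\mathrm{new}},p^{\mathrm{new}})$ to the old ones in a way no linear map can undo. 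The ``composition formula'' you invoke does say that the unique critical point of the refined family in the new variables reproduces the coarse family, but this is a critical-point reduction, not a stabilization: property \ref{mmax}-\eqref{stab} requires the literal identity $f(x)+\zz(\eta)$, not merely equality of critical values. To push your approach through one would need either a genuine nonlinear fibered diffeomorphism (a parametrized Morse lemma, with global Lipschitz control so that $f\circ\phi$ stays in $\mathcal{Q}_m$) or the full Viterbo uniqueness theorem alluded to in Remark~\ref{discvit}; neither is a linear change of variables, and neither is supplied.

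The paper sidesteps this difficulty by decomposing the insertion of a point into two moves that each avoid the nonlinear coupling. First, it inserts a step of length \emph{zero}, say $t_\iota=t_i$: then $F^{t_i}_{t_\iota}=0$ identically, the refined family differs from the coarse one by the exact expression $-(p_i-p_\iota)\cdot(Q_\iota-Q_{i-1})$, and after the affine substitution $\tilde{p}_\iota=p_i-p_\iota$, $\tilde{Q}_\iota=Q_\iota-Q_{i-1}$ this is genuinely $S^t_s u(Q,\cdot)-\tilde{p}_\iota\cdot\tilde{Q}_\iota$, so \ref{mmax}-\eqref{transl} and \ref{mmax}-\eqref{stab} apply cleanly. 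Second, it moves the intermediate time $t_i$ continuously to the desired position: here no structural identification is attempted at all; instead one observes that $t_i\mapsto S^t_s u(Q,\xi)$ is $\cc^1$, hence $t_i\mapsto\sigma(S^t_s u(Q,\cdot))$ is continuous by Consequence~\ref{conti}, while the set of critical values is independent of $t_i$ (Proposition~\ref{crit}) and discrete, forcing the selected value to be constant. This continuity-plus-discreteness argument is what replaces the nonlinear reduction you are missing.
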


\begin{proof}	
It is enough to consider two cases: either the subdivisions are identical with only one intermediate step $t_i$ changing, or one subdivision is obtained from the other by adding artificially an intermediate step of length zero.
	
In the first case, we observe that if the subdivision is fixed except for one intermediate step $t_i$, the function $t_i \mapsto S^t_s u(Q,\xi)$ is $\cc^1$, hence uniformly continuous, and by Consequence \ref{conti} this implies that $t_i \mapsto R^t_s u(Q)$ is continuous. But the set of critical values of $\xi \mapsto S^t_s u(Q,\xi)$ does not depend on $t_i$ (see Proposition \ref{crit}) and is discrete, hence $t_i \mapsto R^t_s u(Q)$ must be a constant function.
	
In the second case, let us artificially add an intermediate step $t_\iota$ equal to $t_i$: the subdivision is now $s=t_0 \leq t_1 \leq \cdots \leq t_{i-1}\leq t_\iota=t_i  \leq \cdots \leq t_{N+1}=t$ and the variables $(Q,p,Q_0,p_1,Q_1,\cdots,Q_{i-1},p_\iota,Q_\iota,p_i,\cdots p_N)$. We denote by $G$ (resp. $\tilde{G}$) the family associated with the subdivision without (resp. with) $t_\iota$, that takes variables $(Q,p,Q_0,\cdots,Q_{i-1},p_i,\cdots, p_N)$ (resp. $(Q,p,Q_0,\cdots,Q_{i-1},p_\iota,Q_\iota,p_i,\cdots, p_N)$).

Since $F^{t_i}_{t_\iota}=0$ and $F^{t_i}_{t_{i-1}}=F^{t_\iota}_{t_{i-1}}$, we may observe that:\[
\tilde{G}(Q,\cdots,Q_i,p_\iota,Q_\iota,p_{i+1},\cdots, p_N) = G(Q,\cdots,Q_i,p_{i+1},\cdots, p_N)
 - (p_i-p_\iota)\cdot(Q_\iota-Q_{i-1}),
\]
and the same holds for the associated families $S$ and $\tilde{S}$:
\[
\tilde{S}(Q,q,\cdot\cdot,Q_i,p_\iota,Q_\iota,p_{i+1},\cdot\cdot, p_N) = S(Q,q,\cdot\cdot,Q_i,p_{i+1},\cdot\cdot, p_N)
- (p_i-p_\iota)\cdot(Q_\iota-Q_{i-1}).
\]
The affine transformation mapping $p_\iota$ to $\tilde{p}_\iota=p_i-p_\iota$, $Q_\iota$ to $\tilde{Q}_\iota=Q_\iota-Q_{i-1}$ and keeping the other variables fixed preserves the value of the selector by property \ref{mmax}-\eqref{transl} of $\sigma$. In these new coordinates, the family writes:
\[
\tilde{S}(Q,q,\cdots,Q_i,\tilde{p}_\iota,\tilde{Q}_\iota,p_{i+1},\cdots, p_N) = S(Q,q,\cdots,Q_i,p_{i+1},\cdots, p_N)
- \tilde{p}_\iota\cdot \tilde{Q}_\iota
\]
and since $(\tilde{p}_\iota, \tilde{Q}_\iota )\mapsto -\tilde{p}_\iota\cdot \tilde{Q}_\iota$ is a nondegenerate quadratic function of $(\tilde{p}_\iota, \tilde{Q}_\iota )$, the invariance by stabilization \ref{mmax}-\eqref{stab} for $\sigma$ of the critical value selector concludes the proof.
\end{proof}
The following basic continuity result for $R^t_s$, which is improved in Theorem \ref{lip}, is only there to allow to work with $u$ os class  $\cc^1$ and extend the results by density:
\begin{prop}[Weak contraction]\label{contsc}
	If $H$ is in $\hh^C_Z$ and $u$ and $v$ are two Lipschitz functions such that $u-v$ is bounded, then $R^t_s u - R^t_s v$ is bounded by $\|u-v\|_\infty$.
\end{prop}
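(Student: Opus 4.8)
The plan is to reduce the bounded-difference estimate for $R^t_s$ to the corresponding property of the critical value selector $\sigma$, namely Consequence \ref{conti}. First I would recall that by definition $R^t_s u(Q) = \sigma(S^t_s u(Q,\cdot))$ and $R^t_s v(Q) = \sigma(S^t_s v(Q,\cdot))$, where by Proposition \ref{Squad} both functions $\xi \mapsto S^t_s u(Q,\xi)$ and $\xi \mapsto S^t_s v(Q,\xi)$ lie in the same $\mathcal{Q}_m$: indeed they differ from the \emph{same} nondegenerate quadratic form $\zz$ (determined only by $H$, $Z$ and the subdivision, not by the initial condition) by Lipschitz functions. Hence their difference is itself a function to which the hypotheses of Consequence \ref{conti} apply, provided it is bounded and Lipschitz.

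The key computation is then immediate from the explicit form \eqref{S} of the generating family: since $G^t_s(p,\nu,Q) + p\cdot(Q-q)$ does not depend on the initial condition, we have
\[
S^t_s u(Q,\xi) - S^t_s v(Q,\xi) = u(q) - v(q),
\]
where $q$ is the first component of $\xi = (q,p,\nu)$. Therefore the difference $\xi \mapsto S^t_s u(Q,\xi) - S^t_s v(Q,\xi)$ is bounded by $\|u-v\|_\infty$ and is Lipschitz (with constant ${\rm Lip}(u-v) \leq {\rm Lip}(u)+{\rm Lip}(v)$), since it is obtained by composing the Lipschitz function $u-v$ with the Lipschitz projection $\xi \mapsto q$. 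Applying Consequence \ref{conti} to $f = S^t_s u(Q,\cdot)$ and $g = S^t_s v(Q,\cdot)$ yields
\[
|R^t_s u(Q) - R^t_s v(Q)| = |\sigma(S^t_s u(Q,\cdot)) - \sigma(S^t_s v(Q,\cdot))| \leq \|S^t_s u(Q,\cdot) - S^t_s v(Q,\cdot)\|_\infty \leq \|u-v\|_\infty,
\]
and taking the supremum over $Q \in \rr^d$ gives that $R^t_s u - R^t_s v$ is bounded by $\|u-v\|_\infty$.

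There is essentially no serious obstacle here; the statement is a soft consequence of the axioms of the critical value selector once the generating family is written out. The only points requiring a word of care are: checking that the difference of the two generating families genuinely falls into the class to which Consequence \ref{conti} applies (bounded \emph{and} Lipschitz difference of two elements of $\mathcal{Q}_m$), which is exactly what the identity $S^t_s u - S^t_s v = (u-v)\circ \mathrm{pr}_q$ provides; and making sure the \emph{same} subdivision of $[s,t]$ is used for both $u$ and $v$ so that the ambient space $\rr^m$ and the quadratic part $\zz$ coincide — this is harmless since Proposition \ref{indsub} guarantees $R^t_s$ is independent of the subdivision. The restriction to $H \in \hh^C_Z$ is what makes Proposition \ref{Squad} applicable and hence guarantees membership in $\mathcal{Q}_m$ in the first place.
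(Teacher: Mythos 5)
Your proof is correct and takes essentially the same route as the paper: compute the pointwise difference $S^t_s u(Q,\xi)-S^t_s v(Q,\xi)=u(q)-v(q)$, observe it is bounded and Lipschitz in $\xi$, and invoke Consequence \ref{conti}. The extra remarks about the shared quadratic form $\zz$ and subdivision are harmless clarifications of why the hypotheses of Consequence \ref{conti} are met, but they match what is implicit in the paper's one-line argument.
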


\begin{proof}
	Let us fix $s$, $t$ and $Q$, and note that the quantity $S^t_s u(Q,\xi) - S^t_s v(Q,\xi)= u(q)-v(q)$ is a Lipschitz and bounded function of $\xi$. The continuity of $\sigma$ established in Consequence \ref{conti} gives that
	\[\|R^t_s u(Q)-R^t_s v(Q)\|\leq \|S^t_s u(Q,\cdot) - S^t_s v(Q,\cdot)\|_\infty \leq \|u-v\|_\infty.\]
\end{proof}

The following proposition implies that the value of the operator depends only on the value of $H$ on a large enough compact set:
\begin{prop}\label{Hdep} Let $Z$ and $\tilde{Z}$ be two quadratic forms, and $H$ (resp. $\tilde{H}$) be a Hamiltonian in $\hh^C_Z$ (resp. $\hh^C_{\tilde{Z}}$). For each $L$-Lipschitz function $u$ and $s\leq t$, if $H=\tilde{H}$ on $\rr\times \rr^d \times  B\!\left(0,e^{C(t-s)}(1+L)-1\right)$, then $R^t_{s,H} u = R^t_{s,\tilde{H}} u$.
\end{prop}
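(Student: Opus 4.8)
The plan is: reduce to a $\cc^1$ initial condition, deform $H$ into $\tilde H$ through Hamiltonians $H_\mu$ that all agree on the strip where $H=\tilde H$, and show the selected critical value is constant along the deformation by invoking Consequence \ref{mmaxquad}. For the reduction, mollify $u$ to obtain $\cc^\infty$, still $L$-Lipschitz functions $u_n$ with $u_n-u$ bounded and $u_n\to u$ uniformly; by the weak contraction property (Proposition \ref{contsc}) applied to both $R^t_{s,H}$ and $R^t_{s,\tilde H}$, it is enough to prove the equality for each $u_n$, and since the $u_n$ share the Lipschitz constant $L$ the hypothesis "$H=\tilde H$ on $\rr\times\rr^d\times B(0,e^{C(t-s)}(1+L)-1)$" applies verbatim. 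So we may assume $u$ of class $\cc^1$.

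For $\mu\in[0,1]$ put $H_\mu:=(1-\mu)H+\mu\tilde H$ and $Z_\mu:=(1-\mu)Z+\mu\tilde Z$. Convexity of $\|\cdot\|$ and $|\cdot|$ shows $H_\mu$ again satisfies Hypothesis \ref{esti} with constant $C$; since $H$ (resp. $\tilde H$) coincides with the quadratic form $Z$ (resp. $\tilde Z$) for $\|p\|$ large and these forms are quadratic in $p$, the forms $Z$ and $\tilde Z$, hence $Z_\mu$, satisfy Hypothesis \ref{esti} with constant $C$; and $H_\mu-Z_\mu$ vanishes for $\|p\|\ge R_0:=\max(R,\tilde R)$. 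Thus $H_\mu\in\hh^C_{Z_\mu}$, the operator $R^t_{s,H_\mu}$ is defined, and on $\rr\times\rr^d\times B(0,e^{C(t-s)}(1+L)-1)$ one has $H_\mu=H=\tilde H$, independently of $\mu$.

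Fix $Q\in\rr^d$ and, using Proposition \ref{Sder}-\eqref{SHdep}, a subdivision common to all the $H_\mu$; set $f_\mu:=S^t_{s,H_\mu}u(Q,\cdot)$. By Proposition \ref{Squad}, $f_\mu=\zz_\mu+\ell_\mu$ where $\zz_\mu$ is a nondegenerate quadratic form whose matrix is affine, hence $\cc^1$, in $\mu$, and $\ell_\mu$ is Lipschitz with constant at most $\|Q\|+L+4(1+R_0)$, uniformly in $\mu$; moreover $\mu\mapsto f_\mu$ is a $\cc^1$ family by Proposition \ref{Sder}-\eqref{SHdep}. Applying the second statement of Proposition \ref{critloc} to the pair $(H_\mu,H)$, which coincide on $[s,t]\times B(Q,(e^{C(t-s)}-1)(1+L))\times B(0,e^{C(t-s)}(1+L)-1)$, we get that $f_\mu$ and $f_0$ have the same critical points; call this $\mu$-independent set $X$. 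For $\xi=(q,p,\nu)\in X$ the Hamiltonian trajectory $\gamma$ issued from $(q,p)$ stays in that strip (Proposition \ref{critloc}), so it is one and the same for all $\mu$, whence the critical value $f_\mu(\xi)=u(q)+\aaa^t_s(\gamma)$ is $\mu$-independent — equivalently $\dd_\mu f_\mu(\xi)=-\int_s^t(\tilde H-H)(\tau,\phi^\tau_{s,\mu}(q,p))\,d\tau=0$ by Proposition \ref{Sder}-\eqref{SHdep}, the integrand vanishing along the trajectory. So $f_\mu$ satisfies the hypotheses of Consequence \ref{mmaxquad}, hence $\mu\mapsto\sigma(f_\mu)=R^t_{s,H_\mu}u(Q)$ is constant. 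Taking $\mu=0$ and $\mu=1$ and letting $Q$ range over $\rr^d$ gives $R^t_{s,H}u=R^t_{s,\tilde H}u$.

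The main obstacle is that the quadratic part $\zz_\mu$ of the generating family genuinely varies with $\mu$ (through $Z_\mu$), so the semigroup-type estimate Proposition \ref{mmax}-\eqref{loc}, stated relative to a single fixed reference quadratic form, cannot be used directly; one must route through Consequence \ref{mmaxquad}, and its three structural inputs — equi-Lipschitz remainders $\ell_\mu$, a $\mu$-independent critical set, and $f_\mu$ constant on that set — all rely on Proposition \ref{critloc} confining the relevant critical points and trajectories to the strip where all the $H_\mu$ coincide.
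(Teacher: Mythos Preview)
Your proof is correct and follows essentially the same route as the paper: linear interpolation $H_\mu$ between $H$ and $\tilde H$, use of Proposition \ref{critloc} to show the critical set and critical values of $f_\mu=S^t_{s,H_\mu}u(Q,\cdot)$ are $\mu$-independent, and then invocation of Consequence \ref{mmaxquad} to conclude constancy of $\sigma(f_\mu)$, with the reduction to $\cc^1$ initial data handled via Proposition \ref{contsc}. The only cosmetic difference is that you perform the density reduction first while the paper does it last.
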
 

\begin{proof} 
	Let us first assume that  $u$ is a $\cc^1$ $L$-Lipschitz function and $s\leq t$. Let us define $H_\mu=\mu H +(1-\mu)\tilde{H}$. Observe that $H_\mu$ is in $\hh^C_{\tilde{Z_\mu}}$ where $Z_\mu=\mu Z +(1-\mu)\tilde{Z}$ is a quadratic form, and that there exists $R>0$ such that for all $\mu$ in $[0,1]$, $H_\mu(t,q,p)=Z_\mu(p)$ if $\|p\|\geq R$. 
	
	Proposition \ref{Squad} hence guarantees that for all $\mu$, $S^t_{s,H_\mu} u(Q,\xi)=\zz_\mu(\xi) + \ell_\mu(Q,\xi)$ where $\zz_\mu$ is a nondegenerate quadratic form and $\xi \mapsto \ell_\mu(Q,\xi)$ is Lipschitz with constant ${\rm Lip}(u)+\|Q\|+4(1+R)$. Note that if $Q$ is fixed, the family $\xi \mapsto \ell_\mu(Q,\xi)$ is hence equi-Lipschitz when $\mu$ is in $[0,1]$.
	
	As $H_\mu$ is constant on $\rr\times \rr^d \times  B\!\left(0,e^{C(t-s)}(1+L)-1\right)$, the second part of Proposition \ref{critloc} states that the set of critical points of $\xi \mapsto S^t_{s,H_\mu}u(Q,\xi)$ does not depend on $\mu$, and neither do the associated critical values. 
	
	So if $Q$ is fixed, the family of functions $f_\mu=S^t_{s,H_\mu} u (Q,\cdot)$ satisfies the conditions of Consequence \ref{mmaxquad}, and hence $R^t_{s,H_\mu}u (Q)=\sigma(f_\mu)$ does not depend on $\mu$. As a consequence, $R^t_{s,H} u = R^t_{s,\tilde{H}} u$.
	
	The result extends to every $L$-Lipschitz $u$ thanks to Proposition \ref{contsc} and the fact that $u$ can be $L^\infty$-approximated by a $\cc^1$ $L$-Lipschitz function.
\end{proof}

We now want to extend the definition to a Hamiltonian that is not quadratic at infinity, by modifying it outside some large enough strip $\rr \times \rr^d \times B(0,R)$ into some $Z(p)$. We cannot make sure that the modified Hamiltonian still satisfies Hypothesis \ref{esti} with the same constant $C$ than $H$, so we have to be cautious since the width of the strip depends on $C$. Lemma \ref{lemext} shows that the constant of the modified Hamiltonian can be arbitrarily close to $C$, and this independently from the width of the strip, which avoids any trouble.

\begin{prop}\label{propladefi} Let $H$ be a $\cc^2$ Hamiltonian satisfying Hypothesis \ref{esti} with constant $C$, $u$ be a $L$-Lipschitz function and $s\leq t$. For all $\delta>0$, and for each quadratic form $Z$ such that $\|d^2Z\|\leq C$, there exists a Hamiltonian $H_{\delta,Z}$ in $\hh^{C(1+\delta)}_Z$ that coincides with $H$ on $\rr\times \rr^d\times B\!\left(0,e^{C(1+\delta)(t-s)}(1+L)-1\right)$. Then, $R^t_{s,H_{\delta,Z}}u$ does neither depend on the choice of $H_{\delta,Z}$, nor on the choice of $Z$, nor on $\delta>0$.
\end{prop}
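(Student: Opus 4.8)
The statement has two parts: first, that for each $\delta>0$ and each admissible quadratic form $Z$ one can construct a Hamiltonian $H_{\delta,Z}\in\hh^{C(1+\delta)}_Z$ agreeing with $H$ on the relevant strip; second, that the resulting value $R^t_{s,H_{\delta,Z}}u$ is independent of all three choices ($H_{\delta,Z}$, $Z$, $\delta$). The plan is to treat the construction step quickly via a cutoff argument (this is the role of the mentioned Lemma \ref{lemext}: interpolate between $H$ and $Z(p)$ using a smooth radial cutoff in $p$, performing the interpolation on an annulus $R_1\le\|p\|\le R_2$ far enough out; because the interpolation region can be pushed arbitrarily far, the $\cc^2$-norm estimates of Hypothesis \ref{esti} are only degraded by a multiplicative factor as close to $1$ as one wishes, giving the constant $C(1+\delta)$). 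The substance is the independence claim.

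For independence \emph{for fixed $Z$ and $\delta$}: suppose $H_{\delta,Z}$ and $H'_{\delta,Z}$ are two such Hamiltonians, both in $\hh^{C(1+\delta)}_Z$ and both equal to $H$ on $\rr\times\rr^d\times B(0,e^{C(1+\delta)(t-s)}(1+L)-1)$. Then they coincide with each other on that strip, so Proposition \ref{Hdep} (applied with constant $C(1+\delta)$, noting the strip in that proposition is exactly the one appearing here) gives $R^t_{s,H_{\delta,Z}}u=R^t_{s,H'_{\delta,Z}}u$. For independence \emph{with respect to $\delta$}: if $\delta<\delta'$, pick any $H_{\delta,Z}$; it lies in $\hh^{C(1+\delta)}_Z\subset\hh^{C(1+\delta')}_Z$ and it coincides with $H$ — hence with any valid $H_{\delta',Z}$ — on the smaller strip $B(0,e^{C(1+\delta)(t-s)}(1+L)-1)$, but Proposition \ref{Hdep} requires agreement only on $B(0,e^{C(1+\delta')(t-s)}(1+L)-1)$, which is \emph{larger}. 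So this does not immediately apply; instead, first enlarge: construct $H_{\delta,Z}$ so that it additionally agrees with $H$ on the larger strip $B(0,e^{C(1+\delta')(t-s)}(1+L)-1)$ — which is legitimate because the construction of Lemma \ref{lemext} lets us choose the interpolation radius as large as we like while keeping the constant below $C(1+\delta)$. Then $H_{\delta,Z}$ and any $H_{\delta',Z}$ agree on $B(0,e^{C(1+\delta')(t-s)}(1+L)-1)$, both satisfy Hypothesis \ref{esti} with constant $C(1+\delta')$, so Proposition \ref{Hdep} yields equality of the operators. This shows the value is the same whether computed at parameter $\delta$ or $\delta'$.

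For independence \emph{with respect to $Z$}: given two quadratic forms $Z_0,Z_1$ with $\|d^2Z_i\|\le C$, and associated Hamiltonians $H_{\delta,Z_0}\in\hh^{C(1+\delta)}_{Z_0}$, $H_{\delta,Z_1}\in\hh^{C(1+\delta)}_{Z_1}$, both of which equal $H$ on the strip $B(0,e^{C(1+\delta)(t-s)}(1+L)-1)$. Again they coincide \emph{with each other} on that strip, so by Proposition \ref{Hdep} (which explicitly allows the two Hamiltonians to be quadratic-at-infinity with \emph{different} quadratic forms $Z,\tilde Z$) we get $R^t_{s,H_{\delta,Z_0}}u=R^t_{s,H_{\delta,Z_1}}u$. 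Combining the three steps — and using that the construction of $H_{\delta,Z}$ is always possible — the value $R^t_{s,H_{\delta,Z}}u$ depends on none of the choices, which is the assertion.

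The main obstacle is purely the bookkeeping around which strip one needs agreement on, since the strip width is governed by the (slightly degraded) constant and so moves when $\delta$ changes; the remedy is the observation that Lemma \ref{lemext} decouples the interpolation radius from the constant, so one is always free to demand agreement on a strip large enough for Proposition \ref{Hdep} to apply with the larger of the two constants in play. Everything else is a direct invocation of Proposition \ref{Hdep} together with the monotonicity $\hh^{C(1+\delta)}_Z\subset\hh^{C(1+\delta')}_Z$ for $\delta\le\delta'$. One should also remark that, for $u$ merely Lipschitz rather than $\cc^1$, the reduction already packaged inside Proposition \ref{Hdep} (via Proposition \ref{contsc} and $L^\infty$-approximation) handles the general case, so no extra argument is needed here.
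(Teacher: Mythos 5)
Your proof is correct and follows essentially the same route as the paper's. The key observation — that Lemma~\ref{lemext} decouples the interpolation radius from the degraded constant, so one may choose a representative of $\hh^{C(1+\delta)}_Z$ that agrees with $H$ on the \emph{larger} strip associated with $\delta'>\delta$, and then invoke Proposition~\ref{Hdep} with the larger constant — is exactly the paper's device (there the intermediate Hamiltonian is called $H_\varphi$, built with $\ep=\tilde\delta$ and $R=R_\delta$). The paper merely bundles the $H_{\delta,Z}$-choice and $Z$-choice into a single application of Proposition~\ref{Hdep}, and it reduces the $\delta$-comparison to the case $Z=0$ before running the intermediate-Hamiltonian argument, but these are presentation differences, not a different argument.
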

This proposition allows to define the variational operator for general Hamiltonians:
\begin{defi}\label{ladefinition}
	Let $H$ be a $\cc^2$ Hamiltonian satisfying Hypothesis \ref{esti} with constant $C$. For each $L$-Lipschitz function  $u$ and $s\leq t$, we define $R^{t}_{s,H}u =R^t_{s,H_{\delta,Z}}u$, where $\delta>0$ and $H_{\delta,Z}$ is a Hamiltonian of  $\hh^{C(1+\delta)}_Z$ for some quadratic form $Z$ such that $\|d^2Z\|\leq C$, which coincides with $H$ on $\rr\times \rr^d\times B\!\left(0,e^{C(1+\delta)(t-s)}(1+L)-1\right)$ .
\end{defi}

\begin{proof}[Proof of Proposition \ref{propladefi}] Let us show that for all $\delta>0$, there exists $H_\delta$ in $\hh^{C(1+\delta)}_Z$ coinciding with $H$ on $\rr\times \rr^d\times B\!\left(0,R_\delta\right)$, where $R_\delta=e^{C(1+\delta)(t-s)}(1+L)-1$. To do so, we use the following lemma:
	\begin{lem}\label{lemext} 
		If $R>0$ and $\ep>0$, there exists a compactly supported $\cc^2$ function $\varphi:\rr_+ \to [0,1]$, equal to $1$ on $[0,R]$, such that for all $r\geq 0$,
		\[
		|\varphi'(r)|\leq \frac{\ep}{6(1+r)},\;|\varphi''(r)|\leq\frac{\ep}{6(1+r)^2} \textrm{ and } \frac{|\varphi'(r)|}{r}\leq\frac{\ep}{6(1+r)^2}. \] 
		For such a function $\varphi$, if $H$ and $\tilde{H}$ are two Hamiltonians satisfying Hypothesis \ref{esti} with constant $C$, the Hamiltonian $H_\varphi:(t,q,p)\mapsto \varphi(\|p\|) H(t,q,p)+(1-\varphi(\|p\|))\tilde{H}(t,q,p)$ satisfies Hypothesis \ref{esti} with constant $C(1+\ep)$, is equal to $H$ on $\rr \times \rr^d \times B(0,R)$ and $H_\varphi-\tilde{H}$ is fiberwise compactly supported.
	\end{lem}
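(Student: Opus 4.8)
The plan is to prove Lemma \ref{lemext} in two stages: first construct the cutoff function $\varphi$ with the three prescribed decay bounds on $\varphi'$, $\varphi''$ and $\varphi'/\mathrm{id}$, and then verify that the interpolated Hamiltonian $H_\varphi$ satisfies Hypothesis \ref{esti} with the slightly worse constant $C(1+\ep)$.

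For the construction of $\varphi$, I would start from a fixed smooth template: pick a $\cc^\infty$ nonincreasing function $\chi:\rr_+\to[0,1]$ equal to $1$ on $[0,1]$ and to $0$ on $[2,\infty)$, and set $\varphi(r)=\chi\!\left(\frac{\ln(1+r)}{A}\right)$ for a large parameter $A=A(R,\ep)$ to be chosen. Then $\varphi$ is compactly supported (it vanishes once $\ln(1+r)\geq 2A$), and it equals $1$ as long as $\ln(1+r)\leq A$, so choosing $A\geq \ln(1+R)$ makes $\varphi\equiv 1$ on $[0,R]$. Differentiating, $\varphi'(r)=\chi'\!\left(\tfrac{\ln(1+r)}{A}\right)\cdot\tfrac{1}{A(1+r)}$, so $|\varphi'(r)|\leq \tfrac{\|\chi'\|_\infty}{A(1+r)}$; a second differentiation gives $|\varphi''(r)|\leq \tfrac{c}{A(1+r)^2}$ with $c$ depending only on $\|\chi'\|_\infty$ and $\|\chi''\|_\infty$; and since $\varphi'$ is supported where $r\geq e^A-1$, we get $\tfrac{|\varphi'(r)|}{r}\leq \tfrac{|\varphi'(r)|}{e^A-1}\cdot\tfrac{e^A-1}{r}\leq \tfrac{\|\chi'\|_\infty}{A(1+r)(e^A-1)}$, which is $\leq \tfrac{\ep}{6(1+r)^2}$ once $A$ is large (using $1+r\leq$ something comparable to $e^A$ on the support — actually more simply $\tfrac{|\varphi'(r)|}{r}\le \tfrac{1+r}{r}\cdot\tfrac{|\varphi'(r)|}{1+r}$ and $\tfrac{1+r}{r}\le 2$ there). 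Hence all three bounds hold simultaneously once $A$ is chosen large enough in terms of $R$ and $\ep$ (via $1/A\le$ a small multiple of $\ep$, together with $A\ge\ln(1+R)$). One should note the asserted independence of the strip width $R$ from $\ep$: enlarging $R$ only forces $A$ larger, which only improves the three estimates, so the constant degradation $C(1+\ep)$ is genuinely uncoupled from $R$.

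For the estimate on $H_\varphi$, write $H_\varphi=\tilde H+\varphi(\|p\|)(H-\tilde H)$ and set $g=H-\tilde H$, which by Hypothesis \ref{esti} for both $H$ and $\tilde H$ satisfies $|g|\le 2C(1+\|p\|)^2$, $\|\dd_{(q,p)}g\|\le 2C(1+\|p\|)$, $\|\dd^2_{(q,p)}g\|\le 2C$. I would compute $\dd_q H_\varphi=\dd_q\tilde H+\varphi(\|p\|)\dd_q g$ and $\dd_p H_\varphi=\dd_p\tilde H+\varphi(\|p\|)\dd_p g+\varphi'(\|p\|)\tfrac{p}{\|p\|}g$, and similarly the second derivatives, each time controlling the new terms using the three bounds on $\varphi$ together with the bounds on $g$. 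The key mechanism is that every factor of $\varphi'$ comes paired with a factor of $g$ (size $\sim(1+\|p\|)^2$) and a factor $\tfrac{p}{\|p\|}$ (size $1$, or a derivative of it of size $1/\|p\|$), so e.g. $|\varphi'(\|p\|)\,g|\le \tfrac{\ep}{6(1+\|p\|)}\cdot 2C(1+\|p\|)^2=\tfrac{\ep C}{3}(1+\|p\|)$, which is absorbed into the $C(1+\|p\|)$-type bound; the $\varphi''$ term and the term with $\dd_p(\tfrac{p}{\|p\|})$ (size $\lesssim 1/\|p\|$, handled by the $\varphi'/r$ bound) feed into the $\dd^2$ bound; and the straightforward term $\varphi(\|p\|)\dd_p g$ is bounded by $2C(1+\|p\|)$. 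Collecting everything and bounding $\varphi\le 1$, one sees that the constant for $H_\varphi$ is at most $C$ times a universal constant close to $1$ plus a term of order $\ep$; after replacing the $\tfrac{\ep}{6}$ in the statement by an appropriate absolute multiple one gets exactly $C(1+\ep)$. The remaining assertions — that $H_\varphi=H$ on $\rr\times\rr^d\times B(0,R)$ because $\varphi\equiv 1$ there, and that $H_\varphi-\tilde H=\varphi(\|p\|)g$ is fiberwise compactly supported because $\varphi$ has compact support — are immediate.

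The main obstacle is purely bookkeeping: making the three constants in the hypothesis on $\varphi$ (the $\tfrac16$'s) match up so that the sum of the degraded contributions is $\le C(1+\ep)$ and not $C(1+K\ep)$ for some $K>1$. I would handle this by simply proving the lemma with $\tfrac{\ep}{6}$ replaced by $\tfrac{\ep}{6}$ — i.e. track the universal constant $K$ that appears, then at the very end rename $\ep$ (the factor $6$ was presumably chosen precisely so that the three terms, the $g$-term from $\dd_p$, the $\varphi''$-term and the $\varphi'/r$-term, each contribute at most $\tfrac{\ep}{3}C$ or so and sum to $\le\ep C$ after also accounting for cross terms); none of this affects the qualitative statement. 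No deformation or selector input is needed here — this is an elementary ODE-free analysis estimate, the only subtlety being the uniformity in $R$, which the logarithmic reparametrization $r\mapsto\ln(1+r)$ delivers automatically.
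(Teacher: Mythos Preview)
Your construction of $\varphi$ via $\chi(\ln(1+r)/A)$ is fine and essentially equivalent to the paper's (which uses a piecewise-linear function of $\ln(1+r)$ then smooths). The three derivative bounds follow exactly as you say.

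However, there is a genuine gap in your estimate of $H_\varphi$. Writing $H_\varphi=\tilde H+\varphi(\|p\|)\,g$ with $g=H-\tilde H$ and bounding termwise gives, for instance,
\[
\|\dd_p H_\varphi\|\le \|\dd_p\tilde H\|+\varphi\,\|\dd_p g\|+|\varphi'|\,|g|\le C(1+\|p\|)+2C(1+\|p\|)+O(\ep)\,C(1+\|p\|),
\]
so the leading constant is $3C$, not $C$. This cannot be repaired by ``renaming $\ep$'': the problem is in the $O(1)$ part, not the $O(\ep)$ part, and the same issue recurs for $|H_\varphi|$ and $\|\dd^2 H_\varphi\|$.

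The fix is to exploit that $H_\varphi$ is a \emph{convex combination}: keep the main terms as $\varphi\,\dd_{(q,p)}H+(1-\varphi)\,\dd_{(q,p)}\tilde H$, which is bounded by $\varphi\,C(1+\|p\|)+(1-\varphi)\,C(1+\|p\|)=C(1+\|p\|)$, and only use the $2C$-bounds on $g$, $\dd g$, $\dd^2 g$ for the terms that carry a factor $\varphi'$ or $\varphi''$ (hence an $\ep$). This is exactly what the paper does, and with that arrangement the three $\tfrac{\ep}{6}$'s are tuned so the extra contributions sum to at most $\ep C$ in each of the three inequalities of Hypothesis~\ref{esti}.
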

	
	\begin{proof} Take some $R'>\max(1,R)$ and let us define \[\varphi(r)=\max\!\left(0,1-\frac{\ep}{12}\max\!\left(0,\ln(1+r)-\ln(1+R')\right)\right).\]
		
		If $r\leq R'$, $\varphi(r)=1$. 
		If $r \geq (1+R')e^{12/\ep}-1$, $\varphi(r)=0$. 
		For all $r\geq 0$, $0\leq \varphi(r)\leq 1$.
		
		The function $\varphi$ is $\cc^\infty$ except at $r=R'$ or $r=(1+R')e^{12/\ep}-1$. Let us evaluate its derivatives on $(R',(1+R')e^{12/\ep}-1)$, where $f(r)=1-\frac{\ep}{12}\left(\ln(1+r)-\ln(1+R')\right)$ : \[
		\varphi'(r)=\frac{-\ep}{12(1+r)}, \varphi''(r)=\frac{\ep}{12(1+r)^2}.
		\]
		Furthermore, as long as $r\geq R' > 1$, this implies that
		\[|\varphi'(r)|=\frac{\ep}{12(1+r)}\leq \frac{\ep r}{6(1+r)^2}.\]
			
		Hence the three wanted estimates are satisfied on $(R',(1+R')e^{12/\ep}-1)$. Since $\varphi'$ and $\varphi''$ are zero if $r < R'$ or $r> (1+R')e^{12/\ep}-1$, it is possible to smooth $\varphi$ by below at $R'$ and by above at $(1+R')e^{12/\ep}-1$ without increasing the derivative bounds, keeping $\varphi=1$ for $r \leq R$ and $\varphi$ compactly supported.

Now if $H$ and $\tilde{H}$ are two Hamiltonians satisfying Hypothesis \ref{esti} with constant $C$, let us define $H_\varphi$ by $H_\varphi(t,q,p)=\varphi(\|p\|) H(t,q,p)+(1-\varphi(\|p\|))\tilde{H}(t,q,p)$. It is $\cc^2$, coincides with $H$ on $\rr \times \rr^d \times B(0,R_\delta)$, and $H_\varphi(t,q,p)-\tilde{H}(t,q,p)= \varphi(\|p\|)(H(t,q,p)-\tilde{H}(t,q,p))$ is fiberwise compactly supported since $\varphi(r)=0$ for $r$ large enough.

In order to verify that $H_\varphi$ satisfies Hypothesis \ref{esti} with constant $C(1+\ep)$, let us bound the derivatives of $\phi(p)=\varphi(\|p\|)$:
\[ \begin{split}
\|d\phi(p)\|&=|\varphi'(\|p\|)|\leq \frac{\ep}{6(1+\|p\|)},\\ \|d^2\phi(p)\|&\leq \max\left(|\varphi''(\|p\|)|,\frac{|\varphi'(\|p\|)|}{\|p\|}\right) \leq \frac{\ep}{6(1+\|p\|)^2}.
\end{split}\]
Now, since both $H$ and $\tilde{H}$ satisfy $|H(t,q,p)|\leq C(1+\|p\|)^2$ and $\phi(p) \in [0,1]$ for all $p$,
\[|H_\varphi(t,q,p)| \leq \phi(p)|H(t,q,p)|+(1-\phi(p))|\tilde{H}(t,q,p)| \leq C(1+\|p\|)^2, \]
	Since $H$ and $\tilde{H}$ satisfies Hypothesis \ref{esti} with constant $C$, $H-\tilde{H}$ satisfies Hypothesis \ref{esti} with constant $2C$, and the following holds:	
	\[\begin{split}
	\|dH_\varphi\| & \leq \phi(p) \underbrace{\|dH\|}_{\leq C(1+\|p\|)}+(1-\phi(p))\underbrace{\|d\tilde{H}\|}_{\leq C(1+\|p\|)} + \underbrace{|d\phi(p)|}_{\leq \frac{\ep}{6(1+\|p\|)}} \underbrace{|H-\tilde{H}|}_{\leq 2C(1+\|p\|)^2}\\
	& \leq C(1+\|p\|)+ \frac{\ep}{3}C(1+\|p\|)\leq C(1+\ep)(1+\|p\|),\\
	\|d^2H_\varphi\|&\leq  \phi \|d^2H\|+(1-\phi)\|d^2 \tilde{H}\| + 2 \|d\phi\| \|dH-d\tilde{H}\|+ \|d^2\phi\| |H-\tilde{H}| 	\\
	& \leq  \phi C+(1- \phi ) C + 2 \frac{\ep}{6(1+\|p\|)} \cdot 2C(1+\|p\|) +\frac{\ep}{6(1+\|p\|)^2} \cdot2C(1+\|p\|)^2\\	
	& \leq C+	2\frac{\ep}{3}C + \frac{\ep}{3} C \leq C(1+\ep).
	\end{split}
	\]\end{proof}	
	To build $H_{\delta,Z}$ in $\hh^{C(1+\delta)}_Z$ coinciding with $H$ on $\rr \times \rr^d \times B(0,R_\delta)$, it is enough to apply Lemma \ref{lemext} with $\tilde{H}(t,q,p)=Z(p)$, $\ep=\delta$ and $R=R_\delta=e^{C(1+\delta)(t-s)}(1+L)-1$.
	
	Let us now check that $R^t_{s,H_{\delta,Z}}u$ is independent from the choice of $H_{\delta,Z}$ and $Z$: if $H_{\delta,Z}$ in $\hh^{C(1+\delta)}_Z$ and $\tilde{H}_{\delta,\tilde{Z}}$ in $\hh^{C(1+\delta)}_{\tilde{Z}}$ coincide on $\rr\times \rr^d\times B\!\left(0,e^{C(1+\delta)(t-s)}(1+L)-1\right)$, Proposition \ref{Hdep} applies and  $R^t_{s,H_{\delta,Z}} u =R^t_{s,\tilde{H}_{\delta,\tilde{Z}}} u$.
	
From now on, we may take $Z=0$, hence the set $\hh^C_0$ is exactly the set of $\cc^2$ fiberwise compactly supported Hamiltonians satisfying Hypothesis \ref{esti} with constant $C$.	Let us prove the independence with respect to $\delta$.
	
	Let $s\leq t$ and $u$ a $L$-Lipschitz function be fixed, and still denote by $R_\delta$ the radius  given by $e^{C(1+\delta)(t-s)}(1+L)-1$, which is increasing with respect to $\delta$. 
	Take $\delta>\tilde{\delta}>0$, and $H_\delta$ (resp. $H_{\tilde{\delta}}$) a Hamiltonian in $\hh^{C(1+\delta)}_0$ (resp. $\hh^{C(1+\tilde{\delta})}_0$) coinciding with $H$ on $\rr\times\rr^d\times B\!\left(0,R_\delta\right)$ (resp. $\times B\!\left(0,R_{\tilde{\delta}}\right)$), so that $R^{t,\delta}_{s,H}u(Q)=R^t_{s,H_\delta}u(Q)$ and $R^{t,\tilde{\delta}}_{s,H}u(Q)=R^t_{s,H_{\tilde{\delta}}}u(Q)$. 
	
	Lemma \ref{lemext} applied with $R=R_\delta$, $\ep = \tilde{\delta}$ and $\tilde{H}=0$ gives a Hamiltonian $H_\varphi$ in $\hh^{C(1+\tilde{\delta})}_0$ coinciding with $H$ (hence $H_\delta$) on $\rr\times\rr^d\times B\!\left(0,R_\delta\right)$, and therefore since 
	$B\!\left(0,R_{\tilde{\delta}}\right)\subset B\!\left(0,R_{\delta}\right)$, with $H_{\tilde{\delta}}$ on $\rr\times\rr^d\times B\!\left(0,R_{\tilde{\delta}}\right)$. Proposition \ref{Hdep} gives on the one hand that $R^t_{s,H_\delta}u=R^t_{s,H_\varphi} u$, and on the other hand that $R^t_{s,H_\varphi} u=R^t_{s,H_{\tilde{\delta}}}u$, hence the result.
	
\end{proof}
\begin{add}\label{defcvx}
	If $H$ is uniformly strictly convex with respect to $p$ (\textit{i.e.} there exists $m>0$ such that $\dd^2_p H(t,q,p)\geq m{\rm id}$ for all $(t,q,p)$) and $Z$ is a strictly positive quadratic form such that $\frac{m}{2}{\rm id} \leq Z \leq \frac{C}{2}{\rm id}$, then the function $H_{\delta,Z}$ of Proposition \ref{propladefi} can be chosen uniformly strictly convex w.r.t. $p$.
\end{add}
\begin{proof}
	In the proof of Lemma \ref{lemext}, we assume that $H$ and $\tilde{H}$ are uniformly strictly convex with respect to $p$ with a constant $m>0$. Then following the construction of $H_\varphi$, we may estimate its second derivative with respect to $p$:
	\[\begin{split}
	\dd^2_p H_\varphi &\geq \phi\dd^2_p H+(1-\phi)\dd^2_p \tilde{H} - \left(2 \|d\phi\| \|\dd_p H-\dd_p\tilde{H}\|+ \|d^2\phi\| |H-\tilde{H}|\right){\rm id}\\
	& \geq (m-C\ep) {\rm id} 
	\end{split}	
	\]
	using the estimates on the derivatives of $\varphi$, $H$ and $\tilde{H}$. So, if $\ep<m/C$, the obtained function is uniformly strictly convex.
\end{proof}

\subsection[Properties and Lipschitz estimates]{Properties and Lipschitz estimates of $R^t_s$.}\label{varop}
Let us prove that $(R_s^t)_{s\leq t}$ is a variational operator. Monotonicity and additivity properties are straightforward:

\begin{prop}[Monotonicity]\label{vermonot} If $u\leq v$ are Lipschitz functions on $\rr^d$, then  for each $s \leq t$, $R^t_s u \leq R^t_s v$ on $\rr^d$. \end{prop}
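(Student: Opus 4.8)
The plan is to reduce the monotonicity of $R^t_s$ directly to the monotonicity property \ref{mmax}-\eqref{monmin} of the critical value selector $\sigma$. Recall that by Definition \ref{ladefinition}, for a given $L$-Lipschitz pair $u\leq v$ we may compute $R^t_s u$ and $R^t_s v$ using the \emph{same} modified Hamiltonian $H_{\delta,Z}\in\hh^{C(1+\delta)}_Z$: indeed $L$ can be taken as a common Lipschitz constant for $u$ and $v$, so the radius $e^{C(1+\delta)(t-s)}(1+L)-1$ is the same, and Proposition \ref{Hdep} (together with Proposition \ref{propladefi}) guarantees that the value of the operator only depends on $H$ restricted to the relevant strip. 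Hence from now on I assume $H\in\hh^C_Z$, so that $\xi\mapsto S^t_s u(Q,\xi)$ and $\xi\mapsto S^t_s v(Q,\xi)$ both lie in some fixed $\mathcal{Q}_m$ by Proposition \ref{Squad}, and $R^t_s u(Q)=\sigma(S^t_s u(Q,\cdot))$, $R^t_s v(Q)=\sigma(S^t_s v(Q,\cdot))$.

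The key observation is the pointwise inequality between the two generating families. Fix $Q\in\rr^d$ and $\xi=(q,p,\nu)$. By the defining formula \eqref{S},
\[
S^t_s v(Q,\xi)-S^t_s u(Q,\xi)=v(q)-u(q)\geq 0,
\]
since the terms $G^t_s(p,\nu,Q)+p\cdot(Q-q)$ are identical in both expressions. Therefore $S^t_s u(Q,\cdot)\leq S^t_s v(Q,\cdot)$ on all of $\rr^m$. Moreover their difference $\xi\mapsto v(q)-u(q)$ is Lipschitz on $\rr^m$ (it is a composition of the Lipschitz function $v-u$ on $\rr^d$ with the linear coordinate projection $\xi\mapsto q$), so both functions differ by a Lipschitz function from the common nondegenerate quadratic form $\zz$ of Proposition \ref{Squad}, and in particular $S^t_s u(Q,\cdot)$ and $S^t_s v(Q,\cdot)$ differ by a Lipschitz function.

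Now property \ref{mmax}-\eqref{monmin} of the critical value selector applies verbatim: since $S^t_s u(Q,\cdot)-S^t_s v(Q,\cdot)$ is Lipschitz and $S^t_s u(Q,\cdot)\leq S^t_s v(Q,\cdot)$ on $\rr^m$, we get $\sigma(S^t_s u(Q,\cdot))\leq\sigma(S^t_s v(Q,\cdot))$, that is, $R^t_s u(Q)\leq R^t_s v(Q)$. As $Q\in\rr^d$ was arbitrary, $R^t_s u\leq R^t_s v$ on $\rr^d$ for each $s\leq t$, which is the claim. There is no real obstacle here; the only point requiring a word of care — and the one I would make explicit — is the reduction to a common modified Hamiltonian so that both families live in the same $\mathcal{Q}_m$ and the selector can legitimately be applied to each and compared; once that is in place the argument is immediate from \ref{mmax}-\eqref{monmin}.
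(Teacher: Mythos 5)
Your proof is correct and follows the same route as the paper: reduce to a common modified Hamiltonian $H_{\delta,Z}$ via Definition \ref{ladefinition} so both generating families lie in the same $\mathcal{Q}_m$, observe that $S^t_s v(Q,\xi)-S^t_s u(Q,\xi)=v(q)-u(q)$ is nonnegative and Lipschitz in $\xi$, and invoke Property \ref{mmax}-\eqref{monmin} of the critical value selector. The extra care you devote to justifying the reduction step (common $L$, common radius, Proposition \ref{Hdep}) is precisely what the paper relies on as well.
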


\begin{proof} Let $L$ be a Lipschitz constant for both $u$ and $v$, and fix $s\leq t$, $\delta>0$. Let $H_\delta$ be a Hamiltonian in $\hh^{C(1+\delta)}_0$ coinciding with $H$ on $\rr \times \rr^d \times B\!\left(0,e^{C(1+\delta)(t-s)}(1+L)-1 \right)$ as in Definition \ref{ladefinition}, so that $R^t_{s,H} u(Q)=R^t_{s,H_\delta} u(Q)$ and $R^t_{s,H} v(Q)=R^t_{s,H_\delta}v(Q)$.

Since $S^t_{s,H_\delta} v(Q,\xi) -S^t_{s,H_\delta} u(Q,\xi) = v(q)-u(q)$ is a non negative and Lipschitz function of $\xi$, the monotonicity \ref{mmax}-(\ref{monmin}) of $\sigma$ applies and $R^t_{s,H_\delta} u(Q) \leq R^t_{s,H_\delta} v(Q)$, thus \[R^t_{s,H} u(Q)\leq R^t_{s,H} v(Q).\]
\end{proof}
\begin{prop}[Additivity]\label{veradd}
	If $c$ is a real constant, then $R^t_s (c+u) = c+ R^t_s u$ for each Lipschitz function $u$.
\end{prop}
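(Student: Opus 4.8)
The plan is to mirror the proof of monotonicity (Proposition \ref{vermonot}), reducing the additivity of $R^t_s$ to the additivity property \ref{mmax}-\eqref{add} of the critical value selector $\sigma$. First I would fix $s\leq t$ and a Lipschitz function $u$ with Lipschitz constant $L$, and note that $c+u$ has the same Lipschitz constant $L$. Then, following Definition \ref{ladefinition}, I would pick $\delta>0$ and a Hamiltonian $H_\delta$ in $\hh^{C(1+\delta)}_0$ coinciding with $H$ on $\rr\times\rr^d\times B\!\left(0,e^{C(1+\delta)(t-s)}(1+L)-1\right)$, so that $R^t_{s,H}u(Q)=R^t_{s,H_\delta}u(Q)$ and $R^t_{s,H}(c+u)(Q)=R^t_{s,H_\delta}(c+u)(Q)$ for every $Q\in\rr^d$ (the same strip works for both since $u$ and $c+u$ share the Lipschitz constant $L$).

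The key computation is then to observe directly from the definition \eqref{S} of the generating family that adding a constant to the initial condition simply translates the generating family by that constant: for every $Q$ and every $\xi=(q,p,\nu)$,
\[
S^t_{s,H_\delta}(c+u)(Q,\xi)=(c+u)(q)+G^t_s(p,\nu,Q)+p\cdot(Q-q)=c+S^t_{s,H_\delta}u(Q,\xi).
\]
Since $\xi\mapsto S^t_{s,H_\delta}u(Q,\xi)$ lies in some $\qq_m$ (Proposition \ref{Squad}), and adding a constant does not change that, I can apply the additivity property \ref{mmax}-\eqref{add} of $\sigma$ to get $\sigma(S^t_{s,H_\delta}(c+u)(Q,\cdot))=c+\sigma(S^t_{s,H_\delta}u(Q,\cdot))$, i.e. $R^t_{s,H_\delta}(c+u)(Q)=c+R^t_{s,H_\delta}u(Q)$. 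Combining with the first step yields $R^t_s(c+u)(Q)=c+R^t_su(Q)$ for all $Q$, which is the claim.

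There is essentially no obstacle here: the statement is a one-line consequence of the linearity of $u\mapsto S^t_s u$ in the $u(q)$ term together with axiom \ref{mmax}-\eqref{add} for $\sigma$. The only point requiring a word of care — and the reason the passage through $H_\delta$ is needed at all — is that for a general Hamiltonian $R^t_s$ is only defined via the auxiliary compactly supported modification, so one must make sure the same modification is admissible for both $u$ and $c+u$; this is immediate because they have the same Lipschitz constant and the width of the admissible strip depends on $u$ only through that constant.
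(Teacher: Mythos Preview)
Your proof is correct and takes essentially the same approach as the paper: reduce to the fiberwise compactly supported Hamiltonian $H_\delta$ as in the monotonicity proof, observe from \eqref{S} that $S^t_{s,H_\delta}(c+u)(Q,\cdot)=c+S^t_{s,H_\delta}u(Q,\cdot)$, and apply the additivity axiom \ref{mmax}-\eqref{add} of $\sigma$. The paper's own proof is a one-line reference to this argument.
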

\begin{proof} The additivity property \ref{mmax}-\eqref{add} of $\sigma$ and the form of $S^t_s u$ conclude, as in the previous proof. 
\end{proof}
\begin{prop}[Variational property]\label{vervar}
	For each $\cc^1$ Lipschitz function $u$, $Q$ in $\rr^d$ and $s\leq t$, there exists $(q,p)$ such that $p =d_q u$, $Q^t_s(q,p)=Q$ and if $\gamma$ denotes the Hamiltonian trajectory issued from $(q(s),p(s))=(q,p)$, \[R^t_s u(Q)=u(q)+ \aaa^t_s(\gamma),\]
\end{prop}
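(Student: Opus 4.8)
The statement to prove is the Variational property (\ref{comp}') for the constructed operator $R^t_s$: given a $\cc^1$ $L$-Lipschitz $u$, a point $Q$, and $s\leq t$, produce $(q,p)$ with $p = d_q u$, $Q^t_s(q,p) = Q$ such that $R^t_s u(Q) = u(q) + \aaa^t_s(\gamma)$ where $\gamma$ is the Hamiltonian trajectory from $(q,p)$. The natural strategy is to invoke the first axiom of the critical value selector, Proposition \ref{mmax}-\eqref{critval}, which tells us that when $\xi\mapsto S^t_s u(Q,\xi)$ is $\cc^1$ (which it is, since $u$ is $\cc^1$ and $S^t_s u$ is built from $\cc^1$ data), $\sigma(S^t_s u(Q,\cdot)) = R^t_s u(Q)$ is a \emph{critical value} of this function. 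Hence there is a critical point $\xi = (q,p,\nu)$ of $\xi\mapsto S^t_s u(Q,\xi)$ with $R^t_s u(Q) = S^t_s u(Q,\xi)$.

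The bulk of the work is already done by Propositions \ref{crit} and \ref{critloc}. Proposition \ref{crit} says precisely that a critical point $\xi = (q,p,\nu)$ of $S^t_s u(Q,\cdot)$ satisfies $p = du(q)$, $Q^t_s(q,p) = Q$, and $(Q_{i-1},p_i) = \phi^{t_i}_s(q,p)$, and moreover that the associated critical value equals $u(q) + \aaa^t_s(\gamma)$ with $\gamma(\tau) = \phi^\tau_s(q,p)$. Combining this with the selector axiom gives directly $R^t_s u(Q) = u(q) + \aaa^t_s(\gamma)$ with the required $(q,p)$. The one subtlety is that $R^t_s$ for a general Hamiltonian satisfying Hypothesis \ref{esti} is defined via Definition \ref{ladefinition} as $R^t_{s,H_{\delta,Z}} u$ for a modified Hamiltonian $H_{\delta,Z}$ that coincides with $H$ only on the strip $\rr\times\rr^d\times B(0,e^{C(1+\delta)(t-s)}(1+L)-1)$. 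So I would first fix such a modification, obtain the critical point $\xi$ and the trajectory $\tilde\gamma$ for $H_{\delta,Z}$, then use Proposition \ref{critloc} (or Lemma \ref{trajesti}) to see that $\tilde\gamma$ stays inside $[s,t]\times B(Q,(e^{C(1+\delta)(t-s)}-1)(1+L))\times B(0,e^{C(1+\delta)(t-s)}(1+L)-1)$, hence inside the region where $H_{\delta,Z} = H$; therefore $\tilde\gamma$ is also the Hamiltonian trajectory for $H$ issued from $(q,p)$, and the action $\aaa^t_s$ computed with $H$ agrees with the one computed with $H_{\delta,Z}$.

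The remaining point is the reduction to $\cc^1$ initial conditions: Proposition \ref{crit} is stated only for $\cc^1$ Lipschitz $u$, and indeed the statement of the Variational property here is for $\cc^1$ Lipschitz $u$, so no density argument is needed. I would also need to check that the $(q,p)$ produced satisfies $\|p\|\leq L$, which is immediate from $p = du(q)$, so that one could equally well work with $\delta\to 0$ and recover estimates with $e^{C(t-s)}$ rather than $e^{C(1+\delta)(t-s)}$; but for the bare Variational property this is not required. The main (mild) obstacle is just the bookkeeping across the Hamiltonian modification of Definition \ref{ladefinition} — making sure that the critical point, the trajectory, and the action all transfer between $H_{\delta,Z}$ and $H$ — and this is handled cleanly by the localization of critical points in Proposition \ref{critloc}. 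Everything else is a direct citation of Propositions \ref{mmax}-\eqref{critval} and \ref{crit}.
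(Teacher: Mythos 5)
Your proof proposal matches the paper's own argument essentially step for step: invoke the selector axiom \ref{mmax}-\eqref{critval} to get a critical value, read off the critical point data from Proposition \ref{crit}, and use Proposition \ref{critloc} to localize the trajectory inside the region where $H$ agrees with the modification $H_{\delta,Z}$, so the trajectory and its action transfer from $H_{\delta,Z}$ back to $H$. The only cosmetic difference is that the paper specializes to $Z=0$ in Definition \ref{ladefinition}; your bookkeeping with a general $Z$ is equivalent.
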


\begin{proof}
	Let us fix $u$, $s\leq t$ and $\delta>0$ and take as in Definition \ref{ladefinition} a Hamiltonian $H_\delta$ in $\hh^{C(1+\delta)}_0$ equal to $H$ on $\rr \times \rr^d \times B\!\left(0,e^{C(1+\delta)(t-s)}(1+L)-1\right)$, such that $R^t_{s,H} u(Q)=R^t_{s,H_\delta}u(Q)$.
	
	Since $u$ is $\cc^1$, $R^t_{s,H_\delta}u(Q)$ is a critical value of $\chi \mapsto S^t_{s,H_\delta} u(Q,\chi)$. Proposition \ref{crit}, which describes the critical points and values of $S$, gives the existence of $(q,p)$ such that $Q^t_{s,H_\delta}(q,p)=Q$ and $p=du(q)$, and states that if $\gamma_\delta(\tau)=\phi^\tau_{s,H_\delta}(q,p)$ denotes the Hamiltonian trajectory issued from $(q,p)$ for the Hamiltonian $H_\delta$, \[R^t_{s,H_\delta}u(Q)=u(q)+\aaa^t_{s,H_\delta}(\gamma_\delta).\] Proposition \ref{critloc}, which localizes the critical points of $S$ under Hypothesis \ref{esti}, gives that $\gamma_\delta(\tau)$ belongs to the set $\rr\times\rr^d \times B\!\left(0,e^{C(1+\delta)(t-s)}(1+L)-1\right)$ for all $\tau$ in $[s,t]$.
	
	Since $H$ and $H_\delta$ coincide on that set for each time in $[s,t]$, $\gamma_\delta$ is also a Hamiltonian trajectory for $H$ on $[s,t]$, the Hamiltonian action of $\gamma_\delta$ has the same expression for $H$ and $H_\delta$, and the conclusion holds: $Q=Q^t_{s,H_\delta}(q,p)=Q^t_{s,H}(q,p)$ and
	\[R^t_{s,H} u(Q)=R^t_{s,H_\delta}u(Q)=u(q)+\aaa^t_{s,H}(\gamma_\delta).\]
\end{proof}
We now prove the Lipschitz estimates of Theorem \ref{lip}, which imply that ${R}^t_s$ satisfies the regularity property \eqref{reg} of Hypotheses \ref{L-O}. 

\begin{proof}[Proof of Theorem \ref{lip}.] Suppose to begin with that $u$ is $\cc^1$ and that $H$ is fiberwise compactly supported, meaning that there exists $R>0$ such that $H(t,q,p)=0$ for $\|p\|\geq R$. Under that assumption, in Proposition \ref{Squad}, the nondegenerate quadratic form $\zz$ does not depend on $s$ or $t$.

For each item of this proof, we are going to use Property \ref{mmax}-(\ref{loc}) on a suitable homotopy $f_\mu$, the form of the derivatives of $S^t_s u$ given in Propositions \ref{crit} and \ref{Sder} and the localization of the critical points of $S^t_s u$ described in Proposition \ref{critloc}. 
	\begin{enumerate}\renewcommand{\labelitemi}{$\bullet$}
		\item Let us show that $R^t_s u$ is Lipschitz with ${\rm Lip}(R^t_s u) \leq e^{C(t-s)}(1+L)-1$.
		Let us fix $Q$ and $h$ in $\rr^d$ and define $f_\mu(\xi)=S^t_s u (Q+\mu h,\xi)$ for $\mu$ in $[0,1]$. The aim is to estimate $|R^t_s u(Q+h)-R^t_s u(Q)| = |\sigma(f_1)-\sigma(f_0)|$.
		
		 Proposition \ref{Squad} states that the family $f_\mu$ is of the form required in Property \ref{mmax}-(\ref{loc}), \emph{i.e.} $f_\mu(\xi)=\zz(\xi)+\ell_\mu(\xi)$, where the family $\ell_\mu$ is equi-Lipschitz with constant ${\rm Lip}(u)+\|Q\|+\|h\|+4(1+R)$.
		 
		  Let us then estimate $\dd_\mu f_\mu$:
		\[\dd_\mu f_\mu(q,p,\nu)=h\cdot\dd_Q S^t_s(Q+\mu h,\xi).\]
		
		If $\xi_\mu=(q_\mu,p_\mu,\nu_\mu)$ is a critical point of $f_\mu$, Proposition \ref{crit} gives on one hand that  $\dd_Q S^t_s(Q+\mu h,\xi_\mu)=P^t_s(q_\mu,p_\mu)$ and Proposition \ref{critloc}, on the other hand, that $\|P^t_s(q_\mu,p_\mu)\|\leq e^{C(t-s)}(1+L)-1$.
		
		To sum it up, we have just proved that $\|\dd_\mu f_\mu\|\leq \|h\|(e^{C(t-s)}(1+L)-1)$ for each critical point of $f_\mu$. This implies by Property \ref{mmax}-(\ref{loc}) of the selector that $|\sigma(f_1)-\sigma(f_0)| \leq \|h\|(e^{C(t-s)}(1+L)-1)$, hence the result.
		
		\item Let us show that $\|R^{t'}_s u-R^t_s u\|_\infty \leq Ce^{2C(t-s)}(1+L)^2|t'-t|$. 
		It is enough to prove the result for $|t-t'|<\delta_1/2$. We may therefore assume that $(t_1,\cdots,t_N)$ is a subdivision suitable both between $s$ and $t$ and between $s$ and $t'$, since the choice of the subdivision does not change the value of the variational operator $R$ (see Proposition \ref{indsub}).
			
			Let us fix $Q$, $t'<t$ and $s$ and define $f_\mu(\xi)=S^{\mu}_s u (Q,\xi)$ for $\mu$ in $[t',t]$. The aim is to estimate $|R^{t}_s u(Q)-R^{t'}_s u(Q)| = |\sigma(f_t)-\sigma(f_{t'})|$.
			
			By Proposition \ref{Squad}, the family $f_\mu$ is as required in Property \ref{mmax}-(\ref{loc}), thanks to the fact that the nondegenerate quadratic form $\zz$ does not depend on $t$ ($=\mu$).
			
			If $\xi_\mu=(q_\mu,p_\mu,\nu_\mu)$ is a critical point of $f_\mu$, Proposition \ref{Sder}-\eqref{temps} gives on one hand that  $\dd_\mu S^\mu_s(Q,\xi_\mu)=-H(\mu,Q,P^{\mu}_s(q_\mu,p_\mu))$ and Proposition \ref{critloc} gives on the other hand that $\|P^\mu_s(q_\mu,p_\mu)\|\leq e^{C(\mu-s)}(1+L)-1$. 
			
	By Hypothesis \ref{esti}, we hence get that\[|\dd_\mu S^{\mu}_s(Q,\xi_\mu)|\leq C(1+\|P^{\mu}_s(q_\mu,p_\mu)\|)^2 \leq Ce^{2C(\mu-s)}(1+L)^2.\]
			
			To sum it up, we have just proved that $\|\dd_\mu f_\mu\|\leq Ce^{2C(t-s)}(1+L)^2$ for each $\mu$ in $[t',t]$ and each critical point of $f_\mu$. Property \ref{mmax}-\eqref{loc} hence states that $\mu \mapsto \sigma(f_\mu)$ is Lipschitz with constant $Ce^{2C(t-s)}(1+L)^2$ on $[t',t]$, hence the result.

		\item Let us show that $\|R^t_{s'} u-R^t_s u\|_\infty \leq C(1+L)^2|s'-s|$.
		Again we may assume that $|s-s'|$ is small enough to choose a subdivision suitable both between $s$ and $t$ and between $s'$ and $t$.
		
		 Let us fix $Q$, $t$ and $s\leq s'$ and define $f_\mu(\xi)=S^{t}_\mu u (Q,\xi)$ for $\mu$ in $[s,s']$. The aim is to estimate $|R^{t}_{s'} u(Q)-R^{t}_s u(Q)| = |\sigma(f_{s'})-\sigma(f_s)|$.
		
		By Proposition \ref{Squad}, the family $f_\mu$ is, again, as required in Property \ref{mmax}-(\ref{loc}).
		
		If $\xi_\mu=(q_\mu,p_\mu,\nu_\mu)$ is a critical point of $f_\mu$, Proposition \ref{Sder}-\eqref{temps} gives on one hand that  $\dd_\mu S^t_\mu(Q,\xi_\mu)=H(\mu,q_\mu,p_{\mu})$ and Proposition \ref{crit} on the other hand that $\|p_\mu\|=\|du(q_\mu)\|\leq L$. 
		
	By Hypothesis \ref{esti}, we hence get that\[|\dd_\mu S^t_{\mu}(Q,\xi)| \leq C(1+L)^2.\]
		
		To sum it up, we have just proved that $\|\dd_\mu f_\mu\|\leq C(1+L)^2$ for each $\mu$ in $[s,s']$ and each critical point of $f_\mu$, hence $\mu \mapsto \sigma(f_\mu)$ is Lipschitz with constant $C(1+L)^2$ on $[s,s']$ and the result holds.

		\item Let us show that $\forall Q \in \rr^d, \left|R^t_s u(Q) - R^t_s v(Q)\right| \leq \|u-v\|_{\bar{B}\!\left(Q,(e^{C(t-s)}-1)(1+L)\right)}$.

	For $Q$ fixed, let us again define $f_\mu=S^t_s\!\left((1-\mu) u+\mu v\right)(Q,\cdot)$ for $\mu$ in $[0,1]$. The aim is to estimate $|R^{t}_s v(Q)-R^{t}_s u(Q)| = |\sigma(f_1)-\sigma(f_0)|$.
		
	By Proposition \ref{Squad}, since $(1-\mu) u+\mu v$ is $L$-Lipschitz, the family $f_\mu$ is, again, as required in Property \ref{mmax}-(\ref{loc}). Let us then estimate $\dd_\mu f_\mu$:
	\[\dd_\mu f_\mu(q,p,\nu)=v(q)-u(q).\]
	
	If $\xi_\mu=(q_\mu,p_\mu,\nu_\mu)$ is a critical point of $f_\mu$, Proposition \ref{critloc} gives that $q_\mu$ belongs to $B\!\left(Q,(e^{C(t-s)}-1)(1+L)\right)$,  
	so that $\|\dd_\mu f_\mu\|\leq \|u-v\|_{\bar{B}\!\left(Q,(e^{C(t-s)}-1)(1+L)\right)}$ for each critical point of $f_\mu$, hence the result.

	\begin{rem}\label{locmonproof1}
		The proof of the alternative Proposition \ref{locmon} is contained here: if $u\leq v$ on $B\!\left(Q,(e^{C(t-s)}-1)(1+L)\right)$, then $\dd_\mu f_\mu(q,p,\nu)=v(q)-u(q)\geq 0$ for each critical point of $f_\mu$, hence $R^t_s v(Q)-R^t_s u(Q)=\sigma(f_1)-\sigma(f_0) \geq0$.
	\end{rem}
	\end{enumerate}
		If $u$ is only Lipschitz with constant $L$, for all $\ep>0$ we may find a $\cc^1$ and $L$-Lipschitz function $u_\ep$ such that $\|u-u_\ep\|_\infty \leq \ep$, and then by weak contraction (Proposition \ref{contsc})$R^t_s u- R^t_s u_\ep$ is also bounded by $\ep$ for each $s\leq t$ . Writing the previous results for $u_\ep$ and then letting $\ep$ tend to zero gives us the wanted estimates.
		
		If $H$ is not fiberwise compactly supported, let us fix $L$, $T$, and $\delta>0$ and take a Hamiltonian $H_\delta$ in $\hh^{C(1+\delta)}_0$ that coincides with $H$ on $\rr\times\rr^d\times B\!\left(0,e^{C(1+\delta)T}(1+L)-1\right)$ as in Definition \ref{ladefinition}, so that if $u$ is $L$-Lipschitz and $0\leq s \leq t \leq T$,  $R^t_s u = R^t_{s,H_\delta} u$.
		
		The previous Lipschitz estimates, applied to $R^t_{s,H_\delta}$, give that:
		\begin{enumerate}\renewcommand{\labelitemi}{$\bullet$}
			\item $R^t_s u$ is Lipschitz with constant ${\rm Lip}(R^t_s u) \leq e^{C(1+\delta)(t-s)}(1+L)-1$,
			\item $\|R^{t'}_s u-R^t_s u\|_\infty \leq C(1+\delta)e^{2C(1+\delta)(t-s)}(1+L)^2|t'-t|$,
			\item $\left\|R^t_{s'} u(Q)-R^t_s u(Q)\right\|_\infty \leq C(1+\delta)(1+L)^2|s'-s|$,
			\item $\left|R^t_s u(Q) - R^t_s v(Q)\right| \leq \|u-v\|_{\bar{B}\!\left(Q,(e^{C(1+\delta)(t-s)}-1)(1+L)\right)}$,
		\end{enumerate}
		and we conclude the proof by letting $\delta$ tend to $0$.	
\end{proof}

Let us end this section with the analogous proof of Proposition \ref{Hlip}, which describes the dependence of the constructed operator with respect to the Hamiltonian.
\begin{proof}[Proof of Proposition \ref{Hlip}]  Let $H_0$ and $H_1$ be two $\cc^2$ Hamiltonians satisfying Hypothesis \ref{esti} with constant $C$, $u$ be a $L$-Lipschitz function, $Q$ be in $\rr^d$ and $s\leq t$. We are going to show that \[|R^t_{s,H_1} u(Q) -R^t_{s,H_0}u(Q)| \leq (t-s)\|H_1-H_0\|_{\bar{V}},\]
	where $\bar{V}=[s,t]\times \bar{B}\!\left(Q,(e^{C(t-s)}-1)(1+L)\right)\times \bar{B}\!\left(0,e^{C(t-s)}(1+L)-1\right)$.
	
	Let us first assume that $u$ is a $\cc^1$ function, and that $H_0$ and $H_1$ are fiberwise compactly supported. Let us define $H_\mu=(1-\mu)H_0 + \mu H_1$ for $\mu$ in $[0,1]$ and observe that $H_\mu$ is in $\hh^C_0$, and that there exists a $R>0$ such that $H_\mu(t,q,p)=0$ for all $\|p\|\geq R$ and all $\mu$ in $[0,1]$. Let us denote by $\phi^t_{s,\mu}=(Q^t_{s,\mu},P^t_{s,\mu})$ the Hamiltonian flow for $H_\mu$.
	
	Let us fix $Q$ and $h$ in $\rr^d$ and define $f_\mu(\xi)=S^t_{s,H_\mu} u (Q,\xi)$ for $\mu$ in $[0,1]$. The aim is to estimate $|R^t_{s,H_1} u(Q)-R^t_{s,H_0} u(Q)| = |\sigma(f_1)-\sigma(f_0)|$.
	
	Proposition \ref{Squad} states that the homotopy $f_\mu$ is of the form required in the condition \ref{mmax}-(\ref{loc}): $f_\mu(\xi)=\zz(\xi)+\ell_\mu(\xi)$, where the family $(\ell_\mu)$ is equi-Lipschitz with constant ${\rm Lip}(u)+\|Q\|+4(1+R)$. 	
	
	Let $\xi=(q,p,\nu)$ be a critical point of $f_\mu$. On the one hand, Proposition \ref{critloc}  gives that $\phi^\tau_{s,\mu}(q,p)$ is in $B\!\left(Q,(e^{C(t-s)}-1)(1+L)\right)\times  B\!\left(0,e^{C(t-s)}(1+L)-1\right)$ for all $s\leq \tau\leq t$, since $H_\mu$ satisfies Hypothesis \ref{esti} with constant $C$. On the other hand, Proposition \ref{Sder}-\eqref{SHdep} gives that
	\[\dd_\mu f_\mu(\xi)=\dd_\mu S^t_{s,H_\mu} u (Q,q,p,\nu)=-\int^t_s \dd_\mu H_{\mu}(\tau,\phi^\tau_{s,\mu}(q,p)) \; \, d\tau.\]
	
	Since $\dd_\mu H_\mu = H_1-H_0$, we have just proved that $\|\dd_\mu f_\mu\|\leq (t-s)\|H_0-H_1\|_V$ for each critical point of $f_\mu$. This implies  that $|\sigma(f_1)-\sigma(f_0)| \leq (t-s)\|H_0-H_1\|_V$ by Property \ref{mmax}-(\ref{loc}) of the selector, hence the result.

		\begin{rem}\label{locmonproof2}
			The proof of the alternative Proposition \ref{locmon} is contained here: if $H_0\leq H_1$ on $V$, then $\dd_\mu f_\mu(\xi)=-\int^t_s (H_1-H_0)(\tau,\phi^\tau_{s,\mu}(q,p))\leq 0$ for each critical point of $f_\mu$, hence $R^t_{s,H_1} u(Q) -R^t_{s,H_0}u(Q)=\sigma(f_1)-\sigma(f_0) \leq0$.
		\end{rem}

	If $u$ is only Lipschitz with constant $L$, for all $\ep>0$ we may find a $\cc^1$ and $L$-Lipschitz function $u_\ep$ such that $\|u-u_\ep\|_\infty \leq \ep$, and then by continuity (Proposition \ref{contsc})$R^t_s u- R^t_s u_\ep$ is also bounded by $\ep$ for each $s\leq t$ . Writing the previous results for $u_\ep$ and then letting $\ep$ tend to zero gives us the wanted estimates.

	If $H_0$ and $H_1$ are not fiberwise compactly supported, take $\delta >0$ and $H_{0,\delta}$ (resp.
		$H_{1,\delta}$) in $\hh^{C(1+\delta)}_0$ coinciding with $H_0$ (resp. with $H_1$) on $\rr \times \rr^d\times B\!\left(0,e^{C(1+\delta)(t-s)}(1+L)-1\right)$ as in Definition \ref{ladefinition}, so that $R^t_{s,H_{0}} u= R^t_{s,H_{0,\delta}} u$ and $R^t_{s,H_{1}} u= R^t_{s,H_{1,\delta}} u$.
		The previous work applied to $H_{0,\delta}$ and $H_{1,\delta}$ gives that
		\[\left|R^t_{s,H_1} u(Q) -R^t_{s,H_0}u(Q)\right| = \left|R^t_{s,H_{1,\delta}} u(Q) -R^t_{s,H_{0,\delta}}u(Q)\right|\leq  (t-s) \underbrace{\|H_{1,\delta}-H_{0,\delta}\|_{V_\delta}}_{=\|H_1-H_0\|_{V_\delta}},\]
		where $V_\delta=[s,t]\times B\!\left(Q,(e^{C(1+\delta)(t-s)}-1)(1+L)\right)\times B\!\left(0,e^{C(1+\delta)(t-s)}(1+L)-1\right)$. The result is then obtained by letting $\delta$ tend to $0$.
\end{proof}
	
Let us add here the considerably simpler Lipschitz estimates obtained for integrable Hamiltonians, using Remark \ref{estint} instead of Proposition \ref{critloc} in the previous proofs.
	
\begin{add}\label{varlipint}
 If $H(p)$ (resp. $\tilde{H}(p)$) satisfies Hypothesis \ref{esti} with constant $C$, then for $0\leq s \leq s' \leq t'\leq t$ and $u$ and $v$ two $L$-Lipschitz functions,		\begin{enumerate}\renewcommand{\labelitemi}{$\bullet$}
		\item $R^t_s u$ is $L$-Lipschitz,
		\item $\|R^{t'}_s u-R^t_s u\|_\infty \leq C(1+L)^2|t'-t|$,
		\item $\|R^t_{s'} u-R^t_s u\|_\infty \leq C(1+L)^2|s'-s|$,
		\item $\forall Q \in \rr^d, \left|R^t_s u(Q) - R^t_s v(Q)\right| \leq \|u-v\|_{\bar{B}\!\left(Q,C(t-s)(1+L)\right)}$,
		\item $\|R^t_{s,\tilde{H}} u -R^t_{s,H}u\|_\infty \leq (t-s)\|\tilde{H}-H\|_{\bar{B}\!\left(0,L\right)}.$
	\end{enumerate}
	where $\bar{B}\!\left(Q,r\right)$ denotes the closed ball of radius $r$ centered in $Q$ and $\|u\|_K:= sup_K |u|$.
\end{add}

\section{Iterating the variational operator}\label{iteration}

A variational operator does a priori not satisfy the Markov property \eqref{mark} of Hypotheses \ref{L-O}, and in that case it cannot coincide with the viscosity operator. Yet we may obtain the viscosity operator from the variational operator we have just constructed by iterating it along a subdivision of the time space and letting then the maximal step of the subdivision tend to zero. Doing so preserves the monotonicity, additivity, regularity and compatibility properties of the operator and the limit operator satisfies the Markov property, hence is the viscosity operator.

\subsection{Iterated operator and uniform Lipschitz estimates}\label{iterlip}

Let us recall the definition of the iterated operator. We fix a sequence of subdivisions of $[0,\infty)$ $\left((\tau^N_i)_{i\in\nn}\right)_{N \in \nn}$ such that for all $N$, $0=\tau^N_0$, $\tau^N_i \underset{i\to \infty}{\to} \infty$ and $i \mapsto \tau^N_i$ is increasing. Assume also that for all $N$, $i \mapsto \tau^N_{i+1}-\tau^N_i$ is bounded a constant $\delta_N$ such that $\delta_N$ tends to zero when $N$ tends to the infinite.

\begin{defi} Let $N$ be fixed and omitted in the notations. For $t$ in $\rr_+$, denote by $i(t)$ the unique integer such that $t$ belongs to $[\tau_{i(t)},\tau_{i(t)+1})$. Now, if $u$ is a Lipschitz function on $\rr^d$, and $0\leq s \leq t$, let us define the iterated operator at rank $N$ by\begin{displaymath}
	R^t_{s,N} u = R^{t}_{\tau_{i(t)}} R^{\tau_{i(t)}}_{\tau_{i(t)-1}}\cdots R^{\tau_{i(s)+1}}_{s} u,
	\end{displaymath}
where $R^t_s$ is any variational operator satisfying the Lipschitz estimate of Theorem \ref{lip}.
\end{defi}
Let us now sum up the Lipschitz estimates of the iterated operator: note that thanks to the semigroup form of Lipschitz constants for the non iterated operator in Theorem \ref{lip}, the new estimates do not depend on $N$.
\begin{prop}\label{ulip} Let $0\leq s \leq s' \leq t'\leq t \leq T$ and $u$ and $v$ two $L$-Lipschitz functions. The Lipschitz constants for the iterated operator are:	\begin{enumerate}\renewcommand{\labelitemi}{$\bullet$}
		\item ${\rm Lip}(R^t_{s,N} u) \leq  e^{CT}(1+L)-1$,
		\item $\|R^{t'}_{s,N} u-R^t_{s,N} u\|_\infty \leq Ce^{2CT}(1+L)^2|t'-t|$,
		\item $\|R^t_{s',N} u-R^t_{s,N} u\|_\infty \leq C(1+L)^2|s'-s|$,
		\item \label{uliploc}$\forall Q \in \rr^d, \left|R^t_{s,N} u(Q) - R^t_{s,N} v(Q)\right| \leq \|u-v\|_{\bar{B}\!\left(Q,(e^{CT}-1)(1+L)\right)}$.
	\end{enumerate}
\end{prop}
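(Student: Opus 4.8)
The plan is to obtain each of the four estimates of Proposition~\ref{ulip} directly from the corresponding estimate in Theorem~\ref{lip} by induction on the number of subdivision intervals crossed between $s$ and $t$, exploiting the fact that the Lipschitz constants in Theorem~\ref{lip} have a semigroup structure: the constant $e^{C(t-s)}(1+L)-1$ is of the form $\Phi(t-s,L)$ where $\Phi(\tau_2,\Phi(\tau_1,L))=\Phi(\tau_1+\tau_2,L)$, which means that composing the operator over consecutive intervals does not degrade the bound. Concretely, write $s=\sigma_0<\sigma_1<\cdots<\sigma_k=t$ for the relevant points of the subdivision $(\tau_i^N)$ (i.e.\ $\sigma_1=\tau_{i(s)+1}$, $\sigma_{k-1}=\tau_{i(t)}$, $\sigma_k=t$), so that $R^t_{s,N}u=R^{\sigma_k}_{\sigma_{k-1}}\circ\cdots\circ R^{\sigma_1}_{\sigma_0}u$, and set $u_0=u$, $u_j=R^{\sigma_j}_{\sigma_{j-1}}u_{j-1}$ so that $u_k=R^t_{s,N}u$.

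For the first estimate, I would show by induction that $u_j$ is $L_j$-Lipschitz with $1+L_j\le e^{C(\sigma_j-s)}(1+L)$: indeed, if $1+L_{j-1}\le e^{C(\sigma_{j-1}-s)}(1+L)$, then by the first bullet of Theorem~\ref{lip} applied on $[\sigma_{j-1},\sigma_j]$ we get ${\rm Lip}(u_j)\le e^{C(\sigma_j-\sigma_{j-1})}(1+L_{j-1})-1\le e^{C(\sigma_j-\sigma_{j-1})}e^{C(\sigma_{j-1}-s)}(1+L)-1=e^{C(\sigma_j-s)}(1+L)-1$. Taking $j=k$ and bounding $\sigma_k-s=t-s\le T$ gives ${\rm Lip}(R^t_{s,N}u)\le e^{CT}(1+L)-1$. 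The fourth estimate is handled the same way but tracking two orbits: with $v_j$ defined analogously from $v$, one proves by induction on $j$ (running from the last interval backwards, or equivalently using the telescoping $u_k-v_k=\sum$ of one-step differences) that $|u_k(Q)-v_k(Q)|\le \|u-v\|_{\bar B(Q,r_k)}$ with $r_k=(e^{C(t-s)}-1)(1+L)$; the key point is that the radius added at step $j$, namely $(e^{C(\sigma_j-\sigma_{j-1})}-1)(1+L_{j-1})\le (e^{C(\sigma_j-\sigma_{j-1})}-1)e^{C(\sigma_{j-1}-s)}(1+L)=(e^{C(\sigma_j-s)}-e^{C(\sigma_{j-1}-s)})(1+L)$, telescopes to $(e^{C(t-s)}-1)(1+L)\le(e^{CT}-1)(1+L)$, using the fourth bullet of Theorem~\ref{lip} at each step together with the uniform Lipschitz bound $L_{j-1}$ already obtained, and the nesting of balls $\bar B(Q',r)\subset\bar B(Q,r+\|Q-Q'\|)$.

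For the two time-continuity estimates (second and third bullets), the only subtlety is that moving $t$ or $s$ may change which subdivision intervals are involved, so I would reduce to the case where $t,t'$ (resp.\ $s,s'$) lie in the same subdivision interval — the general case following by concatenation since both sides are then seen to be, up to the finitely many intermediate fixed times, governed by a single application of $R$ on the varying interval composed with fixed operators that are $1$-Lipschitz in sup-norm for bounded perturbations (Proposition~\ref{contsc}), or more cleanly by using the Markov-type decomposition $R^t_{s,N}=R^t_{s'',N}\circ R^{s''}_{s,N}$ at a grid point $s''$. Concretely, for the second bullet with $t'<t$ in $[\tau_{i(t)},\tau_{i(t)+1})$, write $R^t_{s,N}u=R^t_{\tau_{i(t)}}w$ and $R^{t'}_{s,N}u=R^{t'}_{\tau_{i(t)}}w$ where $w=R^{\tau_{i(t)}}_{s,N}u$ is $L'$-Lipschitz with $1+L'\le e^{CT}(1+L)$ by the first estimate; the second bullet of Theorem~\ref{lip} gives $\|R^t_{\tau_{i(t)}}w-R^{t'}_{\tau_{i(t)}}w\|_\infty\le Ce^{2C(t-\tau_{i(t)})}(1+L')^2|t'-t|\le Ce^{2CT}(1+L)^2|t'-t|$; the third bullet is analogous using $R^t_{s',N}u=R^t_{\tau_{i(s')},N}(R^{\tau_{i(s')}}_{s',\cdot}u)$ and the third bullet of Theorem~\ref{lip}, which has the $N$-independent constant $C(1+L)^2$ with no exponential, so no issue arises.

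The main obstacle, and the only place demanding care, is the bookkeeping in the second and third estimates when $s$ (resp.\ $t$) crosses a grid point: one must verify that the iterated operator is genuinely continuous across such crossings, which ultimately rests on the fact that for a grid point $\tau_i$ one has $R^{\tau_i}_{\tau_i}={\rm id}$ and the Lipschitz estimates of Theorem~\ref{lip} degenerate continuously as the time interval shrinks to zero; combined with the uniform-in-$N$ Lipschitz-in-time bounds just established, this makes the crossing contribute nothing in the limit. Everything else is the routine telescoping described above, whose success is entirely due to the semigroup form of the constants in Theorem~\ref{lip}.
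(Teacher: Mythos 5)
Your proposal follows essentially the same route as the paper: exploit the semigroup form of the constants in Theorem~\ref{lip}, telescope the Lipschitz constants and the ball radii over the subdivision, reduce the time-continuity estimates to $|t-t'|\le\delta_N$ (resp.\ $|s-s'|\le\delta_N$), and split the case where a grid point is crossed by using the intermediate grid point as a Markov-type pivot. That is precisely the structure of the paper's proof.

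One slip worth fixing in the second bullet: you write that $w=R^{\tau_{i(t)}}_{s,N}u$ satisfies $1+L'\le e^{CT}(1+L)$ and then claim
$Ce^{2C(t-\tau_{i(t)})}(1+L')^2\le Ce^{2CT}(1+L)^2$.
That chain does not close: from the loose bound $1+L'\le e^{CT}(1+L)$ you would only get $Ce^{2C(t-\tau_{i(t)})}e^{2CT}(1+L)^2$, which exceeds $Ce^{2CT}(1+L)^2$. You must use the sharper interval-dependent bound you already established in the first bullet, namely $1+L'\le e^{C(\tau_{i(t)}-s)}(1+L)$; then
$Ce^{2C(t-\tau_{i(t)})}(1+L')^2\le Ce^{2C(t-s)}(1+L)^2\le Ce^{2CT}(1+L)^2$,
which is exactly the paper's computation. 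The third bullet avoids this pitfall automatically because the constant $C(1+L)^2$ there carries no exponential factor in $t-s$. With this correction the proof is sound, and the crossing-case handling (split at the intervening grid point $\tau_{i(t)}$, using that the iterated operator decomposes there by definition) matches the paper's case analysis $i(t)=i(t')$ versus $i(t)=i(t')+1$.
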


\begin{proof} This whole proof consists in exploiting the results of Theorem \ref{lip} while keeping the Lipschitz estimates independent of $N$.
	\begin{enumerate}
		\item Since ${\rm Lip}(R^t_s u)\leq e^{C(t-s)}(1+{\rm Lip}(u))-1$ and $R^t_{s,N} u= R^t_{\tau_{i(t)}}(R^{\tau_{i(t)}}_{\tau_{i(t)-1}}\cdots R^{\tau_{i(s)+1}}_{s} u)$:
		\begin{displaymath}
		\begin{split}
		{\rm Lip}(R^t_{s,N} u) &\leq e^{C(t-\tau_{i(t)})}(1+{\rm Lip}(R^{\tau_{i(t)}}_{\tau_{i(t)-1}}\cdots R^{\tau_{i(s)+1}}_{s} u))-1\\
		&\leq e^{C(t-\tau_{i(t)})}e^{C(\tau_{i(t)} - \tau_{i(t)-1})}(1+{\rm Lip}(R^{\tau_{i(t)-1}}_{\tau_{i(t)-2}}\cdots R^{\tau_{i(s)+1}}_{s} u))-1\\
		&\leq e^{C(t-\tau_{i(t)}+\tau_{i(t)}-\cdots -s)} (1+{\rm Lip} (u))-1 \\
		&\leq e^{CT}(1+L)-1.
		\end{split}
		\end{displaymath}
		
		\item Assume that $0\leq s\leq t'\leq t \leq T$. It is enough to prove the result for $|t-t'|\leq \delta_N$, and in that case either $i(t)=i(t')$, or $i(t)=i(t')+1$.
		 If $i(t)=i(t')$, then \begin{displaymath}\begin{split}
		\|R^t_{s,N} u-R^{t'}_{s,N}u\|_{\infty} & = \|R^t_{\tau_{i(t)}}\!\left(R^{\tau_{i(t)}}_{\tau_{i(t)-1}}\cdots R^{\tau_{i(s)+1}}_{s} u\right)-R^{t'}_{\tau_{i(t)}}\!\left(R^{\tau_{i(t)}}_{\tau_{i(t)-1}}\cdots R^{\tau_{i(s)+1}}_{s} u\right)\|_{\infty}\\
		&\leq Ce^{2C(t-\tau_{i(t)})}\left(1+{\rm Lip}\!\left(R^{\tau_{i(t)}}_{\tau_{i(t)-1}}\cdots R^{\tau_{i(s)+1}}_{s} u\right)\right)^2|t'-t|.
		\end{split}\end{displaymath}
		Now since $1+{\rm Lip}\!\left(R^{\tau_{i(t)}}_{\tau_{i(t)-1}}\cdots R^{\tau_{i(s)+1}}_{s} u\right) \leq e^{C(\tau_{i(t)}-s)}(1+L)$,\[
		\|R^t_{s,N} u-R^{t'}_{s,N}u\|_{\infty}\leq Ce^{2C(t-s)}(1+L)^2|t-t'|\leq Ce^{2CT}(1+L)^2|t-t'|.\]
		Else, assume that $i(t)=i(t')+1$. Then \[
		\|R^t_{s,N} u-R^{t'}_{s,N}u\|_{\infty}= \|R^t_{s,N} u-R^{\tau_{i(t)}}_{s,N}u + R^{\tau_{i(t)}}_{\tau_{i(t)-1}}\cdots R^{\tau_{i(s)+1}}_{s} u-R^{t'}_{\tau_{i(t)-1}}\cdots R^{\tau_{i(s)+1}}_{s} u\|_{\infty}
		\]
		and we may use the previous case to estimate both quantities:\begin{displaymath}\begin{split}
		\|R^t_{s,N} u-R^{t'}_{s,N}u\|_{\infty}&\leq Ce^{2C(t-s)}(1+L)^2|t-\tau_{i(t)}|+Ce^{2C(t-s)}(1+L)^2|\tau_{i(t)}-t'| \\
		& \leq Ce^{2C(t-s)}(1+L)^2 |t-t'|\leq Ce^{2CT}(1+L)^2|t-t'|
		\end{split}\end{displaymath}
		since in that case $t' \leq \tau_{i(t)}\leq t$.
		
		\item Again, it is enough to prove the result for $|s-s'|\leq \delta_N$. We freely use a consequence of the estimate proved in the next point:
		\[\|R^t_{s,N} u -R^t_{s,N}v\|_\infty \leq \|u-v\|_\infty \]
		If $i(s')=i(s)$, 
		 \begin{displaymath}\begin{split}
		 \|R^t_{s,N} u-R^t_{s',N}u\|_{\infty} & = \|R^t_{\tau_{i(s)+1},N} R^{\tau_{i(s)+1}}_{s} u-R^{t}_{\tau_{i(s)+1},N} R^{\tau_{i(s)+1}}_{s'} u\|_{\infty}\\
		 &\leq \|R^{\tau_{i(s)+1}}_{s'} u-R^{\tau_{i(s)+1}}_{s} u\|_\infty \leq C(1+L)^2|s-s'|.
		 \end{split}\end{displaymath}
		 		
		If $i(s')=i(s)+1$,
		\begin{displaymath}\begin{split}
		\|R^t_{s',N} u-R^t_{s,N}u\|_{\infty} & \leq  \|R^t_{s',N} u-R^t_{\tau_{i(s')},N}u\|_{\infty}+\|R^t_{\tau_{i(s')},N}u-R^t_{s,N}u\|_{\infty}\\
		&\leq C(1+L)^2\left((s'-i(s'))+(i(s')-s)\right) \leq C(1+L)^2|s-s'|.
		\end{split}\end{displaymath}
		
		\item \label{ppp} Let $Q$ be fixed. Note that $R^{\tau_{i(t)}}_{\tau_{i(t)-1}}\cdots R^{\tau_{i(s)+1}}_{s} u$ and $R^{\tau_{i(t)}}_{\tau_{i(t)-1}}\cdots R^{\tau_{i(s)+1}}_{s} v$ are both Lipschitz with constant $(e^{C(\tau_{i(t)}-s)}(1+L)-1)$.
		Then\begin{displaymath}\begin{split}
		|R^t_{s,N} & u(Q)-R^{t}_{s,N}v(Q)| \\& = |R^t_{\tau_{i(t)}}\!\left(R^{\tau_{i(t)}}_{\tau_{i(t)-1}}\cdots R^{\tau_{i(s)+1}}_{s} u\right)(Q)-R^{t}_{\tau_{i(t)}}\!\left(R^{\tau_{i(t)}}_{\tau_{i(t)-1}}\cdots R^{\tau_{i(s)+1}}_{s} v\right)(Q)|\\
		& \leq \|R^{\tau_{i(t)}}_{\tau_{i(t)-1}}\cdots R^{\tau_{i(s)+1}}_{s} u-R^{\tau_{i(t)}}_{\tau_{i(t)-1}}\cdots R^{\tau_{i(s)+1}}_{s} v\|_{\bar{B}\!\left(Q,(e^{C(t-\tau_{i(t)})}-1)e^{C(\tau_{i(t)}-s)}(1+L))\right)}.\end{split}\end{displaymath}
		Estimating the Lipschitz constant of $R^{\tau_{i(t)-1}}_{\tau_{i(t)-2}}\cdots R^{\tau_{i(s)+1}}_{s} u$ and $R^{\tau_{i(t)-1}}_{\tau_{i(t)-2}}\cdots R^{\tau_{i(s)+1}}_{s} v$ gives the next step:
		\begin{displaymath}\begin{split}
		|R^t_{s,N} u(Q)-& R^{t}_{s,N}v(Q)|\\
		& \leq \|R^{\tau_{i(t)-1}}_{\tau_{i(t)-2}}\cdots R^\cdot_{s} u-R^{\tau_{i(t)-1}}_{\tau_{i(t)-2}}\cdots R^\cdot_{s} v\|_{\bar{B}\!\left(Q,(e^{C(t-s)}-e^{C(\tau_{i(t)-1}-s)})(1+L))\right)}\\
		&\;\;\;\;\;\;\,\,\,\,\,\,\leq \cdots \leq \|u-v\|_{\bar{B}\!\left(Q,(e^{C(t-s)}-1)(1+L))\right)}.\end{split}\end{displaymath}
	\end{enumerate}
\end{proof}

Let us gather the Lipschitz dependence in $s$ and $t$ to obtain an estimation of how non-Markov the iterated operator is:

\begin{prop}\label{nonmarkov}
	Take $0\leq s \leq r \leq t\leq T$ and $u$ $L$-Lipschitz. Then for all integer $N$,
	\[\|R^t_{s,N}u - R^t_{r,N}R^r_{s,N} u\|_\infty \leq 2C e^{2CT}(1+L)^2 \delta_N\]  
	where $\delta_N$ is the upper bound of $i \mapsto \tau^N_{i+1}-\tau^N_i$.
\end{prop}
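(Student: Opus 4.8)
Fix $N$ and, as in the rest of this section, drop it from the notation, writing $\tau_i$ for $\tau^N_i$ and $i(\cdot)$ for $i_N(\cdot)$. The plan is to exploit the fact that $R^t_{s,N}$ and $R^t_{r,N}\circ R^r_{s,N}$ are assembled from the very same elementary blocks $R^{\tau_{i+1}}_{\tau_i}$ and differ only in the way the subdivision block $[\tau_{i(r)},\tau_{i(r)+1})$ containing $r$ is cut — the composed operator merely inserting the extra node $r$ inside it. All of the non-Markovianity is thus concentrated in one block of length at most $\delta_N$, and it will be controlled by the continuity estimates of Theorem~\ref{lip}.

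First I would rewrite both operators as ordered compositions of non-iterated operators. Unwinding the definition one checks two elementary facts: (a) if $\tau_j$ is a subdivision point lying in $[\alpha,\beta]$, then $R^\beta_{\alpha,N}=R^\beta_{\tau_j,N}\circ R^{\tau_j}_{\alpha,N}$, the blocks concatenating at $\tau_j$ (the empty block $R^\gamma_\gamma$ being the identity for every $\gamma$); and (b) on an interval whose interior contains no subdivision point, $R^\beta_{\alpha,N}=R^\beta_\alpha$. Applying these around $r$, set $a=\max(s,\tau_{i(r)})$, $b=r$ and $c=\min(t,\tau_{i(r)+1})$, so that $a\le b\le c$ and $[a,c]\subset[\tau_{i(r)},\tau_{i(r)+1}]$, whence $c-a\le\delta_N$. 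Then
\[R^t_{s,N}u=\Phi_2\!\left(R^c_a(\Phi_1 u)\right)\quad\text{and}\quad R^t_{r,N}R^r_{s,N}u=\Phi_2\!\left(R^c_b\,R^b_a(\Phi_1 u)\right),\]
where $\Phi_1=R^a_{s,N}$ and $\Phi_2=R^t_{c,N}$ are iterated operators, reduced to the identity when $a=s$ or $c=t$. The boundary configurations ($r$ itself a subdivision point, or $s$, $r$ and $t$ lying in a single subdivision step) are covered by the same formulas and either make the left-hand side vanish or reduce directly to the generic case.

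Next I would strip off $\Phi_1$ and $\Phi_2$. Writing $w=\Phi_1 u=R^a_{s,N}u$, Proposition~\ref{ulip} gives $1+{\rm Lip}(w)\le e^{CT}(1+L)$, while its last estimate shows that $\Phi_2=R^t_{c,N}$ is $1$-Lipschitz for $\|\cdot\|_\infty$. Hence
\[\|R^t_{s,N}u-R^t_{r,N}R^r_{s,N}u\|_\infty\le\|R^c_a w-R^c_b\,R^b_a w\|_\infty.\]
Inserting $R^c_b w$ and invoking Theorem~\ref{lip}: its third estimate bounds $\|R^c_a w-R^c_b w\|_\infty$ by $C(1+{\rm Lip}(w))^2(b-a)$; its fourth estimate (the localized contraction in the initial condition) bounds $\|R^c_b w-R^c_b(R^b_a w)\|_\infty$ by $\|w-R^b_a w\|_\infty=\|R^b_b w-R^b_a w\|_\infty$, which is again at most $C(1+{\rm Lip}(w))^2(b-a)$ by the third estimate of Theorem~\ref{lip} (using $R^b_b={\rm id}$). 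Adding the two contributions and using $b-a\le c-a\le\delta_N$ together with $(1+{\rm Lip}(w))^2\le e^{2CT}(1+L)^2$ yields exactly $\|R^t_{s,N}u-R^t_{r,N}R^r_{s,N}u\|_\infty\le 2Ce^{2CT}(1+L)^2\delta_N$.

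The chain of inequalities in the last two paragraphs is routine; the genuinely delicate part is the bookkeeping of the first step — checking that the two iterated operators really coincide outside the single block straddling $r$, correctly identifying $a$ and $c$, and dispatching the various boundary and degenerate configurations. Once that combinatorial reduction is carried out, the bound follows at once from Theorem~\ref{lip} and Proposition~\ref{ulip}.
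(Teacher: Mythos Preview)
Your proof is correct and follows essentially the same approach as the paper: strip off the common outer iterated operator using the contraction estimate, reduce to comparing $R^{c}_{a}w$ with $R^{c}_{b}R^{b}_{a}w$ on the single subdivision block containing $r$, insert the pivot $R^{c}_{b}w$, and bound both halves via Theorem~\ref{lip}. The only cosmetic difference is that for the term $\|w-R^{b}_{a}w\|_\infty$ you write it as $\|R^{b}_{b}w-R^{b}_{a}w\|_\infty$ and invoke the $s$-Lipschitz estimate~(3), whereas the paper writes it as $\|R^{a}_{a}w-R^{b}_{a}w\|_\infty$ and invokes the $t$-Lipschitz estimate~(2); your choice actually avoids an extra $e^{2C(b-a)}$ factor at that step, and your explicit $a=\max(s,\tau_{i(r)})$, $c=\min(t,\tau_{i(r)+1})$ handles the degenerate configurations a bit more cleanly than the paper, which tacitly assumes the generic case.
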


\begin{proof}
	Let us first show that if $s\leq r \leq t$, then
	\[\|R^t_s u - R^t_r R^r_s u\|_\infty \leq 2C e^{2C(t-s)}\left(1+{\rm Lip}(u)\right)^2|r-s|\]
	for each Lipschitz function $u$. Since $R^s_s u = u$, we might write
	\[\begin{split}
	\|R^t_s u - R^t_r R^r_s u\|_\infty &\leq \|R^t_s u -R^t_r u\|_\infty 
	+ \|R^t_r R^s_s u-R^t_r R^r_s u\|_\infty\\	
	& \leq C (1+{\rm Lip}(u))^2|r-s| + \| R^s_s u- R^r_s u\|_\infty\\
	& \leq C \left(1+{\rm Lip}(u)\right)^2|r-s| + C e^{2C(r-s)}\left(1+{\rm Lip}(u)\right)^2|r-s|\\
	& \leq C(1+e^{2C(t-s)})\left(1+{\rm Lip}(u)\right)^2|r-s|\\
	& \leq 2Ce^{2C(t-s)}\left(1+{\rm Lip}(u)\right)^2|r-s|.
	\end{split}\]
	The second line is obtained by applying the Lipschitz estimates w.r.t. $s$ and $u$ of Theorem \ref{lip}, the third line by applying the Lipschitz estimate w.r.t. $t$ (same Theorem).
	
	Now, let us fix $N$ and estimate $\|R^t_{s,N}u - R^t_{r,N}R^r_{s,N} u\|_\infty$. The fourth point of Proposition \ref{ulip} implies that
	\[\begin{split}
		\|R^t_{s,N}u -& R^t_{r,N}R^r_{s,N} u\|_\infty \leq \|R^{\tau_{i(r)+1}}_{s,N}u  -R^{\tau_{i(r)+1}}_r R^r_{s,N} u\|_\infty \\
		& \leq \|R^{\tau_{i(r)+1}}_{\tau_{i(r)}}R^{\tau_{i(r)}}_{s,N} u -R^{\tau_{i(r)+1}}_{r} R^r_{\tau_{i(r)}} R^{\tau_{i(r)}}_{s,N}  u\|_\infty.\end{split}\]
		Using the previous result gives that 
			\[
			\|R^t_{s,N}u - R^t_{r,N}R^r_{s,N} u\|_\infty
		 \leq 	2C e^{2C(\tau_{i(r)+1}-\tau_{i(r)})}\left(1+{\rm Lip}(R^{\tau_{i(r)}}_{s,N}u)\right)^2|r-\tau_{i(r)}| \]
		 and since $\left(1+{\rm Lip}(R^{\tau_{i(r)}}_{s,N}u)\right)^2 \leq e^{2C(\tau_{i(r)}-s)}\left(1+{\rm Lip}(u)\right)^2$, we get
			\[
			\|R^t_{s,N}u - R^t_{r,N}R^r_{s,N} u\|_\infty \leq 2C e^{2C(\tau_{i(r)+1}-s)}\left(1+{\rm Lip}(u)\right)^2|r-\tau_{i(r)}|.\]
	Then the result comes by using the definition of $\delta_N$.
\end{proof}

Let us add a word on the dependence with respect to $H$, extending Proposition \ref{Hlip}:
\begin{prop}
	Let $H_0$ and $H_1$ be two $\cc^2$ Hamiltonians satisfying Hypothesis \ref{esti} with constant $C$, $u$ be a $L$-Lipschitz function, $Q$ be in $\rr^d$ and $s\leq t$. Then
	\[|{R}^t_{s,H_1,N} u(Q) -{R}^t_{s,H_0,N}u(Q)| \leq (t-s)\|H_1-H_0\|_{\bar{V}},\]
	where $\bar{V}=[s,t]\times \bar{B}\!\left(Q,(e^{C(t-s)}-1)(1+L)\right)\times \bar{B}\!\left(0,e^{C(t-s)}(1+L)-1\right)$.
\end{prop}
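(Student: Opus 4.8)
The idea is to telescope along the subdivision, swapping $H_1$ for $H_0$ one interval at a time, and to push each elementary discrepancy back to the point $Q$ by means of the localized Lipschitz estimate (fourth item) of Proposition \ref{ulip}. Denote by $s=\theta_0<\theta_1<\cdots<\theta_m=t$ the subdivision points occurring in $R^t_{s,N}$, i.e. $\theta_1=\tau^N_{i_N(s)+1},\dots,\theta_{m-1}=\tau^N_{i_N(t)}$, so that for any Hamiltonian satisfying Hypothesis \ref{esti} with constant $C$ one has $R^t_{s,H,N}u=R^{\theta_m}_{\theta_{m-1},H}\cdots R^{\theta_1}_{\theta_0,H}u$. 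For $0\le j\le m$ let $w_j$ be the hybrid function obtained by applying the first $j$ steps with $H_1$ and the last $m-j$ steps with $H_0$; then $w_0=R^t_{s,H_0,N}u$, $w_m=R^t_{s,H_1,N}u$, and $R^t_{s,H_1,N}u(Q)-R^t_{s,H_0,N}u(Q)=\sum_{j=0}^{m-1}\bigl(w_{j+1}(Q)-w_j(Q)\bigr)$.

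Fix $j$. Write $g_j=R^{\theta_j}_{\theta_{j-1},H_1}\cdots R^{\theta_1}_{\theta_0,H_1}u$ for the common inner function, which by the semigroup form of the Lipschitz estimate (Proposition \ref{ulip}, first item, on $[s,\theta_j]$) satisfies $1+{\rm Lip}(g_j)\le e^{C(\theta_j-s)}(1+L)$; write $\Phi=R^t_{\theta_{j+1},H_0,N}$ for the iterated $H_0$-operator over the remaining intervals (the identity if $j+1=m$); and set $a_j=R^{\theta_{j+1}}_{\theta_j,H_1}g_j$, $b_j=R^{\theta_{j+1}}_{\theta_j,H_0}g_j$, which by the first item of Theorem \ref{lip} are Lipschitz with $1+{\rm Lip}(a_j),\,1+{\rm Lip}(b_j)\le e^{C(\theta_{j+1}-s)}(1+L)$. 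Then $w_{j+1}(Q)-w_j(Q)=\Phi(a_j)(Q)-\Phi(b_j)(Q)$, and the fourth item of Proposition \ref{ulip} (in the sharp form with $t-\theta_{j+1}$ in the exponent, as read off from its proof) gives $|\Phi(a_j)(Q)-\Phi(b_j)(Q)|\le\|a_j-b_j\|_{\bar{B}(Q,\rho_{j+1})}$ with $\rho_{j+1}=(e^{C(t-s)}-e^{C(\theta_{j+1}-s)})(1+L)$. For every $Q'$ in that ball, Proposition \ref{Hlip} applied to $g_j$ on $[\theta_j,\theta_{j+1}]$ bounds $|a_j(Q')-b_j(Q')|$ by $(\theta_{j+1}-\theta_j)\,\|H_1-H_0\|_{\bar{V}_j(Q')}$, where $\bar{V}_j(Q')=[\theta_j,\theta_{j+1}]\times\bar{B}\!\left(Q',(e^{C(\theta_{j+1}-\theta_j)}-1)(1+{\rm Lip}(g_j))\right)\times\bar{B}\!\left(0,e^{C(\theta_{j+1}-\theta_j)}(1+{\rm Lip}(g_j))-1\right)$.

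The one point requiring care is to check that every $\bar{V}_j(Q')$ is contained in $\bar{V}$. Its time factor is $[\theta_j,\theta_{j+1}]\subseteq[s,t]$; its momentum factor is centered at $0$ with radius $\le e^{C(\theta_{j+1}-\theta_j)}e^{C(\theta_j-s)}(1+L)-1=e^{C(\theta_{j+1}-s)}(1+L)-1\le e^{C(t-s)}(1+L)-1$; and its position factor is $\bar{B}(Q',r_j)$ with $r_j\le(e^{C(\theta_{j+1}-\theta_j)}-1)e^{C(\theta_j-s)}(1+L)=(e^{C(\theta_{j+1}-s)}-e^{C(\theta_j-s)})(1+L)$, which, since $Q'\in\bar{B}(Q,\rho_{j+1})$, lies in $\bar{B}(Q,\rho_{j+1}+r_j)\subseteq\bar{B}\!\left(Q,(e^{C(t-s)}-e^{C(\theta_j-s)})(1+L)\right)\subseteq\bar{B}\!\left(Q,(e^{C(t-s)}-1)(1+L)\right)$, using $\rho_{j+1}+(e^{C(\theta_{j+1}-s)}-e^{C(\theta_j-s)})(1+L)=(e^{C(t-s)}-e^{C(\theta_j-s)})(1+L)$. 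Hence $\|H_1-H_0\|_{\bar{V}_j(Q')}\le\|H_1-H_0\|_{\bar{V}}$, so $|w_{j+1}(Q)-w_j(Q)|\le(\theta_{j+1}-\theta_j)\|H_1-H_0\|_{\bar{V}}$, and summing over $j$ the telescoping sum yields $\sum_{j=0}^{m-1}(\theta_{j+1}-\theta_j)\|H_1-H_0\|_{\bar{V}}=(t-s)\|H_1-H_0\|_{\bar{V}}$, as claimed. I expect the main obstacle to be precisely this uniform domination of all intermediate position and momentum balls by the single box $\bar{V}$; the telescoping identity and the two quoted estimates (Propositions \ref{Hlip} and \ref{ulip}) are then routine.
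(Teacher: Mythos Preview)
Your proof is correct and follows the same strategy as the paper: telescope by swapping $H_0$ for $H_1$ one subdivision step at a time, using Proposition \ref{Hlip} on the step where the swap occurs and the localized Lipschitz estimate in $u$ to carry the resulting discrepancy back to $Q$, then check that all intermediate position and momentum balls nest inside $\bar{V}$ via the semigroup identity $(e^{C(t-\theta_{j+1})}-1)e^{C(\theta_{j+1}-s)}+(e^{C(\theta_{j+1}-\theta_j)}-1)e^{C(\theta_j-s)}=e^{C(t-s)}-e^{C(\theta_j-s)}$. The only organisational difference is that the paper presents this as an induction on the number of steps (writing out the two-step case and saying the rest follows), whereas you write the full telescoping sum explicitly; the estimates invoked and the ball-nesting verification are identical.
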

\begin{proof}  To lighten the notation, let us prove that for the non iterated operator,	\[\begin{split}
	|{R}^t_{\tau,H_1}{R}^{\tau}_{s,H_1} & u(Q) -{R}^t_{\tau,H_0}{R}^{\tau}_{s,H_0}u(Q)| \\&\leq (t-s)\|H_1-H_0\|_{[s,t]\times\bar{B}\!\left(Q,(e^{C(t-s)}-1)(1+L)\right)\times \bar{B}\!\left(0,e^{C(t-s)}(1+L)-1\right)}.
	\end{split}\]
	The result is then obtained for the iterated operator by induction on the number of steps between $s$ and $t$.
	
	For both $H_0$ and $H_1$, $1+{\rm Lip}(R^{\tau}_{s}u)\leq e^{C(\tau-s)}(1+L)$ by Theorem \ref{lip}. Hence, on the one hand, Proposition \ref{Hlip} gives that
	\[\begin{split}
	|{R}^t_{\tau,H_1}&{R}^{\tau}_{s,H_1}  u(Q) -{R}^t_{\tau,H_0}{R}^{\tau}_{s,H_1}u(Q)| \\&\leq (t-\tau)\|H_1-H_0\|_{[\tau,t]\times\bar{B}\!\left(Q,(e^{C(t-\tau)}-1)e^{C(\tau-s)}(1+L)\right)\times \bar{B}\!\left(0,e^{C(t-\tau)}e^{C(\tau-s)}(1+L)-1\right)}\\
	& \leq (t-\tau)\|H_1-H_0\|_{\bar{V}}.
	\end{split}\]	
	On the other hand, using the Lipschitz estimate with respect to $u$ of Theorem \ref{lip},
	\[|{R}^t_{\tau,H_0}{R}^{\tau}_{s,H_1}  u(Q) -{R}^t_{\tau,H_0}{R}^{\tau}_{s,H_0}u(Q)| \leq \|{R}^{\tau}_{s,H_1}  u-{R}^{\tau}_{s,H_0}u\|_{\bar{B}\!\left(Q,(e^{C(t-s)}-1)e^{C(\tau-s)}(1+L)\right)}\]		
	Proposition \ref{Hlip} gives that for each $q$ of $\bar{B}\!\left(Q,(e^{C(t-s)}-1)e^{C(\tau-s)}(1+L)\right)$,
	\[|{R}^{\tau}_{s,H_1}  u(q)-{R}^{\tau}_{s,H_0}u(q)| \leq (\tau-s)\|H_1-H_0\|_{[s,\tau]\times \bar{B}\!\left(q,(e^{C(\tau-s)}-1)(1+L)\right)\times \bar{B}\!\left(0,e^{C(\tau-s)}(1+L)-1\right)},\]
	and then summing up the radius of the balls gives
		\[\begin{split}
		|{R}^t_{\tau,H_0}&{R}^{\tau}_{s,H_1}  u(Q) -{R}^t_{\tau,H_0}{R}^{\tau}_{s,H_0}u(Q)| \\&\leq (\tau-s)\|H_1-H_0\|_{[s,\tau]\times\bar{B}\!\left(Q,(e^{C(t-s)}-1)(1+L)\right)\times \bar{B}\!\left(0,e^{C(\tau-s)}(1+L)-1\right)}\\
		& \leq (\tau-s)\|H_1-H_0\|_{\bar{V}}.
		\end{split}\]
		Summing up the two estimates concludes the proof.
\end{proof}

\subsection{Convergence towards the viscosity operator}\label{AA}
In this section we prove that the iterated operator sequence $(R^t_{s,N})_N$ converges to a limit operator when the maximal step of the subdivision tends to $0$. To do so, we first use a compactness argument to get a converging subsequence (Theorem \ref{sscv}), then show that the limit of such a subsequence is the viscosity operator (Proposition \ref{work}) and finally prove Theorem \ref{cc} with the uniqueness of this operator.

\begin{defi} Let $\|\cdot\|_{Lip}$ be the norm on the sets of real-valued Lipschitz functions on $\rr^d$ given by \begin{displaymath}
	\|u\|_{Lip}=|u(0)|+ {\rm Lip}(u).
	\end{displaymath} 
\end{defi}

\begin{defi} We denote by $\mathcal{L}^L(K)$ the set of Lipschitz functions on $\rr^d$ supported by the compact set $K$ and with Lipschitz norm $\|\cdot\|_{Lip}$ bounded by the constant $L$: \begin{displaymath}
	\mathcal{L}^L(K) = \left\{ u \in C^{0,1}(\rr^d,\rr) \Big| \begin{array}{l}
	supp(u) \subset K\\
	\|u\|_{Lip}\leq L \end{array}\right\}
	\end{displaymath}
\end{defi}
\begin{prop} The set $\mathcal{L}^L(K)$ is a compact set for the uniform norm.
\end{prop}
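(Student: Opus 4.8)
The statement asserts that $\mathcal{L}^L(K)$, the set of Lipschitz functions on $\rr^d$ supported in a fixed compact $K$ with $\|u\|_{Lip}=|u(0)|+{\rm Lip}(u)\le L$, is compact for the uniform norm. Since $C^0(\rr^d,\rr)$ with the uniform norm is a metric space, it suffices to prove that $\mathcal{L}^L(K)$ is sequentially compact, and the natural tool is the Arzelà–Ascoli theorem. I would first observe that every $u\in\mathcal{L}^L(K)$ is uniformly bounded: indeed $\|u\|_\infty\le |u(0)|+{\rm Lip}(u)\cdot\operatorname{diam}(K\cup\{0\})\le L(1+\operatorname{diam}(K\cup\{0\}))$, using that $u$ vanishes outside $K$ (so its sup is attained on $K$, or $u\equiv 0$) and that $|u(0)|+{\rm Lip}(u)\le L$. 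Second, the family is equi-Lipschitz (all functions are $L$-Lipschitz since ${\rm Lip}(u)\le L$), hence equicontinuous. Third, since all functions are supported in the common compact $K$, no mass escapes to infinity, which replaces the usual restriction-to-compacts step in Arzelà–Ascoli.

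The argument then runs as follows. Take a sequence $(u_n)$ in $\mathcal{L}^L(K)$. By Arzelà–Ascoli applied on a large closed ball $\bar B(0,\rho)$ containing $K$ (equibounded + equicontinuous on the compact $\bar B(0,\rho)$), extract a subsequence converging uniformly on $\bar B(0,\rho)$ to some continuous $u$. Outside $\bar B(0,\rho)$ every $u_n$ is zero (since $K\subset \bar B(0,\rho)$ and $\operatorname{supp}(u_n)\subset K$), so the convergence is in fact uniform on all of $\rr^d$ with limit $u$ extended by $0$ outside $\bar B(0,\rho)$. It remains to check that the limit $u$ lies in $\mathcal{L}^L(K)$: uniform limits of $L$-Lipschitz functions are $L$-Lipschitz, so ${\rm Lip}(u)\le L$; pointwise convergence gives $|u(0)|=\lim|u_n(0)|\le L-{\rm Lip}(u_n)$, and combined with lower semicontinuity of ${\rm Lip}$ under uniform convergence one gets $|u(0)|+{\rm Lip}(u)\le \liminf\big(|u_n(0)|+{\rm Lip}(u_n)\big)\le L$; finally $u$ is supported in $K$ because $u$ vanishes on the open set $\rr^d\setminus K$ (each $u_n$ does, and uniform convergence preserves this), and $K$ being closed, $\operatorname{supp}(u)=\overline{\{u\ne 0\}}\subset K$. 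Hence $u\in\mathcal{L}^L(K)$ and $\mathcal{L}^L(K)$ is sequentially compact, therefore compact.

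The only mildly delicate point — not really an obstacle — is the joint constraint $|u(0)|+{\rm Lip}(u)\le L$: one must be a little careful to pass it to the limit, using that $u\mapsto |u(0)|$ is continuous for the uniform norm while $u\mapsto{\rm Lip}(u)$ is merely lower semicontinuous, so that $|u(0)|+{\rm Lip}(u)\le\liminf_n\big(|u_n(0)|+{\rm Lip}(u_n)\big)\le L$. Everything else is a routine application of Arzelà–Ascoli, the key structural input being that a single compact $K$ contains all the supports, which kills the tightness/escape-at-infinity issue and lets one work on one big ball.
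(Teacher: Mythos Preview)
Your proof is correct and takes essentially the same approach as the paper: Arzelà--Ascoli gives relative compactness (equiboundedness from the common compact support and the bound $\|u\|_{Lip}\le L$, equicontinuity from the uniform Lipschitz constant), and then one checks that $\mathcal{L}^L(K)$ is closed under uniform limits. The paper's proof is just a two-line sketch of exactly this, so you have simply spelled out the details --- including the lower semicontinuity of ${\rm Lip}(\cdot)$ needed to pass the constraint $|u(0)|+{\rm Lip}(u)\le L$ to the limit.
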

\begin{proof} The Arzelà-Ascoli theorem immediately gives that the closure of $\mathcal{L}^L(K)$ is compact. Then, it is easy to check that $\mathcal{L}^L(K)$ is closed. Hence, it is compact.
\end{proof}

\begin{prop}\label{arg}
	For each $T>0$, $R>0$, $L>0$, the family $\left\{(s,t,Q,u) \mapsto R^t_{s,N} u (Q)\right\}_{N}$ is equi-Lipschitz on the set $\left\{0\leq s \leq t \leq T\right\}\times \bar{B}(0,R) \times \mathcal{L}^L(\bar{B}(0,R))$.
\end{prop}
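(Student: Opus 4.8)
The plan is to establish equi-Lipschitz control of the map $(s,t,Q,u) \mapsto R^t_{s,N} u(Q)$ in each of its four arguments separately, with constants depending only on $T$, $R$, $L$ and $C$ but not on $N$. Three of these four estimates are essentially the content of Proposition \ref{ulip}: the bound $\|R^t_{s,N}u - R^{t'}_{s,N}u\|_\infty \leq Ce^{2CT}(1+L)^2|t'-t|$ controls the $t$-dependence, the bound $\|R^t_{s',N}u - R^t_{s,N}u\|_\infty \leq C(1+L)^2|s'-s|$ controls the $s$-dependence, and from ${\rm Lip}(R^t_{s,N}u) \leq e^{CT}(1+L)-1$ we get $|R^t_{s,N}u(Q) - R^t_{s,N}u(Q')| \leq (e^{CT}(1+L)-1)|Q-Q'|$, which controls the $Q$-dependence. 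So the only work is the dependence on $u$.

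For the $u$-variable, the point \eqref{uliploc} of Proposition \ref{ulip} gives $|R^t_{s,N}u(Q) - R^t_{s,N}v(Q)| \leq \|u-v\|_{\bar B(Q,(e^{CT}-1)(1+L))}$ for any two $L$-Lipschitz functions. Since we restrict to $u,v \in \mathcal{L}^L(\bar B(0,R))$ — which are $L$-Lipschitz with $|u(0)| \leq L$ — this immediately yields $|R^t_{s,N}u(Q) - R^t_{s,N}v(Q)| \leq \|u-v\|_\infty$, i.e. the family is $1$-Lipschitz in $u$ for the uniform norm uniformly in everything else. One should check that $\|\cdot\|_{Lip}$ and $\|\cdot\|_\infty$ are equivalent norms on $\mathcal{L}^L(\bar B(0,R))$ (they are, by a Poincaré-type inequality on the compact support: $\|u\|_\infty \leq |u(0)| + L\,{\rm diam}(\bar B(0,R)) = |u(0)|+2RL$ and conversely $|u(0)| \leq \|u\|_\infty$), so that the $1$-Lipschitz estimate in $\|\cdot\|_\infty$ is indeed an equi-Lipschitz estimate for whichever norm is used to metrize $\mathcal{L}^L(\bar B(0,R))$.

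It then remains only to assemble these four partial estimates into a single global Lipschitz bound on the product space. This is routine: for $(s,t,Q,u)$ and $(s',t',Q',v)$ in the domain, one interpolates one coordinate at a time, writing the difference $R^t_{s,N}u(Q) - R^{t'}_{s',N}v(Q')$ as a telescoping sum of four terms, each of which changes only one argument, and bounds each term by the corresponding partial estimate above. Care must be taken that in each intermediate step the function still lies in the relevant Lipschitz class and the time points still satisfy $0 \leq s \leq t \leq T$; since each operator $R^{\cdot}_{\cdot,N}$ preserves the bound ${\rm Lip} \leq e^{CT}(1+L)-1$, one may need to enlarge the Lipschitz constant in the intermediate comparisons to some $L' = e^{CT}(1+L)-1$, which is harmless as all constants remain $N$-independent and controlled by $T,R,L,C$. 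The only mildly delicate point — and the closest thing to an obstacle — is bookkeeping the fact that $R^{t'}_{s',N}v$ need not be compactly supported even though $v$ is, so when comparing it to $R^t_{s',N}v$ one must use the $\|\cdot\|_\infty$ estimates of Proposition \ref{ulip} (which do not require compact support) rather than any estimate phrased in terms of $\mathcal{L}^L$; this is exactly why Proposition \ref{ulip} was stated for general $L$-Lipschitz functions.
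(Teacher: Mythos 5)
Your proof is correct and follows the same route as the paper: the $N$-uniform estimates of Proposition \ref{ulip} in $t$, $s$, $Q$ and $u$ (the $u$-estimate weakened to the sup norm) are telescoped into a global Lipschitz bound. One parenthetical slip: the norms $\|\cdot\|_{Lip}$ and $\|\cdot\|_\infty$ are \emph{not} equivalent on $\mathcal{L}^L(\bar{B}(0,R))$ --- take $u_n(q)=n^{-1}\max(0,1-n\|q\|)$, which has $\|u_n\|_\infty\to 0$ while ${\rm Lip}(u_n)=1$, with all $u_n$ lying in a fixed $\mathcal{L}^2(\bar{B}(0,1))$; only the one-sided inequality $\|u-v\|_\infty\le(1+R)\|u-v\|_{Lip}$ holds, but this is harmless since the metric underlying the Arzel\`a--Ascoli argument of Theorem \ref{sscv} is the uniform norm, so the $1$-Lipschitz bound in $\|\cdot\|_\infty$ is exactly what is needed.
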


\begin{proof}
	It is enough to observe that the Lipschitz constants obtained in Proposition \ref{ulip} depend only on $T$, $R$, $L$, and that if $u$ and $v$ are compactly supported Lipschitz functions,\[\|R^t_{s,N} u-R^t_{s,N} v\|\leq \|u-v\|_\infty.\]
\end{proof}

\begin{theo}\label{sscv} There exists a subsequence $N_k$ such that for all $0\leq s \leq t$, $Q \in \rr^d$, $u$ Lipschitz function on $\rr^d$, $R^t_{s,N_k}u(Q)$ has a limit when $k$ tends to $\infty$, denoted $\bar{R}^t_su (Q)$. Furthermore, the sequence of functions $\left\{(s,t,Q) \mapsto R^t_{s,N_k}u(Q)\right\}_k$ converges uniformly towards $(s,t,Q)\mapsto \bar{R}^t_s u(Q)$ on every compact subset of $\left\lbrace 0\leq s \leq t\right\rbrace \times \rr^d$. 
\end{theo}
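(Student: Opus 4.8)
The plan is to extract the convergent subsequence by a diagonal argument over a countable dense set, then upgrade pointwise convergence to local uniform convergence via the equi-Lipschitz bound of Proposition \ref{arg}. First I would fix a countable family $(u_j)_{j\in\nn}$ of compactly supported Lipschitz functions that is dense (for the uniform norm, on compact sets) in $\cc^{0,1}(\rr^d)$: for instance, piecewise-affine functions with rational breakpoints and rational values, supported in balls of integer radius. For each fixed $j$, Proposition \ref{arg} tells us that $\{(s,t,Q)\mapsto R^t_{s,N}u_j(Q)\}_N$ is equi-Lipschitz on every set $\{0\le s\le t\le T\}\times\bar B(0,R)$, and it is also uniformly bounded there (apply the Lipschitz-in-$q$ estimate of Proposition \ref{ulip} at a fixed point, together with the Lipschitz-in-$s$ and Lipschitz-in-$t$ estimates). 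By Arzelà–Ascoli and a diagonal extraction over $j\in\nn$ and over an exhausting sequence of compacts $\{0\le s\le t\le k\}\times\bar B(0,k)$, I obtain a single subsequence $N_k$ along which $(s,t,Q)\mapsto R^t_{s,N_k}u_j(Q)$ converges uniformly on every compact subset of $\{0\le s\le t\}\times\rr^d$, for every $j$.

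Next I would pass from the dense family to an arbitrary Lipschitz $u$. The key tool is the contraction estimate $\|R^t_{s,N}u-R^t_{s,N}v\|_\infty\le\|u-v\|_\infty$ for compactly supported $u,v$ (noted in the proof of Proposition \ref{arg}, coming from Proposition \ref{contsc}), which is uniform in $N$, $s$, $t$. Given $u$ Lipschitz and a compact $K=\{0\le s\le t\le T\}\times\bar B(0,R)$, I first reduce to compactly supported $u$: by the finite propagation speed (the fourth estimate of Proposition \ref{ulip}), $R^t_{s,N}u(Q)$ for $(s,t,Q)\in K$ depends only on the values of $u$ on $\bar B(0,R+(e^{CT}-1)(1+L))$, so I may replace $u$ by a compactly supported Lipschitz function agreeing with it on that ball without changing $R^t_{s,N}u$ on $K$. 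Then I approximate this compactly supported $u$ uniformly by some $u_j$ within $\ep$; the contraction estimate gives $\|R^t_{s,N}u-R^t_{s,N}u_j\|_\infty\le\ep$ uniformly in $N$ on $K$, and since $(R^t_{s,N_k}u_j)_k$ is uniformly Cauchy on $K$, a standard $3\ep$ argument shows $(R^t_{s,N_k}u)_k$ is uniformly Cauchy on $K$, hence converges uniformly on $K$. Letting $K$ exhaust $\{0\le s\le t\}\times\rr^d$ defines $\bar R^t_s u(Q)$ and gives the claimed local uniform convergence; in particular it converges pointwise for every $(s,t,Q)$ and every Lipschitz $u$.

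The main obstacle, and the only place requiring care, is the interplay between the non-compactness of $\rr^d$ and the fact that the contraction estimate is only available for compactly supported functions: one cannot directly apply Arzelà–Ascoli to $(s,t,Q,u)\mapsto R^t_{s,N}u(Q)$ on the whole space of Lipschitz functions, which is why the argument must proceed through a dense countable family of \emph{compactly supported} functions and use the finite propagation speed from Proposition \ref{ulip} to localize a general $u$ to a compactly supported one on each compact set $K$. Once that localization is in place, everything else is the routine Arzelà–Ascoli plus diagonal-extraction plus density machinery, and no new estimate beyond those of Propositions \ref{ulip} and \ref{arg} and Proposition \ref{contsc} is needed.
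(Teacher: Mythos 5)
Your proposal is correct and reaches the same conclusion by essentially the same Arzelà--Ascoli-plus-diagonal-extraction-plus-localization strategy, but there is a real organizational difference worth noting. The paper treats $u$ as one of the arguments of the map $(s,t,Q,u)\mapsto R^t_{s,N}u(Q)$ and applies Arzelà--Ascoli on the compact product $\{0\le s\le t\le T\}\times\bar B(0,R)\times\mathcal{L}^L(\bar B(0,R))$, using the compactness of the function ball $\mathcal{L}^L(\bar B(0,R))$ (which is itself an Arzelà--Ascoli fact). You instead run Arzelà--Ascoli on $(s,t,Q)$ only, for each member of a countable dense family $(u_j)$ of compactly supported Lipschitz functions, diagonalize over $j$ and over an exhaustion, and then pass to general $u$ via the uniform contraction estimate $\|R^t_{s,N}u-R^t_{s,N}u_j\|_\infty\le\|u-u_j\|_\infty$. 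This trades the compactness of $\mathcal{L}^L(K)$ for an explicit separability argument plus the contraction; either works. Your version is a bit more explicit and perhaps more familiar; the paper's is slightly tighter because it does not require exhibiting and verifying density of a concrete countable family. One small point to make rigorous in your third-step truncation, which the paper handles explicitly: when you replace $u$ by a compactly supported $\bar u$ agreeing with $u$ on a ball, the radius of that ball must be taken as $(e^{CT}-1)(1+\bar L)$ where $\bar L\ge L$ is the Lipschitz constant of $\bar u$ itself (not of $u$), so the cutoff construction and the choice of $\bar L$ have to be made compatible; the paper spells out a cutoff $\phi$ whose derivative decays so as to keep $\bar L$ arbitrarily close to $L$.
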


\begin{proof} The first step consists in applying Arzelà-Ascoli theorem with $(s,t,Q,u)$ living in the compact set $\{0\leq s \leq t \leq T\} \times \bar{B}(0,R) \times \mathcal{L}^L\!\left(\bar{B}(0,R)\right)$, where $T$, $R$ and $L$ are fixed. The second step is to get a subsequence working for all $T$, $R$ and $L$. The third step consists in extending the result to Lipschitz functions which are not compactly supported.\\	
	$ $\\
	\emph{First step.} Since Proposition \ref{arg} gives that $\left\{(s,t,Q,u) \mapsto R^t_{s,N}v(Q)\right\}_N$ is equi-Lipschitz on $\{0\leq s \leq t \leq T\} \times \bar{B}(0,R) \times \mathcal{L}^L\!\left(\bar{B}(0,R+CT)\right)$, it is enough to prove that it is uniformly bounded at one point - for example $(s,s,Q,0)$ - to gather all the conditions required to apply Arzelà-Ascoli theorem. \begin{displaymath}
	|R^s_{s,N} 0(Q)|=|0(Q)|= 0,
	\end{displaymath}
	hence, there exists a subsequence $N_k$ (a priori depending on $T$, $R$ and $L$) such that the sequence $\left\{(s,t,Q,u) \mapsto R^t_{s,N_k}u(Q)\right\}_k$ converges uniformly to a limit $(s,t,Q,u) \mapsto \bar{R}^t_s u(Q)$ on the compact set $\{0\leq s \leq t \leq T\} \times \bar{B}(0,R) \times \mathcal{L}^L\!\left(\bar{B}(0,R)\right)$.
	
	$ $\\
	\emph{Second step.} In this paragraph we will describe a subsequence by the diagonal process. Note that the first step also applies on every subsequence of $(R^t_{s,N})_N$.

	Let $T_i=R_i=L_i=i$ for each integer $i$.  
	
	For $i=1$, let $\psi^1$ be the subsequence given by the Arzelà-Ascoli theorem for the sequence $(R^t_{s,N})_{N\in \nn}$ and the constants $T_1=L_1=R_1=1$.
	
	For $i>1$, let $\psi^i$ be the subsequence given by the Arzelà-Ascoli theorem for the sequence $(R^t_{s,\psi^{i-1}(N)})_{N\in\nn}$ and the constants $T_i=L_i=R_i=i$.
	
	Now define the diagonal subsequence $N_k=\psi^k(k)$: for all $k$, $(N_i)_{i\geq k}$ is extracted from $\psi^k$.
	
	For each $T$, $R$, $L$, there exists $i$ such that $T\leq i$, $R\leq i$ and $L\leq i$. Since $R^t_{s,\psi^i(k)}$ converges on $\{0\leq s \leq t \leq i\} \times \bar{B}(0,i) \times \mathcal{L}^i\!\left(\bar{B}(0,i)\right)$, it converges on $\{0\leq s \leq t \leq T\} \times \bar{B}(0,R) \times \mathcal{L}^L\!\left(\bar{B}(0,R)\right)$, and so does $R^t_{s,N_k}$ since $N_k$ is a subsequence of $\psi^i(k)$. Hence we have constructed a subsequence that works for all $L$, $R$, $T$ positive constants. If $\mathcal{L}_c$ denotes the set of compactly supported Lipschitz functions,\[
	\bigcup_{T,L,R} \{0\leq s \leq t \leq T\} \times \bar{B}(0,R) \times \mathcal{L}^L\!\left(\bar{B}(0,R)\right) = \{0\leq s \leq t\}\times \rr^d \times \mathcal{L}_c,
	\]
	and the subsequence we have constructed converges for all $s\leq t$, $Q\in \rr^d$ and $u$ compactly supported Lipschitz function.
	
	$ $\\
	\emph{Third step.} Now take $T$ and $R$ two constants and $u$ a Lipschitz function on $\rr^d$, with Lipschitz constant $L$. For all $\bar{L}> L$, we build a compactly supported $\bar{L}$-Lipschitz function $\bar{u}$ such that $\bar{u}=u$ on $\bar{B}\!\left(0,R +(e^{CT}-1)(1+\bar{L})\right)$: to do so, let us take a compactly supported $\cc^1$ function $\phi:\rr^+\to [0,1]$ such that \[\left\{\begin{array}{l}
	\phi = 1 \textrm{ on } [0,R+(e^{CT}-1)(1+\bar{L})],\\	|\phi'(x)|\leq \frac{L'-L}{|u(0)|+Lx} \;\; \forall x \geq 0,
	\end{array}\right.\] and $\bar{u}(q)=\phi(\|q\|)\cdot u(q)$. 
	
	If $u$ is $\cc^1$, so is $\bar{u}$, and since $\|d_q(\phi(\|q\|))\| =|\phi'(\|q\|)|\leq \frac{L'-L}{|u(0)|+L \|q\|}$, the differential of $\bar{u}$ is bounded by $\bar{L}$:
	\[
	\|d\bar{u}(q)\|\leq \underbrace{\|d_q(\phi(\|q\|))\| \cdot |u(q)|}_{\leq \bar{L}-L }+\underbrace{|\phi(q)|}_{\leq 1}\cdot \underbrace{\|du(q)\|}_{\leq L}\leq \bar{L}.
	\] 
	If $u$ is not $\cc^1$, one can show that $\bar{u}$ is $\bar{L}$-Lipschitz by applying the mean value theorem to $\phi$. 
	
	For all $Q$ in the ball $\bar{B}(0,R)$, since $u$ and $\bar{u}$ are $\bar{L}$-Lipschitz and coincide on the ball centered in $Q$ of radius $(e^{CT}-1)(1+\bar{L})$ , the Lipschitz property \ref{ulip}-\eqref{uliploc} gives\begin{displaymath}
	R^t_{s,N} \bar{u}(Q) = R^t_{s,N} u(Q) \; \; \forall N \in \nn, \forall \; 0\leq s \leq t \leq T.
	\end{displaymath}
	Since $\bar{u}$ is a compactly supported function, $\left\{(s,t,Q) \mapsto R^t_{s,N_k} \bar{u} (Q)\right\}_k$ uniformly converges on $\{0\leq s \leq t \leq T\}\times \bar{B}(0,R)$, and thus the same holds for $\left\{(s,t,Q) \mapsto R^t_{s,N_k} u (Q)\right\}_k$.
\end{proof}

\begin{prop}\label{work} The limit operator $\bar{R}^t_s$ is the viscosity operator: $\bar{R}^t_s=V^t_s$.
\end{prop}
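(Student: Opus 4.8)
The plan is to verify that the limit operator $\bar{R}^t_s$ satisfies the five defining properties of a viscosity operator from Hypotheses \ref{L-O}, and then invoke the uniqueness statement (Consequence \ref{viscuniq}, proved in Appendix \ref{C}) to conclude $\bar{R}^t_s=V^t_s$. Throughout I work along the subsequence $(N_k)$ produced by Theorem \ref{sscv}, so that $R^t_{s,N_k}u(Q)\to\bar{R}^t_s u(Q)$ for every Lipschitz $u$, uniformly on compact subsets of $\{0\le s\le t\}\times\rr^d$.

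First, monotonicity \eqref{monot} and additivity \eqref{addi} hold for each $R^t_{s,N}$ — being a composition of the monotone, additive operators $R^t_s$ of Theorem \ref{lip} — and both pass to the pointwise limit. Regularity \eqref{reg} follows from Proposition \ref{ulip}, whose Lipschitz constants are \emph{independent of $N$}: the bound ${\rm Lip}(\bar R^t_s u)\le e^{C(t-s)}(1+L)-1$ survives in the limit, uniformly in $t\in[\tau,T]$, and combining the uniform Lipschitz control in $q$ with the uniform estimate $\|R^{t'}_{s,N}u-R^t_{s,N}u\|_\infty\le Ce^{2CT}(1+L)^2|t'-t|$ shows that $(t,q)\mapsto\bar R^t_\tau u(q)$ is locally Lipschitz on $(\tau,\infty)\times\rr^d$. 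For compatibility \eqref{comp}, recall from Remark \ref{comprop} that a variational operator satisfies the compatibility property: if $u$ is a Lipschitz $\cc^2$ solution of \eqref{HJ}, then $R^{t'}_{s'}u_{s'}=u_{t'}$ for all $s'\le t'$. Applying this successively along the subdivision gives $R^t_{s,N}u_s=u_t$ for every $N$, hence $\bar R^t_s u_s=u_t$.

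The crux is the Markov property \eqref{mark}: for $0\le s\le r\le t$ we must show $\bar R^t_s=\bar R^t_r\circ\bar R^r_s$. Fix an $L$-Lipschitz $u$. Proposition \ref{nonmarkov} gives $\|R^t_{s,N_k}u-R^t_{r,N_k}R^r_{s,N_k}u\|_\infty\le 2Ce^{2CT}(1+L)^2\delta_{N_k}\to 0$, so it suffices to identify the limit of $R^t_{r,N_k}R^r_{s,N_k}u$. Write
\[
R^t_{r,N_k}R^r_{s,N_k}u-\bar R^t_r\bar R^r_s u=\big(R^t_{r,N_k}R^r_{s,N_k}u-R^t_{r,N_k}\bar R^r_s u\big)+\big(R^t_{r,N_k}\bar R^r_s u-\bar R^t_r\bar R^r_s u\big).
\]
By Theorem \ref{sscv}, $R^r_{s,N_k}u\to\bar R^r_s u$ uniformly on compact sets, and both are $(e^{C(r-s)}(1+L)-1)$-Lipschitz; hence the local Lipschitz estimate \ref{ulip}-\eqref{uliploc} bounds the first bracket at a point $Q$ by $\|R^r_{s,N_k}u-\bar R^r_s u\|_{\bar B(Q,(e^{CT}-1)(1+L))}\to 0$. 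The second bracket tends to $0$ because $\bar R^r_s u$ is a fixed Lipschitz function and $R^t_{r,N_k}\to\bar R^t_r$ along the subsequence. Thus $\bar R^t_s u=\bar R^t_r\bar R^r_s u$.

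Having checked properties \eqref{monot}--\eqref{mark}, $\bar R^t_s$ is a viscosity operator, and Consequence \ref{viscuniq} forces $\bar R^t_s=V^t_s$. The main obstacle is precisely the interchange of limits in the Markov step: Proposition \ref{nonmarkov} absorbs the failure of composition to commute with the pointwise limit along the subdivision, while \ref{ulip}-\eqref{uliploc} — a Lipschitz dependence on the initial condition that is uniform in $N$ — is exactly what allows $R^t_{r,N_k}$ to be passed to the limit on its inner argument; without such uniform continuity the argument would break down.
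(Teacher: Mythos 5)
Your proof is correct and follows essentially the same route as the paper: you verify the five defining properties of a viscosity operator, and for the Markov property you use the same decomposition, combining Proposition \ref{nonmarkov} with the uniform local Lipschitz estimate \ref{ulip}-\eqref{uliploc} to pass $R^t_{r,N_k}$ to the limit on its inner argument. The only slip is the radius in the Markov step, which should be computed from the Lipschitz constant $e^{C(r-s)}(1+L)-1$ of $R^r_{s,N_k}u$ rather than from $L$ itself; this is immaterial since it remains a fixed compact ball depending only on $C$, $T$ and $L$, which is all the argument needs.
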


\begin{proof}\begin{enumerate}
		\item Monotonicity property follows from the monotonicity of $R^t_s$, for $s\leq t$.
		\item Same thing for the additivity property.
		\item Regularity: since the convergence of $\left\{(s,t,Q) \mapsto R^t_{s,N_k} v (Q)\right\}_k$ is uniform on every compact subset of $\left\lbrace 0\leq s \leq t\right\rbrace \times \rr^d$, and the family is equi-Lipschitz in time and space, the limit satisfies that  $\left\lbrace \bar{R}^t_\tau u, t\in [\tau,T] \right\rbrace$ is uniformly Lipschitz for each $\tau \leq T$ and $(t,q) \mapsto \bar{R}^t_\tau u(q)$ is locally Lipschitz on $(\tau,\infty)\times \rr^d$.
		\item Compatibility with Hamilton-Jacobi equation: Remark \ref{comprop} and Proposition \ref{vervar} give the compatibility property for the operator $R^t_s$. Hence if $u$ is a Lipschitz $\cc^2$ solution of the Hamilton-Jacobi equation, for all $N$:\begin{displaymath}
		R^t_{s,N} u_s = R^{t}_{\tau_{i(t)}} \cdots  \underbrace{R^{\tau_{i(s)+1}}_{s} u_s}_{=u_{\tau_{i(s)+1}}}
		=R^{t}_{\tau_{i(t)}} u_{\tau_{i(t)}} = u_t,
		\end{displaymath}
		and the limit satisfies $\bar{R}^t_s u_s = u_t$.
		\item Markov property: take $u$ Lipschitz, and $0\leq s \leq \tau \leq t\leq T$. Let us show the equality $\bar{R}^t_\tau  \circ \bar{R}^\tau_s u = \bar{R}^t_s u$. Let $Q$ be fixed in $\rr^d$.
		
		Since $Q \mapsto \bar{R}^\tau_s u(Q)$ is Lipschitz, $\left( R^t_{\tau,N_k} \bar{R}^\tau_s u(Q) \right)_k$ converges to $\bar{R}^t_\tau \bar{R}^\tau_s u (Q)$.\\

		Let us first show that $R^t_{\tau,N_k} R^\tau_{s,N_k} u(Q)$ tends to $\bar{R}^t_\tau \bar{R}^\tau_s u (Q)$.
		
		\begin{displaymath}\begin{split}
		\left|R^t_{\tau,N_k} R^\tau_{s,N_k} u(Q) - \bar{R}^t_\tau \bar{R}^\tau_s u (Q) \right|  \leq	 \left|R^t_{\tau,N_k}  R^\tau_{s,N_k} u(Q)  - R^t_{\tau,N_k} \bar{R}^\tau_s u (Q) \right|&\\
		+ \underbrace{\left|R^t_{\tau,N_k} \bar{R}^\tau_s u (Q) - \bar{R}^t_\tau \bar{R}^\tau_s u (Q) \right|}_{\to 0 \; }&.
		\end{split}
		\end{displaymath}
		
		Now, the uniform Lipschitz estimates of property \ref{ulip}-(\ref{uliploc}) give \begin{displaymath}
		\left|R^t_{\tau,N_k} R^\tau_{s,N_k} u(Q) - R^t_{\tau,N_k} \bar{R}^\tau_s u (Q) \right| \leq \|R^\tau_{s,N_k} u -  \bar{R}^\tau_s u\|_{\bar{B}(Q,r)}
		\end{displaymath}
		for some radius $r$ depending only on $C$, $T$, $L$; as the convergence is uniform on every compact subset of $\rr^d$, the right hand side tends to $0$ when $k$ tends to $\infty$.
		
		Now, since $\delta_{N_k}\underset{k\to \infty}{\to}0$,  Proposition \ref{nonmarkov} implies that  $R^t_{\tau,{N_k}}R^\tau_{s,{N_k}} u(Q)$ and $R^t_{s,{N_k}}u(Q)$ have the same limit, hence the conclusion:
		\begin{displaymath}
		\bar{R}^t_s u(Q)  =  \bar{R}^t_\tau \bar{R}^\tau_s u (Q).
		\end{displaymath}
	\end{enumerate}
\end{proof}

\begin{csq} We have proved, for every Hamiltonian satisfying Hypothesis \ref{esti}, that the viscosity operator exists. In particular, for such a Hamiltonian and for a Lipschitz initial condition, there exists a viscosity solution of \eqref{HJ} on $(0,\infty)\times \rr^d$ that coincides with the initial condition at time $0$, see Proposition \ref{key}.
\end{csq}

\begin{proof}[Proof of Theorem \ref{cc}] Since every subsequence of $R^t_{s,N}u$ admits a subsequence uniformly converging to the viscosity solution $V^t_s u$ on every compact set, the whole family $(R^t_{s,N}u)_N$ converge to $V^t_s u$	by uniqueness of the viscosity solution.
\end{proof}

The local Lipschitz estimates on the viscosity operator $V$ and the local monotonicity properties stated in Proposition \ref{visclip} are directly deduced from this uniform convergence and the estimates on the variational operator $R$. In the integrable case, the iterated operator $R^t_{s,N}$ satisfies the same Lipschitz estimate than the variational operator $R^t_s$ (see Addendum \ref{varlipint}), whence the following result.

\begin{add}\label{visclipint}
	If $H(p)$ (resp. $\tilde{H}(p)$) satisfies Hypothesis \ref{esti} with constant $C$, then for $0\leq s \leq s' \leq t'\leq t$ and $u$ and $v$ two $L$-Lipschitz functions,		\begin{enumerate}\renewcommand{\labelitemi}{$\bullet$}
		\item $V^t_s u$ is $L$-Lipschitz,
		\item $\|V^{t'}_s u-V^t_s u\|_\infty \leq C(1+L)^2|t'-t|$,
		\item $\|V^t_{s'} u-V^t_s u\|_\infty \leq C(1+L)^2|s'-s|$,
		\item $\forall Q \in \rr^d, \left|V^t_s u(Q) - V^t_s v(Q)\right| \leq \|u-v\|_{\bar{B}\!\left(Q,C(t-s)(1+L)\right)}$,
		\item $\|V^t_{s,\tilde{H}} u -V^t_{s,H}u\|_\infty \leq (t-s)\|\tilde{H}-H\|_{\bar{B}\!\left(0,L\right)}.$
	\end{enumerate}
	where $\bar{B}\!\left(Q,r\right)$ denotes the closed ball of radius $r$ centered in $Q$ and $\|u\|_K:= sup_K |u|$.
\end{add}

\section{The convex case}\label{convex}
The purpose of this chapter is to prove Theorem \ref{jouk}, that states in particular that for strictly convex Hamiltonians, the variational operator constructed in this paper coincides with the Lax-Oleinik semigroup.
To do so, we give a description of the Lax-Oleinik semigroup in terms of broken geodesics, and discuss the link between the so-called \emph{Lagrangian generating family} involved in this description and the generating family used for general Hamiltonians.
\subsection{The Lax-Oleinik semigroup with broken geodesics}\label{chapgencv}
The Lax-Oleinik semigroup defined by the equation \eqref{defLO} in the introduction may also be written as a finite dimensional optimization problem.  If $H$ is strictly uniformly convex w.r.t. $p$ and satisfies Hypothesis \ref{esti}, we fix $\delta_2>0$ such that $(q,p)\mapsto(q,Q^t_s(q,p))$ is a $\cc^1$-diffeomorphism for each $|t-s|\leq \delta_2$ (see Proposition \ref{twist}).
\begin{prop} If $s=t_0\leq t_1\leq \cdots \leq t_N=t$ is a subdivision such that $t_{i+1}-t_i<\delta_2$ for all $i$, then 
	\[T^t_s u(Q)= \min_{q,Q_0,\cdots,Q_{N-1}} \aaa^t_s u(Q,q,Q_0,\cdots,Q_{N-1}),\]
	with the Lagrangian generating family $\aaa$ defined by
	\[\aaa^t_s u(Q,q,Q_0,\cdots,Q_{N-1})= u(q)+ \sum_{i=0}^{N} \int_{t_i}^{t_{i+1}} L\left(\tau,Q_{t_i}^\tau(Q_{i-1},p_i),\dd_\tau Q_{t_i}^\tau(Q_{i-1},p_i)\right) d\tau\]
where $p_i$ is uniquely defined by $Q^{t_{i+1}}_{t_i}(Q_{i-1},p_i)=Q_i$ and while denoting $q=Q_{-1}$ and $Q=Q_N$.
\end{prop}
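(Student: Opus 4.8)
The plan is to reduce to the one-step case using the Markov property of the Lax--Oleinik semigroup, and then to invoke that on a short time interval the action minimizer between two prescribed positions is precisely the Hamiltonian trajectory joining them.

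First, since $(T^t_s)$ satisfies the Markov property $T^t_s=T^t_r\circ T^r_s$ (immediate from \eqref{defLO}), iterating along the points of the subdivision reduces the claim to the following one-step identity: for $t'-s'<\delta_2$, every Lipschitz $w$ and every $y\in\rr^d$,
\[T^{t'}_{s'}w(y)=\min_{x\in\rr^d}\Big(w(x)+\int_{s'}^{t'}L\big(\tau,Q^\tau_{s'}(x,p),\partial_\tau Q^\tau_{s'}(x,p)\big)\,d\tau\Big),\]
where, by the twist property (Proposition \ref{twist}), $p=p(x,y)$ is the unique momentum with $Q^{t'}_{s'}(x,p)=y$, depending in a $\cc^1$ way on $(x,y)$. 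Indeed, applying this identity successively with $w=u$, then $w=T^{t_1}_{t_0}u$, and so on, and collapsing the resulting nested infima, one obtains exactly $\min_{q,Q_0,\dots,Q_{N-1}}\aaa^t_su(Q,q,Q_0,\dots,Q_{N-1})$ with the stated conventions $Q_{-1}=q$, $Q_N=Q$, and each $p_i$ determined by $Q^{t_{i+1}}_{t_i}(Q_{i-1},p_i)=Q_i$.

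To prove the one-step identity, the inequality ``$\le$'' is obtained by inserting the admissible curve $\tau\mapsto Q^\tau_{s'}(x,p(x,y))$, which runs from $x$ to $y$, into \eqref{defLO} and taking the infimum over $x$. For ``$\ge$'' I would use the classical fact that on a short interval the Hamiltonian trajectory joining two positions is the \emph{unique} minimizer of the Lagrangian action among all Lipschitz curves with those endpoints: a minimizer exists by Tonelli's theorem --- the Lagrangian $L$ is $\cc^2$, fibrewise strictly convex, and superlinear, because $\partial^2_pH\ge m\,{\rm id}$ together with Hypothesis \ref{esti} forces $H(t,q,p)\ge\tfrac{m}{2}\|p\|^2-C\|p\|-C$; a minimizer is $\cc^2$ and solves the Euler--Lagrange equation, hence its Legendre lift is a Hamiltonian trajectory; and by the twist property there is exactly one Hamiltonian trajectory from $x$ to $y$ in time $t'-s'$, namely $\tau\mapsto\phi^\tau_{s'}(x,p(x,y))$, whose Lagrangian action is the corresponding summand of $\aaa$. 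Thus every competing curve $c$ with $c(t')=y$ has action at least that of the trajectory issued from $c(s')$, and taking the infimum over $c$ gives ``$\ge$''. Finally the outer infimum is attained, since $w$ is Lipschitz while $x\mapsto\int_{s'}^{t'}L(\tau,Q^\tau_{s'}(x,p(x,y)),\partial_\tau Q^\tau_{s'}(x,p(x,y)))\,d\tau$ is continuous and, by superlinearity of $L$, coercive.

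The main obstacle is this one-step minimization statement --- specifically, establishing that the Hamiltonian trajectory is a \emph{global} minimizer of the action among all Lipschitz competitors, not merely a critical point; this is exactly where the smallness $|t'-s'|<\delta_2$ and the diffeomorphism property $(q,p)\mapsto(q,Q^{t'}_{s'}(q,p))$ of Proposition \ref{twist} are used, together with the existence half of Tonelli's theorem. The remaining ingredients --- the Markov reduction, the collapse of the nested infima, and the coercivity estimates guaranteeing finiteness of $T^t_su$ and attainment of the minima --- are routine.
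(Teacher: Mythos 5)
Your argument is correct and essentially reconstructs the proof the paper delegates to \cite{bernard} (Lemma 48 and Proposition 49): iterate the Markov property along the subdivision and reduce each short step to the fact that the unique Hamiltonian trajectory joining two prescribed positions (the twist property, Proposition \ref{twist}) is the global minimizer of the Lagrangian action, obtained via Tonelli existence and regularity. One small slip in the justification: superlinearity of $L$ follows from the \emph{upper} bound $|H|\le C(1+\|p\|)^2$ in Hypothesis \ref{esti} (which gives $L(t,q,v)\ge R\|v\|-C(1+R)^2$ for every $R$), whereas the lower bound $H\ge\tfrac{m}{2}\|p\|^2-C\|p\|-C$ that you derive is what makes $L$ finite with at most quadratic growth; both ingredients are needed for Tonelli's theorem and both hold, so your conclusion stands.
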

A proof of this statement can be found in \cite{bernard}, Lemma 48 and Proposition 49.

The two next propositions gather properties of the Lagrangian generating family $\aaa$.
\begin{prop}\label{A=maxS}
	If $H$ is uniformly strictly convex w.r.t. $p$, for $\delta_2$ small enough, \[\aaa^t_s u(Q,q,Q_0,\cdots,Q_{N-1})= \max_{p,p_1,\cdots, p_N} S^t_s u(Q,q,p,Q_0,\cdots,p_N).\]
\end{prop}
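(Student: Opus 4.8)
The plan is to compare, for each fixed $Q$ and $q$, the two finite-dimensional problems living over the intermediate variables. Write the variables as $(p,p_1,\dots,p_N)$ (the momenta) and $(Q_0,\dots,Q_{N-1})$ (the intermediate positions); set $Q_{-1}=q$ and $Q_N=Q$. The Lagrangian family $\aaa^t_s u(Q,q,Q_0,\dots,Q_{N-1})$ already has the momenta \emph{eliminated}: each $p_i$ is defined implicitly by the twist condition $Q^{t_{i+1}}_{t_i}(Q_{i-1},p_i)=Q_i$, which is solvable precisely because $|t_{i+1}-t_i|<\delta_2$ makes $(q,p)\mapsto(q,Q^{t_{i+1}}_{t_i}(q,p))$ a diffeomorphism (Proposition \ref{twist}). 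So the statement amounts to saying that in $S^t_s u$, maximizing over the momenta $(p,p_1,\dots,p_N)$ with $(Q,q,Q_0,\dots,Q_{N-1})$ held fixed recovers exactly $\aaa^t_s u$, with the maximizing momenta being the ones dictated by the twist conditions.

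First I would reduce to a single time step. By the additive structure of $G^t_s$ in \eqref{Gfam} and of $S^t_s u$ in \eqref{S}, the momentum $p_i$ appears only in the two consecutive blocks it sits between, so the maximization over $(p,p_1,\dots,p_N)$ decouples into $N+1$ independent one-step maximizations, one for each pair $(Q_{i-1},Q_i)$. Thus it suffices to prove, for $|t'-s'|<\delta_2$, that
\[
\max_{p} \Bigl( F^{t'}_{s'}(Q_i,p) + p\cdot(Q_i-Q_{i-1}) \Bigr) \;=\; \int_{s'}^{t'} L\bigl(\tau,Q^\tau_{s'}(Q_{i-1},p_i),\dd_\tau Q^\tau_{s'}(Q_{i-1},p_i)\bigr)\,d\tau,
\]
where $p_i$ is the momentum selected by $Q^{t'}_{s'}(Q_{i-1},p_i)=Q_i$. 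For the critical point analysis: differentiating the left-hand bracket in $p$ and using the generating-function identity $\dd_p F^{t'}_{s'}(Q_i,p)=q-Q_i$ (with $q$ the preimage of $Q_i$ under $Q^{t'}_{s'}(\cdot,p)$, from \S\ref{chapgen}), the critical equation becomes $q=Q_{i-1}$, i.e. $p=p_i$. So the unique critical point in $p$ is the twist-selected momentum. Then I would identify the critical \emph{value}: at $p=p_i$ the bracket equals $F^{t'}_{s'}(Q_i,p_i)+p_i\cdot(Q_i-Q_{i-1})$, which by the broken-geodesic interpretation of $F$ (Proposition \ref{F} and the discussion around \eqref{critG}) is exactly the Hamiltonian action $\aaa^{t'}_{s'}(\gamma)$ of the trajectory $\gamma$ from $(Q_{i-1},p_i)$; and by the Legendre identity recalled in \S\ref{varchapter} (equality in $L(t,q,v)+H(t,q,p)\ge p\cdot v$ when $v=\dd_pH$), this Hamiltonian action equals the Lagrangian action $\int_{s'}^{t'}L(\tau,Q^\tau_{s'}(Q_{i-1},p_i),\dd_\tau Q^\tau_{s'}(Q_{i-1},p_i))\,d\tau$.

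It remains to show the critical point is a genuine \emph{maximum}, not merely a critical point — this is where uniform strict convexity of $H$ in $p$ enters, and I expect it to be the main obstacle. The second derivative in $p$ of $F^{t'}_{s'}(Q_i,p)+p\cdot(Q_i-Q_{i-1})$ is $\dd^2_p F^{t'}_{s'}(Q_i,p)$, and one must check this is negative definite for $t'-s'$ small. Differentiating $\dd_p F^{t'}_{s'}(Q_i,p)=q-Q_i$ in $p$ (at fixed $Q_i$) gives $\dd^2_p F^{t'}_{s'}(Q_i,p)=\dd_p q = \bigl(\dd_q Q^{t'}_{s'}(q,p)\bigr)^{-1}\,\dd_p Q^{t'}_{s'}(q,p)$. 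For $t'=s'$ this is the zero matrix; expanding in $t'-s'$ using $\dd_\tau Q^\tau_{s'}=\dd_pH$, one finds $\dd^2_p F^{t'}_{s'}\approx -(t'-s')\,\dd^2_p H + o(t'-s')$, which is negative definite once $\dd^2_p H\ge m\,{\rm id}>0$ and $t'-s'$ is small — this is exactly the role of choosing $\delta_2$ (or shrinking it) ``small enough'' in the statement. Combined with the fact that $F^{t'}_{s'}$ differs from a fixed nondegenerate quadratic form by a Lipschitz term (Proposition \ref{Squad}), so the function is coercive downward in $p$, the unique critical point is the global max. Assembling the $N+1$ one-step identities and summing, together with $\sum_i p_{i+1}\cdot(Q_{i+1}-Q_i)$ being accounted for in the block decomposition, yields the claimed equality $\aaa^t_s u=\max_{p,p_1,\dots,p_N}S^t_s u$.
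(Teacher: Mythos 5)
Your proposal is correct and takes essentially the paper's own route: the paper simply defers to Proposition \ref{famcvx}, whose proof is exactly the argument you reconstruct — the $p$-Hessian of $G^t_s$ is block-diagonal (your one-step decoupling) with each block negative definite for short time steps (Proposition \ref{Gccve}, proved as in your final paragraph), the unique critical point is pinned down by the twist conditions, and the Legendre identity converts the critical value into the Lagrangian action. One small slip: implicit differentiation of $Q^{t'}_{s'}(q(p),p)=Q_i$ gives $\dd_p q = -\bigl(\dd_q Q^{t'}_{s'}\bigr)^{-1}\dd_p Q^{t'}_{s'}$, so your displayed formula for $\dd^2_p F^{t'}_{s'}$ is missing a minus sign; since your subsequent first-order expansion $\dd^2_p F^{t'}_{s'}\approx -(t'-s')\,\dd^2_p H$ carries the correct sign, this is evidently a typo rather than a gap.
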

\begin{proof}
	This is a direct consequence of Proposition \ref{famcvx}, since by definition \[
	\aaa^t_s u(Q,q,Q_0,\cdots,Q_{N-1}) = u(q) + A^t_s(q,Q_0,\cdots, Q)\]
	and
	\[ S^t_s u( Q,q,p,Q_0,\cdots,p_N)= u(q) + G^t_s(p,Q_0,\cdots,p_N,Q)+p\cdot(Q-q),
	\]
	with the notations of Appendix \ref{chap}.
\end{proof}
\begin{prop}\label{AQm}
	If $H$ satisfies Hypothesis \ref{esti} with constant $C$, is uniformly strictly convex w.r.t. $p$ and $H(t,q,p)=\frac{\|p\|^2}{2}$ outside of a band $\rr\times \rr^d \times B(0,R)$, then the function $(q,Q_0,\cdots,Q_{N-1}) \mapsto \aaa^t_s u(Q,q,Q_0,\cdots,Q_{N-1})$ is coercive and in some $\mathcal{Q}_{m}$.
\end{prop}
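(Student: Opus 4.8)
The plan is to transfer the quadratic-at-infinity structure established for $S^t_s u$ in Proposition \ref{Squad} to the Lagrangian generating family $\aaa^t_s u$ via the fiberwise Legendre transform identity of Proposition \ref{A=maxS}. Recall that under the standing hypotheses $H(t,q,p)=\frac{\|p\|^2}{2}=:Z(p)$ for $\|p\|\geq R$, and $Z$ satisfies Hypothesis \ref{esti} with the same constant $C$ (after possibly enlarging $C$), so Proposition \ref{Squad} applies: $S^t_s u(Q,\xi)=\zz(\xi)+\ell(Q,\xi)$ with $\zz$ a fixed nondegenerate quadratic form (here $\xi=(q,p,Q_0,p_1,\dots,Q_{N-1},p_N)$) and $\xi\mapsto\ell(Q,\xi)$ Lipschitz. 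The key observation is that $\aaa^t_s u(Q,q,Q_0,\dots,Q_{N-1})$ is obtained from $S^t_s u(Q,\cdot)$ by maximizing over the momentum variables $(p,p_1,\dots,p_N)$, i.e. by a partial Legendre transform in the linear subspace $E_p$ of momentum coordinates. So the first step is to split $\zz=\zz_{pp}\oplus\zz_{qq}\oplus(\text{cross terms})$ according to the decomposition of variables into momenta and positions, and record that, because $H$ is \emph{uniformly strictly convex} with $\dd^2_p H\geq m\,\mathrm{id}$, the restriction of $S^t_s u(Q,\cdot)$ to each momentum fiber is uniformly strictly concave — this is exactly the content behind Proposition \ref{A=maxS} and its reference to Proposition \ref{famcvx}, so the maximum is attained at a unique point depending smoothly (Lipschitz-ly) on the position variables.

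Next I would compute the quadratic part of $\aaa^t_s u$ explicitly. Writing $S^t_s u(Q,x,p)=\zz(x,p)+\ell(Q,x,p)$ where $x=(q,Q_0,\dots,Q_{N-1})$ collects the positions and $p$ the momenta, the partial max over $p$ of the pure quadratic piece $\zz$ is itself a quadratic form $\tilde\zz(x)$ in $x$ — the \emph{Schur complement} of the (negative definite) momentum block of $\zz$ — plus a bounded-derivative correction coming from $\ell$. Concretely, $\aaa^t_s u(Q,x)=\tilde\zz(x)+\tilde\ell(Q,x)$ where $\tilde\zz$ is this Schur complement and $\tilde\ell$ is Lipschitz in $x$ (its Lipschitz constant controlled by that of $\ell$ together with the uniform concavity constant, which bounds the operator norm of the inverse momentum Hessian). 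From the explicit matrix of $\zz$ written out in Proposition \ref{Squad} — whose momentum block for $Z(p)=\frac{\|p\|^2}{2}$ is $\mathrm{diag}(\tau_0\,\mathrm{id},\dots,\tau_N\,\mathrm{id})$, positive definite — one reads off that the Schur complement $\tilde\zz$ is nondegenerate; in fact it should come out as (a sign-convention of) the standard broken-geodesic quadratic form $-A^t_s$ whose nondegeneracy is the twist-condition hypothesis encoded in the choice of $\delta_2$. That $\tilde\zz$ is nondegenerate is the statement ``$\aaa^t_s u(Q,\cdot)\in\mathcal{Q}_m$''.

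Finally, coercivity: since $H$ is uniformly strictly convex and quadratic at infinity with $Z=\frac{\|\cdot\|^2}{2}$ superlinear, the Lagrangian $L(t,q,v)$ is itself bounded below by $\frac{\|v\|^2}{2}-c$ for some constant $c$ (Legendre transform of something $\leq\frac{\|p\|^2}{2}+c'$), so each term $\int_{t_i}^{t_{i+1}} L(\tau,Q^\tau_{t_i},\dd_\tau Q^\tau_{t_i})\,d\tau$ is bounded below by $\frac{1}{2\tau_i}\|Q_i-Q_{i-1}\|^2-c\tau_i$ by Cauchy--Schwarz (length-energy inequality); summing the telescoping-in-position estimate and adding $u(q)$, which grows at most linearly, yields a lower bound of the form $\mathrm{const}\cdot\|(q,Q_0,\dots,Q_{N-1})\|^2 - \mathrm{const}$, hence coercivity. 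Alternatively, and perhaps more cleanly, coercivity follows directly from the decomposition $\aaa^t_s u=\tilde\zz+\tilde\ell$ once one checks $\tilde\zz$ is \emph{positive definite} (not merely nondegenerate): this again is visible from the Schur complement computation since both the momentum block and the way position variables enter are ``elliptic'' for $Z=\frac{\|\cdot\|^2}{2}$. The main obstacle is the bookkeeping in the Schur-complement step — keeping track of the cross terms between $p$ and $x$ in $\zz$ and confirming both nondegeneracy and positivity of $\tilde\zz$ from the explicit matrix — but this is a finite linear-algebra computation with the matrix already displayed in Proposition \ref{Squad}, so no genuine difficulty beyond care is expected; the uniform strict convexity of $H$ is exactly what guarantees the momentum block stays uniformly negative definite after adding the $\ell$-correction, so that the partial maximum is well-defined and the correction term $\tilde\ell$ stays Lipschitz.
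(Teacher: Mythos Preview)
Your approach is correct but takes a genuinely different route from the paper's. The paper does not go through the Hamiltonian family $S^t_s u$ and a Schur complement at all: instead it (i) computes $\aaa^t_s u$ \emph{explicitly} for the model Hamiltonian $\tilde H(t,q,p)=\tfrac{\|p\|^2}{2}$, obtaining the broken-geodesic energy $u(q)+\tfrac{1}{2}\sum_i \|Q_i-Q_{i-1}\|^2/(t_{i+1}-t_i)$, which is visibly a coercive element of some $\mathcal Q_m$ after an affine change of variables; and then (ii) invokes Proposition~\ref{lbnl2} to conclude that for the actual $H$ (which agrees with $\tilde H$ outside a band) the difference $\tilde\aaa^t_s u-\aaa^t_s u=\tilde A^t_s-A^t_s$ is Lipschitz, so $\aaa^t_s u$ inherits both properties. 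This is shorter because Proposition~\ref{lbnl2} is already available and the model computation is elementary.

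Your route---pull the decomposition $S^t_s u=\zz+\ell$ from Proposition~\ref{Squad} through the fiberwise maximum of Proposition~\ref{A=maxS}---does work, but the step you describe as mere ``bookkeeping'' is where the content lies. You need that the Lipschitz correction $\ell$ survives the partial Legendre transform, i.e.\ that $\aaa-\tilde\zz$ is Lipschitz when $\tilde\zz$ is the Schur complement of the (negative definite) momentum block of $\zz$. This follows from an envelope argument: the maximizer $p^*(x)$ differs from the linear maximizer $p_0^*(x)$ of $\zz$ alone by $-\tfrac12\zz_{pp}^{-1}\,\partial_p\ell$, which is bounded since $\ell$ is Lipschitz and $\zz_{pp}$ is invertible (here all $\tau_i>0$ is needed); then $\partial_x(\aaa-\tilde\zz)=2\zz_{xp}(p^*-p_0^*)+\partial_x\ell$ is bounded. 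This is fine, just not entirely for free. Also, a small slip: the momentum block of $\zz$ for $Z=\tfrac{\|p\|^2}{2}$ is \emph{negative} definite (you write ``positive definite'' once and ``negative definite'' later); the concavity in $p$ is of course what makes the max well-posed. Your coercivity argument via the Lagrangian lower bound $L\geq\tfrac{\|v\|^2}{2}-c$ is correct and independent of all this.
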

\begin{proof}
	We are first going to prove the result for $H(t,q,p)=\frac{\|p\|^2}{2}$. In that case, $L(t,q,v)=\frac{\|v\|^2}{2}$ and $Q^{t_{i+1}}_{t_i}(Q_{i-1},p_i)=Q_i$ if and only if $Q_i=Q_{i-1}+(t_{i+1}-t_i) p_i$.
	Thus \[\begin{split}
\aaa^t_s u(Q,q,Q_0,\cdots,Q_{N-1})= u(q)+ \sum_{i=0}^{N} \int_{t_i}^{t_{i+1}} L&\left(\tau,Q_{t_i}^\tau(Q_{i-1},p_i),\dd_\tau Q_{t_i}^\tau(Q_{i-1},p_i)\right) d\tau\\
 = u(q)+ \frac{1}{2}\sum_{i=0}^{N} \int_{t_i}^{t_{i+1}} \frac{\|Q_i -Q_{i-1}\|^2}{(t_{i+1}-t_i)^2} d\tau
& = u(q) + \frac{1}{2} \sum_{i=0}^{N} \frac{\|Q_i -Q_{i-1}\|^2}{t_{i+1}-t_i}
	\end{split}\]
	always denoting $q=Q_{-1}$ and $Q=Q_N$. To see that the considered function is coercive and in some $\mathcal{Q}_m$, we may then use the affine diffeomorphism $(q,Q_0,\cdots, Q_{N-1})\mapsto (\frac{Q_0-q}{\sqrt{t_1-s}},\frac{Q_1-Q_0}{\sqrt{t_2-t_1}},\cdots,\frac{Q-Q_{N-1}}{\sqrt{t-t_N}})$. 
	
	Now, if $H(t,q,p)=\frac{\|p\|^2}{2}$ outside of a band $\rr\times \rr^d \times B(0,R)$, and if $\tilde{H}$ denotes the quadratic form $\tilde{H}(p)=\frac{\|p\|^2}{2}$, $H$ and $\tilde{H}$ satisfy the hypotheses of Proposition \ref{lbnl2} with constants $C$ and $K=C(1+R)$, and thus $\tilde{\aaa}^t_s u - \aaa^t_s u= \tilde{A}^t_s - A^t_s$ is a Lipschitz function of $(q,Q_0,\cdots, Q_{N-1})$. The previous part hence proves that the function $(q,Q_0,\cdots, Q_{N-1})\mapsto \aaa^t_s u(Q,q,Q_0,\cdots,Q_{N-1})$ is coercive and in some $\mathcal{Q}_m$.
\end{proof}

\subsection{Proof of Joukovskaia's Theorem}
 To prove that the variational operator $R^t_s$ constructed in Chapter \ref{buildvar} is the viscosity operator, it is enough to prove that it satisfies the Markov property \eqref{mark}, see Remark \ref{comprop}. In that purpose, we need the critical value selector to satisfy the two additional following properties - properties that are actually satisfied by the minmax constructed in Appendix \ref{minmax}. 
	\begin{prop}\label{mmaxbis}  There exists a critical value selector $\sigma:\bigcup_{m\in \nn}\mathcal{Q}_m\to \rr$, as defined in Proposition \ref{mmax}, that satisfies:
		\begin{enumerate}
			\item \label{opp} $\sigma(-f)=-\sigma(f)$,
			\item \label{cvx} if $f(x,y)$ is a $\cc^2$ function of $\mathcal{Q}_m$ such that $\dd^2_y f \geq c\rm{id}$ for some $c>0$, and if $g$ defined by $g(x)=\min_y f(x,y)$ is in some $\mathcal{Q}_{\tilde{m}}$, then $\sigma(g)=\sigma(f)$.
		\end{enumerate}
	\end{prop}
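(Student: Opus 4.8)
The plan is to construct $\sigma$ as the \emph{minmax} of Chaperon and verify the two new properties on top of those already established in Proposition \ref{mmax}. Recall that for $f = \zz + \ell \in \qq_m$ with $\zz$ a nondegenerate quadratic form of index $k$, the minmax is defined (see Appendix \ref{minmax}) by a minimax over the pair of sublevel sets: writing $E^-$ for the negative eigenspace of $\zz$ and using the homotopy type of sublevel sets $\{f \le a\}$ relative to $\{f \le -A\}$ for $A$ large, one sets $\sigma(f) = \inf_{[h]} \sup_{x \in h(E^-\cap S)} f(x)$, the infimum running over a suitable family of cycles/maps detecting the generator of the relevant relative homology group. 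The first property, $\sigma(-f) = -\sigma(f)$, is the duality between minmax and maxmin: applying the construction to $-f = (-\zz) + (-\ell)$ exchanges the negative and positive eigenspaces of the quadratic part, so the $\inf\sup$ for $-f$ becomes, after the sign change, a $\sup\inf$ for $f$, and one shows by a standard linking/intersection argument that minmax and maxmin of $f$ coincide. I would either cite the corresponding statement in Appendix \ref{minmax} or note that it follows formally from Alexander duality between the homology used for $f$ and the cohomology used for $-f$.

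For the second property I would argue as follows. Fix $f(x,y) \in \qq_m$ with $\dd^2_y f \ge c\,\mathrm{id}$, $c>0$, and set $g(x) = \min_y f(x,y)$, assumed to lie in some $\qq_{\tilde m}$. Strict convexity in $y$ gives, for each $x$, a unique minimizer $y = \eta(x)$, and by the implicit function theorem $\eta$ is $\cc^1$ (one degree less than $f$, but $\cc^1$ suffices here) with $g(x) = f(x,\eta(x))$; moreover $\eta$ is Lipschitz because the Lipschitz part of the quadratic-plus-Lipschitz decomposition controls $\dd_y f$ while $\dd^2_y f \ge c\,\mathrm{id}$ controls the inverse. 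The key geometric fact is that the map $\Phi(x,y) = (x, y - \eta(x))$ is a Lipschitz $\cc^1$-diffeomorphism of $\rr^m$ sending $f$ to a function whose minimum over the second variable is attained at $y=0$; after this change of variables one is reduced to the case where $\eta \equiv 0$. Then I would use a fiberwise deformation retracting each sublevel set $\{f \le a\}$ onto its slice $\{(x,0) : g(x) \le a\}$ — explicitly, the homotopy $(x,y,s)\mapsto(x,(1-s)y)$ pushes sublevel sets down since $s\mapsto f(x,(1-s)y)$ is nonincreasing as $y=0$ is the minimum and $f(x,\cdot)$ is convex — which is exactly the kind of deformation handled by the deformation lemmas of Appendix \ref{deformation}. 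This deformation is compatible with the quadratic structure at infinity (the extra $y$-directions carry a positive-definite part of $\zz$, i.e. a stabilization), so by the stabilization invariance \ref{mmax}-\eqref{stab} together with the invariance under Lipschitz diffeomorphisms \ref{mmax}-\eqref{transl}, and the fact that the deformation induces an isomorphism on the relevant relative homology respecting the filtration by $f$, we get $\sigma(f) = \sigma(f\circ\Phi^{-1}) = \sigma(g \oplus \zz|_{\text{extra}}) = \sigma(g)$.

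The main obstacle I anticipate is the second property, and specifically making rigorous that the fiberwise deformation onto the graph of the minimizer respects the filtered homological data defining the minmax — i.e. that one really gets an isomorphism of the pairs $(\{f\le a\},\{f\le -A\})$ at all levels $a$ compatible with inclusions, not merely a homotopy equivalence of individual sublevel sets. The convexity in $y$ is what makes the deformation $f$-nonincreasing, which is precisely the hypothesis that lets the deformation lemmas of Appendix \ref{deformation} apply; without $\dd^2_y f \ge c\,\mathrm{id}$ one could neither guarantee a unique $\cc^1$ minimizer nor the monotonicity of $f$ along the retraction. The requirement that $g \in \qq_{\tilde m}$ is needed only so that $\sigma(g)$ is defined in the first place, and the requirement that $f$ be $\cc^2$ is needed for the implicit function theorem; once these are in place the argument is a combination of the axioms in Proposition \ref{mmax} and the deformation lemmas, so I would present it as such rather than re-proving the homological machinery.
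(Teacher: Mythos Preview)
Your outline for property (1) via Alexander duality is correct and is exactly how the paper proceeds (Proposition \ref{oppfield}).

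For property (2), your ingredients are right—the unique $\cc^1$ minimizer $\eta(x)$ and the fiberwise convex retraction are precisely what the paper uses (Proposition \ref{mmaxcvx})—but your concluding chain of equalities contains a real gap. After the change of variables $\Phi(x,y)=(x,y-\eta(x))$, the function $\tilde f = f\circ\Phi^{-1}$ is \emph{not} of the split form $g(x)+\zz(y)$: uniform convexity in $y$ neither forces the mixed derivatives $\dd^2_{xy}\tilde f$ to vanish nor makes $\dd^2_y\tilde f$ constant in $x$. So the step $\sigma(f\circ\Phi^{-1})=\sigma(g\oplus\zz|_{\text{extra}})$ has no justification, and stabilization \ref{mmax}-\eqref{stab} does not apply. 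A secondary issue is that $\Phi$ is only $\cc^1$ (since $\eta$ is $\cc^1$), while property \ref{mmax}-\eqref{transl} is stated for $\cc^\infty$ diffeomorphisms.

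The paper's proof bypasses both problems by dispensing with $\Phi$ and with the selector axioms altogether at this step: it works directly with the retraction $\Phi_t(x,y)=(x,(1-t)y+t\eta(x))$ of each sublevel set $f^a$ onto the graph $\tilde g^a=\{(x,\eta(x)):g(x)\le a\}$, notes that $x\mapsto(x,\eta(x))$ is a diffeomorphism from $g^a$ onto $\tilde g^a$, and then reads off the commuting square
\[
\begin{array}{ccc}
H^\bullet(f^c,f^{-c}) & \overset{i^{c,f\star}_a}{\to} & H^\bullet(f^a,f^{-c})\\
\wr\uparrow & & \wr\uparrow\\
H^\bullet(g^c,g^{-c}) & \underset{i^{c,g\star}_a}{\to} & H^\bullet(g^a,g^{-c})
\end{array}
\]
so that $i^{c,f\star}_a$ and $i^{c,g\star}_a$ vanish simultaneously, whence $\sigma(f)=\sigma(g)$ from the \emph{definition} of the minmax. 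The point is that property (2) is strictly stronger than anything the axioms (transl) and (stab) alone can deliver; you must go back to the cohomological construction. Your paragraph about ``the deformation induces an isomorphism on the relevant relative homology respecting the filtration'' is in fact the whole argument—it is not an auxiliary remark but the proof itself, and it stands on its own without any appeal to stabilization.
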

We assume $\sigma$ to be such a critical value selector.\\ 
\begin{proof}[Proof of Theorem \ref{jouk}]
\emph{First step.} We assume that the Hamiltonian $H$ is uniformly strictly convex w.r.t. $p$ ($\dd^2_p H \geq m \textrm{id}$), satisfies Hypothesis \ref{esti} with some constant $C$ and coincides with the quadratic form $Z(p)=\|p\|^2$ outside of a band $\rr \times \rr^d\times B(0,R)$. Then the variational operator constructed in Chapter \ref{buildvar} is the Lax-Oleinik operator: $R^t_s=T^t_s$.  

To see this, we apply the last item to the function $f(x,y)=S^t_s u(Q,q,p,Q_0,\cdots,p_N)$ where $x=(q,Q_0,Q_1,\cdots,Q_{N-1})$ and $y=(p,\cdots,p_N)$.  Proposition \ref{Gccve} gives, since  $S^t_s u( Q,q,p,Q_0,\cdots,p_N)= u(q) + G^t_s(p,Q_0,\cdots,p_N,Q)+p\cdot(Q-q)$, that $y \mapsto f(x,y)$ is uniformly strictly concave, and Proposition \ref{A=maxS} gives that \[g(x)=\max_y f(x,y)=u(q)+ \sum_{i=0}^{N} \int_{t_i}^{t_{i+1}} L\left(\tau,Q_{t_i}^\tau(Q_{i-1},p_i),\dd_\tau Q_{t_i}^\tau(Q_{i-1},p_i)\right) d\tau.\]  Proposition \ref{AQm} states that $g$ is a coercive function of some $\mathcal{Q}_{\tilde{m}}$. Since $g$ is coercive, Consequence \ref{coercmin} states that $\sigma(g)=\min g$, so we have that
\[T^t_s u(Q)=\min g=\sigma(g)=\sigma(f)=R^t_s u(Q).\]
 \\
\emph{Second step.} We only assume that the Hamiltonian $H$ is uniformly strictly convex w.r.t. $p$ ($\dd^2_p H \geq m \textrm{id}$) and satisfies 
Hypothesis \ref{esti} with some constant $C$. It does not a priori coincide with a quadratic form at infinity.

Let us prove the Markov property: we fix $u$, $s\leq \tau \leq t$ and $Q$ and we are going to show that
$R^t_\tau R^\tau_s u(Q)=R^t_s u (Q)$. 
If $Z$ denotes the quadratic form $Z(p)=\|p\|^2$, we may choose $\delta>0$ and build as in Definition \ref{ladefinition} a Hamiltonian $H_\delta$ in $\hh^{C(1+\delta)}_Z$ such that both $R^t_s u(Q)=R^t_{s,H_\delta} u(Q)$ and $R^t_{\tau,H_\delta} R^\tau_{s,H_\delta} u(Q)=R^t_\tau R^\tau_s u(Q)$. Addendum \ref{defcvx} states that $H_\delta$ can moreover be constructed uniformly strictly convex w.r.t. $p$. 

The previous work applies to $H_\delta$, and hence \[R^t_\tau R^\tau_s u(Q)=R^t_{\tau,H_\delta} R^\tau_{s,H_\delta} u(Q) =T^t_{\tau,H_\delta} T^\tau_{s,H_\delta} u(Q) = T^t_{s,H_\delta} u (Q)=R^t_{s,H_\delta} u (Q) =R^t_s u (Q)\] since $T^t_{s,H_\delta}$ is a semigroup. We hence showed that $R^t_s$ satisfies the Markov property \eqref{mark}.

The uniqueness of the viscosity operator concludes: $R^t_s=V^t_s=T^t_s$.\\
\emph{Third step.} If $H$ is convex with respect to $p$ and satisfies Hypothesis \ref{esti} with constant $C$, $H_\ep(t,q,p)=H(t,q,p) +\frac{1}{2} \ep \|p\|^2$ is uniformly strictly convex w.r.t. $p$ ($\dd^2_p H_\ep \geq \ep {\rm id}$) and satisfies Hypothesis \ref{esti} with constant $C+\ep$.

Now for all $\ep \leq 1$, the estimates of Propositions \ref{Hlip} and \ref{visclip} give, for all $s\leq t$ and Lipschitz function $u$:
\[\|R^t_{s,H_\ep} u -R^t_{s,H}u\|_\infty \leq (t-s)\|H_\ep-H\|_{\bar{V}},\]
\[\|V^t_{s,H_\ep} u -V^t_{s,H}u\|_\infty \leq (t-s)\|H_\ep-H\|_{\bar{V}},\]
where $V=\rr \times \rr^d \times \bar{B}\!\left(0,e^{(C+1)(t-s)}(1+Lip(u))-1\right)$. In other words,
\[\|R^t_{s,H_\ep} u -R^t_{s,H}u\|_\infty \leq \frac{1}{2} \ep (t-s)\left(e^{(C+1)(t-s)}(1+Lip(u))-1\right)^2,\]
\[\|V^t_{s,H_\ep} u -V^t_{s,H}u\|_\infty \leq \frac{1}{2}  \ep (t-s)\left(e^{(C+1)(t-s)}(1+Lip(u))-1\right)^2.\]
The second step applied to $H_\ep$ states that $R^t_{s,H_\ep} u=V^t_{s,H_\ep} u$, and hence letting $\ep$ tend to zero gives the conclusion: $R^t_{s,H}u=V^t_{s,H}u$.

The result is obtained analogously in the concave case, where the Lax-Oleinik semigroup is defined as a maximum, see Remark \ref{concave}.
\end{proof}

\begin{appendices}

\section[Generating families]{Generating families of the Hamiltonian flow}\label{chap}
All the results and proofs of this appendix are inspired from \cite{chaperon}. We write them down here only to explicit the time derivatives of the generating families (see Proposition \ref{F}) and the Lipschitz constant in Proposition \ref{lipfam}.

We first state a useful basic technical lemma.
\begin{lem}\label{lipliplip}
	If $u,v : \rr^n \to \rr^n$ are $\cc^1$ and such that ${\rm Lip}(u)<1$ and ${\rm Lip}(v)<1$, then $f={\rm id}-u$ and $g={\rm id}-v$ are $\cc^1$-diffeomorphisms of $\rr^n$. If $f-g$ is bounded, then $f^{-1}-g^{-1}$ is bounded by $\frac{\|f-g\|_\infty}{1-{\rm Lip}(u)}$. 
\end{lem}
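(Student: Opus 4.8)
The plan is to prove the two assertions of Lemma~\ref{lipliplip} separately, starting with the diffeomorphism claim and then the quantitative bound on the inverses.

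\textbf{Step 1: $f = \mathrm{id} - u$ is a $\cc^1$-diffeomorphism.} Since ${\rm Lip}(u) < 1$, for every $y \in \rr^n$ the map $x \mapsto y + u(x)$ is a contraction of $\rr^n$, hence by the Banach fixed point theorem it has a unique fixed point $x$, which means $f(x) = y$. Thus $f$ is a bijection. For regularity of the inverse, note $df(x) = \mathrm{Id} - du(x)$ is invertible because $\|du(x)\| \le {\rm Lip}(u) < 1$, so $\mathrm{Id} - du(x)$ is invertible by the Neumann series. The inverse function theorem then gives that $f^{-1}$ is $\cc^1$. The same argument applies verbatim to $g = \mathrm{id} - v$.

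\textbf{Step 2: bounding $f^{-1} - g^{-1}$.} Assume $f - g$ is bounded, i.e. $\|u - v\|_\infty = \|f - g\|_\infty =: M < \infty$. Fix $y \in \rr^n$ and set $a = f^{-1}(y)$, $b = g^{-1}(y)$, so that $f(a) = g(b) = y$. Then
\[
a - b = (a - u(a)) - (b - v(b)) + u(a) - v(b) = u(a) - v(b),
\]
using $f(a) = g(b)$. Now write $u(a) - v(b) = \big(u(a) - u(b)\big) + \big(u(b) - v(b)\big)$, so that
\[
\|a - b\| \le \|u(a) - u(b)\| + \|u(b) - v(b)\| \le {\rm Lip}(u)\,\|a - b\| + M.
\]
Since ${\rm Lip}(u) < 1$, this rearranges to $\|a - b\| \le \dfrac{M}{1 - {\rm Lip}(u)}$, i.e. $\|f^{-1}(y) - g^{-1}(y)\| \le \dfrac{\|f - g\|_\infty}{1 - {\rm Lip}(u)}$ for all $y$, which is the claimed bound.

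There is no real obstacle here; the only mild subtlety is making sure one splits $u(a) - v(b)$ the right way (adding and subtracting $u(b)$ rather than $v(a)$) so that the Lipschitz constant of $u$ is the one that appears in the denominator, matching the statement. One could equally well add and subtract $v(a)$ and obtain the symmetric bound with ${\rm Lip}(v)$; since the hypotheses are symmetric in $u$ and $v$, the estimate holds with $\min({\rm Lip}(u), {\rm Lip}(v))$ in the denominator, but the stated form with ${\rm Lip}(u)$ is all that is needed.
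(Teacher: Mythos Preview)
Your proof is correct and follows essentially the same approach as the paper: both establish bijectivity via the contraction mapping theorem and invertibility of $df$ via $\|du\|<1$, and both obtain the quantitative bound by writing $a-b = u(a)-v(b)$, splitting through $u(b)$, and rearranging. Your treatment of the $\cc^1$ regularity (Neumann series plus inverse function theorem) is slightly more explicit than the paper's, and your closing remark about the symmetric bound with ${\rm Lip}(v)$ is a nice addition.
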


\begin{proof}
	Let us first proof that $f$ is a $\cc^1$-diffeomorphism of $\rr^n$. It is clearly $\cc^1$, and is a local diffeomorphism since $\|df\|=\|{\rm id} - du\| \geq 1 - {\rm Lip}(u)>0$. To see that it is invertible, we observe that $f(q)=\theta$ can be rewritten as a fixed point problem $q=u(q) + \theta$, where the map $q \mapsto u(q)+\theta$ is contracting.
	
	Now, if $f-g$ is bounded, so is $u-v$, with $\|u-v\|_\infty=\|f-g\|_\infty$. Let us denote $x=f^{-1}(z)$ and $y=g^{-1}(z)$ for some $z$ in $\rr^n$. Then $x=u(x)+z$ and $y=v(y)+z$ and\[
	|x-y| \leq |u(x)-v(y)| \leq |u(x)-u(y)| +|u(y)-v(y)| \leq {\rm Lip}(u)|x-y|+ \|u-v\|_\infty,
	\] 
	whence $|x-y| \leq \frac{\|f-g\|_\infty}{1-{\rm Lip}(u)}$.
\end{proof}	

Let us now state two Grönwall-type estimates on Hamiltonian flows:
\begin{prop}\label{gron} Let $H$ and $\tilde{H}$ be two $\cc^2$ Hamiltonians on $\rr \times T^\star \rr^d$ such that $\|\dd^2_{q,p}H\|$ and $\|\dd^2_{q,p}\tilde{H}\|$ are uniformly bounded by a constant $C$ and $\|\dd_{q,p} H-\dd_{q,p} \tilde{H}\|$ is uniformly bounded by a constant $K$. Then, if $\phi$ and $\tilde{\phi}$ denote the Hamiltonian flows respectively associated with $H$ and $\tilde{H}$, we have for all $s\leq t$:
	\begin{enumerate}	
		\item $\|\phi^t_s - \tilde{\phi}^t_s\| \leq  \frac{K}{C} (e^{C(t-s)}-1)$,
		\item $\|d\phi^t_s - {\rm id} \|\leq e^{C(t-s)}-1$.
	\end{enumerate}	
	In particular if $t-s \leq \delta_1=\frac{\ln(3/2)}{C}$, $\phi^t_s-{\rm id}$ is $\frac{1}{2}$-Lipschitz.
\end{prop}	

\begin{lem}[Grönwall's lemma, elementary version]	\label{gronlem}
	If $t\mapsto f(t)$ is a continuous non negative function such that $f(s)=0$ and $f(t)\leq \int^t_s \left(Cf(u)+K \right) du$, then $f(t)\leq \frac{K}{C} (e^{C(t-s)}-1)$. \end{lem}

\begin{proof} Observe that the assumed inequality can be written \[\dd_t \!\left(e^{-C(t-s)} \int^t_s f(u) \;\, du\right) \leq e^{-C(t-s)} K(t-s),\] and integrating this between $s$ and $t$ we get
	\[\int^t_s  f(u) du \leq K e^{C(t-s)} \int^t_s e^{-C(u-s)} (u-s)\;\, du = \frac{K}{C^2} (e^{C(t-s)}-C(t-s)-1).
	\]
	Reinjecting this into $f(t)\leq \int^t_s \left(Cf(u)+K \right) du$ gives the result.\end{proof}

\begin{proof}[Proof of Proposition \ref{gron}]	
	Let us define $\Gamma_t (q, p) = (\dd_p H(t,q, p),- \dd_q H(t,q, p))$, so that the Hamiltonian system $\eqref{HS}$ can be rewritten $\dd_t \phi^t_s = \Gamma_t(\phi^t_s)$, and $\tilde{\Gamma}$ associated similarly with $\tilde{H}$. 
	\begin{enumerate}
		\item Since $\|\dd_{q,p} H-\dd_{q,p} \tilde{H}\|\leq K$, $\|\Gamma_u-\tilde{\Gamma}_u\|\leq K$ for all $u$ and since $\Gamma_u$ is $C$-Lipschitz, \[\begin{split}
		\|\phi^t_s& - \tilde{\phi}^t_s \| = \left\|\int^t_s \Gamma_u(\phi^u_s) - \tilde{\Gamma}_u(\tilde{\phi}^u_s) \;\,du \right\|\\
		&\leq \int^t_s \| \Gamma_u(\phi^u_s) - \Gamma_u(\tilde{\phi}^u_s) \| + \|\Gamma_u(\tilde{\phi}^u_s)- \tilde{\Gamma}_u(\tilde{\phi}^u_s)\|\;\, du\\
		& \leq \int^t_s C \|\phi^u_s - \tilde{\phi}^u_s \|+ K \;\,du.
		\end{split}
		\]
		So, $f(t)=\|\phi^t_s - \tilde{\phi}^t_s \|$ satisfies the conditions of Lemma \ref{gronlem} and hence \[
		\|\phi^t_s - \tilde{\phi}^t_s \|\leq \frac{K}{C} (e^{C(t-s)}-1).\]
		
		\item Since $\|\dd^2_{q,p} H\|\leq C$, $d\Gamma_t$ is bounded by $C$ and hence \[\|\dd_t d\phi^t_s \|=\|d(\Gamma_t(\phi^t_s))\|=\|d\Gamma_t(\phi^t_s) \circ d\phi^t_s\| \leq C \|d\phi^t_s\|.\]
		which implies that $\|d\phi^t_s - {\rm id}\|\leq \int^t_s C\!\left(  \|d\phi^t_s-{\rm id}\|+1\right)\,\; du$.
		
		Since $d\phi^s_s={\rm id}$, $f(t)=\|d\phi^t_s - {\rm id}\|$ satisfies the conditions of Lemma \ref{gronlem} with $K=C$, and hence \[
		\|d\phi^t_s - {\rm id}\|\leq e^{C(t-s)}-1.\]	\end{enumerate}
\end{proof}

If $\gamma=(q,p)$ is a path on $T^\star \rr^d$, its Hamiltonian action is given by
\[\aaa^t_s(\gamma)= \int^t_s p(\tau)\cdot \dd_\tau q(\tau) -H(\tau,\gamma(\tau)) d\tau.\]
We give here a simple element of calculus of variations, giving for a parametrized family of Hamiltonian trajectories the link between the dependence of the Hamiltonian action with respect to the parameter and the behaviour of the family at the endpoints. It is useful to understand the construction of the generating family of the flow in the next paragraph.
\begin{lem}\label{varcalc}
	If $\gamma_u=(q_u,p_u):  \rr \to T^\star \rr^d$ is a $\cc^1$ family of Hamiltonian trajectories, 
	\[\dd_u \aaa^t_s(\gamma_u) =p_u(t)\cdot\dd_u q_u(t)-p_u(s)\cdot\dd_u q_u(s). \]
\end{lem}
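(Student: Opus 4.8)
This is the classical first-variation formula for the Hamiltonian action. The plan is to differentiate under the integral sign and integrate by parts, using the fact that each $\gamma_u$ is a solution of the Hamiltonian system \eqref{HS}. Writing $\gamma_u(\tau)=(q_u(\tau),p_u(\tau))$ and
\[
\aaa^t_s(\gamma_u)=\int_s^t p_u(\tau)\cdot\dd_\tau q_u(\tau)-H(\tau,q_u(\tau),p_u(\tau))\,d\tau,
\]
I would first note that the integrand is $\cc^1$ in $u$ jointly with $\tau$ (the family is $\cc^1$ and $H$ is $\cc^2$), so differentiation under the integral is legitimate. This gives
\[
\dd_u\aaa^t_s(\gamma_u)=\int_s^t \dd_u p_u\cdot\dd_\tau q_u + p_u\cdot\dd_\tau\dd_u q_u - \dd_q H\cdot\dd_u q_u - \dd_p H\cdot\dd_u p_u\,d\tau,
\]
where all functions are evaluated at $(\tau,q_u(\tau),p_u(\tau))$ and I have used Schwarz's theorem to swap $\dd_u$ and $\dd_\tau$ on $q_u$.

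The second step is to use the Hamiltonian equations $\dd_\tau q_u=\dd_p H$ and $\dd_\tau p_u=-\dd_q H$ along $\gamma_u$. The term $\dd_u p_u\cdot\dd_\tau q_u$ cancels with $-\dd_p H\cdot\dd_u p_u$, and the term $-\dd_q H\cdot\dd_u q_u$ equals $\dd_\tau p_u\cdot\dd_u q_u$. Hence
\[
\dd_u\aaa^t_s(\gamma_u)=\int_s^t \dd_\tau p_u\cdot\dd_u q_u + p_u\cdot\dd_\tau\dd_u q_u\,d\tau=\int_s^t \dd_\tau\!\left(p_u\cdot\dd_u q_u\right)d\tau,
\]
recognizing the integrand as a total $\tau$-derivative. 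The fundamental theorem of calculus then yields
\[
\dd_u\aaa^t_s(\gamma_u)=\bigl[p_u(\tau)\cdot\dd_u q_u(\tau)\bigr]_{\tau=s}^{\tau=t}=p_u(t)\cdot\dd_u q_u(t)-p_u(s)\cdot\dd_u q_u(s),
\]
which is the claim.

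There is no real obstacle here; the only points requiring a word of care are the justification of differentiation under the integral (continuity of the partial derivatives of the integrand, guaranteed by $H\in\cc^2$ and the $\cc^1$ dependence of the family) and the interchange of $\dd_u$ and $\dd_\tau$ applied to $q_u$ (Schwarz's theorem, again valid since the family is $\cc^1$ in both variables — strictly one wants $q_u$ to be $\cc^2$ in $(\tau,u)$ jointly, which follows from smoothness of the flow and of $H$). Everything else is algebraic cancellation driven by \eqref{HS}.
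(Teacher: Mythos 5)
Your proof is correct and follows exactly the same route as the paper's: differentiate under the integral, use the Hamiltonian system to cancel the $\dd_p H\cdot\dd_u p_u$ term against $\dd_u p_u\cdot\dd_\tau q_u$ and rewrite $-\dd_q H\cdot\dd_u q_u$ as $\dd_\tau p_u\cdot\dd_u q_u$, then recognize the integrand as $\dd_\tau(p_u\cdot\dd_u q_u)$. The only difference is that you explicitly flag the justification for differentiating under the integral and for Schwarz's theorem, which the paper leaves implicit.
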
	
\begin{proof}
	We recall the Hamiltonian system satisfied by the Hamiltonian trajectory $\gamma_u$:
	\[\left\{\begin{array}{l}
	\dd_\tau{q}_u(\tau)= \dd_p H(t,q_u(\tau),p_u(\tau)),\\
	\dd_\tau{p}_u(\tau)= - \dd_q H(t,q_u(\tau),p_u(\tau)).
	\end{array}\right.
	\]
	As a consequence,
	\[\begin{split}
	\dd_u \aaa^t_s(\gamma_u) =\dd_u \int^t_s p_u(\tau)\cdot \dd_\tau q_u(\tau) &- H(\tau,q_u(\tau),p_u(\tau)) \,d \tau \\
	= \int^t_s \dd_u p_u(\tau)\cdot \dd_\tau q_u(\tau) &+ p_u(\tau)\cdot \dd_u \dd_\tau q_u(\tau)\\
	- \dd_q H(\tau,q_u(\tau),&p_u(\tau))\cdot \dd_u q_u(\tau)- \dd_p H(\tau,q_u(\tau),p_u(\tau))\cdot \dd_u p_u(\tau ) \,d \tau \\
	= \int^t_s p_u(\tau)\cdot \dd_u \dd_\tau q_u(\tau)& + \dd_\tau p_u(\tau) \cdot \dd_u q_u(\tau)  \,d \tau= \left[p_u \cdot\dd_u q_u\right]^t_s.
	\end{split}
	\]
\end{proof}

\subsection[General case]{Generating family in the general case}\label{gfgc}

As a consequence of Lemma \ref{lipliplip} and Proposition \ref{gron}, if we choose a  $\delta\leq \delta_1=\frac{ln(3/2)}{C}$, the map $g^t_s:(q,p)\mapsto (Q^t_s(q,p),p)$ is a $\cc^1$-diffeomorphism for all $0\leq t-s \leq \delta$, since we have ${\rm Lip}(g^t_s-{\rm id}) \leq {\rm Lip}(\phi^t_s - {\rm id})\leq 1/2$. If $0\leq t-s\leq \delta$, let 
$F^t_s:\rr^{2d} \to \rr$ be the function defined by\begin{equation}\label{flowfam}
F^t_s(Q,p)=\int^t_s \left(P^\tau_s(q,p)-p\right)\cdot\dd_\tau Q^\tau_s(q,p)-H(\tau,\phi^\tau_s(q,p))\;d\tau,
\end{equation}
where $q$ is the only point satisfying $Q^t_s(q,p)=Q$, \textit{i.e.} the first coordinate of $(g^t_s)^{-1}(Q,p)$. In other terms, if $\gamma(\tau)=(q(\tau),p(\tau))$ is the unique Hamiltonian trajectory such that $(q(t),p(s))=(Q,p)$,
\begin{equation}\label{flowfam2}
F^t_s(Q,p)=p\cdot(q(s)-Q)+\aaa^t_s(\gamma)=p\cdot(q(s)-Q)+\int^t_s p(\tau)\cdot \dd_\tau q(\tau) - H(\tau,\gamma(\tau)) \,  d\tau.
\end{equation}

\begin{prop} 	\label{F}
	The family of functions $(F^t_s)_{s\leq t\leq s + \delta}$ is $\cc^1$ with respect to $s$, $t$, $Q$, $p$ and its derivatives are given by\[
	\left\{\begin{array}{lcl}
	\dd_p F^t_s (Q,p)= q-Q,& &\dd_t F^t_s (Q,p)= -H(t,Q,P), \\
	\dd_Q F^t_s (Q,p)= P-p,& & \dd_s F^t_s (Q,p)= H(s,q,p),\end{array}\right.
	\]
	where $P$ and $q$ are uniquely defined by $(Q,P)=\phi^t_s(q,p)$.
	In particular,
	\begin{displaymath}
	(Q,P) = \phi^t_s (q,p) \iff  \left\{\begin{array}{rcl}
	\dd_p F^t_s (Q,p)&=&q-Q,\\
	\dd_Q F^t_s(Q,p)&=&P-p.
	\end{array}\right.
	\end{displaymath}
	Furthermore, if $Q=Q^t_s (q,p)$ and $\gamma$ denotes the Hamiltonian trajectory issued from $(q,p)$,	\[
	F^t_s(Q,p)=\aaa^t_s(\gamma) - p\cdot(Q-q).
	\]
\end{prop}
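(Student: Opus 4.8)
The strategy is to compute the four partial derivatives of $F^t_s$ directly from the two integral expressions \eqref{flowfam} and \eqref{flowfam2}, relying on Lemma \ref{varcalc} for the variations in $Q$ and $p$ (which move the endpoints of a family of Hamiltonian trajectories) and on the fundamental theorem of calculus together with the Hamiltonian system for the variations in $s$ and $t$. The key observation that makes everything clean is the second expression \eqref{flowfam2}: writing $\gamma^{Q,p}$ for the unique Hamiltonian trajectory with $q(t)=Q$, $p(s)=p$, and setting $q=q^{Q,p}(s)$, we have $F^t_s(Q,p)=p\cdot(q-Q)+\aaa^t_s(\gamma^{Q,p})$.

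\textbf{Derivatives in $Q$ and $p$.} First I would fix $s\leq t$ and differentiate $Q\mapsto F^t_s(Q,p)$. The map $h\mapsto \gamma^{Q+h,p}$ is a $\cc^1$ family of Hamiltonian trajectories whose time-$t$ position is $Q+h$ and whose time-$s$ impulsion is the constant $p$; Lemma \ref{varcalc} gives $\dd_h \aaa^t_s(\gamma^{Q+h,p})\big|_{h=0}=p(t)\cdot h - p(s)\cdot\dd_h q(s)=P\cdot h - p\cdot\dd_Q q$, where $P=P^t_s(q,p)$. Adding the derivative of $p\cdot(q-Q)$, namely $p\cdot\dd_Q q - p\cdot h$, the $\dd_Q q$ terms cancel and we obtain $\dd_Q F^t_s(Q,p)=P-p$. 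Similarly, differentiating in $p$: the family $\gamma^{Q,p+h}$ has fixed time-$t$ position $Q$ and time-$s$ impulsion $p+h$, so $\dd_h\aaa^t_s = p(t)\cdot\dd_p q(t) - p(s)\cdot\dd_p q(s) = 0 - p\cdot\dd_p q$ (the first term vanishes since $q(t)\equiv Q$ is constant in $h$). Adding the derivative of $p\cdot(q-Q)$, which is $\dd_p q\cdot p + (q-Q)$ (here the outer $p$ also varies, producing the $\dd_p p\cdot(q-Q)=(q-Q)$ term), the $\dd_p q$ terms cancel and we get $\dd_p F^t_s(Q,p)=q-Q$. One should be a little careful bookkeeping which $p$ is "varying" — the linear factor $p$ in $p\cdot(q-Q)$ versus the boundary condition $p(s)=p$ — but both vary together and the cancellation is exact.

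\textbf{Derivatives in $t$ and $s$, and the equivalences.} For $\dd_t F^t_s$, I would use expression \eqref{flowfam}: treating $q$ (equivalently the initial point) as the unknown determined by $Q^t_s(q,p)=Q$, differentiating the upper limit of the integral contributes the integrand evaluated at $\tau=t$, namely $(P^t_s(q,p)-p)\cdot\dd_t Q^t_s - H(t,Q,P)$, while the $t$-dependence of $q$ inside the integral is handled by the same variation-of-endpoint argument (Lemma \ref{varcalc} applied to the family obtained as $t$ varies, whose endpoint data changes), and the terms combine so that only $-H(t,Q,P)$ survives; alternatively one differentiates \eqref{flowfam2} directly, using that $\dd_\tau q(\tau)\cdot p(\tau)-H=$ integrand and the chain rule through the $t$-dependence of the trajectory. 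The $\dd_s$ computation is symmetric and yields $H(s,q,p)$ (the sign flips because $s$ is the lower limit and the boundary term $p\cdot q(s)$ now carries $s$-dependence). The $\cc^1$ regularity in all four variables follows from the $\cc^1$ regularity of the flow $\phi^\tau_s$ (Hypothesis \ref{esti} and standard ODE theory) and of the diffeomorphism $g^t_s$ whose inverse defines $q$. Finally, the claimed equivalence $(Q,P)=\phi^t_s(q,p)\iff \dd_p F^t_s(Q,p)=q-Q$ and $\dd_Q F^t_s(Q,p)=P-p$ is immediate: the first equation reads $q-Q=q-Q$ exactly when the $q$ in the definition of $F$ (the one with $Q^t_s(q,p)=Q$) equals the given $q$, and then the second reads $P^t_s(q,p)-p=P-p$, i.e. $P=P^t_s(q,p)$; conversely both hold when $(Q,P)=\phi^t_s(q,p)$. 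The last identity $F^t_s(Q,p)=\aaa^t_s(\gamma)-p\cdot(Q-q)$ when $Q=Q^t_s(q,p)$ is just \eqref{flowfam2} rewritten, since then $q(s)=q$.

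\textbf{Main obstacle.} The only genuinely delicate point is the careful application of Lemma \ref{varcalc}: one must recognize $h\mapsto\gamma^{Q+h,p}$ and $h\mapsto\gamma^{Q,p+h}$ as \emph{$\cc^1$ families of Hamiltonian trajectories} (which uses that $g^t_s$ is a $\cc^1$-diffeomorphism, hence $q$ depends $\cc^1$-ly on $(Q,p)$), identify correctly the variations of the two endpoints, and track the interplay between the linear prefactor $p$ in $p\cdot(q(s)-Q)$ and the boundary value $p(s)=p$ so that the unwanted $\dd q(s)$ and $\dd q(t)$ terms cancel cleanly. Everything else is routine differentiation under the integral sign and use of the Hamiltonian system \eqref{HS}.
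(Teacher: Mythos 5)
Your proposal is correct and follows essentially the same route as the paper: both compute all four derivatives from the formulation \eqref{flowfam2}, $F^t_s(Q,p)=p\cdot(q_u(s)-Q)+\aaa^t_s(\gamma_u)$, apply Lemma \ref{varcalc} to the $\cc^1$ family of Hamiltonian trajectories fixed by $p_u(s)=p$ and $q_u(t)=Q$, and exploit the cancellations produced by differentiating the constraint $q_u(t)=Q$ with respect to each parameter. The only place you leave something implicit is the $\dd_t$ and $\dd_s$ computations (where the boundary-of-integral term must cancel against the Lemma \ref{varcalc} term via $\dd_t q_u(t)+\dd_\tau q_u(t)=0$, exactly as in the paper's display), but the cancellation mechanism you correctly identify for $\dd_Q$ and $\dd_p$ is the same one that closes those cases.
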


The generating family is constructed by adding boundary terms to the Hamiltonian action of a Hamiltonian trajectory depending on parameters. 

\begin{proof}[Proof of Proposition \ref{F}]
	Let us differentiate $F$ with respect to $s$, $t$, $Q$ and $p$. The rest of the proposition is a straightforward consequence of the form of the derivatives of $F$. In terms of Lemma \ref{varcalc}, let us denote by $u=(s,t,Q,p)$ and by $\gamma_u=(q_u,p_u)$ the unique Hamiltonian trajectory such that $p_u(s)=p$ and $q_u(t)=Q$. Let us gather the derivatives of $q_u$ at the endpoints in view of applying Lemma \ref{varcalc}: we differentiate $q_u(t)=Q$ with respect to $s$, $t$, $Q$ and $p$, while denoting by $\tau$ the time variable of the trajectory $\gamma_u$:
	\begin{equation}\label{derder}
	\dd_s q_u(t)= 0,\;
	\dd_t q_u(t)+\dd_\tau q_u(t)=0,  \;
	\dd_Q q_u(t)={\rm id},\;
	\dd_p q_u(t)=0.
	\end{equation}
	The equation \eqref{flowfam2} defining $F$ may now be written as:
	\[F^t_s(Q,p)=p\cdot(q_u(s)-Q)+\aaa^t_s (\gamma_u).\]
	Lemma \ref{varcalc} gives the dependence of $\aaa^t_s(\gamma_u)$ with respect to $u$. We differentiate this expression with respect to $s$, $t$, $Q$ and $p$, cautiously denoting by $\tau$ the time variable of the trajectory $\gamma_u=(q_u,p_u)$, and taking into account the term $p\cdot(q_u(s)-Q)$ and the boundaries of the integral defining the action:
	\[\begin{split}
	\dd_s F^t_s(Q,p)=\, &p\cdot (\dd_s q_u(s)+\dd_\tau q_u(s))-\left(p_u(s)\cdot \dd_\tau q_u(s)-H(s,q_u(s),p_u(s))\right)+[p_u\cdot \dd_s q_u]^t_s\\
	=\, & H(s,q_u(s),p_u(s))+(p-p_u(s))\cdot (\dd_s q_u(s)+\dd_\tau q_u(s))+ p_u(t)\cdot \dd_s q_u(t)\\
	=\, & H(s,q,p),\end{split}\]\[
	\begin{split}
	\dd_t F^t_s(Q,p)=\, & p\cdot \dd_t q_u(s)  + \left(p_u(t)\cdot \dd_\tau q_u(t)-H(t,q_u(t),p_u(t))\right)+[p_u\cdot \dd_t q_u]^t_s\\
	=\, &(p-p_u(s)) \cdot \dd_t q_u(s) + p_u(t)\cdot (\dd_\tau q_u(t)+\dd_t q_u(t)) -H(t,q_u(t),p_u(t))\\
	=\,&- H(t,Q,P),\\
	\dd_Q F^t_s(Q,p)=\, &p\cdot \dd_Q q_u(s)-p + [p_u\cdot \dd_Q q_u]^t_s\\
	= \, &(p-p_u(s))\cdot \dd_Q q_u(s) - p + p_u(t)\cdot \dd_Q q_u(t) = -p+P,\\
	\dd_p F^t_s(Q,p)=\, &p\cdot \dd_p q_u(s)+ q_u(s)-Q + [p_u\cdot \dd_p q_u]^t_s\\
	= \, &(p-p_u(s))\cdot \dd_p q_u(s) +q_u(s)-Q + p_u(t)\cdot \dd_p q_u(t)=q-Q
	\end{split}\]
	if we denote by $(P,q)=(p_u(t),q_u(s))$, using \eqref{derder} and $(p_u(s),q_u(t))=(p,Q)$.
\end{proof}

\begin{prop}\label{FHdep} If $H_\mu$ is a $\cc^2$ family of Hamiltonians such that $\|\dd^2_{q,p} H_\mu\|$ is bounded by $C$, let us denote by $F^t_{s,\mu}$ associated with $H_\mu$ as previously for $t-s\leq \delta$. Then \[
	\dd_\mu F^t_{s,\mu}(Q,p) = - \int^t_s \dd_\mu H_\mu(\tau,\gamma_\mu(\tau)) \,\;d\tau
	\]
	where $\gamma_\mu=(q_\mu,p_\mu)$ is the unique Hamiltonian trajectory for $H_\mu$ with $q_\mu(t)=Q$ and $p_\mu(s)=p$. \end{prop}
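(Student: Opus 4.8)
The plan is to mimic the proof of Proposition~\ref{F}, the only new ingredient being a parametric refinement of the calculus-of-variations Lemma~\ref{varcalc} that keeps track of the explicit dependence of $H_\mu$ on $\mu$. As in \eqref{flowfam2}, I would write
\[
F^t_{s,\mu}(Q,p)=p\cdot(q_\mu(s)-Q)+\aaa^t_{s,\mu}(\gamma_\mu),
\]
where $\gamma_\mu=(q_\mu,p_\mu)$ is the unique Hamiltonian trajectory for $H_\mu$ with $q_\mu(t)=Q$ and $p_\mu(s)=p$, and $\aaa^t_{s,\mu}$ denotes the Hamiltonian action computed with $H_\mu$. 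Since the endpoint data $q_\mu(t)=Q$ and $p_\mu(s)=p$ do not depend on $\mu$, we have $\dd_\mu q_\mu(t)=0$ and $\dd_\mu p_\mu(s)=0$, which will produce the needed cancellations.

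First I would differentiate the action under the integral sign. Using the Hamiltonian equations $\dd_\tau q_\mu=\dd_p H_\mu(\tau,\gamma_\mu)$ and $\dd_\tau p_\mu=-\dd_q H_\mu(\tau,\gamma_\mu)$, and reorganizing exactly as in the proof of Lemma~\ref{varcalc}, the only surviving extra term being the one coming from the explicit $\mu$-dependence of $H$:
\begin{align*}
\dd_\mu \aaa^t_{s,\mu}(\gamma_\mu)
&=\int^t_s \dd_\mu p_\mu\cdot\dd_\tau q_\mu+p_\mu\cdot\dd_\tau\dd_\mu q_\mu-\dd_q H_\mu\cdot\dd_\mu q_\mu-\dd_p H_\mu\cdot\dd_\mu p_\mu-\dd_\mu H_\mu(\tau,\gamma_\mu)\,d\tau\\
&=\int^t_s \dd_\tau\!\left(p_\mu\cdot\dd_\mu q_\mu\right)d\tau-\int^t_s \dd_\mu H_\mu(\tau,\gamma_\mu(\tau))\,d\tau\\
&=\left[p_\mu\cdot\dd_\mu q_\mu\right]^t_s-\int^t_s \dd_\mu H_\mu(\tau,\gamma_\mu(\tau))\,d\tau.
\end{align*}
This is the parametric version of Lemma~\ref{varcalc}. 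Then, differentiating $F^t_{s,\mu}(Q,p)=p\cdot(q_\mu(s)-Q)+\aaa^t_{s,\mu}(\gamma_\mu)$ and using $p_\mu(s)=p$ gives
\[
\dd_\mu F^t_{s,\mu}(Q,p)=p\cdot\dd_\mu q_\mu(s)+p_\mu(t)\cdot\dd_\mu q_\mu(t)-p\cdot\dd_\mu q_\mu(s)-\int^t_s \dd_\mu H_\mu(\tau,\gamma_\mu(\tau))\,d\tau,
\]
and since $\dd_\mu q_\mu(t)=0$ the first three terms cancel, leaving the claimed formula.

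The one step requiring genuine care is the legitimacy of differentiating under the integral sign, i.e.\ the $\cc^1$ dependence of $\tau\mapsto\gamma_\mu(\tau)$, together with its $\tau$-derivative, on the parameter $\mu$. This is precisely where the hypothesis $t-s\leq\delta\leq\delta_1=\frac{\ln(3/2)}{C}$ is used: by Lemma~\ref{lipliplip} and Proposition~\ref{gron}, the map $(q,p)\mapsto(Q^t_s(q,p),p)$ is a $\cc^1$-diffeomorphism, so the two-point boundary problem $q_\mu(t)=Q$, $p_\mu(s)=p$ has a unique solution $\gamma_\mu$ depending in a $\cc^1$ fashion on $(\mu,s,t,Q,p)$ by the standard smooth dependence of ODE solutions on parameters and initial data. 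The integrand and its $\mu$-derivative are then continuous in all variables, which validates the manipulations above; the rest is the same bookkeeping of boundary terms as in Proposition~\ref{F}.
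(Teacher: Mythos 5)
Your proof is correct and follows the same route as the paper: rewrite $F^t_{s,\mu}$ via \eqref{flowfam2}, differentiate in $\mu$, and use $q_\mu(t)=Q$ and $p_\mu(s)=p$ to kill the boundary terms. The only difference is that you spell out the parametric version of Lemma~\ref{varcalc} (with the extra $-\int \dd_\mu H_\mu$ term coming from the explicit $\mu$-dependence of the Hamiltonian), whereas the paper invokes Lemma~\ref{varcalc} directly even though that lemma as stated concerns a family of trajectories of a single fixed Hamiltonian; your rederivation makes that step airtight, and your remark on the smooth dependence of $\gamma_\mu$ on $\mu$ (via the $\cc^1$-diffeomorphism $(q,p)\mapsto(Q^t_s(q,p),p)$ for $t-s\le\delta$) justifies the differentiation under the integral sign, which the paper leaves implicit.
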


\begin{proof} Let us fix $Q$, $p$, $s$ and $t$, and take $\gamma_\mu$ as in the statement. 
	By definition \eqref{flowfam2},\[
	F^t_{s,\mu}(Q,p) =p\cdot(q_\mu(s)-Q))+\aaa^t_{s,H_\mu} (\gamma_\mu)	
	\]
	and thus differentiating w.r.t. $\mu$ gives the following, using Lemma \ref{varcalc}:
	\[		\dd_\mu F^t_{s,\mu}(Q,p) = p \cdot \dd_\mu q_\mu(s) + [p_\mu\cdot \dd_\mu q_\mu]^t_s - \int^t_s \dd_\mu H_\mu(\tau,\gamma_u(\tau)) \; d\tau.\]
	Now, since $q_\mu(t)=Q$ for all $\mu$, $\dd_\mu q_\mu(t)=0$, and since $p=p_\mu(s)$, the two first terms of the right hand side cancel, hence the conclusion.
\end{proof}
When $t-s$ is large, we choose a subdivision of the time interval with steps smaller than $\delta$ and add intermediate coordinates along this trajectory.	For each $s\leq t$ and $(t_i)$ such that $t_0=s\leq t_1 \leq \cdots \leq t_{N+1}=t$ and $t_{i+1}-t_i\leq \delta$ for each $i$, let $G^t_s:\rr^{2d(1+N)}\to \rr$ be the function defined by\[
G^t_s(p_0,Q_0,p_1,Q_1,\cdots, Q_{N-1}, p_N,Q_N)=\sum_{i=0}^N F^{t_{i+1}}_{t_i}(Q_i,p_i) +p_{i+1}\cdot(Q_{i+1}-Q_i)
\]
where indices are taken modulo $N+1$.

\begin{prop}\label{Ggen}The family of functions $(G^t_s)_{s\leq t}$ is $\cc^1$ with respect to $s$, $t$, $t_i$, $Q_i$ and $p_i$, and its derivatives are given by \[
	\left\{\begin{array}{rl}
	\dd_{p_i} G^t_s (p_0,\cdots,Q_N)=\dd_p F^{t_{i+1}}_{t_i}(Q_i,p_i) +Q_i -Q_{i-1}=q_i-Q_{i-1},\\
	\dd_{Q_i} G^t_s (p_0,\cdots,Q_N)= \dd_Q F^{t_{i+1}}_{t_i}(Q_i,p_i) +p_i-p_{i+1} =P_i -p_{i+1},
	\end{array}\right.
	\]
	where $P_i$ and $q_i$ are uniquely defined by $(Q_i,P_i)=\phi^{t_{i+1}}_{t_i}(q_i,p_i)$ and indices are taken modulo $N+1$.
	
	It is hence a \emph{generating family} for the flow $\phi$, meaning that if we denote $(Q,p)=(Q_N,p_0)$ and $\nu=(Q_0,p_1,\cdots, Q_{N-1}, p_N)$, \begin{displaymath}
	(Q,P) = \phi^t_s (q,p) \iff  \exists \nu \in \rr^{2dN},
	\left\{\begin{array}{rcl}
	\dd_p G^t_s (p,\nu,Q)&=&q-Q,\\	
	\dd_Q G^t_s(p,\nu,Q)&=&P-p,\\
	\dd_\nu G^t_s(p,\nu,Q)&=&0,
	\end{array}\right.
	\end{displaymath}
	and in this case $(Q_i,p_{i+1})=\phi^{t_{i+1}}_s(q,p)$ for all $0\leq i\leq N-1$. 

	Furthermore, if $Q=Q^t_s (q,p)$ and $\gamma$ denotes the Hamiltonian trajectory issued from $(q,p)$,\[G^t_s(p,\nu,Q)=\aaa^t_s(\gamma)-p\cdot(Q-q)\]
	if $\dd_\nu G^t_s(p,\nu,Q)=0$.
\end{prop}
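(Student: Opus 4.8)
The plan is to reduce each of the three assertions --- the $\cc^1$ regularity with the derivative formulas, the generating-family characterization, and the value at a fibrewise critical point --- to the corresponding statement for the elementary blocks $F^{t_{i+1}}_{t_i}$ given by Proposition \ref{F}. Since each $F^{t_{i+1}}_{t_i}$ is $\cc^1$ in all its arguments and the added terms $p_{i+1}\cdot(Q_{i+1}-Q_i)$ are polynomial, $G^t_s$ is $\cc^1$, and I would get the derivative formulas by differentiating the sum term by term. The only care needed is the cyclic convention: $p_i$ occurs in $F^{t_{i+1}}_{t_i}(Q_i,p_i)$ and in $p_i\cdot(Q_i-Q_{i-1})$, while $Q_i$ occurs in $F^{t_{i+1}}_{t_i}(Q_i,p_i)$, in $p_{i+1}\cdot(Q_{i+1}-Q_i)$ and in $p_i\cdot(Q_i-Q_{i-1})$. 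Inserting $\dd_p F^{t_{i+1}}_{t_i}(Q_i,p_i)=q_i-Q_i$ and $\dd_Q F^{t_{i+1}}_{t_i}(Q_i,p_i)=P_i-p_i$ from Proposition \ref{F} and collecting terms produces exactly $\dd_{p_i}G^t_s=q_i-Q_{i-1}$ and $\dd_{Q_i}G^t_s=P_i-p_{i+1}$, indices modulo $N+1$.

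For the generating-family property in the direct sense, starting from $(Q,P)=\phi^t_s(q,p)$ I would set $\gamma(\tau)=\phi^\tau_s(q,p)$, write $\gamma(t_i)=(q_i,p_i)$ for $0\le i\le N$ (so $q_0=q$, $p_0=p$) and $\gamma(t_{N+1})=(Q,P)$, then take $Q_i:=q_{i+1}$ for $0\le i\le N-1$ and $Q_N:=Q$. The step-wise relation of Proposition \ref{F} on each $[t_i,t_{i+1}]$, namely $\dd_p F^{t_{i+1}}_{t_i}(q_{i+1},p_i)=q_i-q_{i+1}$ and $\dd_Q F^{t_{i+1}}_{t_i}(q_{i+1},p_i)=p_{i+1}-p_i$, fed into the derivative formulas of the previous paragraph yields $\dd_{p_i}G=\dd_{Q_i}G=0$ at the intermediate indices and $\dd_pG=q-Q$, $\dd_QG=P-p$ at $i=0,N$; moreover $(Q_i,p_{i+1})=\phi^{t_{i+1}}_s(q,p)$ by construction.

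Conversely, given $\nu=(Q_0,p_1,\dots,Q_{N-1},p_N)$ satisfying the three equations, let $(q_i,P_i)$ be the unique pair with $\phi^{t_{i+1}}_{t_i}(q_i,p_i)=(Q_i,P_i)$ (Proposition \ref{F}). Vanishing of the $\nu$-derivatives reads $q_i=Q_{i-1}$ for $1\le i\le N$ and $P_i=p_{i+1}$ for $0\le i\le N-1$, that is, the endpoint $(Q_{i-1},P_{i-1})$ of step $i-1$ coincides with the starting point $(q_i,p_i)$ of step $i$; composing these partial flows and inducting gives $\phi^{t_{i+1}}_s(q_0,p)=(Q_i,P_i)$ for all $i$, hence $\phi^t_s(q_0,p)=(Q,P_N)$, and the remaining equations $\dd_pG=q_0-Q=q-Q$ and $\dd_QG=P_N-p=P-p$ force $q_0=q$, $P_N=P$, so $(Q,P)=\phi^t_s(q,p)$ and $(Q_i,p_{i+1})=\phi^{t_{i+1}}_s(q,p)$. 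I expect this closing-up of the chain of partial flows into $\phi^t_s$, together with the tracking of the cyclic boundary terms, to be the main (if routine) point of the argument.

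For the value formula, given $\nu$ with $\dd_\nu G^t_s(p,\nu,Q)=0$, the analysis above provides $q_i=Q_{i-1}$, $p_{i+1}=P_i$ and a Hamiltonian trajectory $\gamma$ with $\gamma(t_i)=(q_i,p_i)$; writing $q:=q_0$ one has $Q=Q^t_s(q,p)$. On each step the last formula of Proposition \ref{F} gives $F^{t_{i+1}}_{t_i}(Q_i,p_i)=\aaa^{t_{i+1}}_{t_i}(\gamma)-p_i\cdot(Q_i-q_i)$; summing, using the additivity $\sum_i\aaa^{t_{i+1}}_{t_i}(\gamma)=\aaa^t_s(\gamma)$ and the cyclic re-indexing $\sum_i p_{i+1}\cdot(Q_{i+1}-Q_i)=\sum_i p_i\cdot(Q_i-Q_{i-1})$, all boundary terms telescope away except the $i=0$ contribution $p_0\cdot(q_0-Q_N)=-p\cdot(Q-q)$, leaving $G^t_s(p,\nu,Q)=\aaa^t_s(\gamma)-p\cdot(Q-q)$, as claimed.
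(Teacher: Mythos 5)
Your proof is correct and follows essentially the same route as the paper's: compute the derivatives of $G^t_s$ from those of the blocks $F^{t_{i+1}}_{t_i}$ (Proposition~\ref{F}) while keeping track of the cyclic boundary terms, translate the three critical-point equations into the sewing conditions $(q_i,p_i)=(Q_{i-1},P_{i-1})$ for the broken geodesic, and obtain the value formula by summing the per-step expressions of $F^{t_{i+1}}_{t_i}$ and telescoping. The paper's proof is just terser on those steps you spell out; there is no substantive difference.
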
 
\begin{proof} The derivatives of $G$, which are directly obtained from the ones of $F$, give that, if $p$ and $Q$ are fixed, 
	\begin{displaymath}
	\left\{\begin{array}{rcl}
	\dd_p G^t_s (p,\nu,Q)&=&q-Q,\\
	\dd_Q G^t_s(p,\nu,Q)&=&P-p,\\
	\dd_\nu G^t_s(p,\nu,Q)&=&0,
	\end{array}\right. \iff \left\{\begin{array}{l}
	q=q_0,\\
	P_N=P,\\
	(Q_i,p_{i+1})=\phi^{t_{i+1}}_{t_i}(Q_{i-1},p_i)\;\, \forall\, 0\leq i\leq N-1.
	\end{array}\right. 
	\end{displaymath}		
	If this is satisfied, $\nu$ describes a non broken Hamiltonian geodesic, $(Q_i,p_{i+1})=\phi^{t_{i+1}}_s(q,p)$ and $(Q,P)=\phi^t_s(q,p)$.
	If $(Q,P)=\phi^t_s(q,p)$, then $\nu$ is given by $\phi^{t_i}_s(q,p)$ and the right hand system holds.
	
	The critical value of $\nu \mapsto G^t_s(p,\nu,Q)$ is obtained by summing up the result obtained for $F$ in Proposition \ref{F}.
\end{proof}

The last statement compares the generating families of flows related to Hamiltonians with Lipschitz difference.
\begin{prop}\label{lipfam}
	Let $H$ and $\tilde{H}$ be two $\cc^2$ Hamiltonians on $\rr \times T^\star \rr^d$ such that $\|\dd^2_{q,p}H\|$ and $\|\dd^2_{q,p}\tilde{H}\|$ are uniformly bounded by a constant $C$ and $\|\dd_{q,p} H-\dd_{q,p} \tilde{H}\|$ is uniformly bounded by a constant $K$. We can find a $\delta>0$ suiting both $\tilde{H}$ and $H$ and build $\tilde{G}^t_{s}$ and $G^t_{s}$ with the same subdivision $(t_i)$, and then $\tilde{G}^t_{s}-G^t_{s}$ is Lipschitz with constant $4 \frac{K}{C} (e^{C(t-s)}-1)$ and also with constant $2 \frac{K}{C}$.
\end{prop}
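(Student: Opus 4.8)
The plan is to reduce the Lipschitz estimate for $\tilde G^t_s - G^t_s$ to the corresponding estimate for the elementary generating functions $F^{t_{i+1}}_{t_i}$, since by definition $G^t_s$ is a sum of such blocks together with the bilinear boundary terms $p_{i+1}\cdot(Q_{i+1}-Q_i)$, which are identical for $H$ and $\tilde H$ and therefore cancel in the difference. So I first need a Lipschitz bound for $\tilde F^{t'}_{t} - F^{t'}_{t}$ on a single short step $t'-t\le\delta$, and then I must be careful about how the Lipschitz constants of the blocks add up when forming $G$: the subtlety is that the blocks do not act on disjoint sets of variables — the variable $Q_i$ appears in block $i$ (as the endpoint $Q$) and the variable $p_i$ appears in block $i$ (as the initial momentum $p$); each of $Q_i,p_i$ appears in exactly one $F$-block, so in fact the blocks \emph{do} act on essentially disjoint variable groups $(Q_i,p_i)$, and the Lipschitz constant of the sum is just the max of the block constants (or their sum, depending on the norm convention), not an $N$-fold accumulation. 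This is the reason the final bound $4\frac KC(e^{C(t-s)}-1)$ does not depend on $N$.

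For the single-step estimate I would use the formula from Proposition~\ref{F}, namely $F^{t'}_{t}(Q,p)=\aaa^{t'}_{t}(\gamma)-p\cdot(Q-q)$ where $\gamma$ is the Hamiltonian trajectory with $q(t')=Q$, $p(t)=p$, together with the explicit derivatives $\dd_Q F^{t'}_t=P-p$ and $\dd_p F^{t'}_t=q-Q$. Hence $\dd_Q(\tilde F^{t'}_t-F^{t'}_t)=\tilde P-P$ and $\dd_p(\tilde F^{t'}_t-F^{t'}_t)=\tilde q-q$, where $(\tilde q,\tilde P)$, $(q,P)$ are the endpoints of the $\tilde H$- resp. $H$-trajectories sharing the boundary data $(Q,p)$. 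To control $\|\tilde P-P\|$ and $\|\tilde q-q\|$ I would invoke the Grönwall-type comparison of Proposition~\ref{gron}, which gives $\|\tilde\phi^{t'}_t-\phi^{t'}_t\|\le \frac KC(e^{C(t'-t)}-1)$ for the flows; since here the two trajectories are pinned by \emph{mixed} boundary data (final position, initial momentum) rather than a common initial point, I would pass through the twist diffeomorphism $g^{t'}_t:(q,p)\mapsto(Q^{t'}_t(q,p),p)$, whose inverse is $\frac12$-Lipschitz (Lemma~\ref{lipliplip} with $\mathrm{Lip}(g^{t'}_t-\mathrm{id})\le\mathrm{Lip}(\phi^{t'}_t-\mathrm{id})\le\frac12$ on a step of length $\le\delta_1$), and use the boundedness part of Lemma~\ref{lipliplip} together with Proposition~\ref{gron} to conclude $\|\tilde q-q\|\le 2\frac KC(e^{C(t'-t)}-1)$; similarly $\|\tilde P - P\|$ is bounded, with a factor accounting for the momentum coordinate. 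Adding the contributions from $Q$ and $p$ gives a Lipschitz constant of the form $4\frac KC(e^{C(t'-t)}-1)$ for the single block $\tilde F^{t'}_t-F^{t'}_t$, and since each $t_{i+1}-t_i\le t-s$ this is bounded by $4\frac KC(e^{C(t-s)}-1)$; for the alternative bound $2\frac KC$ I would instead estimate $\|\tilde\phi^{\tau}_{t}-\phi^{\tau}_{t}\|$ directly by $K(\tau-t)$ via $\|\Gamma-\tilde\Gamma\|\le K$ and the naive Grönwall-free integration $f(t')\le\int Cf + K$, hm, more precisely bounding the action difference directly.

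Assembling: on the difference $\tilde G^t_s - G^t_s = \sum_{i=0}^N(\tilde F^{t_{i+1}}_{t_i}-F^{t_{i+1}}_{t_i})(Q_i,p_i)$, each summand depends only on its own pair $(Q_i,p_i)$ and is Lipschitz in that pair with the constants just obtained; as these pairs are disjoint coordinate blocks of the full variable $(p_0,Q_0,\dots,p_N,Q_N)$, the whole sum is Lipschitz with the same constant $4\frac KC(e^{C(t-s)}-1)$ (resp. $2\frac KC$), which is exactly the claim. The main obstacle I anticipate is the single-step comparison: Proposition~\ref{gron} compares flows through a \emph{common initial condition}, whereas $F$ is naturally parametrised by the mixed data $(Q,p)$, so the bookkeeping of propagating the flow discrepancy through the inverse twist map $(g^{t'}_t)^{-1}$ — and keeping track of which coordinate picks up the factor $\frac1{1-1/2}=2$ from Lemma~\ref{lipliplip} — is where the constant $4$ (rather than, say, $2$) really comes from and must be tracked carefully; everything else is routine.
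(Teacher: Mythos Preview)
Your approach is essentially the paper's own: compute $\dd_{Q_i}(\tilde G-G)=\tilde P_i-P_i$ and $\dd_{p_i}(\tilde G-G)=\tilde q_i-q_i$ via Proposition~\ref{Ggen}, bound $\|\tilde q_i-q_i\|\le 2\|\tilde\phi^{t_{i+1}}_{t_i}-\phi^{t_{i+1}}_{t_i}\|_\infty$ by Lemma~\ref{lipliplip} applied to the twist map $g^{t_{i+1}}_{t_i}$, and then $\|\tilde P_i-P_i\|\le\|\tilde\phi-\phi\|_\infty+\mathrm{Lip}(\phi^{t_{i+1}}_{t_i})\|\tilde q_i-q_i\|\le(1+\tfrac32\cdot2)\|\tilde\phi-\phi\|_\infty=4\|\tilde\phi-\phi\|_\infty$; this is exactly how the constant $4$ arises in the paper. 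One clarification: the alternative bound $2\tfrac KC$ does not need a separate argument --- since each step satisfies $t_{i+1}-t_i\le\delta\le\delta_1=\ln(3/2)/C$, you have $e^{C(t_{i+1}-t_i)}-1\le\tfrac12$, so $4\tfrac KC(e^{C(t_{i+1}-t_i)}-1)\le2\tfrac KC$ directly.
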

\begin{proof}
	Let $\delta\leq \delta_1=\frac{\ln(3/2)}{C}$ so that both $\phi^t_s-{\rm id}$ and $\tilde{\phi}^t_s-{\rm id}$ are $\frac{1}{2}$-Lipschitz if $0\leq t-s \leq \delta$, see Proposition \ref{gron}, and 
	in that case $g^t_s:(q,p)\mapsto (Q^t_s(q,p),p)$ satisfies also ${\rm Lip}(g^t_s-{\rm id})\leq 1/2$.
	
	Proposition \ref{gron} states that $\|\tilde{\phi}^{t_{i+1}}_{t_i}-\phi^{t_{i+1}}_{t_i}\|_\infty \leq \frac{K}{C} (e^{C(t_{i+1}-t_i)}-1)$ under the assumptions made on $H$ and $\tilde{H}$. We are hence going to check that for all $i$, $\|\dd_{Q_i} \tilde{G}^t_s - \dd_{Q_i} G^t_s \|$ and $\|\dd_{p_i} \tilde{G}^t_s - \dd_{p_i} G^t_s \|$ are both bounded by $4 \|\tilde{\phi}^{t_{i+1}}_{t_i}-\phi^{t_{i+1}}_{t_i}\|_\infty$ in order to get the wanted Lipschitz constants. 	
	Proposition \ref{Ggen} states that $\|\dd_{Q_i} \tilde{G}^t_s - \dd_{Q_i} G^t_s \|=\|\tilde{P}_i-P_i\|$ and $\|\dd_{p_i} \tilde{G}^t_s - \dd_{p_i} G^t_s \|=\|\tilde{q}_i-q_i\|$, where $P_i$ and $q_i$ (resp. $\tilde{P}_i$ and $\tilde{q}_i$) are uniquely defined by $(Q_i,P_i)=\phi^{t_{i+1}}_{t_i}(q_i,p_i)$ (resp. $(Q_i,\tilde{P}_i)=\tilde{\phi}^{t_{i+1}}_{t_i}(\tilde{q}_i,p_i)$). Since $(q_i,p_i)=(g^{t_{i+1}}_{t_i})^{-1}(Q_i,p_i)$ and $(\tilde{q}_i,p_i)=(\tilde{g}^{t_{i+1}}_{t_i})^{-1}(Q_i,p_i)$, Lemma \ref{lipliplip} gives \[
	\|\tilde{q_i}-q_i\| \leq \|(\tilde{g}^{t_{i+1}}_{t_i})^{-1}-(g^{t_{i+1}}_{t_i})^{-1}\|_\infty \leq \frac{ \|\tilde{g}^{t_{i+1}}_{t_i}-g^{t_{i+1}}_{t_i}\|_\infty}{1-{\rm Lip}(g^t_s-{\rm id})}\leq 2 \|\tilde{\phi}^{t_{i+1}}_{t_i}-{\phi}^{t_{i+1}}_{t_i}\|_\infty
	\] 
	since ${\rm Lip}(g^{t_{i+1}}_{t_i}-{\rm id}) \leq 1/2$. Now,
	\[\begin{split}
	\|\tilde{P}_i-P_i\|&\leq \|\tilde{\phi}^{t_{i+1}}_{t_i}(\tilde{q}_i,p_i)-\phi^{t_{i+1}}_{t_i}(q_i,p_i)\|\\
	& \leq \|\tilde{\phi}^{t_{i+1}}_{t_i}(\tilde{q}_i,p_i)-\phi^{t_{i+1}}_{t_i}(\tilde{q}_i,p_i)\|+ {\rm Lip}(\phi^{t_{i+1}}_{t_i})\|\tilde{q}_i-q_i\|\\
	& \leq \|\tilde{\phi}^{t_{i+1}}_{t_i}-{\phi}^{t_{i+1}}_{t_i}\|_\infty + {\rm Lip}(\phi^{t_{i+1}}_{t_i}) 2  \|\tilde{\phi}^{t_{i+1}}_{t_i}-{\phi}^{t_{i+1}}_{t_i}\|_\infty \\&\leq 4\|\tilde{\phi}^{t_{i+1}}_{t_i}-{\phi}^{t_{i+1}}_{t_i}\|_\infty
	\end{split}\]
	since $\phi^{t_{i+1}}_{t_i}$ is $\frac{3}{2}$-Lipschitz.
	
	Since $t_{i+1}-t_i$ is smaller than $t-s$ and than $\delta$ for all $i$, we have proved that $\|d\tilde{G}^t_s-dG^t_s\|$ is bounded by $4\frac{K}{C}(e^{C\delta}-1)\leq 2 \frac{K}{C}$ and by $4\frac{K}{C}(e^{C(t-s)}-1)$.
\end{proof}

\subsection[Convex case]{Generating family in the convex case}\label{gfcc}

In this section we assume that the Hamiltonian $H$ satisfies Hypothesis \ref{esti} with constant $C$, and that there exists 
$m>0$ such that for each $(t,q,p)$, $\dd^2_p H(t,q,p) \geq m {\rm id}$ in the sense of quadratic forms.

\begin{prop}\label{twist} The following holds in the sense of quadratic forms:\[
	\dd_p Q^t_s \geq m(t-s){\rm id}-2\left(e^{C(t-s)}-1-C(t-s)\right){\rm id}.\]
	In particular there exists $\delta_2>0$ depending only on $C$ and $m$ such that if $|t-s|\leq \delta_2$,
	\[
	\dd_p Q^t_s \geq \frac{m}{2}(t-s){\rm id}\]
	which implies that the function $p \mapsto Q^t_s(q,p)$ is $\frac{m(t-s)}{2}$-monotone, meaning that
	\[
	(Q^t_s(q,\tilde{p})-Q^t_s(q,p))\cdot(\tilde{p}-p) \geq \frac{m}{2}(t-s)\|\tilde{p}-p\|^2.\]	 
	In particular, if $|t-s|\leq \delta_2$, $(q,p)\mapsto(q,Q^t_s(q,p))$ is a $\cc^1$-diffeomorphism.
\end{prop}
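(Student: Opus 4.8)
The plan is to get the lower bound on $\dd_p Q^t_s$ from a Gr\"onwall-type argument applied to the variational (linearized) flow, and then extract the two stated consequences by elementary convexity and inverse-function arguments.

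First I would differentiate the Hamiltonian system with respect to the initial momentum $p$. Write $\phi^\tau_s=(Q^\tau_s,P^\tau_s)$ and set $a(\tau)=\dd_p Q^\tau_s$, $b(\tau)=\dd_p P^\tau_s$, so that $a(s)=0$, $b(s)={\rm id}$. Differentiating \eqref{HS} gives the linearized system
\[
\left\{\begin{array}{l}
\dot a(\tau)=\dd^2_{qp}H\, a(\tau)+\dd^2_{pp}H\, b(\tau),\\
\dot b(\tau)=-\dd^2_{qq}H\, a(\tau)-\dd^2_{pq}H\, b(\tau),
\end{array}\right.
\]
where the second derivatives of $H$ are evaluated along $\phi^\tau_s(q,p)$. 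Then I would compare this with the ``free'' evolution $\dot a_0=\dd^2_{pp}H\, b_0$, $\dot b_0=0$, i.e. $b_0\equiv{\rm id}$ and $a_0(\tau)=\int_s^\tau \dd^2_{pp}H\,d\sigma\ge m(\tau-s){\rm id}$ using $\dd^2_pH\ge m\,{\rm id}$. The difference $a-a_0$ and $b-b_0$ are controlled: from Proposition \ref{gron} we already know $\|d\phi^\tau_s-{\rm id}\|\le e^{C(\tau-s)}-1$, which bounds $\|a\|,\|b-{\rm id}\|\le e^{C(\tau-s)}-1$ (and $\|a\|\le e^{C(\tau-s)}-1$ in particular). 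Plugging these bounds and $\|\dd^2_{(q,p)}H\|<C$ into $\dot a-\dot a_0=\dd^2_{qp}H\,a+\dd^2_{pp}H\,(b-{\rm id})$ gives $\|\dot a(\tau)-\dot a_0(\tau)\|\le C\|a(\tau)\|+C\|b(\tau)-{\rm id}\|\le 2C(e^{C(\tau-s)}-1)$, hence integrating from $s$ to $t$,
\[
\|a(t)-a_0(t)\|\le 2\int_s^t\!\big(e^{C(\tau-s)}-1\big)\,d\tau=\frac{2}{C}\big(e^{C(t-s)}-1-C(t-s)\big)\le 2\big(e^{C(t-s)}-1-C(t-s)\big),
\]
the last inequality being harmless since $C(e^{x}-1-x)\ge e^x-1-x$ for $x\ge0$ when $C\ge1$; if one does not want to assume $C\ge 1$ one simply keeps the factor $\frac1C$ and notes the statement as written uses the coefficient $2$, so I would just use $\|a(t)-a_0(t)\|\le 2(e^{C(t-s)}-1-C(t-s))$ as the target form. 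Combining with $a_0(t)\ge m(t-s){\rm id}$ gives $\dd_pQ^t_s=a(t)\ge m(t-s){\rm id}-2(e^{C(t-s)}-1-C(t-s)){\rm id}$, which is the first assertion.

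For the second assertion, observe that $e^{C(t-s)}-1-C(t-s)=o(t-s)$ as $t-s\to0$ (it is $O((t-s)^2)$), so there is $\delta_2>0$ depending only on $C$ and $m$ with $2(e^{C(t-s)}-1-C(t-s))\le\frac m2(t-s)$ for $0\le t-s\le\delta_2$; explicitly any $\delta_2$ with $e^{C\delta_2}-1-C\delta_2\le\frac m4\delta_2$ works. Then $\dd_pQ^t_s\ge\frac m2(t-s){\rm id}$. The $\frac{m(t-s)}2$-monotonicity of $p\mapsto Q^t_s(q,p)$ follows by the standard computation: for fixed $q$, $(Q^t_s(q,\tilde p)-Q^t_s(q,p))\cdot(\tilde p-p)=\int_0^1\dd_pQ^t_s(q,p+\theta(\tilde p-p))(\tilde p-p)\cdot(\tilde p-p)\,d\theta\ge\frac m2(t-s)\|\tilde p-p\|^2$. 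Finally, to see that $(q,p)\mapsto(q,Q^t_s(q,p))$ is a $\cc^1$-diffeomorphism for $|t-s|\le\delta_2$: it is $\cc^1$; its differential has block-triangular form $\begin{pmatrix}{\rm id}&0\\ \dd_qQ^t_s&\dd_pQ^t_s\end{pmatrix}$ with $\dd_pQ^t_s$ invertible by the monotonicity bound, so it is a local diffeomorphism; and it is a bijection because $q$ is unchanged and, $q$ being fixed, $p\mapsto Q^t_s(q,p)$ is injective by strict monotonicity and surjective since a $\frac{m(t-s)}2$-monotone continuous map is proper and hence onto $\rr^d$ (degree argument, or the Browder–Minty theorem).

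The main obstacle is the linearized-flow estimate: one has to set up the variation equation for $\dd_p\phi^\tau_s$ correctly and feed in the a priori bounds from Proposition \ref{gron} so that the Gr\"onwall integration produces exactly the deficit $e^{C(t-s)}-1-C(t-s)$; everything after that (the choice of $\delta_2$, the monotonicity integral, the inverse function argument) is routine.
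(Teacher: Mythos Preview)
Your approach is essentially the same as the paper's: linearize the flow, write $\dd_p\dot Q^\tau_s-\dd^2_pH=\dd^2_pH(\dd_pP^\tau_s-{\rm id})+\dd^2_{qp}H\,\dd_pQ^\tau_s$, bound the right-hand side by $2C(e^{C(\tau-s)}-1)$ via Proposition~\ref{gron}, integrate, and then deduce monotonicity and the diffeomorphism property exactly as you do.

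One arithmetic slip: after correctly obtaining $\|\dot a(\tau)-\dot a_0(\tau)\|\le 2C(e^{C(\tau-s)}-1)$, you drop the factor $C$ when writing the integral as $2\int_s^t(e^{C(\tau-s)}-1)\,d\tau$. Keeping it gives $\int_s^t 2C(e^{C(\tau-s)}-1)\,d\tau=2(e^{C(t-s)}-1-C(t-s))$ directly, so the awkward aside about whether $C\ge 1$ is unnecessary.
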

\begin{rem}\label{quadcomp}
	For $A$ a not necessarily symmetric matrix, we say that $A\geq c {\rm id}$ in the sense of quadratic forms if $Ax\cdot x\geq c\|x\|^2$ for all $x$. If $\|A\|\leq a$, then in particular $-a {\rm id}\leq A \leq a {\rm id}$.
\end{rem}
\begin{proof}
	Let us recall the variational equation
	\[ \dd_p \dot{Q}^t_s = \dd^2_p H \dd_p P^t_s + \dd^2_{q,p} H \dd_p Q^t_s\]
	that we write under the form
	\[ \dd_p \dot{Q}^t_s - \dd^2_p H = \dd^2_p H (\dd_p P^t_s- {\rm id}) + \dd^2_{q,p} H \dd_p Q^t_s. \]
	Lemma \ref{gron} gives that $\|\dd_q Q^t_s - {\rm id}\|$, $\|\dd_p Q^t_s\|$ and $\|\dd_p P^t_s - {\rm id}\|$ are smaller than $e^{C(t-s)}-1$. Adding the estimate on $\dd^2 H$, we get
	\[\|\dd_p \dot{Q}^t_s - \dd^2_p H \|\leq 2C(e^{C(t-s)}-1),\]
	which implies that \[\dd_p \dot{Q}^t_s \geq \dd^2_p H - 2C(e^{C(t-s)}-1) {\rm id} \geq \left(m-2C(e^{C(t-s)}-1)\right) {\rm id}\] in the sense of quadratic forms, see Remark \ref{quadcomp}. Integrating the result between $s$ and $t$ we obtain
	\[\dd_p Q^t_s \geq m(t-s){\rm id}-2\left(e^{C(t-s)}-1-C(t-s)\right){\rm id}.\]	
	Since the second term of the right hand side is second order, there exists a constant $\delta_2>0$ depending only on $C$ and $m$ such that if $|t-s|\leq \delta_2$,
	\[
	\dd_p Q^t_s \geq \frac{m}{2}(t-s){\rm id},\]
	which means that for all $z$,
	\[
	\dd_p Q^t_s(q,p) z \cdot z \geq \frac{m}{2}(t-s)\|z\|^2.\]
	Applying this to $z=\tilde{p}-p$ we get  \[\begin{split}
	(Q^t_s(q,\tilde{p})-Q^t_s(q,p))\cdot(\tilde{p}-p) &= \int^1_0 \dd_p Q^t_s(q,p + \tau(\tilde{p}-p))(\tilde{p}-p) d\tau \cdot (\tilde{p}-p) \\
	& = \int^1_0 \dd_p Q^t_s(q,p + \tau(\tilde{p}-p))(\tilde{p}-p) \cdot (\tilde{p}-p) d\tau \\
	& \geq \int^1_0 \frac{m}{2}(t-s)\|\tilde{p}-p\|^2 d\tau \geq \frac{m}{2}(t-s)\|\tilde{p}-p\|^2. \end{split}\]	
	We have proved that the function $p \mapsto Q^t_s(q,p)$ is $\frac{m(t-s)}{2}$-monotone.
	It is then a classical result that $p\mapsto Q^t_s(q,p)$ is a global $\cc^1$-diffeomorphism (see for example Proposition $51$ of \cite{bernard}), and therefore $(q,p)\mapsto (q, Q^t_s(q,p))$ is also a global $\cc^1$-diffeomorphism.
\end{proof}

\begin{prop}\label{Gccve} There exists $\delta_3>0$ depending only on $C$ and $m$ such that if $G^t_s$ is constructed with a maximal step smaller than $\delta_3$, the function $(p_0,p_1,\cdots,p_N) \mapsto G^t_s(p_0,Q_0,p_1,Q_1,\cdots, Q_{N-1}, p_N,Q_N)$ is uniformly strictly concave.
\end{prop}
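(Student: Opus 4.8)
The plan is to reduce the statement to an estimate on each individual block $F^{t_{i+1}}_{t_i}$, since the cross terms $p_{i+1}\cdot(Q_{i+1}-Q_i)$ in \eqref{Gfam} are linear in the $p_i$'s and therefore do not affect the second derivative with respect to $(p_0,\dots,p_N)$. More precisely, with $Q_0,\dots,Q_N$ fixed, $\dd^2_{(p_0,\dots,p_N)} G^t_s$ is block-diagonal with $i$-th block $\dd^2_{p} F^{t_{i+1}}_{t_i}(Q_i,p_i)$, so it suffices to show that each $p\mapsto F^{t_{i+1}}_{t_i}(Q,p)$ is uniformly strictly concave, with a concavity constant that can be taken uniform in $(Q,p)$ and in the (small) length $t_{i+1}-t_i$. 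I would state this as a lemma: for $0\le t-s\le\delta_3$, $\dd^2_p F^t_s(Q,p)\le -c(t-s)\,{\rm id}$ in the sense of quadratic forms, for some $c>0$ depending only on $C$ and $m$.

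For the lemma, I would compute $\dd^2_p F^t_s$ from Proposition \ref{F}. We have $\dd_p F^t_s(Q,p)=q-Q$ where $q$ is the first coordinate of $(g^t_s)^{-1}(Q,p)$, i.e. $q=q(p)$ is defined implicitly by $Q^t_s(q,p)=Q$. Differentiating $Q^t_s(q(p),p)=Q$ gives $\dd_q Q^t_s\cdot\dd_p q+\dd_p Q^t_s=0$, hence $\dd_p q=-(\dd_q Q^t_s)^{-1}\dd_p Q^t_s$, and therefore
\[
\dd^2_p F^t_s(Q,p)=\dd_p q=-(\dd_q Q^t_s)^{-1}\dd_p Q^t_s.
\]
Now Proposition \ref{gron} (item 2) gives $\|\dd_q Q^t_s-{\rm id}\|\le e^{C(t-s)}-1$, so for $t-s$ small $(\dd_q Q^t_s)^{-1}$ is close to the identity; quantitatively $\|(\dd_q Q^t_s)^{-1}-{\rm id}\|\le \frac{e^{C(t-s)}-1}{2-e^{C(t-s)}}$ say, once $e^{C(t-s)}<3/2$. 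And Proposition \ref{twist} gives $\dd_p Q^t_s\ge \tfrac{m}{2}(t-s){\rm id}$ together with $\|\dd_p Q^t_s\|\le e^{C(t-s)}-1=O(t-s)$. Writing $(\dd_q Q^t_s)^{-1}=\mathrm{id}+E$ with $\|E\|=O(t-s)$, we get $\dd^2_p F^t_s=-\dd_p Q^t_s-E\,\dd_p Q^t_s$, and the first term is $\le-\tfrac{m}{2}(t-s){\rm id}$ while the correction $E\,\dd_p Q^t_s$ has norm $O((t-s)^2)$. Hence for $t-s\le\delta_3$ with $\delta_3$ depending only on $C,m$, we obtain $\dd^2_p F^t_s(Q,p)\le-\tfrac{m}{4}(t-s){\rm id}$, uniformly in $(Q,p)$. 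Summing over the blocks, $\dd^2_{(p_0,\dots,p_N)}G^t_s\le -\tfrac{m}{4}\min_i(t_{i+1}-t_i)\,{\rm id}<0$, which is the claimed uniform strict concavity.

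The main subtlety is the bookkeeping of the constants: one must check that the concavity bound is genuinely uniform over $(q,p)$ (the estimates from Propositions \ref{gron} and \ref{twist} are uniform in the initial data, so this is fine) and that the second-order correction term coming from $E\,\dd_p Q^t_s$ is truly negligible compared to the leading term $\tfrac{m}{2}(t-s)$, which forces the choice of $\delta_3$ and is the reason one cannot simply take $\delta_3=\delta_1$ or $\delta_2$. One should also note, as in Remark \ref{quadcomp}, that $\dd^2_p F^t_s$ is symmetric (being a Hessian) even though the matrices $\dd_q Q^t_s$, $\dd_p Q^t_s$ individually need not be, so the inequality in the sense of quadratic forms is unambiguous. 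I expect no difficulty beyond this constant-chasing; the geometric content is entirely contained in the twist estimate of Proposition \ref{twist}.
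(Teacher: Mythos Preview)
Your proposal is correct and follows essentially the same route as the paper's proof: both observe that the Hessian in $(p_0,\dots,p_N)$ is block-diagonal with blocks $\dd^2_{p_i}G^t_s=\dd_{p_i}q_i=-(\dd_q Q^{t_{i+1}}_{t_i})^{-1}\dd_p Q^{t_{i+1}}_{t_i}$, then combine the twist estimate of Proposition~\ref{twist} with the Gr\"onwall bound $\|(\dd_q Q)^{-1}-\mathrm{id}\|=O(t-s)$ to conclude that the leading term $-\dd_p Q^t_s\le -\tfrac{m}{2}(t-s)\mathrm{id}$ dominates the $O((t-s)^2)$ correction for $t-s\le\delta_3$. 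The only cosmetic differences are that the paper works directly with $G$ via Proposition~\ref{Ggen} rather than reducing to $F$, and obtains the constant $-\tfrac{m}{2}(t_{i+1}-t_i)$ rather than your $-\tfrac{m}{4}(t_{i+1}-t_i)$ by using the sharper first form of the twist estimate in Proposition~\ref{twist}.
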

\begin{proof} Let us denote the function $(p_0,p_1,\cdots,p_N) \mapsto G^t_s(p_0,Q_0,p_1,\cdots, Q_{N-1}, p_N,Q_N)$ by $g$.
	Proposition \ref{Ggen} gives that $\dd_{p_i} G^t_s (p_0,\cdots,Q_N)=q_i-Q_{i-1}$,
	where $q_i$ is the only point such that $Q_i=Q^{t_{i+1}}_{t_i}(q_i,p_i)$. On one hand, we get that if $i\neq j$, $\dd^2_{p_ip_j} G^t_s$ is zero. On the other hand, $\dd^2_{p_i} G^t_s = \dd_{p_i} q_i$. 
	
	Differentiating $Q^{t_{i+1}}_{t_i}(q_i,p_i)=Q_i$ w.r.t. $p_i$ gives
	\[\dd_q Q^{t_{i+1}}_{t_i}(q_i,p_i) \dd_{p_i} q_i+\dd_p Q^{t_{i+1}}_{t_i}(q_i,p_i)=0,\]
	so we have
	\[\dd^2_{p_i} G^t_s=-(\dd_q Q^{t_{i+1}}_{t_i})^{-1}\dd_p Q^{t_{i+1}}_{t_i}.\]	
	Lemma \ref{gron} gives that $\|\dd_p Q^{t_{i+1}}_{t_i}\|\leq e^{C(t_{i+1}-t_i)}-1$ and $\|\dd_q Q^{t_{i+1}}_{t_i} - {\rm id} \|\leq e^{C(t_{i+1}-t_i)}-1$, and hence $\dd_q Q^{t_{i+1}}_{t_i}$ is invertible as long as $e^{C(t_{i+1}-t_i)}<2$ and satisfies \begin{equation}\label{B3}
	\left\|(\dd_q Q^{t_{i+1}}_{t_i})^{-1}-{\rm id}\right\|\leq \frac{e^{C(t_{i+1}-t_i)}-1}{2-e^{C(t_{i+1}-t_i)}}.
	\end{equation}
	Using \eqref{B3} and the estimate of Proposition \ref{twist} we get
	\[\begin{split}
	\dd^2_{p_i} G^t_s=-((\dd_q Q^{t_{i+1}}_{t_i})^{-1}-&{\rm id})\dd_p Q^{t_{i+1}}_{t_i}- \dd_p Q^{t_{i+1}}_{t_i}\\
	\leq \frac{e^{C(t_{i+1}-t_i)}-1}{2-e^{C(t_{i+1}-t_i)}}&(e^{C(t_{i+1}-t_i)}-1){\rm id}  \\
	& - m(t_{i+1}-t_i) {\rm id}+2\left(e^{C(t_{i+1}-t_i)}-1-C(t_{i+1}-t_i)\right){\rm id}. 
	\end{split} \]
	Since the only first order term is $- m(t_{i+1}-t_i) {\rm id}$, there exists a $\delta_3>0$ depending only on $C$ and $m$ such that if $t_{i+1}-t_i\leq \delta_3$,
	\[\dd^2_{p_i} G^{t_{i+1}}_{t_i}\leq - \frac{m}{2}(t_{i+1}-t_i) {\rm id}.\]
	If $\delta\leq \delta_3$, then $d^2 g$, which is a blockwise diagonal matrix, is smaller than $-\frac{m \delta}{2} {\rm id}$ and $g$ is hence uniformly strictly concave.
\end{proof}
When the Hamiltonian $H$ is strictly convex w.r.t. $p$, the Lagrangian function on the tangent bundle is associated as follows:
\[L(t,q,v)= \sup_{p \in \left(\rr^d\right)^\star} p\cdot v - H(t,q,p).\]

Assume that $\delta< \min(\delta_1,\delta_2,\delta_3)$, and let $h_i$ be the inverse of $(q,p)\mapsto\left(q,Q^{t_{i+1}}_{t_i}(q,p)\right)$ (see Proposition \ref{twist}). We define 
\[A^t_s(q,Q_0,\cdots,Q_{N-1},Q)=\sum_{i=0}^{N} \int_{t_i}^{t_{i+1}} L\left(\tau,Q_{t_i}^\tau(h_i(Q_{i-1},Q_i)),\dd_\tau Q_{t_i}^\tau(h_i(Q_{i-1},Q_i))\right) d\tau\]
with the notations $q=Q_{-1}$ and $Q=Q_N$.

\begin{prop}\label{famcvx} The so-called \emph{Lagrangian generating family} $A$ is $\cc^1$ and satisifies :
	\begin{enumerate} 
		\item 
		\[A^t_s(q,Q_0,\cdots,Q_{N-1},Q)=  \max_{(p_0,\cdots,p_N)} G^t_s(p_0,Q_0,\cdots, Q_{N-1}, p_N,Q)
		+p_0\cdot(Q-q).\]
		
		\item 
		\[\left\lbrace\begin{array}{lc}
		\dd_{Q_i} A^t_s (q,Q_0,\cdots,Q_{N-1},Q)= P_i-p_{i+1} & \forall i=0\cdots N-1,\\
		\dd_q A^t_s(q,Q_0,\cdots,Q_{N-1},Q)=-p_0,&\\
		\dd_Q A^t_s(q,Q_0,\cdots,Q_{N-1},Q)=P_N,&
		\end{array}\right.\]
		where $P_i$ and $p_i$ are uniquely defined by $(Q_i,P_i)=\phi^{t_{i+1}}_{t_i}(Q_{i-1},p_i)$. 
	\end{enumerate}
\end{prop}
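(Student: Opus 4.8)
The plan is to read both assertions off the identity between the Hamiltonian action and the generating family $G^t_s$, exploiting the uniform strict concavity of $G^t_s$ in the momenta established in Proposition \ref{Gccve}. Throughout, as in the paragraph preceding the statement, the maximal step is assumed smaller than $\min(\delta_1,\delta_2,\delta_3)$, so that each $h_i$ is a well-defined $\cc^1$ map (Proposition \ref{twist}) and $(p_0,\dots,p_N)\mapsto G^t_s(p_0,Q_0,p_1,\dots,Q_{N-1},p_N,Q_N)$ is uniformly strictly concave (Proposition \ref{Gccve}). Adding the linear term $p_0\cdot(Q-q)$ preserves uniform strict concavity, hence coercivity, so the maximum in the right-hand side of the first assertion is attained at a unique point, which is the unique critical point in $(p_0,\dots,p_N)$. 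The first task is then to identify that critical point.

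By Proposition \ref{Ggen}, $\dd_{p_i}G^t_s=q_i-Q_{i-1}$ with indices mod $N+1$, where $q_i$ denotes the position at time $t_i$ of the Hamiltonian trajectory $\gamma_i$ on $[t_i,t_{i+1}]$ having momentum $p_i$ at $t_i$ and reaching $Q_i$ at $t_{i+1}$; moreover $\dd_{p_0}\big(G^t_s+p_0\cdot(Q-q)\big)=(q_0-Q_N)+(Q-q)=q_0-q$ since $Q=Q_N$. Hence the critical point equations say $q_0=q$ and $q_i=Q_{i-1}$ for $1\le i\le N$, i.e.\ setting $Q_{-1}:=q$, each $\gamma_i$ runs from $Q_{i-1}$ at time $t_i$ to $Q_i$ at time $t_{i+1}$; by the twist property (Proposition \ref{twist}) this forces $p_i$ to be the momentum component of $h_i(Q_{i-1},Q_i)$. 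Next I would unwind the cyclic boundary terms: collecting them gives $\sum_{i=0}^N p_{i+1}\cdot(Q_{i+1}-Q_i)+p_0\cdot(Q-q)=\sum_{i=0}^N p_i\cdot(Q_i-Q_{i-1})$ with the convention $Q_{-1}=q$, $Q_N=Q$, so that $G^t_s+p_0\cdot(Q-q)=\sum_{i=0}^N\big(F^{t_{i+1}}_{t_i}(Q_i,p_i)+p_i\cdot(Q_i-Q_{i-1})\big)$. Formula \eqref{flowfam2} rewrites $F^{t_{i+1}}_{t_i}(Q_i,p_i)=p_i\cdot(q_i-Q_i)+\aaa^{t_{i+1}}_{t_i}(\gamma_i)$, so each summand equals $p_i\cdot(q_i-Q_{i-1})+\aaa^{t_{i+1}}_{t_i}(\gamma_i)$; evaluated at the maximizer the first term vanishes, and by the Legendre identity recalled in the introduction $\aaa^{t_{i+1}}_{t_i}(\gamma_i)=\int_{t_i}^{t_{i+1}}L\big(\tau,Q^\tau_{t_i}(h_i(Q_{i-1},Q_i)),\dd_\tau Q^\tau_{t_i}(h_i(Q_{i-1},Q_i))\big)\,d\tau$. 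Summing over $i$ yields exactly $A^t_s(q,Q_0,\dots,Q_{N-1},Q)$, which is the first assertion.

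For the $\cc^1$ regularity and the derivative formulas I would invoke the envelope theorem. Since $H$ is $\cc^2$, the flow $\phi^t_s$ is $\cc^1$, so (by the formulas in Propositions \ref{F} and \ref{Ggen}) $F^t_s$ and $G^t_s$ are $\cc^2$; the Hessian of $G^t_s+p_0\cdot(Q-q)$ in $(p_0,\dots,p_N)$ is negative definite, so the implicit function theorem applied to the critical point equation makes the maximizer $(p_0^\star,\dots,p_N^\star)$ a $\cc^1$ function of $(q,Q_0,\dots,Q_{N-1},Q)$; therefore $A^t_s$ is $\cc^1$. As the $p_i$-derivatives vanish at the maximizer, the envelope theorem gives that $\dd_{Q_i}A^t_s$, $\dd_qA^t_s$ and $\dd_QA^t_s$ coincide with the corresponding partial derivatives of $G^t_s+p_0\cdot(Q-q)$ at the maximizer. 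Using Proposition \ref{Ggen} (and $p_{N+1}=p_0$, $Q_{-1}=Q_N$ for the cyclic term) this yields $\dd_{Q_i}A^t_s=\dd_{Q_i}G^t_s=P_i-p_{i+1}$ for $0\le i\le N-1$, $\dd_qA^t_s=-p_0$, and $\dd_QA^t_s=\dd_{Q_N}G^t_s+p_0=(P_N-p_0)+p_0=P_N$, which is the second assertion.

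None of the steps is deep; the main obstacle is the bookkeeping with the cyclic indices modulo $N+1$ and with the boundary terms $p_{i+1}\cdot(Q_{i+1}-Q_i)$ — one must check carefully that the single extra term $p_0\cdot(Q-q)$ is precisely what turns the "closed loop" appearing in $G^t_s$ into the "open" broken geodesic encoded by $A^t_s$, with boundary data $Q_{-1}=q$, $Q_N=Q$. A minor point requiring a line of justification is the $\cc^2$-regularity of $G$ needed to run the implicit function theorem.
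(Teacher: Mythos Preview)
Your proof is correct and follows essentially the same route as the paper: identify the unique maximizer via the $p_i$-derivatives of $G^t_s$ from Proposition \ref{Ggen}, evaluate using the Lagrangian rewriting of $F^{t_{i+1}}_{t_i}$ along Hamiltonian trajectories, and then apply the envelope theorem for the derivative formulas. One cosmetic difference: the paper sidesteps the $\cc^2$-regularity issue you flag by writing the maximizer explicitly as $\mathbf{p}_i=h_i^2(Q_{i-1},Q_i)$, which is $\cc^1$ directly because each $h_i$ is the inverse of a $\cc^1$-diffeomorphism (Proposition \ref{twist}); this avoids invoking the implicit function theorem on $G^t_s$.
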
	 

This function is indeed a \emph{generating family} for the flow, in the sense that if $v=(Q_0,\cdots,Q_{N-1})$, the graph of the flow $\phi^t_s$ is the set
\[\left\lbrace\left((q,-\dd_q A^t_s(q,v,Q)),(Q,\dd_Q A^t_s (p,v,Q)) \right) \left|\, \dd_v A^t_s (p,v,Q)=0\right. \right\rbrace.\]

\begin{proof}\begin{enumerate}
		
		\item 	The function $(p_0,p_1,\cdots,p_N)\mapsto G^t_s(p_0,Q_0,p_1,Q_1,\cdots, Q_{N-1}, p_N,Q)+p_0\cdot(Q-q)$  is uniformly strictly concave by Proposition \ref{Gccve}, and its maximum is hence attained by a unique point. 
		
		For $i$ from $1$ to $N$, this is a consequence of the derivative of $G^t_s$ given in Proposition 
		\ref{Ggen}: $\dd_{p_i} G^t_s(p_0,Q_0,p_1,Q_1,\cdots, Q_{N-1}, p_N,Q)=q_i-Q_{i-1}=0$ if and only if $Q^{t_{i+1}}_{t_i}(Q_{i-1},p_i)=Q_i$. For $i=0$, the derivative with respect to $p_0$ is $q_0-q$ where $q_0$ is the only point such that $Q^{t_1}_s(q_0,p_0)=Q_0$, and consequently $\dd_{p_0}\left( G^t_s + p_0\cdot(Q-q)\right) =0$ if and only if $Q^{t_1}_s(q,p_0)=Q_0$.		
		
		The maximum is hence uniquely attained by the $\cc^1$ function
		\[\mathbf{p}:(q,Q_0\cdots,Q) \mapsto \left(h_0^2(q,Q_0),h_1^2(Q_0,Q_1),\cdots,h_N^2(Q_{N-1},Q)\right),\]
		where $h_i^2$ denotes the second coordinate of $h_i$. In other terms, its coordinates satisfy $Q^{t_{i+1}}_{t_i}(Q_{i-1},\mathbf{p}_i)=Q_i$		
		for all $i$ from $0$ to $N$, with the notations $q=Q_{-1}$ and $Q=Q_N$.

		By definition of the Lagrangian, if $(q(t),p(t))$ is a Hamiltonian trajectory associated with $H$, then
		\[L(t,q(t),\dot{q}(t))=  p(t)\cdot \dot{q}(t) - H(t,q(t),p(t)).\]			
		In particular the function $F$ defined in \eqref{flowfam2} can be written in Lagrangian terms:
		\[	F^t_s(Q,p)=p\cdot(q-Q)+\int^t_s L(\tau,Q^\tau_s(q,p),\dd_\tau Q^\tau_s(q,p))\;d\tau.\]
		where $q$ is the only point such that $Q^t_s(q,p)=Q$, and the function $G$ is hence the following:
		\[\begin{split}
		G^t_s&(p_0,Q_0,p_1,Q_1,\cdots, Q_{N-1}, p_N,Q_N)= \sum_{i=0}^N  F^{t_{i+1}}_{t_i}(Q_i,p_i) +p_{i+1}\cdot(Q_{i+1}-Q_i)\\
		&= \sum_{i=0}^N \int^{t_{i+1}}_{t_i} L(\tau,Q^\tau_{t_i}(q_i,p_i),\dd_\tau Q^\tau_{t_i}(q_i,p_i))\;d\tau
		+ p_i\cdot(q_i-Q_i) + p_{i+1} \cdot (Q_{i+1}-Q_i)\\
		&=\sum_{i=0}^N \int^{t_{i+1}}_{t_i} L(\tau,Q^\tau_{t_i}(q_i,p_i),\dd_\tau Q^\tau_{t_i}(q_i,p_i))\;d\tau
		+ p_i\cdot(q_i-Q_{i-1}),
		\end{split}	
		\]
		where $q_i$ is the only point such that $Q^{t_{i+1}}_{t_i}(q_i,p_i)=Q_i$.
		
		Now, if $(\mathbf{p}_0,\cdots,\mathbf{p}_N)$ is the critical point, we have on one hand that $q_i =Q_{i-1}$ and on the other hand that $Q^{t_{i+1}}_{t_i}(q_i,\mathbf{p}_i)=Q_i$ if and only if $(q_i,\mathbf{p}_i)=h_i(q_i,Q_i)$, hence the result.

		\item Since $A^t_s(q,\cdots,Q)=G^t_s(Q_0,\cdots,Q,\mathbf{p}(q,\cdots,Q))	+ \mathbf{p}_0(q,\cdots,Q)\cdot(Q-q)$ while reorganising the variables, we have for all $i$ from $-1$ to $N$
		\[\begin{split}
		\dd_{Q_i}A^t_s (q,\cdot\cdot,Q)  = \dd_{Q_i} &\left(G^t_s(Q_0,\cdot\cdot,Q,\mathbf{p}(q,\cdot\cdot,Q))
		+ \mathbf{p}_0(q,\cdot\cdot,Q)\cdot(Q-q)\right) \\
		&+ \underbrace{\dd_\mathbf{p} \left(G^t_s(Q_0,\cdot,Q,\mathbf{p}(q,\cdot\cdot,Q))
			+  \mathbf{p}_0(q,\cdot\cdot,Q)\cdot(Q-q)\right)}_{=0}   \dd_{Q_i} \mathbf{p} 
		\end{split}
		\]		
		since $\mathbf{p}(q,\cdot\cdot,Q)$ is the critical point. The result is then a straightforward consequence of Proposition \ref{Ggen} and of the second point.
	\end{enumerate}\end{proof}	
	Let us state what happens in the case of a uniformly strictly concave Hamiltonian.
	\begin{rem}\label{concave}
		If $H$ is uniformly strictly concave (which means that $-H$ is uniformly strictly convex), Proposition \ref{twist} analogous statement is that $-Q^t_s$ is $m(t-s)/2$ monotone, which implies the twist property: $(q,p)\mapsto(q,Q^t_s(q,p))$ is a $\cc^1$-diffeomorphism for $|t-s| \leq \delta_2$. Proposition \ref{Gccve} analogous statement is that $-G^t_s$ is strictly concave with respect to its $\boldmath{p}$ variable for $|t-s|\leq \delta_3$. The Lagrangian is now defined by
		\[L(t,q,v)= \inf_{p \in \left(\rr^d\right)^\star} p\cdot v - H(t,q,p),\]
		and the analogous statement of Proposition \ref{famcvx} is that \[A^t_s(q,Q_0,\cdots,Q_{N-1},Q)=  \min_{(p_0,p_1,\cdots,p_N)} G^t_s(p_0,Q_0,\cdots, Q_{N-1}, p_N,Q)
		+p_0\cdot(Q-q),\]
		where $A$ is defined as in the convex case. Finally, the next Proposition holds in both convex and concave cases.
	\end{rem}

	\begin{prop}\label{lbnl2}
		Let $H$ and $\tilde{H}$ be two $\cc^2$ Hamiltonians on $\rr \times T^\star \rr^d$ such that 
		\begin{itemize}
			\item 	$\dd^2_{q,p}H$ and $\dd^2_{q,p}\tilde{H}$ are uniformly bounded by a constant $C$,
			\item $\dd^2_p H\geq m {\rm id}$, $\dd^2_p \tilde{H}\geq m {\rm id}$ (or $\leq -m {\rm id}$ in the concave case),
			\item 	$\dd_{q,p} H-\dd_{q,p} \tilde{H}$ is uniformly bounded by a constant $K$. 
		\end{itemize}
		We fix a subdivision $s\leq t_0 \leq \cdots \leq t_{N+1}=t$ such that $0<t_{i+1}-t_i<\delta$, with $\delta$ smaller than $\delta_1$, $\delta_2$ and $\delta_3$, and build the Lagrangian generating families $A^t_s$ and $\tilde{A}^t_s$ as previously, respectively for $H$ and $\tilde{H}$.	
		Then the difference $\tilde{A}^t_{s}-A^t_{s}$ is Lipschitz.
	\end{prop}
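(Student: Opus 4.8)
The statement asserts that $\tilde A^t_s - A^t_s$ is Lipschitz, and the natural strategy is to reduce it to the already-established Lipschitz comparison for the \emph{Hamiltonian} generating families, namely Proposition \ref{lipfam}, together with the Lagrangian description of $A$ given in Proposition \ref{famcvx}. First I would recall from Proposition \ref{famcvx}(1) that, with $v=(Q_0,\dots,Q_{N-1})$ and the abbreviated variable $w=(q,v,Q)$,
\[
A^t_s(w)=\max_{(p_0,\dots,p_N)}\Bigl(G^t_s(p_0,Q_0,\dots,p_N,Q)+p_0\cdot(Q-q)\Bigr),
\]
and similarly $\tilde A^t_s(w)=\max_{(p_0,\dots,p_N)}\bigl(\tilde G^t_s(p_0,Q_0,\dots,p_N,Q)+p_0\cdot(Q-q)\bigr)$, where the two maxima are attained (uniquely, by Proposition \ref{Gccve}) because the $\delta$ chosen is smaller than $\delta_3$ for both Hamiltonians. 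The additive term $p_0\cdot(Q-q)$ is common to both expressions, so it is harmless for the difference.

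**Key steps.** The elementary inequality $|\max_x f(x)-\max_x g(x)|\le \sup_x|f(x)-g(x)|$ then gives, for fixed $w$,
\[
\bigl|\tilde A^t_s(w)-A^t_s(w)\bigr|\le \sup_{(p_0,\dots,p_N)}\bigl|\tilde G^t_s(p_0,Q_0,\dots,p_N,Q)-G^t_s(p_0,Q_0,\dots,p_N,Q)\bigr|,
\]
but this only bounds the sup-norm of the difference, not its Lipschitz constant, so a little more care is needed. Instead I would argue pointwise on derivatives: let $\mathbf p(w)$ (resp. $\tilde{\mathbf p}(w)$) denote the $\cc^1$ maximizer for $A^t_s$ (resp. $\tilde A^t_s$) furnished by Proposition \ref{famcvx}. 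By the envelope theorem the derivative of $A^t_s$ with respect to any component of $w$ equals the corresponding partial derivative of $(p,w)\mapsto G^t_s(p,\cdot)+p_0\cdot(Q-q)$ evaluated at $p=\mathbf p(w)$; concretely Proposition \ref{famcvx}(2) already gives $\dd_q A^t_s=-p_0$, $\dd_{Q_i}A^t_s=P_i-p_{i+1}$, $\dd_Q A^t_s=P_N$, and likewise with tildes. So
\[
\dd_q(\tilde A^t_s-A^t_s)=\mathbf p_0(w)-\tilde{\mathbf p}_0(w),\qquad
\dd_{Q_i}(\tilde A^t_s-A^t_s)=(\tilde P_i-P_i)-(\tilde p_{i+1}-p_{i+1}),\qquad
\dd_Q(\tilde A^t_s-A^t_s)=\tilde P_N-P_N,
\]
where, on each subinterval $[t_i,t_{i+1}]$, $\mathbf p_i$ is the unique momentum with $Q^{t_{i+1}}_{t_i}(Q_{i-1},\mathbf p_i)=Q_i$ and $P_i$ is the terminal momentum of that trajectory, and similarly for $\tilde{\mathbf p}_i,\tilde P_i$. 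It therefore suffices to bound $\|\tilde{\mathbf p}_i-\mathbf p_i\|$ and $\|\tilde P_i-P_i\|$ uniformly in $w$. But this is exactly the content of the proof of Proposition \ref{lipfam}: there $\mathbf p_i=g^{-1}$-type data and the estimates $\|\tilde q_i-q_i\|\le 2\|\tilde\phi^{t_{i+1}}_{t_i}-\phi^{t_{i+1}}_{t_i}\|_\infty$ and $\|\tilde P_i-P_i\|\le 4\|\tilde\phi^{t_{i+1}}_{t_i}-\phi^{t_{i+1}}_{t_i}\|_\infty$ were established using Lemma \ref{lipliplip}, the twist diffeomorphism $(q,p)\mapsto(q,Q^{t_{i+1}}_{t_i}(q,p))$ of Proposition \ref{twist}, and the Grönwall bound $\|\tilde\phi^{t_{i+1}}_{t_i}-\phi^{t_{i+1}}_{t_i}\|_\infty\le\frac{K}{C}(e^{C(t_{i+1}-t_i)}-1)$ of Proposition \ref{gron}. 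The only difference here is that the relevant diffeomorphism twists in the $(q,Q)$ variables rather than $(Q,p)$, which is precisely what $\delta<\delta_2$ guarantees via Proposition \ref{twist}; the $\cc^1$-diffeomorphism inverse $h_i$ is $\frac{2}{m(t_{i+1}-t_i)}$-Lipschitz in its relevant directions, so the same chain of estimates applies with an extra constant depending only on $m$, $C$ and the mesh.

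**Main obstacle.** The only real subtlety is bookkeeping: one must make sure the bound on each $\|\tilde{\mathbf p}_i-\mathbf p_i\|$ is uniform over all $w$, which works because the Grönwall estimates in Propositions \ref{gron} and the monotonicity estimate in Proposition \ref{twist} are uniform in the base point $q$. Summing over the finitely many subintervals then yields a Lipschitz constant for $\tilde A^t_s-A^t_s$ that is a finite sum of terms of the form $\mathrm{const}(m,C)\cdot\frac{K}{C}(e^{C(t_{i+1}-t_i)}-1)$, which is finite. I would close by remarking that in the concave case the maxima in Proposition \ref{famcvx} become minima (Remark \ref{concave}) and the identical argument, with $\min$ in place of $\max$, goes through unchanged; hence $\tilde A^t_s-A^t_s$ is Lipschitz in both the convex and the concave settings.
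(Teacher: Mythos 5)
Your proposal matches the paper's argument: both reduce the problem to the derivative formulas of Proposition \ref{famcvx}(2), hence to uniform bounds on $\|\tilde{\mathbf p}_i-\mathbf p_i\|$ and $\|\tilde P_i-P_i\|$, which are then obtained from the strong monotonicity of $p\mapsto Q^{t_{i+1}}_{t_i}(q,p)$ in Proposition \ref{twist} together with the Grönwall estimate $\|\tilde\phi^{t_{i+1}}_{t_i}-\phi^{t_{i+1}}_{t_i}\|_\infty\le\frac{K}{C}(e^{C\delta}-1)$. One small imprecision: the bound $\|\tilde{\mathbf p}_i-\mathbf p_i\|\le\frac{2}{m(t_{i+1}-t_i)}\|\tilde\phi-\phi\|_\infty$ is \emph{not} "exactly the content of Proposition \ref{lipfam}" — that proposition inverts $(q,p)\mapsto(Q^t_s,p)$ via Lemma \ref{lipliplip}, a $\tfrac12$-Lipschitz perturbation of the identity, whereas here the map $(q,p)\mapsto(q,Q^{t_{i+1}}_{t_i}(q,p))$ is \emph{not} near the identity and its inverse must instead be controlled by the $\tfrac{m(t_{i+1}-t_i)}{2}$-monotonicity of Proposition \ref{twist}, which is the convexity-specific ingredient and is exactly the argument the paper (and you, in your next sentence) actually uses.
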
	
	
	\begin{proof} We denote by $\tilde{\cdot}$ the objects defined for $\tilde{H}$ instead of $H$. 
		Given the form of the derivatives of $A^t_s$ obtained in Proposition \ref{famcvx}, it is enough to prove that 
		$\tilde{p}_i-p_i$ and $\tilde{P}_i-P_i$ are bounded uniformly with respect to $(q,\cdots,Q) $ for all $i$,  where $P_i$ and $p_i$ (resp. $\tilde{P}_i$ and $\tilde{p}_i$) are uniquely defined by $(Q_i,P_i)=\phi^{t_{i+1}}_{t_i}(Q_{i-1},p_i)$ (resp. $(Q_i,\tilde{P}_i)=\tilde{\phi}^{t_{i+1}}_{t_i}(Q_{i-1},\tilde{p}_i)$). 
		
		Proposition \ref{twist} states that $p\mapsto Q^{t_{i+1}}_{t_i}(q,p)$ is $\frac{m(t_{i+1}-t_i)}{2}$-monotone, meaning that for all $p$ and $\tilde{p}$
		\[(Q^{t_{i+1}}_{t_i}(q,\tilde{p})-Q^{t_{i+1}}_{t_i}(q,p))\cdot(\tilde{p}-p) \geq \frac{m}{2}(t_{i+1}-t_i)\|\tilde{p}-p\|^2.\]
		Applying the Cauchy-Schwarz inequality and dividing by $\|\tilde{p}-p\|$ we get
		\[ \|\tilde{p}-p\| \leq \frac{2}{m(t_{i+1}-t_i)} \left\|Q^{t_{i+1}}_{t_i}(q,\tilde{p})-Q^{t_{i+1}}_{t_i}(q,p)\right\|.\]
		Take $q=Q_{i-1}$, $\tilde{p}=\tilde{p}_i$ and $p=p_i$. Since $Q^{t_{i+1}}_{t_i}(Q_{i-1},p_i)=\tilde{Q}^{t_{i+1}}_{t_i}(Q_{i-1},\tilde{p}_i)$, we have
		\[ \|\tilde{p}_i-p_i\|\leq  \frac{2}{m(t_{i+1}-t_i)}    \left\|Q^{t_{i+1}}_{t_i}(Q_{i-1},\tilde{p}_i)-\tilde{Q}^{t_{i+1}}_{t_i}(Q_{i-1},\tilde{p}_i)\right\| 
		\leq \frac{2}{m\mu}  \left\|\phi^{t_{i+1}}_{t_i} - \tilde{\phi}^{t_{i+1}}_{t_i}\right\|_\infty
		\]
		where $\mu$ denotes the minimum of $t_{i+1}-t_i$.
		The first estimate of Proposition \ref{gron} gives:
		\[ \|\tilde{p}_i-p_i\|\leq  \frac{2}{m\mu} \frac{K}{C}(e^{C\delta}-1).\]
		
		Finally, since $P_i=P^{t_{i+1}}_{t_i}(Q_{i-1},p_i)$ and $\tilde{P}_i=\tilde{P}^{t_{i+1}}_{t_i}(Q_{i-1},\tilde{p}_i)$,	
		\[\begin{split}
		\|\tilde{P}_i-P_i\|&\leq \left\|\phi^{t_{i+1}}_{t_i} - \tilde{\phi}^{t_{i+1}}_{t_i}\right\|_\infty 
		+ \left\|P^{t_{i+1}}_{t_i}(Q_{i-1},p_i) - P^{t_{i+1}}_{t_i}(Q_{i-1},\tilde{p}_i) \right\|\\
		& \leq \left\|\phi^{t_{i+1}}_{t_i} - \tilde{\phi}^{t_{i+1}}_{t_i}\right\|_\infty + Lip(\phi^{t_{i+1}}_{t_i}) \|\tilde{p}_i-p_i\|
		\end{split}
		\]
		is uniformly bounded since $\phi^{t_{i+1}}_{t_i}$ is $\frac{3}{2}$-Lipschitz (see Proposition \ref{gron}).
	\end{proof}	
	
\section[Minmax]{Minmax: a critical value selector}\label{minmax}
We denote by $\mathcal{Q}_m$ the set of functions on $\rr^m$ that can be written as the sum of a nondegenerate quadratic form and of a Lipschitz function.
The aim of this appendix is to build a function $\sigma:\bigcup_{m\in \nn}\mathcal{Q}_m\to \rr$, named \emph{minmax}, satisfying:
\begin{enumerate}
	\item \label{critval2} if $f$ is $\cc^1$, then $\sigma(f)$ is a critical value of $f$,
	\item \label{add2} if $c$ is a real constant, then $\sigma(c+f)=c+\sigma(f)$,
	\item \label{transl2} if $\phi$ is a Lipschitz $\cc^\infty$-diffeomorphism on $\rr^m$ such that $f\circ \phi$ is in $\mathcal{Q}_m$, then \[\sigma(f \circ \phi )=\sigma(f),\]
	\item \label{monmin2} if $f_0-f_1$ is Lipschitz and $f_0 \leq f_1$ on $\rr^d$, then $\sigma(f_0)\leq \sigma(f_1)$,
	\item \label{loc2} 
	if $(f_\mu)_{\mu\in[s,t]}$ is a $\cc^1$ family of $\mathcal{Q}_m$ with $(\zz-f_\mu)_\mu$ equi-Lipschitz for some nondegenerate quadratic form $\zz$, then for all $\mu \neq \tilde{\mu}\in [s,t]$,
	\[ \min_{\mu \in [s,t]} \min_{x \in Crit(f_\mu)} \dd_\mu f_\mu(x) \leq  \frac{\sigma(f_{\tilde{\mu}})-\sigma(f_\mu)}{\tilde{\mu}-\mu}\leq \max_{\mu \in [s,t]} \max_{x \in Crit(f_\mu)}  \dd_\mu f_\mu(x). \] 
	\item \label{opp2} $\sigma(-f)=-\sigma(f)$,
	\item \label{cvx2} if $f(x,y)$ is a $\cc^2$ function of $\mathcal{Q}_m$ such that $\dd^2_y f \geq c\rm{id}$ for a $c>0$, and if $g(x)=\min_y f(x,y)$ is in some $\mathcal{Q}_{\tilde{m}}$, then $\sigma(g)=\sigma(f)$.
\end{enumerate}
For smooth functions, \eqref{critval}, \eqref{transl} and \eqref{add} are proved in Proposition \ref{prop},  \eqref{monmin} is implied by Proposition \ref{mon}, and \eqref{opp2} and \eqref{cvx2} are proved respectively in Propositions \ref{oppfield} and \ref{mmaxcvx}. They are extended to non smooth functions in Propositions \ref{propc0} and \ref{propc1}, and 
\eqref{loc2} is proved in Proposition \ref{locloc}.

\begin{csqs}\label{csqs} 
	These properties imply the following consequences: \begin{enumerate}
		\item \label{conti2} If $f$ and $g$ are two functions of $\mathcal{Q}_m$ with difference bounded and Lipschitz on $\rr^m$, then $
		|\sigma(f)-\sigma(g)| \leq \|f-g\|_\infty.$ This is a consequence of properties \eqref{add2} and \eqref{monmin2}.
		\item \label{mmaxstab2} If $g(x,\eta)=f(x)+\zz(\eta)$ where $\zz$ is a nondegenerate quadratic form and $f$ is in $\mathcal{Q}_m$, then $\sigma(g)=\sigma(f)$. This is a consequence of properties \eqref{opp2} and \eqref{cvx2} for smooth functions, which may be extended by continuity thanks to the previous point.
		\item \label{mmaxquad2}
		If $f_\mu=\zz_\mu + \ell_\mu$ is a $\cc^1$ family of $\qq_m$ with $\ell_\mu$ equi-Lipschitz, such that the set of critical points $f_\mu$ does not depend on $\mu$ and such that $\mu\mapsto f_\mu$ is constant on this set, then $\mu \mapsto \sigma(f_\mu)$ is constant. This is a consequence of properties \eqref{transl2} and \eqref{loc2}.
		\item \label{coerc} If $f$ is bounded below, then $\sigma(f)=\min(f)$. This is a consequence of properties \eqref{critval2} and \eqref{monmin2}.
	\end{enumerate}
\end{csqs}
Consequences \ref{csqs}-\eqref{mmaxquad2} and \ref{csqs}-\eqref{coerc} are proved in the main corpus, see respectively Consequences \ref{mmaxquad} and \ref{coercmin}.

The construction of such a critical value selector proves Propositions \ref{mmax} and \ref{mmaxbis}. 

We will use two deformation lemmas proved in Appendix \ref{deformation}, and we refer to \cite{weiminmax} for a survey of minmax related subtleties, including an example due to F. Laudenbach where the minmax is not uniquely defined.

\begin{rem}\label{discvit} In this paper we describe the geometric solution associated with the considered Cauchy problem with a particular generating family proposed by Chaperon. In a more general setting, Viterbo's uniqueness theorem on generating functions state that if $S$ and $\tilde{S}$ are two generating functions quadratic at infinity describing a same Lagrangian submanifold which is Hamiltonianly isotopic to the zero section, they may be obtained one from another via a combination of the three following transformations:
	\begin{itemize}
		\item Addition of a constant:  $\tilde{S} = S + c$ for some $c\in\rr$,
		\item Diffeomorphism operation: $\tilde{S}=S\circ \phi$ for some fiber $\cc^\infty$-diffeomorphism $\phi$,
		\item Stabilization: $\tilde{S}(x,\xi,\nu)=S(x,\xi)+\zz(\nu)$ for a nondegenerate quadratic form $\zz$.
	\end{itemize}
	The proof of D. Theret in \cite{theret} puts forward the fact that the diffeomorphism $\phi$ may be chosen affine outside a compact set - in particular such a diffeomorphism is Lipschitz and if $f$ is in $\mathcal{Q}_m$, so does $f\circ \phi$. Hence, the invariance of the minmax by additivity (property \eqref{add2}), by diffeomorphism action (property \eqref{transl2}) and by stabilization (property \ref{csqs}-\eqref{mmaxstab2}) gives that the minmax behave well when applied to generating functions. Up to adding a constant, it is the same for generating functions describing the same Lagrangian submanifold.
\end{rem}

\subsection{Definition of the minmax for smooth functions}$ $\\

Let us denote by $\mathcal{Q}^\infty_m$ the set of $\cc^\infty$ functions of $\mathcal{Q}_m$. The critical points and values of $\cc^1$ functions of $\mathcal{Q}_m$ are bounded:

\begin{prop}\label{vc} If $\zz$ is a nondegenerate quadratic form and $\ell$ is a $\cc^1$ Lipschitz function with constant $L$, then the set of critical points of the function $f=\zz+\ell$ is closed and contained in the ball $\bar{B}(0,L/m)$ where $m=\inf_{\|x\|=1} \|d\zz(x)\|$. The set of critical values of $f$ is hence closed and bounded.
\end{prop}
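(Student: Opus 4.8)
The plan is to prove both assertions by a direct Grönwall-type argument on the critical point equation $df(x)=0$, i.e. $d\zz(x) = -d\ell(x)$. First I would fix the notation: write $\zz$ for the nondegenerate quadratic form and $\ell$ for the $L$-Lipschitz $\cc^1$ perturbation, and set $m=\inf_{\|x\|=1}\|d\zz(x)\|$, noting $m>0$ precisely because $\zz$ is nondegenerate (the linear map $x\mapsto d\zz(x)$ is invertible, hence bounded below on the unit sphere, and by homogeneity $\|d\zz(x)\|\geq m\|x\|$ for all $x$).

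Then the key computation: if $x$ is a critical point of $f=\zz+\ell$, then $d\zz(x)=-d\ell(x)$, so
\[
m\|x\| \leq \|d\zz(x)\| = \|d\ell(x)\| \leq L,
\]
where the last inequality is the fact that an $L$-Lipschitz $\cc^1$ function has differential bounded by $L$ everywhere. Hence $\|x\|\leq L/m$, which gives the containment $\mathrm{Crit}(f)\subset \bar B(0,L/m)$. Closedness of $\mathrm{Crit}(f)$ is immediate: it is the zero set of the continuous map $df$. The set of critical values is then the image of the closed bounded (hence compact) set $\mathrm{Crit}(f)$ under the continuous map $f$, so it is compact, in particular closed and bounded.

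I expect essentially no obstacle here — the only point requiring a little care is the lower bound $\|d\zz(x)\|\geq m\|x\|$, which relies on nondegeneracy and homogeneity of degree $1$ of $d\zz$ (since $\zz$ is quadratic, $d\zz$ is linear), and the elementary fact that $\|d\ell\|\leq \mathrm{Lip}(\ell)$. One small subtlety worth a sentence: one should observe that $\mathrm{Crit}(f)$ being closed does not by itself give closedness of the set of critical values without the boundedness already established, so the argument must be presented in the order "bounded $\Rightarrow$ compact $\Rightarrow$ image compact". That is the whole argument; it is short and self-contained.
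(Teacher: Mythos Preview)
Your proof is correct and is exactly the natural argument. The paper actually states this proposition without proof, treating it as elementary; your write-up fills that gap cleanly. One cosmetic remark: calling this a ``Grönwall-type argument'' is a misnomer---there is no differential inequality being integrated, just the direct algebraic bound $m\|x\|\leq\|d\zz(x)\|=\|d\ell(x)\|\leq L$---so you may want to drop that phrase.
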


\begin{nota} For $f$ a function and $c$ a real number, let $f^c = \left\lbrace x \in \rr^m | f(x)\leq c \right\rbrace$ be the sublevel set of $f$ associated with the value $c$. Note that $f^c \subset f^{c'}$ if $c \leq c'$.
\end{nota}

\begin{defi} Let $f$ be a function of $\mathcal{Q}^\infty_m$ and $a$ be a real constant. Since the critical values of $f$ are bounded, we can find $c\geq|a|$ greater than any critical values of $f$ in modulus.
	For $a \leq c$, let $i^c_a$ be the canonical injection \begin{displaymath}
	(f^a,f^{-c}) \hookrightarrow (f^c,f^{-c}).
	\end{displaymath}		
	It induces a morphism $i^{c\star}_a$ in relative cohomology:\begin{displaymath}
	H^\bullet(f^c,f^{-c}) \overset{i^{c\star}_a}{\rightarrow} H^\bullet(f^a,f^{-c}).
	\end{displaymath}
	We assume that the cohomology is calculated with coefficients in a field, which allows to choose a simplified definition.
	
	Let the \textit{minmax} of $f$ be defined by\begin{displaymath}
	\sigma(f)=\inf\left\{a \in \rr | i^{c\star}_a \neq 0 \right\}=\sup\left\{a \in \rr | i^{c\star}_a = 0 \right\}.
	\end{displaymath}
	This definition does not depend on the choice of $c$ when $c$ is large enough.
\end{defi}

\begin{proof}
	The fact that $\sigma(f)$ does not depend on the choice of $c$ when it is large enough is a consequence of the following lemma:
	
	\begin{lem}\label{infty} If $c_1\geq |a|$ and $c_2 \geq a$ are two real constants greater than any critical values of $f$ in modulus, $i^{c_1\star}_a$ and $i^{c_2\star}_a$ are conjugate in cohomology. 
		Therefore they are simultaneously zero or non-zero.
	\end{lem}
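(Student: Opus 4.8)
The plan is to exhibit an explicit conjugacy between $i^{c_1\star}_a$ and $i^{c_2\star}_a$ coming from a deformation of the sublevel sets that moves only in the region where $f$ has no critical value. Set $c=\max(c_1,c_2)$ and $c'=\min(c_1,c_2)$, so it suffices to treat the case $c'\leq c$ with no critical value of $|f|$ in $[c',\,+\infty)$ — equivalently, no critical value of $f$ outside $(-c',c')$. First I would invoke the deformation lemma of Appendix \ref{deformation}: since $f$ has no critical value in $[c',c]$ and, $f$ being in $\mathcal{Q}^\infty_m$, the negative gradient flow is complete and well-behaved near infinity, there is a deformation retraction of $f^{c}$ onto $f^{c'}$ fixing $f^{-c}$ (and symmetrically for the negative sublevels, giving $f^{-c'}$ deformation-retracts onto $f^{-c}$, or rather $f^{-c}\hookrightarrow f^{-c'}$ is a homotopy equivalence of pairs-components). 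This makes the inclusion $(f^{c'},f^{-c})\hookrightarrow (f^{c},f^{-c})$ a homotopy equivalence, hence it induces an isomorphism $j^\star\colon H^\bullet(f^{c},f^{-c})\xrightarrow{\sim}H^\bullet(f^{c'},f^{-c})$.

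Next I would track the two injections through the commutative triangle of inclusions of pairs
\[
(f^{a},f^{-c})\hookrightarrow (f^{c'},f^{-c})\hookrightarrow (f^{c},f^{-c}),
\]
which gives $i^{c\star}_a = i^{c'\star}_a\circ j^\star$ in cohomology. Since $j^\star$ is an isomorphism, $i^{c\star}_a$ and $i^{c'\star}_a$ are conjugate, and in particular one is zero if and only if the other is. The only remaining subtlety is that the definition uses $f^{-c}$ as the fixed subspace on \emph{both} sides while here the two candidates $c_1,c_2$ may differ in their negative level as well; to handle this cleanly I would first reduce, by the same argument applied to the negative end, to a common choice of negative threshold, using that $f^{-c}\hookrightarrow f^{-c'}$ is a homotopy equivalence when there is no critical value in $[-c,-c']$, so that the long exact sequence of the triple (or excision-type comparison) identifies $H^\bullet(f^{a},f^{-c})$ with $H^\bullet(f^{a},f^{-c'})$ compatibly with the injections.

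The main obstacle is purely the deformation-retraction input: one must be sure the gradient (or a pseudo-gradient) flow of $f=\zz+\ell$ is complete and that the sublevel sets behave like those of the nondegenerate quadratic form $\zz$ at infinity, so that $f^{c}$ really does retract onto $f^{c'}$ across a critical-value-free band without escaping to infinity. This is exactly what the two deformation lemmas of Appendix \ref{deformation} are designed to supply (using the Lipschitz bound on $\ell$ and nondegeneracy of $\zz$), so once those are cited the rest is a formal diagram chase in relative cohomology. I would therefore structure the write-up as: (1) cite the deformation lemma to get the homotopy equivalences of pairs at the positive and negative ends; (2) assemble the commutative triangle(s) of inclusions; (3) conclude conjugacy, hence simultaneous vanishing.
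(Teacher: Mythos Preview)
Your proposal is correct and follows essentially the same route as the paper: both arguments invoke the deformation lemma of Appendix~\ref{deformation} (Lemma~\ref{deform}) to obtain homotopy equivalences across the critical-value-free bands $[c',c]$ and $[-c,-c']$, and then assemble a commutative diagram of pairs to conclude conjugacy. The paper phrases the equivalences via explicit retractions $\Phi_+,\Phi_-$ rather than via inclusions, and it also singles out the boundary case $a=-c_1$ (where both maps land in a trivial group) before applying the deformation; you should likewise note that your negative-end deformation needs $a>-c'$ to keep $f^a$ fixed, so that edge case deserves one line, but otherwise your plan matches the paper's proof.
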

	
	\begin{proof} Suppose $c_2>c_1$. If $a=-c_1$, let us check that $i^{c_1\star}_a=i^{c_2\star}_a=0$:\begin{displaymath}
		H^\bullet(f^{c_1 },f^{-c_1}) \overset{i^{c_1\star}_a}{\rightarrow} \underbrace{H^\bullet(f^{-c_1},f^{-c_1})}_{=\{0\}}
		\end{displaymath}
		and therefore $i^{c_1\star}_{-c_1}=0$. 	We can prove that $i^{c_2\star}_{-c_1}=0$ in the same way:
		\begin{displaymath}
		H^\bullet(f^{c_2 },f^{-c_2}) \overset{i^{c_2\star}_a}{\rightarrow} \underbrace{H^\bullet(f^{-c_1},f^{-c_2})}_{=\{0\}}
		\end{displaymath}
		where the nullity of $H^\bullet(f^{-c_1},f^{-c_2})$ is guaranteed by the retraction constructed in Lemma \ref{deform}.
		
		Now, if $a>-c_1$, there is an $\ep>0$ such that $-c_1+\ep \leq a$, and $f$ has no critical value in $[-c_2-\ep,-c_1+\ep]$ or in $[c_1-\ep,c_2+\ep]$. Since $-c_1 + \ep \leq a \leq c_1$, Deformation lemma \ref{deform} gives two homotopy equivalences $\Phi_+$ and $\Phi_-$ such that:\begin{displaymath}
		\left\{\begin{array}{c}
		\Phi_+(f^{c_2})=f^{c_1}\\
		\Phi_+(f^a)=f^a
		\end{array}\right. \;\textrm{ and } \;
		\left\{\begin{array}{c}
		\Phi_-(f^{-c_1})=f^{-c_2}\\
		\Phi_-(f^a)=f^a.
		\end{array}\right.
		\end{displaymath} The homotopy equivalences give isomorphisms in cohomology, and the following diagram commutes:\begin{displaymath}
		\begin{array}{c c c}
		H^\bullet (f^{c_1},f^{-c_1}) &\overset{i^{c_1\star}_{a}}{\to} & H^\bullet(f^a,f^{-c_1})\\
		\wr\downarrow (\Phi_+^\star)^{-1} & & \wr\downarrow (\Phi_+^\star)^{-1} \\
		H^\bullet (f^{c_2},f^{-c_1}) & \comm & H^\bullet(f^a,f^{-c_1})\\
		\wr\downarrow \Phi_-^\star & & \wr\downarrow \Phi_-^\star \\
		H^\bullet (f^{c_2},f^{-c_2}) &\underset{i^{c_2\star}_{a}}{\to} & H^\bullet(f^a,f^{-c_2})
		\end{array}
		\end{displaymath}
		which proves that $i^{c_1\star}_a$ and $i^{c_2\star}_a$ are conjugate in cohomology.
	\end{proof}
	
	Let us now fix $c$ large enough and prove that $\inf\left\{a \in \rr | i^{c\star}_a \neq 0 \right\}=\sup\left\{a \in \rr | i^{c\star}_a = 0 \right\}$. To do so, we are going to prove that any element of the set $\left\{a \in \rr | i^{c\star}_a \neq 0 \right\}$ is bigger than any element of its complement set $\left\{a \in \rr | i^{c\star}_a = 0 \right\}$.
	Let $a$ be such that $i^{c\star}_a \neq 0$ and $b$ be such that $i^{c\star}_b =0$. Assume that $b>a$. The following diagram commutes:\begin{displaymath}
	\begin{array}{c c c}
	(f^a,f^{-c}) &\overset{i}{\hookrightarrow} &(f^b,f^{-c})\\
	&\underset{i^c_a}{\searrow}\;\; \comm   &   \downarrow i^c_b\\
	&				& (f^c,f^{-c})\end{array}
	\end{displaymath}			
	where $i$ denotes the canonical injection from $(f^a,f^{-c})$ to $(f^b,f^{-c})$. It induces a commutative diagram in cohomology:\begin{displaymath}
	\begin{array}{c c c}
	H^\bullet(f^a,f^{-c}) &\overset{i^\star}{\leftarrow} &H^\bullet(f^b,f^{-c})\\
	&\underset{i^{c\star}_a}{\nwarrow} \;\; \comm  &   \uparrow i^{c\star}_b\\
	&				& H^\bullet(f^c,f^{-c})\end{array}
	\end{displaymath}
	Since $i^{c\star}_b$ is zero, $i^{c\star}_a$ is necessarily zero which is excluded. We have proved that $a\geq b$ (and then $a>b$ since $i^{c\star}_a \neq i^{c\star}_b$), and consequently:\begin{displaymath}
	\inf\left\{a \in \rr | i^{c\star}_a \neq 0 \right\}=\sup\left\{a \in \rr | i^{c\star}_a = 0 \right\}.
	\end{displaymath}
\end{proof}

\begin{theo}\label{vac} The minmax $\sigma(f)$ is a critical value of $f$.\end{theo}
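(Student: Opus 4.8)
The plan is to argue by contradiction: suppose $c_0 := \sigma(f)$ is a regular value of $f$. The definition of $\sigma(f)$ as an inf/sup of the set $\{a : i^{c\star}_a = 0\}$ (for $c$ large and fixed) says that for every $\varepsilon > 0$ the map $i^{c\star}_{c_0+\varepsilon}$ is nonzero while $i^{c\star}_{c_0-\varepsilon}$ is zero, so I want to derive a contradiction by showing that near a regular value the sublevel sets $f^{c_0-\varepsilon}$ and $f^{c_0+\varepsilon}$ are homotopy equivalent (rel $f^{-c}$), forcing $i^{c\star}_{c_0-\varepsilon}$ and $i^{c\star}_{c_0+\varepsilon}$ to have the same rank. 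First I would choose $\varepsilon$ small enough that $f$ has no critical value in $[c_0-\varepsilon, c_0+\varepsilon]$ — this is possible because by Proposition \ref{vc} the set of critical values is closed and bounded, hence its complement is open, so a regular value has a whole regular interval around it.

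The key step is then a deformation/retraction statement of the standard Morse-theoretic type: if $f \in \mathcal{Q}^\infty_m$ has no critical value in a compact interval $[\alpha,\beta]$, then $f^\alpha$ is a deformation retract of $f^\beta$, by means of a homotopy that fixes $f^{-c}$ (since $-c < \alpha$, the set $f^{-c}$ is untouched). This is exactly the content of the deformation lemmas announced in Appendix \ref{deformation} (the same Lemma \ref{deform} already invoked in the proof of Lemma \ref{infty}), applied to the interval $[c_0-\varepsilon, c_0+\varepsilon]$. The construction uses the negative gradient flow of $f$, suitably cut off; the only subtlety compared to the compact-manifold case is completeness of the flow and the fact that trajectories do not escape to infinity, which is guaranteed because $f = \mathcal{Z} + \ell$ with $\mathcal{Z}$ nondegenerate quadratic and $\ell$ Lipschitz, so $\|df\|$ is bounded below outside a large ball (Proposition \ref{vc}) and the flow can be taken complete. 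I would simply cite Lemma \ref{deform} here rather than reconstruct it.

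Granting the retraction $\Phi$ with $\Phi(f^{c_0+\varepsilon}) = f^{c_0-\varepsilon}$ and $\Phi|_{f^{c_0-\varepsilon}} = \mathrm{id}$, the induced map in relative cohomology $\Phi^\star : H^\bullet(f^{c_0+\varepsilon}, f^{-c}) \to H^\bullet(f^{c_0-\varepsilon}, f^{-c})$ is an isomorphism, and it fits into a commutative triangle with the two canonical injections into $(f^c, f^{-c})$: namely $i^c_{c_0-\varepsilon}$ factors (up to the homotopy equivalence) through $i^c_{c_0+\varepsilon}$ and conversely, so $i^{c\star}_{c_0-\varepsilon}$ and $i^{c\star}_{c_0+\varepsilon}$ are conjugate and in particular simultaneously zero. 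But $\sigma(f) = c_0$ forces them to be, respectively, zero and nonzero for small $\varepsilon$ — a contradiction. Hence $\sigma(f)$ is a critical value. I expect the main obstacle to be purely bookkeeping: making sure the homotopy used respects the pair (i.e. leaves $f^{-c}$ invariant) and that the commutative triangle is set up with the arrows in the right direction, exactly as in the diagram chase already carried out in Lemma \ref{infty}; the genuine analytic content (existence of the retraction away from critical values, for functions quadratic-plus-Lipschitz at infinity) is deferred to Appendix \ref{deformation}.
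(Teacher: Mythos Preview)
Your proposal is correct and follows essentially the same route as the paper: argue by contradiction, use Proposition \ref{vc} to find a critical-value-free interval around $\sigma(f)$, invoke the deformation lemma (Lemma \ref{deform}) to retract the larger sublevel onto the smaller one rel $f^{-c}$, and conclude that the two induced maps $i^{c\star}$ are simultaneously zero, contradicting the definition of $\sigma(f)$. The paper sets this up as a commutative square (using that $\Phi$ also preserves $(f^c,f^{-c})$), whereas you phrase it as a commutative triangle via the factorization $i^c_{c_0-\varepsilon} = i \circ i^c_{c_0+\varepsilon}$ with $i^\star$ an isomorphism; both are equivalent bookkeeping for the same argument.
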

\begin{proof} Suppose that $\sigma(f)$ is not a critical value of $f$. Then, since the set of critical values of $f$ is closed (see Proposition \ref{vc}), there is a $\ep>0$ such that $f$ has no critical value in $[\sigma(f)-\ep,\sigma(f)+\ep]$. Since $\sigma(f)$ is finite, by definition, there exist $a$ and $b$ such that $\sigma(f) -\ep < a \leq \sigma(f) \leq b < \sigma(f)+\ep$, $i^\star_a=0$ and $i^\star_b \neq 0$. Taking $c$ strictly bigger than $|a|$, $|b|$ and any critical value of $f$, Proposition \ref{infty} states that $i^{c\star}_a=0$ and $i^{c\star}_b \neq 0$.
	
	One can find an $\ep'>0$ such that $[a-\ep',b+\ep']\subset[\sigma(f)-\ep,\sigma(f)+\ep]$ and $b+\ep' \leq c$, so that $[a- \ep',b+\ep']$ does not contain any critical point of $f$, and Deformation lemma \ref{deform} builds a continuous function $\Phi$ such that $\Phi(f^b,f^{-c})=(f^a,f^{-c})$ and also $\Phi(f^c,f^{-c})=(f^c,f^{-c})$ since $b+\ep' \leq c$. Since $\Phi$ is a homotopy equivalence, it defines an isomorphism in cohomology. The following diagram should then commute:\begin{displaymath}
	\begin{array}{c c c}
	H^\bullet (f^c,f^{-c}) &\overset{i^{c\star}_{a}=0}{\to} & H^\bullet(f^a,f^{-c})\\
	\wr\downarrow \Phi^\star & \comm & \wr\downarrow \Phi^\star \\
	H^\bullet (f^c,f^{-c}) &\underset{i^{c\star}_{b}\neq 0}{\to} & H^\bullet(f^b,f^{-c})
	\end{array}
	\end{displaymath}
	which is impossible. Hence, $\sigma(f)$ is necessarily a critical value of $f$.
\end{proof}

\subsection{Minmax properties for smooth functions}

\begin{prop}\label{prop} Let $f$ be in $\mathcal{Q}^\infty_m$. Then the minmax satisfies:\begin{enumerate}
		\item $\sigma(f)$ is a critical value of $f$,
		\item \label{addan} if $c$ is a real number, $\sigma(c+f)=c+\sigma(f)$,
		\item \label{trans} if $\phi$ is a Lipschitz $\cc^\infty$-diffeomorphism on $\rr^m$ such that $f\circ \phi$ is in $\mathcal{Q}_m$, then \[\sigma(f \circ \phi )=\sigma(f).\]
	\end{enumerate}
\end{prop}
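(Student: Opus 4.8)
The plan is to prove the three statements in order, each one reducing to an elementary observation about how sublevel sets and relative cohomology transform.

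For the first statement, there is nothing to do: it is exactly Theorem \ref{vac}, which was just proved.

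For the additivity statement \eqref{addan}, first observe that $(c+f)^a = f^{a-c}$ for every real $a$, simply because $c+f(x)\le a \iff f(x)\le a-c$. Hence if $C$ is a constant larger in modulus than $|a|+|c|$ and than all critical values of $f$ (note the critical values of $c+f$ are those of $f$ shifted by $c$, so $C$ can be chosen to dominate both sets), the canonical injection $(( c+f)^a,(c+f)^{-C}) \hookrightarrow ((c+f)^C,(c+f)^{-C})$ is literally the injection $(f^{a-c},f^{-C-c})\hookrightarrow (f^{C-c},f^{-C-c})$. By Lemma \ref{infty} the vanishing or non-vanishing of the induced map in cohomology does not depend on the precise large constant used for the lower and upper cut-offs, so this map vanishes exactly when $i^{c'\star}_{a-c}$ vanishes for $f$ (for a suitable large $c'$). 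Taking infima over the values $a$ for which the map is nonzero, the set for $c+f$ is exactly the set for $f$ translated by $c$, whence $\sigma(c+f)=c+\sigma(f)$.

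For the invariance under diffeomorphism \eqref{trans}, the key point is that a Lipschitz $\cc^\infty$-diffeomorphism $\phi$ of $\rr^m$ induces, for every real $a$, a homeomorphism of pairs $\phi:\big((f\circ\phi)^a,(f\circ\phi)^{-C}\big)\to \big(f^a,f^{-C}\big)$, because $(f\circ\phi)^a=\phi^{-1}(f^a)$. Moreover $f$ and $f\circ\phi$ have the same critical values (a diffeomorphism carries critical points to critical points), so the same cut-off constant $C$ works for both. These homeomorphisms are compatible with the canonical inclusions, giving a commuting square relating $i^{C\star}_a$ for $f$ and for $f\circ\phi$ in which the two vertical arrows are the isomorphisms induced by $\phi^\star$. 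Hence $i^{C\star}_a$ is zero for $f$ iff it is zero for $f\circ\phi$, and taking the infimum over such $a$ gives $\sigma(f\circ\phi)=\sigma(f)$. One small point to check is that $f\circ\phi$ indeed lies in some $\qq_m$ so that the minmax is defined for it; this is part of the hypothesis of the statement, so nothing more is needed here, although one should note that $\phi$ being Lipschitz is what guarantees the sublevel sets behave well (in particular are nonempty and cofinal as $a\to\infty$) — this is the only place the Lipschitz assumption on $\phi$ is used.

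The main obstacle, such as it is, is purely bookkeeping: making sure that a single cut-off constant $C$ can be chosen to dominate the critical values of all the functions simultaneously (for \eqref{addan}, of $f$ and $c+f$; for \eqref{trans}, of $f$ and $f\circ\phi$), and invoking Lemma \ref{infty} to see that the answer is independent of that choice. Once that is set up, each statement is a one-line diagram chase, since in every case the relevant map of pairs is either literally the same map (after a shift of indices) or differs by a homeomorphism of pairs inducing an isomorphism in cohomology.
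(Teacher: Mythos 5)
Your plan is essentially the paper's. Item (1) is indeed just a pointer to Theorem \ref{vac}, and item (3) is word-for-word the paper's argument: $\phi$ induces homeomorphisms of pairs $(g^a,g^{-c})\to(f^a,f^{-c})$ with $g=f\circ\phi$, the critical values agree, and the resulting commuting square forces $i^{c,f\star}_a$ and $i^{c,g\star}_a$ to vanish simultaneously. For item (2) you take a slightly shorter route: you observe directly that $(c+f)^a=f^{a-c}$, so the defining injections for $c+f$ are literally injections of sublevel pairs of $f$ with an asymmetric pair of cutoffs $(C-c,-C-c)$, and you then invoke Lemma \ref{infty} to say the answer is insensitive to the cutoff choice. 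That is the right idea, but be aware that Lemma \ref{infty} as stated only treats the \emph{symmetric} case $(f^c,f^{-c})$; the paper's proof of additivity instead goes through an explicit homotopy equivalence built from the Deformation Lemma \ref{deform} to relate the asymmetric pair $(f^{c-2b},f^{-c})$ to the symmetric one. Your argument works if you first extend Lemma \ref{infty} to asymmetric cutoffs (its proof already does this implicitly, since it passes through the intermediate pair $(f^{c_2},f^{-c_1})$), so this is a cosmetic gap rather than a real one. One side remark of yours is off: you claim the Lipschitz hypothesis on $\phi$ is what makes the sublevel sets behave; in fact for the smooth Proposition \ref{prop} it plays no role in the argument (any diffeomorphism carries sublevels to sublevels), and its real purpose shows up only in the extension to non-smooth $f$ in Proposition \ref{propc0}, where one needs $\ell_n\circ\phi$ to remain equi-Lipschitz.
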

\begin{proof}\begin{enumerate}
		\item has already been proved (see Theorem \ref{vac}).
		\item If $b> 0$ is a real number, $g=b+f$ is in $\mathcal{Q}^\infty_m$. For all $c \in \rr$, $f^c=g^{c+b}$. Choose $c$ big enough so that $c-2b$ is strictly greater than $|a|$ and than the critical values of $f$. Take $a$ in $\rr$ and let us show that $i^{c,f\star}_{a}\neq 0 \Longleftrightarrow i^{c-b,g\star}_{a+b}\neq 0$. 
		There is an $\ep>0$ such that $f$ has no critical value of $f$ in $[c+\ep,c-2b-\ep]$. Now take the homotopy equivalence constructed in Lemma \ref{deform} and satisfying:
		\begin{displaymath}
		\left\{\begin{array}{c}
		\Phi(f^{c})=f^{c-2b}\\
		\Phi(f^u)=f^u \;\; \forall u \leq c-2b.
		\end{array}\right. 
		\end{displaymath}
		
		This gives the following commutative diagram, since $a$ and $-c$ are smaller than $c-2b$:\begin{displaymath}
		\begin{array}{c c c }
		H^\bullet(f^c,f^{-c})&\overset{i^{c,f\star}_{a}}{\to} & H^\bullet (f^a,f^{-c}) \\
		\wr\uparrow \Phi^\star &  & \wr\uparrow \Phi^\star \\
		H^\bullet(f^{c-2b},f^{-c}) &\comm & H^\bullet(f^{a},f^{-c})\\
		\parallel & & \parallel\\
		H^\bullet(g^{c-b},g^{-c+b}) &\underset{i^{(c-b),g\star}_{a+b}}{\to}  &  H^\bullet (g^{a+b},g^{-c+b}) 
		\end{array}
		\end{displaymath}
		which proves that $i^{c,f\star}_{a}=0 \Longleftrightarrow i^{(c-b),g\star}_{a+b}=0$. But since the critical values of $g$ are the critical values of $f$ added to the constant $b$, $c-b$ is greater than any critical value of $g$ in modulus since $c-2b$ is greater in modulus than the critical values of $f$. Lemma \ref{infty} states that the nullity of $i^{c,f\star}_{a}$ (resp. $i^{c,g\star}_{a}$) does not depend on $c$ large enough, hence: 	\[\sigma(f)=\inf\left\{a \in \rr | i^{c,f\star}_a \neq 0\right\}=\inf\left\{a \in \rr | i^{(c-b),g\star}_{a+b} \neq 0\right\}=\sigma(g)-b.\]
		
		\item Let $\phi$ be a Lipschitz $\cc^\infty$-diffeomorphism of $\rr^m$ such that $g=f \circ \phi$ is in $\mathcal{Q}_m^\infty$. Note that $f$ and $g$ have the same critical values. Take $a$ in $\rr$ and $c\geq |a|$ greater than any critical value of $f$ (hence $g$).
		
		For all $u \in \rr$, $f^u=\phi(g^u)$. Since $\phi$ is a $\cc^\infty$-diffeomorphism mapping the pair $(g^{u'},g^u)$ to $(f^{u'},f^u)$ for all real numbers $u<u'$, $\phi$ gives an isomorphism in cohomology. The following diagram commutes:\begin{displaymath}
		\begin{array}{c c c}
		H^\bullet (f^c,f^{-c}) &\overset{i^{c,f\star}_{a}}{\to} & H^\bullet(f^a,f^{-c})\\
		\wr\downarrow \phi^\star & \comm & \wr\downarrow \phi^\star\\
		H^\bullet (g^c,g^{-c}) &\underset{i^{c,g\star}_{a}}{\to}& H^\bullet(g^a,g^{-c})
		\end{array}
		\end{displaymath}
		which shows that $i^{c,f\star}_{a}\neq 0 \Longleftrightarrow i^{c,g\star}_{a}\neq 0$, hence $\sigma(f)=\sigma(g)$.
	\end{enumerate}
\end{proof}

Now let us focus on the monotonicity of the minmax. 

\begin{defi} If $f_0$ and $f_1$ are two functions of $\mathcal{Q}^\infty_m$ with Lipschitz difference, let us consider the homotopy $f_t = (1-t)f_0 + tf_1$ between $f_0$ and $f_1$ and denote by $\CC_{f_0,f_1}$ the set of critical points $\CC_{f_0,f_1} = \left\{ x \in \rr^m | \exists t \in [0,1], df_t(x)=0\right\}$. 
\end{defi}

\begin{prop}\label{CCcompact} Under these assumptions, the set $\CC_{f_0,f_1}$ is compact.
\end{prop}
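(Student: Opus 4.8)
The plan is to reduce the statement to a single nondegenerate quadratic form shared by all the functions $f_t$, and then to combine Proposition \ref{vc} with an elementary compactness argument. First I would exploit the hypothesis that $f_0-f_1$ is Lipschitz. Since $f_0\in\qq_m$, write $f_0=\zz+\ell_0$ with $\zz$ a nondegenerate quadratic form and $\ell_0$ Lipschitz; setting $\ell_1:=\ell_0+(f_1-f_0)$, which is still Lipschitz, we obtain $f_1=\zz+\ell_1$, so that $f_0$ and $f_1$ admit decompositions with the \emph{same} quadratic part. Consequently, for every $t\in[0,1]$,
\[ f_t=(1-t)f_0+tf_1=\zz+\ell_t,\qquad \ell_t:=(1-t)\ell_0+t\ell_1, \]
and ${\rm Lip}(\ell_t)\leq L:=\max({\rm Lip}\,\ell_0,{\rm Lip}\,\ell_1)$ uniformly in $t$. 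Applying Proposition \ref{vc} to each $f_t$ with the fixed constant $m:=\inf_{\|x\|=1}\|d\zz(x)\|>0$ (which does not depend on $t$ because $\zz$ is fixed) gives $Crit(f_t)\subset\bar B(0,L/m)$ for all $t\in[0,1]$, hence $\CC_{f_0,f_1}=\bigcup_{t\in[0,1]}Crit(f_t)\subset\bar B(0,L/m)$ is bounded.

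For closedness I would consider the set $E:=\{(t,x)\in[0,1]\times\rr^m\mid df_t(x)=0\}$. Since $f_0,f_1$ are $\cc^\infty$, the map $(t,x)\mapsto df_t(x)=(1-t)\,df_0(x)+t\,df_1(x)$ is continuous, so $E$ is closed; moreover $E\subset[0,1]\times\bar B(0,L/m)$ by the previous bound, and the latter set is compact, so $E$ is compact. Since $\CC_{f_0,f_1}$ is the image of $E$ under the continuous projection $[0,1]\times\rr^m\to\rr^m$, it is compact.

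The only genuine point in the argument is the common-quadratic-part observation in the first step, which allows a single application of Proposition \ref{vc} to control all the $f_t$ simultaneously; once this is in place the conclusion follows from the fact that a closed subset of a compact set is compact and that continuous images of compact sets are compact, so I do not expect any serious obstacle.
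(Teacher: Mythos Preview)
Your proof is correct and follows essentially the same approach as the paper: both reduce to a common quadratic part $\zz$ so that the Lipschitz perturbations $\ell_t$ are equi-Lipschitz, then invoke Proposition \ref{vc} for boundedness, and use continuity of $(t,x)\mapsto df_t(x)$ for closedness. The only cosmetic difference is that the paper argues closedness directly via sequences in $\CC_{f_0,f_1}$ (extracting a convergent $t_n$), whereas you pass through the compact set $E\subset[0,1]\times\rr^m$ and project; these are equivalent.
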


\begin{proof}  Let us denote by $f_0=\zz+\ell_0$ and $f_1=\zz+\ell_1$. If $L$ is a Lipschitz constant suiting both $\ell_0$ and $\ell_1$, note that $\ell_0 +t(\ell_1-\ell_0)$ is also $L$-Lipschitz. The critical points of $f_t$ are hence in the ball $\bar{B}(0,L/m)$ by Proposition \ref{vc}, and $\CC_{f_0,f_1}$ is a bounded set.
	
	Let $(x_n)$ be a converging sequence of $\CC_{f_0,f_1}$ and denote by $x$ its limit. By definition of $\CC_{f_0,f_1}$, there is a sequence $(t_n)\in [0,1]$ such that $df_{t_n}(x_n)=0$ for all $n$. Since $(t_n)$ is bounded, it is possible to find a subsequence of $t_n$ converging to some $t \in [0,1]$. Since $(t,x)\mapsto f_t(x)$ is $\cc^1$, $df_t(x)$ is zero, and $\CC_{f_0,f_1}$ is closed.
\end{proof}

\begin{prop}\label{mon} Let $f_0$ and $f_1$ be two functions of $\mathcal{Q}^\infty_m$ with Lipschitz difference. If $U$ is a set containing $\CC_{f_0,f_1}$ and $f_0 \geq f_1$ on $U$, then $\sigma(f_0) \geq \sigma(f_1)$. In particular if $f_0 \geq f_1$ on $\CC_{f_0,f_1}$ (or if $f_0 \geq f_1$ on $\rr^m$), then $\sigma(f_0) \geq \sigma(f_1)$.
\end{prop}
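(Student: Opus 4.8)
The plan is to reduce the monotonicity statement to a standard deformation argument on sublevel sets, using the compactness of the set $\CC_{f_0,f_1}$ established in Proposition \ref{CCcompact}. First I would fix a common constant $c$: since $f_0$ and $f_1$ differ by a Lipschitz function and share the quadratic part $\zz$, the intermediate functions $f_t=(1-t)f_0+tf_1$ are all of the form $\zz+\ell_t$ with $\ell_t$ equi-Lipschitz (constant $L$), so by Proposition \ref{vc} all their critical values lie in a common bounded interval; pick $c$ larger in modulus than all of them and than $|a|$ for the relevant $a$. The minmax $\sigma(f_i)$ is then computed via the injections $i^{c\star}_a:H^\bullet(f_i^c,f_i^{-c})\to H^\bullet(f_i^a,f_i^{-c})$.

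The core of the argument: I want to show that if $f_0\geq f_1$ on a neighbourhood $U$ of $\CC_{f_0,f_1}$, then $\sigma(f_0)\geq \sigma(f_1)$. Fix $a$ with $i^{c\star}_a\neq 0$ for $f_1$, i.e. $a> \sigma(f_1)$ is not required — rather, to prove $\sigma(f_0)\geq\sigma(f_1)$ it suffices to show that for every $a$ with $i^{c\star}_a\neq 0$ for $f_0$ we also have $i^{c\star}_a\neq 0$ for $f_1$ — wait, the correct direction: $\sigma(f_i)=\inf\{a: i^{c\star}_a\neq 0\}$, so $\sigma(f_0)\geq\sigma(f_1)$ follows if $\{a: i^{c\star}_a\neq 0 \text{ for } f_1\}\supseteq\{a: i^{c\star}_a\neq 0\text{ for } f_0\}$, equivalently if nonvanishing of the $f_0$-injection at level $a$ forces nonvanishing of the $f_1$-injection at level $a$. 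To get this I would use the homotopy $t\mapsto f_t$ and a deformation lemma (Appendix \ref{deformation}, as invoked in the proof of Theorem \ref{vac}): outside the compact set $\CC_{f_0,f_1}$ none of the $f_t$ has a critical point, and on $\CC_{f_0,f_1}$ we have $f_0\geq f_1$, hence $f_0\geq f_t\geq f_1$ pointwise along the homotopy at critical points; combined with the inclusion of sublevel sets $f_1^a\supseteq f_0^a$ (which holds wherever $f_0\geq f_1$, in particular near the critical locus where the topology of the sublevel sets can change) one builds, via the deformation flow of $-\nabla f_t$ away from $\CC_{f_0,f_1}$, a homotopy equivalence of pairs intertwining $i^{c\star}_a$ for $f_0$ and for $f_1$. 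More precisely, I would interpolate: show $i^{c\star}_a$ for $f_0$ and the analogous map for $f_1$ fit into a commuting diagram connected by the retractions furnished by Deformation Lemma \ref{deform} applied to each $f_t$ on intervals of regular values, using compactness of $[0,1]\times\CC_{f_0,f_1}$ to cover $[0,1]$ by finitely many such steps.

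The cleanest route, which I would actually carry out, is: it is enough to treat $U=\rr^m$ after a preliminary modification — replace $f_0$ by $\tilde f_0 = \max(f_1, f_0 - \text{(bump supported off }U))$ or rather note that changing $f_0$ outside $U$ within $\mathcal{Q}^\infty_m$ without creating critical points in the changed region does not affect $\sigma(f_0)$ (by the diffeomorphism/deformation invariance and the fact that $\CC_{f_0,f_1}\subset U$), so one may assume $f_0\geq f_1$ everywhere; then $f_0^a\subseteq f_1^a$ and $f_0^{-c}\subseteq f_1^{-c}$ for every $a$, giving a commuting square of canonical injections of pairs, and functoriality of cohomology shows that $i^{c\star}_a\neq 0$ for $f_1$ whenever it is $\neq 0$ for $f_0$, because the inclusion $(f_0^c,f_0^{-c})\hookrightarrow(f_1^c,f_1^{-c})$ at the top level is a homotopy equivalence (both pairs retract onto the same "large" pair, e.g. $(f^{c}, f^{-c})$ is homotopy equivalent to $(\zz^c,\zz^{-c})$ by Lemma \ref{deform} since $c$ exceeds all critical values and the $f_i$ agree with $\zz$ outside a compact set up to a Lipschitz perturbation). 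This yields $\sigma(f_0)\geq\sigma(f_1)$.

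The main obstacle I anticipate is justifying the first reduction — that $\sigma(f_0)$ is unchanged when $f_0$ is altered outside a neighbourhood of $\CC_{f_0,f_1}$, and more delicately, that after this alteration the new homotopy's critical set is still controlled so that $f_0\geq f_1$ can be arranged globally while staying in $\mathcal{Q}^\infty_m$ with equi-Lipschitz perturbation of $\zz$. This requires a careful cutoff construction (à la Lemma \ref{lemext}) ensuring the modification adds no critical points and preserves the quadratic behaviour at infinity. Once that reduction is in hand, the cohomological comparison via the commuting square and Deformation Lemma \ref{deform} is routine, exactly parallel to the argument already used in the proof of Theorem \ref{vac}.
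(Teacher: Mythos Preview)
Your diagnosis of the obstacle is accurate, and that obstacle is a genuine gap: the reduction ``modify $f_0$ outside $U$ so that $f_0\geq f_1$ globally while keeping $\sigma(f_0)$ and $\CC_{f_0,f_1}$ unchanged'' is not something you can carry out cleanly. Any such modification $\tilde f_0$ would need to (i) have the same sublevel-set topology as $f_0$, (ii) satisfy $\tilde f_0\geq f_1$ everywhere, and (iii) yield a new homotopy $\tilde f_t$ whose critical set is still contained in $U$. Condition (iii) is the killer: once you perturb $f_0$ off $U$, the linear homotopy to $f_1$ can acquire new critical points outside $U$, and you have no control over them. Your first, vaguer approach (cover $[0,1]$ by small steps and use Lemma~\ref{deform} on each) does not obviously close either, because Lemma~\ref{deform} moves sublevels of a \emph{single} function across a regular interval, whereas here you need to compare sublevels of \emph{different} functions.

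The paper sidesteps the reduction entirely via a second deformation lemma (Lemma~\ref{deff}), which is really the missing idea. For $U$ open and bounded containing $\CC_{f_0,f_1}$, one builds a global $\cc^\infty$-diffeomorphism $\Psi$ of $\rr^m$ by flowing along a vector field $Y_t(x)=g(x)\,(f_0-f_1)(x)\,\nabla f_t(x)/\|\nabla f_t(x)\|^2$, where $g$ is a cutoff equal to $1$ off $U$ and $0$ near $\CC_{f_0,f_1}$. Outside $U$ this field makes $f_t(\psi(t,x))$ constant in $t$, so for $c$ larger than $\sup_U|f_t|$ one gets exactly $\Psi(f_0^{\pm c})=f_1^{\pm c}$; inside $U$, the hypothesis $f_0\geq f_1$ forces $t\mapsto f_t(\psi(t,x))$ to be nonincreasing, giving $\Psi(f_0^a)\subset f_1^a$ for every $a$. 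The commuting square
\[
\begin{array}{ccc}
H^\bullet(f_1^c,f_1^{-c}) & \xrightarrow{\ i^{c,f_1\star}_a\ } & H^\bullet(f_1^a,f_1^{-c})\\[2pt]
\Psi^\star\,\wr\!\!\downarrow & & \downarrow\,\Psi^\star\\[2pt]
H^\bullet(f_0^c,f_0^{-c}) & \xrightarrow{\ i^{c,f_0\star}_a\ } & H^\bullet(f_0^a,f_0^{-c})
\end{array}
\]
has an isomorphism on the left, so $i^{c,f_1\star}_a=0\Rightarrow i^{c,f_0\star}_a=0$, hence $\sigma(f_0)\geq\sigma(f_1)$. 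The extension to $U$ merely bounded, then to $U$ arbitrary, is done afterwards by a straightforward $\varepsilon$-thickening and the compactness of $\CC_{f_0,f_1}$. So the key technical input you are missing is not a modification of $f_0$, but a single diffeomorphism built from the whole homotopy $(f_t)$ at once.
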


\begin{csq}\label{contan} If $f_0$ and $f_1$ are two functions of $\mathcal{Q}^\infty_m$ with Lipschitz difference: \begin{displaymath}
	\underset{U}{\inf}(f_0-f_1)\leq \underset{\CC_{f_0,f_1}}{\inf}(f_0-f_1)\leq	\sigma(f_0)-\sigma(f_1) \leq \underset{\CC_{f_0,f_1}}{\sup}(f_0-f_1)\leq \underset{U}{\sup}(f_0-f_1).
	\end{displaymath}
	for each set $U$ containing the set $\CC_{f_0,f_1}$. In particular if $f_0-f_1$ is Lipschitz and bounded on $\rr^m$, then ${|\sigma(f_0)-\sigma(f_1)|\leq \|f_0-f_1\|_\infty}$.
\end{csq}

\begin{proof} Since $f_1 + \underset{\CC_{f_0,f_1}}{\inf}(f_0-f_1)\leq  f_0\leq f_1 + \underset{\CC_{f_0,f_1}}{\sup}(f_0-f_1)$ on $\CC_{f_0,f_1}$ and the three functions are in $\mathcal{Q}^\infty_m$ with Lipschitz difference,  Proposition (\ref{mon}) gives\begin{displaymath}
	\sigma(f_1+\underset{\CC_{f_0,f_1}}{\inf}(f_0-f_1)) \leq 	\sigma(f_0) \leq  \sigma(f_1+\underset{\CC_{f_0,f_1}}{\sup}(f_0-f_1)).
	\end{displaymath}
	The additivity (\ref{prop}-\ref{addan}) then concludes:\begin{displaymath}
	\underset{\CC_{f_0,f_1}}{\inf}(f_0-f_1)	\leq \sigma(f_0)-\sigma(f_1) \leq \underset{\CC_{f_0,f_1}}{\sup}(f_0-f_1).\end{displaymath}
\end{proof}

\begin{proof} Let us first prove Proposition \ref{mon} in the case of an open and bounded set $U$.
	Take $a$ in $\rr$ and $C=\underset{t \in [0,1]}{\max} \underset{U}{\sup} |f_t| $, and choose a $c$ bigger than $C$ and $|a|$. Note that $c$ is bigger in modulus than the critical values of $f_0$ and $f_1$ (which are contained in $U$). Lemma \ref{deff} gives a $\cc^1$-diffeomorphism $\Psi:(f_0^c,f_0^{-c})\to(f_1^c,f_1^{-c})$, sending the pair $(f_0^a,f_0^{-c})$ into the pair $(f_1^a,f_1^{-c})$ (since $\Psi(f_0^a) \subset f_1^a$ and $\Psi(f_0^{-c})=f_1^{-c}$). This results in the following commutative diagram:\begin{displaymath}
	\begin{array}{c c c}
	H^\bullet(f_1^c,f_1^{-c}) & \overset{i^{c,f_1\star}_{a}}{\to} & H^\bullet(f_1^a,f_1^{-c})\\
	\wr\downarrow \Psi^\star & \comm & \downarrow \Psi^\star\\
	H^\bullet(f_0^c,f_0^{-c}) & \underset{i^{c,f_0\star}_{a}}{\to} & H^\bullet(f_0^a,f_0^{-c})\end{array}
	\end{displaymath}
	Hence, if $i^{c,f_1\star}_a$ is zero, since the left arrow is one-to-one, $i^{c,f_0\star}_a$ is necessarily zero. This proves that $\{a\in \rr | i^{c,f_0\star}_{a} \neq 0 \} \subset \{a\in \rr | i^{c,f_1\star}_{a} \neq 0 \}$ and then $\sigma(f_1) \leq \sigma(f_0)$.
	
	Now, if $U$ is not open anymore, but bounded, it is contained for all $\delta>0$ in the open and bounded set $U_\delta = \{x\in \rr^d| d(x,U)<\delta\}$. Furthermore since $f_0\geq f_1$ on $U$ and since $U_\delta$ is bounded, we have by continuity of $f_0$ and $f_1$ that $f_0 \geq f_1 + w(\delta)$ on $U_\delta$ with $w(\delta) \to 0$ when $\delta \to 0$. The previous work states that $\sigma(f_0)\geq \sigma(f_1+w(\delta))=\sigma(f_1)+w(\delta)$ by additivity of the minmax, and letting $\delta$ tend to $0$ finishes the proof.
	
	Finally, we get rid of the boundness assumption by observing that since $\CC_{f_0,f_1}$ is compact (Proposition \ref{CCcompact}), we may always replace $U$ by the intersection of $U$ with a ball large enough to contain $\CC_ {f_0,f_1}$, which ends the proof. 
\end{proof}

\begin{prop}\label{oppfield}
	If the cohomology is calculated with coefficients in a field, $\sigma(-f)=-\sigma(f)$  for each function $f$ of $\mathcal{Q}^\infty_m$.
\end{prop}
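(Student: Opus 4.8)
The plan is to deduce the identity $\sigma(-f)=-\sigma(f)$ from Poincaré--Lefschetz duality, which is precisely where the hypothesis that cohomology is taken with coefficients in a field intervenes. First I would record the elementary preliminaries. Writing $f=\zz+\ell$ with $\zz$ nondegenerate of index $k$ and $\ell$ Lipschitz, one has $-f=(-\zz)+(-\ell)\in\mathcal{Q}^\infty_m$ with $\mathrm{index}(-\zz)=m-k$; the sublevel sets of $-f$ are the superlevel sets of $f$, namely $(-f)^a=\{f\geq -a\}$; and for $c$ large the pair $(f^c,f^{-c})$ is homotopy equivalent to the corresponding pair for $\zz$, so $H^\bullet(f^c,f^{-c})\cong\mathbb F$ concentrated in degree $k$, and likewise $H^\bullet((-f)^c,(-f)^{-c})\cong\mathbb F$ concentrated in degree $m-k$. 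Using the monotonicity property (Proposition \ref{mon}) applied on a ball containing the critical points of all the functions $(1-t)f+t\zz$, I may moreover assume that $f\equiv\zz$ outside a large ball, and then restrict all sublevel pairs to a still larger closed ball $B$ on which $f\equiv\zz$ near $\partial B$; by the deformation lemmas of Appendix \ref{deformation} this does not change the relevant cohomology groups nor the maps between them.

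Next I would rewrite both minmaxes purely in terms of inclusion-induced maps being zero or onto. From the cohomology exact sequence of the triple $(f^c,f^a,f^{-c})$, the map $i^{c\star}_a$ vanishes if and only if the inclusion-induced map $H^k(f^c,f^a)\to H^k(f^c,f^{-c})$ is onto; hence $\sigma(f)=\sup\{a: H^k(f^c,f^a)\to H^k(f^c,f^{-c})\ \text{onto}\}$. Similarly, for $-f$ and a regular value $b\in[-c,c]$, the exact sequence of the triple $((-f)^c,(-f)^b,(-f)^{-c})$ shows that $j^{c\star}_b(-f)\neq 0$ if and only if $H^{m-k}((-f)^c,(-f)^b)\to H^{m-k}((-f)^c,(-f)^{-c})$ fails to be onto; since the target is one-dimensional this means it is the zero map, hence (universal coefficients over a field) its dual $H_{m-k}((-f)^c,(-f)^{-c})\to H_{m-k}((-f)^c,(-f)^b)$ is zero, hence (exactness of the homology triple) the map $H_{m-k}((-f)^b,(-f)^{-c})\to H_{m-k}((-f)^c,(-f)^{-c})$ is onto.

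Finally comes the duality step, which I expect to be the main obstacle. Poincaré--Lefschetz duality on $B$ (after smoothing corners, using that $f\equiv\zz$ near $\partial B$) should provide, for each regular value $a\in[-c,c]$, isomorphisms
$$H^k(f^c,f^a)\xrightarrow{\ \sim\ }H_{m-k}\bigl((-f)^{-a},(-f)^{-c}\bigr),\qquad H^k(f^c,f^{-c})\xrightarrow{\ \sim\ }H_{m-k}\bigl((-f)^c,(-f)^{-c}\bigr)$$
fitting into a commutative square whose left vertical arrow is induced by the inclusion of pairs $(f^c,f^{-c})\hookrightarrow(f^c,f^a)$ and whose right vertical arrow is induced by $((-f)^{-a},(-f)^{-c})\hookrightarrow((-f)^c,(-f)^{-c})$; checking the naturality of these duality isomorphisms with respect to these inclusions is the delicate point, and is where care with the noncompactness and the corners on $\partial B$ is required. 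Granting it, the left arrow is onto if and only if the right one is, so combining with the previous paragraph we obtain $i^{c\star}_a(f)=0\iff j^{c\star}_{-a}(-f)\neq 0$. Consequently $\{\,b:\ j^{c\star}_b(-f)\neq 0\,\}=-\{\,a:\ i^{c\star}_a(f)=0\,\}$, and passing to infima gives $\sigma(-f)=\inf\{b:\ j^{c\star}_b(-f)\neq 0\}=-\sup\{a:\ i^{c\star}_a(f)=0\}=-\sigma(f)$, which is the claim.
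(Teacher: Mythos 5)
Your argument is in the same spirit as the paper's --- both rest on a duality exchanging the indices $k$ and $m-k$, combined with the field-coefficient fact that $H^\bullet(f^c,f^{-c})$ is one-dimensional so that inclusion maps are either zero or isomorphisms --- but the specific duality you invoke is different, and that difference is exactly where your proof has a (self-acknowledged) gap. You propose to truncate $f$ so that $f\equiv\zz$ outside a large ball, restrict to a closed ball $B$, and apply Poincar\'e--Lefschetz duality on $B$; you then correctly identify that the whole argument hinges on the naturality of these duality isomorphisms with respect to the inclusions $(f^c,f^{-c})\hookrightarrow (f^c,f^a)$, and you leave this as ``the delicate point, granting it.'' That naturality is genuinely nontrivial here: the pairs involved do not sit inside $B$ as a clean triad with $\partial B$ neatly split, and making the Lefschetz squares commute requires a careful choice of the duality classes and of the corner structure on $\partial B$, none of which you supply. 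So as written the proposal is not a complete proof.

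The paper sidesteps precisely this difficulty by using Alexander duality in $\rr^m$ directly: it pairs $H_\bullet(f^a,f^{-c})$ with $H^\bullet(\rr^m\setminus f^{-c},\rr^m\setminus f^a)$, for which the commutativity with inclusion-induced maps is a standard naturality statement, and then identifies the complements $\rr^m\setminus f^a=\{-f<-a\}$ with sublevel sets $(-f)^{-a-\ep}$ of $-f$ using the existing deformation retraction (Lemma \ref{deform}). No truncation of $f$, no passage to a compact manifold with boundary, and no ad hoc verification of commuting squares is needed. Your bookkeeping of the long exact sequences of triples and the dualization over a field is correct, and if you replaced the Poincar\'e--Lefschetz step by the Alexander-duality-plus-deformation-retraction step your argument would close; as it stands, the key commutative square is asserted rather than proved.
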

\begin{proof}
	If $f$ is in $\mathcal{Q}^\infty_m$ with an associate nondegenerate form $\zz$ of index $\lambda$, take $c$ bigger in modulus than the critical values of $f$. The homology calculation for the quadratic form gives that $H_k (f^c,f^{-c})=0$ if $k\neq \lambda$ and $H_\lambda(f^c,f^{-c})$ is one dimensional. In particular, if the homology is calculated with coefficients in a field, the homology morphism $i^{c}_{a\star}:H_\bullet(f^a,f^{-c})\to H_\bullet(f^c,f^{-c})$  induced by $i^c_a$ is non zero if and only if it is one-to-one. Since $i^{c}_{a\star}$ is the transposition of $i^{c\star}_{a}$, they are simultaneously non zero.
	
	Alexander duality gives the following commutative diagram, with exact columns:
	\begin{displaymath}
	\begin{array}{c c c}
	H_\bullet (f^a,f^{-c}) &\simeq & H^\bullet(\rr^m\setminus f^{-c},\rr^m\setminus f^{a})\\
	i^c_{a\star} \downarrow  & \comm & \downarrow \\
	H_\bullet (f^c,f^{-c}) &\simeq& H^\bullet(\rr^m\setminus f^{-c},\rr^m\setminus f^{c})\\
	\;\;\,\;\;\,\,\downarrow & \comm & \downarrow \\
	H_\bullet (f^c,f^{a}) &\simeq & H^\bullet(\rr^m\setminus f^{a},\rr^m\setminus f^{c})
	\end{array}
	\end{displaymath}
	If $a$ is not a critical value of $f$, for $\ep>0$ small enough $\rr^m\setminus f^a = \{-f<-a\}$ retracts on $-f^{-a-\ep}$ via the homotopy equivalence constructed in Lemma \ref{deform}, just as $-f^{-a}$. The same can be done for $c$ and $-c$, and composing the cohomology induced isomorphisms we get an isomorphism $\Phi^\star$, completing the previous diagram as follows:
	\begin{displaymath}
	\begin{array}{c c c c c }
	H_\bullet (f^a,f^{-c}) &\simeq & H^\bullet(\rr^m\setminus f^{-c},\rr^m\setminus f^{a})& &\\
	i^c_{a\star} \downarrow  & \comm & \downarrow& & \\
	H_\bullet (f^c,f^{-c}) &\simeq& H^\bullet(\rr^m\setminus f^{-c},\rr^m\setminus f^{c})& \overset{\Phi^\star}{\simeq}& H^\bullet((-f)^{c},(-f)^{-c})\\
	\;\;\,\;\;\,\,\downarrow & \comm & \downarrow & \comm & \downarrow(i^{-a}_{c,-f})^\star\\
	H_\bullet (f^c,f^{a}) &\simeq & H^\bullet(\rr^m\setminus f^{a},\rr^m\setminus f^{c})
	& \underset{\Phi^\star}{\simeq}& H^\bullet((-f)^{-a},(-f)^{-c})
	\end{array}
	\end{displaymath}
	If $a$ is larger than $\sigma(f)$, $i^{c\star}_a$ is non zero, hence $i^c_{a\star}$ is non zero and it is then one-to-one. Since the first column is exact, this implies that $(i^{-a}_{c,-f})^\star$ is zero, hence $-a \leq \sigma(-f)$. This being true for each $a$ larger than $\sigma(g)$, it comes that $-\sigma(f)\leq \sigma(-f)$.
	
	If $a$ is smaller than $\sigma(f)$, $i^{c\star}_a$, hence $i^c_{a\star}$, are zero and it follows that $(i^{-a}_{c,-f})^\star$ is non zero, hence $-a \geq \sigma(-f)$. As before this implies that $-\sigma(f)\geq \sigma(-f)$, and the result holds.
\end{proof}

\begin{rem}\label{cvsuniq} 
	The proof of Proposition \ref{oppfield} is the only place where we need to work with coefficients in a field.
\end{rem}

\begin{prop}\label{mmaxcvx}
	If $f:(x,y)\in \rr^d \times \rr^k\to \rr$ is a function of $\mathcal{Q}^\infty_{d+k}$ such that $\dd^2_y f \geq c\rm{id}$ for some $c>0$, and if $g(x)=\min_y f(x,y)$ is in $\mathcal{Q}_d$, then $\sigma(g)=\sigma(f)$.
\end{prop}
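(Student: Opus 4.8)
The plan is to reduce to the main minmax property by exhibiting, for each level $a$, a deformation retraction of the sublevel set $f^a = \{(x,y) : f(x,y)\le a\}$ onto the graph of the minimizing section, which lies over $g^a = \{x : g(x)\le a\}$. The key geometric fact is that because $\dd^2_y f \ge c\,\mathrm{id}$, for each fixed $x$ the function $y \mapsto f(x,y)$ is uniformly strictly convex, hence attains its minimum at a unique point $y=\eta(x)$; moreover $\eta$ is $\cc^1$ by the implicit function theorem applied to $\dd_y f(x,\eta(x))=0$, and $g(x)=f(x,\eta(x))$. First I would record that $g\in\qq_d$ is part of the hypothesis (so $\sigma(g)$ makes sense) and that $f$ and $g$ have the same critical values: if $dg(x_0)=0$ then $df(x_0,\eta(x_0))=0$ since $\dd_x f + \dd_y f\cdot d\eta = dg$ and $\dd_y f(x_0,\eta(x_0))=0$, and conversely a critical point of $f$ must have $\dd_y f=0$, i.e.\ $y=\eta(x)$, so it projects to a critical point of $g$ with the same value. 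In particular $\sigma(f)$ and $\sigma(g)$ are both critical values of the same finite set (Proposition \ref{vc}, Theorem \ref{vac}).

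Next I would build the retraction. Fix $c$ larger in modulus than all critical values of $f$ (equivalently of $g$), and fix $a\in\rr$. Consider the fiberwise negative gradient flow of $y\mapsto f(x,y)$, or more simply the straight-line homotopy $H_t(x,y)=(x,(1-t)y+t\eta(x))$. Along this path $t\mapsto f(x,(1-t)y+t\eta(x))$ is convex in $t$ (restriction of the convex function $y\mapsto f(x,y)$ to a segment) with minimum at $t=1$, hence nonincreasing is \emph{not} automatic — so instead I would use the reparametrized fiberwise gradient flow, which decreases $f$ monotonically and converges to $\eta(x)$; by uniform convexity the convergence is uniform on the compact set where critical points live, and the flow preserves each sublevel set $f^a$ and each $f^{-c}$. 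This gives a deformation of the pair $(f^a, f^{-c})$ onto $(\{(x,\eta(x)) : g(x)\le a\}, \{(x,\eta(x)) : g(x)\le -c\})$, which is homeomorphic via the projection $(x,y)\mapsto x$ (with inverse $x\mapsto(x,\eta(x))$) to $(g^a, g^{-c})$. These homotopy equivalences are compatible with the inclusions $i^c_a$, so they induce a commutative square in relative cohomology identifying $i^{c,f\star}_a$ with $i^{c,g\star}_a$.

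Consequently $\{a : i^{c,f\star}_a \neq 0\} = \{a : i^{c,g\star}_a \neq 0\}$, and taking infima yields $\sigma(f)=\sigma(g)$. The main obstacle I anticipate is the care needed to make the fiberwise gradient flow genuinely well-defined and continuous as a global deformation on an unbounded space: one must check that trajectories stay in a fixed large ball (using that $f=\zz+\ell$ with $\ell$ Lipschitz, so $\dd_y f$ is coercive in $y$ uniformly enough), that the flow extends continuously to $t=\infty$ with limit $\eta(x)$, and that it respects $f^{-c}$ — essentially the content of a deformation lemma of the type proved in Appendix \ref{deformation}, which I would invoke rather than reprove. The extension from $\cc^\infty$ to merely $\cc^1$ (or Lipschitz perturbations) then follows by the continuity estimate of Consequence \ref{contan}, as in Propositions \ref{propc0} and \ref{propc1}.
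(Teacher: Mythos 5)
Your overall approach — deform the pair $(f^a,f^{-c})$ onto the graph of the minimizing section $\eta$, identify that graph with $(g^a,g^{-c})$ via $x\mapsto(x,\eta(x))$, and read the result off a commuting square identifying $i^{c,f\star}_a$ with $i^{c,g\star}_a$ — is exactly the paper's strategy. But there is a genuine error in your rejection of the straight-line homotopy $\Phi_t(x,y)=(x,(1-t)y+t\eta(x))$. You write that convexity of $t\mapsto f(x,(1-t)y+t\eta(x))$ with minimum at $t=1$ does not make this path nonincreasing; it does. Set $\phi(t)=f\bigl(x,(1-t)y+t\eta(x)\bigr)$. Then $\phi$ is convex on $[0,1]$ and $\phi'(1)=\dd_y f(x,\eta(x))\cdot(\eta(x)-y)=0$ since $\eta(x)$ is a critical point of $y\mapsto f(x,y)$. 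Convexity means $\phi'$ is nondecreasing, so $\phi'(t)\le\phi'(1)=0$ for all $t\in[0,1]$, hence $\phi$ is nonincreasing. Thus $\Phi_t(f^a)\subset f^a$ for every $a$ and every $t$, and $\Phi_t$ restricts to the identity on the graph of $\eta$; this is precisely the strong deformation retraction the paper uses, and no gradient-flow reparametrization is needed.

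This matters because the fiberwise gradient flow you substitute introduces exactly the technicalities you flag (global existence, continuous extension to $t=\infty$, preservation of $f^{-c}$), none of which are needed once you observe the monotonicity above. The affine homotopy is $\cc^1$, globally defined, independent of $a$, and immediately yields the commuting diagram in relative cohomology — no appeal to the deformation lemmas of Appendix~\ref{deformation} is necessary for this proposition. The rest of your argument (same critical values for $f$ and $g$, $\Psi:x\mapsto(x,\eta(x))$ a diffeomorphism onto the graph compatible with inclusions, and the extension to non-smooth data via Consequence~\ref{contan}) agrees with the paper.
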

\begin{proof} If $\dd^2_y f \geq c\rm{id}$, $y \mapsto f(x,y)$ attains for each $x$ a strict minimum at a point $y(x)$ and $x \mapsto y(x)$ is $\cc^1$ by implicit differentiation of $\dd_y f(x,y(x))=0$. 	Note that $g(x)=f(x,y(x))$ and $f$ have the same critical values and choose $c$ larger in modulus than these critical values. 
	
	We denote by $\tilde{g}^a$ the set $\left\{(x,y(x))|g(x)\leq a\right\}$. It is the restriction of the graph of $x \mapsto y(x)$ on $g^a$. Hence $\Psi : x \mapsto (x,y(x))$, which is a $\cc^1$-diffeomorphism from $\rr^d$ to the graph of $x\mapsto y(x)$, maps for all $a$ $g^a$ on $\tilde{g}^a$, and it induces an isomorphism in relative cohomology.
	
	For all $a$ in $\rr$, the sublevel set $f^a$ retracts to $\tilde{g}^a$ via  $\Phi_t(x,y)=(x,(1-t)y + ty(x))$ which is a deformation retraction. One can indeed check, using the convexity of $y \mapsto f(x,y)$ and the fact that $y(x)$ is the minimum of this function, that:\[\left\{\begin{array}{l}
	\Phi_0 = {\rm id},\\
	\Phi_1(f^a)\subset \tilde{g}^a,\\
	\Phi_t(f^a)\subset f^a \;\,\forall t \in [0,1],\\
	\Phi_t= {\rm id} \textrm{ on } \tilde{g}^a.
	\end{array}\right.\]
	
	Since this retraction does not depend on $a$, the following diagram commutes:\begin{displaymath}
	\begin{array}{c c c}
	H^\bullet (f^c,f^{-c}) &\overset{i^{c,f\star}_{a}}{\to} & H^\bullet(f^a,f^{-c})\\
	\wr\uparrow \Phi_1^\star & & \wr\uparrow \Phi_1^\star\\
	H^\bullet (\tilde{g}^c,\tilde{g}^{-c}) &\comm & H^\bullet(\tilde{g}^a,\tilde{g}^{-c})\\
	\wr\uparrow \Psi^{-1\star} & & \wr\uparrow \Psi^{-1\star}\\
	H^\bullet (g^c,g^{-c}) &\underset{i^{c,g\star}_{a}}{\to} & H^\bullet(g^a,g^{-c})
	\end{array}
	\end{displaymath}
	Hence $i^{c,g\star}_{a}$ and $i^{c,f\star}_{a}$ are simultaneously nonzero and therefore $\sigma(g)=\sigma(f)$.
\end{proof}			

\subsection{Extension to non-smooth functions}
From now on the aim is to extend by continuity the definition and properties of the minmax to non-smooth functions. 

\begin{defi} If $f$ is in $\mathcal{Q}_m$, there exists by definition a nondegenerate quadratic form $\zz$ and a Lipschitz function $\ell$ such that $f=\zz+\ell$. Since $\ell$ is Lipschitz, there exists an equi-Lipschitz sequence $(\ell_n)$ of $\cc^\infty$ functions such that $\ell_n$ converge uniformly towards $\ell$. Then the minmax of $f=\zz+\ell$ is defined by\begin{displaymath}
	\sigma(f)=\lim_{n\to \infty} \sigma(\zz+\ell_n).
	\end{displaymath}
	This does not depend on the choice of $(\ell_n)$.
\end{defi}

\begin{proof} Let us show that the limit exists, and that it does not depend on the choice of the sequence $(\ell_n)$.
	\begin{itemize}
		\item Let $\ep>0$ be fixed. Since $\ell_n$ converges uniformly, it is a Cauchy sequence and there is a $N>0$ such that:\begin{displaymath}
		\|\ell_n-\ell_m\|_\infty\leq \ep \;\; \forall n,m \geq N.\end{displaymath}
		Then, since $\zz+\ell_n$ and $\zz+\ell_m$ are in $\mathcal{Q}^\infty_m$ with Lipschitz and bounded difference, Consequence \ref{contan} gives: \begin{displaymath}
		|\sigma(\zz+\ell_n)-\sigma(\zz+\ell_m)|\leq \|\ell_n-\ell_m\|_\infty\leq \ep \;\; \forall n,m \geq N
		\end{displaymath}
		and $(\sigma(\zz+\ell_n))$ is a Cauchy sequence in $\rr$, hence has a limit denoted $\sigma(f)$.
		\item Let $(\ell_n)$ and $(\tilde{\ell}_n)$ be two equi-Lipschitz sequences of $\cc^\infty$ functions, and assume that $\ell_n$ and $\tilde{\ell}_n$ admit the same uniform limit $\ell$. Let us show that $\sigma(\zz+\ell_n)$ and $\sigma(\zz+\tilde{\ell}_n)$ tend to the same limit. 
		
		Let $\ep>0$. Since $\ell_n$ and $\tilde{\ell}_n$ have the same limit, there is a $N>0$ such that:\begin{displaymath}
		\|\ell_n-\tilde{\ell}_n\|_\infty\leq \ep \;\; \forall n \geq N.
		\end{displaymath}
		Then, since $\zz+\ell_n$ and $\zz+\tilde{\ell}_n$ are in $\mathcal{Q}^\infty_m$ with Lipschitz and bounded difference, Consequence \ref{contan} gives:
		\begin{displaymath}
		|\sigma(Q+\ell_n)-\sigma(Q+\tilde{\ell}_n)|\leq \ep \;\; \forall n \geq N.
		\end{displaymath}
		Letting $n$ tend to $\infty$ shows that the limit does not depend on the choice of the sequence $(\ell_n)$.
	\end{itemize}
\end{proof}
Let us gather the properties satisfied for continuous functions of $\mathcal{Q}_m$:
\begin{prop}\label{propc0} If $f$ is in $\mathcal{Q}_m$, the properties of the smooth minmax still hold:\begin{enumerate}
		\item if $c$ is a real constant, then $\sigma(c+f)=c+\sigma(f)$,
		\item if $f_0 \leq f_1$ on $\rr^m$ and if $f_1-f_0$ is Lipschitz, then $\sigma(f_0)\leq \sigma(f_1)$,
		\item if $\phi$ is a Lipschitz $\cc^\infty$-diffeomorphism on $\rr^m$ such that $f\circ \phi$ is in $\mathcal{Q}_m$, then \[\sigma(f \circ \phi )=\sigma(f),\]
		\item $\sigma(-f)=-\sigma(f)$.	
	\end{enumerate}
\end{prop}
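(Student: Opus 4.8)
The plan is to derive each of the four properties from its smooth counterpart (Propositions \ref{prop} and \ref{oppfield}) by a regularization argument, the passage to the limit being controlled throughout by the continuity estimate of Consequence \ref{contan}. Fix once and for all a mollifier $(\rho_\ep)_{\ep>0}$ with $\rho_\ep\geq 0$ and $\int\rho_\ep=1$: for any Lipschitz $\ell:\rr^m\to\rr$ the functions $\ell\ast\rho_{1/n}$ are $\cc^\infty$, have the same Lipschitz constant as $\ell$, and converge uniformly to $\ell$. Hence for $f=\zz+\ell\in\mathcal{Q}_m$ one has $\zz+\ell\ast\rho_{1/n}\in\mathcal{Q}^\infty_m$ and $\sigma(f)=\lim_n\sigma(\zz+\ell\ast\rho_{1/n})$ by the very definition of $\sigma$ on $\mathcal{Q}_m$. (The splitting $f=\zz+\ell$ is in fact unique, since a homogeneous quadratic form that is globally Lipschitz must vanish, but this is not needed below.)

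\emph{Additivity and oddness.} For $c\in\rr$ the sequence $(c+\ell\ast\rho_{1/n})$ is equi-Lipschitz, smooth, and converges to $c+\ell$, so Proposition \ref{prop}-\eqref{addan} gives $\sigma(c+f)=\lim_n\sigma\bigl(\zz+c+\ell\ast\rho_{1/n}\bigr)=\lim_n\bigl(c+\sigma(\zz+\ell\ast\rho_{1/n})\bigr)=c+\sigma(f)$. Likewise $-f=(-\zz)+(-\ell)$ with $-\zz$ nondegenerate and $(-\ell\ast\rho_{1/n})$ equi-Lipschitz smooth converging to $-\ell$, so Proposition \ref{oppfield} gives $\sigma(-f)=\lim_n\sigma\bigl(-(\zz+\ell\ast\rho_{1/n})\bigr)=-\lim_n\sigma(\zz+\ell\ast\rho_{1/n})=-\sigma(f)$.

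\emph{Monotonicity.} The only subtlety is to regularize without destroying the inequality $f_0\leq f_1$. Write $f_0=\zz+\ell_0$; since $f_1-f_0$ is Lipschitz, $f_1=\zz+\ell_1$ with $\ell_1=\ell_0+g$ and $g:=f_1-f_0\geq 0$ Lipschitz. Mollify with the \emph{same} nonnegative kernel: set $\ell_0^n=\ell_0\ast\rho_{1/n}$ and $\ell_1^n=\ell_0^n+g\ast\rho_{1/n}$. Then $g\ast\rho_{1/n}\geq 0$ everywhere, so $\zz+\ell_0^n\leq\zz+\ell_1^n$ on $\rr^m$, both sides lie in $\mathcal{Q}^\infty_m$ with Lipschitz difference, and Proposition \ref{mon} gives $\sigma(\zz+\ell_0^n)\leq\sigma(\zz+\ell_1^n)$. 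Since $(\ell_0^n)$ and $(\ell_1^n)$ are equi-Lipschitz and converge uniformly to $\ell_0$ and $\ell_1$, letting $n\to\infty$ yields $\sigma(f_0)\leq\sigma(f_1)$.

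\emph{Invariance under diffeomorphism.} Here $\zz\circ\phi$ need not be quadratic, so one cannot regularize "inside" $\phi$; instead one exploits the hypothesis $f\circ\phi\in\mathcal{Q}_m$ to write $f\circ\phi=\zz'+\ell'$ with $\zz'$ nondegenerate quadratic and $\ell'$ Lipschitz. Put $\ell_n=\ell\ast\rho_{1/n}$. Then $(\zz+\ell_n)\circ\phi$ is $\cc^\infty$ (as $\phi$ is $\cc^\infty$) and lies in $\mathcal{Q}^\infty_m$: indeed $(\zz+\ell_n)\circ\phi=\zz'+\bigl(\ell'+(\ell_n-\ell)\circ\phi\bigr)$, where $(\ell_n-\ell)\circ\phi=(\zz+\ell_n)\circ\phi-(\zz'+\ell')$ is smooth, Lipschitz (a composition of Lipschitz maps, $\phi$ being Lipschitz) with Lipschitz constant $\leq 2\,{\rm Lip}(\ell)\,{\rm Lip}(\phi)$ uniformly in $n$, and converges uniformly to $0$. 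Thus $\ell'+(\ell_n-\ell)\circ\phi$ is an equi-Lipschitz smooth sequence converging uniformly to $\ell'$, so by definition $\sigma(f\circ\phi)=\sigma(\zz'+\ell')=\lim_n\sigma\bigl((\zz+\ell_n)\circ\phi\bigr)$. On the other hand Proposition \ref{prop}-\eqref{trans} gives $\sigma\bigl((\zz+\ell_n)\circ\phi\bigr)=\sigma(\zz+\ell_n)$, whose limit is $\sigma(f)$; comparing the two limits gives $\sigma(f\circ\phi)=\sigma(f)$. The routine limits are immediate once Consequence \ref{contan} is in hand; the only points needing thought are the use of a nonnegative kernel in the monotonicity step and the identification of the correct quadratic-plus-Lipschitz splitting $\zz'+\bigl(\ell'+(\ell_n-\ell)\circ\phi\bigr)$, rather than the naive $\zz\circ\phi+\ell_n\circ\phi$, in the last step.
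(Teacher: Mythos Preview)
Your approach matches the paper's: regularize, invoke the smooth statements, and pass to the limit via the continuity estimate. The additivity and oddness items are identical to the paper; the monotonicity item is a slightly more explicit variant of the paper's (the paper simply asserts the existence of smooth approximants with $\ell^0_n\leq\ell^1_n$, which your mollification with a nonnegative kernel realizes).

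There is however a small but genuine slip in your diffeomorphism step. You write that $(\ell_n-\ell)\circ\phi=(\zz+\ell_n)\circ\phi-(\zz'+\ell')$ is smooth. This is not correct: $\zz'+\ell'=f\circ\phi$ is only Lipschitz (since $f$ is), so that difference need not be smooth, and indeed $(\ell_n-\ell)\circ\phi$ is generally not smooth because $\ell$ is not. What you actually need---and what is true---is that the \emph{combined} Lipschitz part $\ell'+(\ell_n-\ell)\circ\phi$ is smooth: it equals $(\zz+\ell_n)\circ\phi-\zz'$, a difference of two $\cc^\infty$ functions. With this correction your argument goes through exactly as written: $(\zz+\ell_n)\circ\phi=\zz'+\bigl((\zz+\ell_n)\circ\phi-\zz'\bigr)$ exhibits an equi-Lipschitz $\cc^\infty$ sequence converging uniformly to $\ell'$, so $\sigma(f\circ\phi)=\lim_n\sigma((\zz+\ell_n)\circ\phi)=\lim_n\sigma(\zz+\ell_n)=\sigma(f)$. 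The paper phrases the same observation as ``$\zz\circ\phi$ is in $\mathcal{Q}^\infty_m$'', which is equivalent.
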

\begin{proof}
	\begin{enumerate}
		\item It is enough to notice that if $\zz+\ell_n$ converges to $f$ as in the definition, then $\zz+\ell_n+c$ converges to $f+c$. Then, $\sigma(\zz+c +\ell_n)=c+\sigma(\zz+\ell_n)$ by the additivity property (\ref{prop}-\ref{addan}), and the statement holds when $n$ tends to $\infty$.
		
		\item If $f^0\leq f^1$ are in $\mathcal{Q}_m$ and if their difference is Lipschitz, then there exist two sequences of equi-Lipschitz $\cc^\infty$ functions $(\ell^0_n)$ and $(\ell^1_n)$ such that $\zz+\ell^0_n$ (resp. $\zz+\ell^1_n$) converges uniformly to $f^0$ (resp. $f^1$) with $\ell^0_n\leq \ell^1_n$ for $n$ big enough. Then, Proposition \ref{mon} states that $\sigma(\zz+\ell^0_n) \leq \sigma(\zz+\ell^1_n)$ for $n$ big enough, and the statement holds when $n$ tends to $\infty$.
		
		\item Since $\|(\zz+\ell_n)\circ \phi - f \circ \phi \|_\infty \leq \|\zz+\ell_n -f\|_\infty$, if $(\zz+\ell_n)$ converges uniformly to $f=\zz+\ell$, then $(\zz+\ell_n)\circ\phi$ converges uniformly to $f\circ \phi$. 
		Moreover, since $\phi$ is Lipschitz, $\ell_n\circ \phi$ and $\ell \circ \phi$ are (equi-)Lipschitz. Now since $f\circ \phi = \zz\circ \phi + \ell\circ \phi$ is in $\mathcal{Q}_m$ and $\ell \circ \phi$ is Lipschitz, $\zz \circ \phi$ is in $\mathcal{Q}^\infty_m$ (as $\zz$ is $\cc^\infty$) and the sequence $((\zz+\ell_n) \circ \phi)$ is still in $\mathcal{Q}^\infty_m$.
		
		Thus, $((\zz+\ell_n)\circ \phi)$ is a sequence converging uniformly to $f \circ \phi$, as required in the definition. Since Property (\ref{prop}-\ref{trans}) states that $\sigma((\zz+\ell_n)\circ\phi)=\sigma(\zz+\ell_n)$ for all $n$, the statement holds when $n$ tends to $\infty$.
		
		\item This is a direct consequence of Proposition \ref{oppfield}.
	\end{enumerate}
\end{proof}

\begin{prop}\label{propc1}\begin{enumerate} The properties involving critical elements hold for $\cc^1$ functions of $\mathcal{Q}_m$:
		\item If $f\in \mathcal{Q}_m$ is $\cc^1$, then $\sigma(f)$ is a critical value of $f$.
		\item \label{monan} If $f_0,f_1\in \mathcal{Q}_m$ are $\cc^1$ with Lipschitz difference, and $\CC_{f_0,f_1}$ is the set of critical points of the homotopy $f_t=(1-t)f_0+tf_1$, then \[\underset{\CC_{f_0,f_1}}{\inf}(f_0-f_1)\leq	\sigma(f_0)-\sigma(f_1) \leq \underset{\CC_{f_0,f_1}}{\sup}(f_0-f_1).\]
	\end{enumerate}
\end{prop}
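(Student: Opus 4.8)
The statement to prove, Proposition \ref{propc1}, extends to $\cc^1$ functions of $\mathcal{Q}_m$ the two properties known for smooth functions: that $\sigma(f)$ is a critical value, and the sandwich bound on $\sigma(f_0)-\sigma(f_1)$ by $\inf$ and $\sup$ of $f_0-f_1$ over $\CC_{f_0,f_1}$. The natural strategy is density: approximate the $\cc^1$ data by $\cc^\infty$ data keeping equi-Lipschitz control, apply the smooth versions (Theorem \ref{vac} and Propositions \ref{mon}, \ref{contan}), and pass to the limit using the continuity of $\sigma$ established in Consequence \ref{contan}.

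\textbf{Proof plan.} For the first item, write $f=\zz+\ell$ with $\ell$ $\cc^1$ and $L$-Lipschitz. I would pick a sequence $\ell_n$ of $\cc^\infty$, equi-$L$-Lipschitz functions converging to $\ell$ in $\cc^1_{loc}$ (for instance by mollification, which preserves the Lipschitz constant and improves to $\cc^1_{loc}$ convergence since $\ell$ is $\cc^1$); this in particular gives uniform convergence on compacts, hence $\sigma(\zz+\ell_n)\to\sigma(f)$ by definition and by Consequence \ref{contan}. Each $\sigma(\zz+\ell_n)$ is a critical value of $f_n=\zz+\ell_n$ by Theorem \ref{vac}, located in the ball $\bar B(0,L/m)$ by Proposition \ref{vc}. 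So there is $x_n\in\bar B(0,L/m)$ with $df_n(x_n)=0$ and $f_n(x_n)=\sigma(f_n)$. Extract a convergent subsequence $x_n\to x$; by $\cc^1_{loc}$ convergence $df(x)=\lim df_n(x_n)=0$, and $f(x)=\lim f_n(x_n)=\sigma(f)$. Hence $\sigma(f)$ is a critical value of $f$.

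For the second item the scheme is the same but one must be careful about \emph{where} the perturbed critical set lies. Write $f_0=\zz+\ell_0$, $f_1=\zz+\ell_1$ with $\ell_0,\ell_1$ both $\cc^1$ and $L$-Lipschitz (enlarging $L$), so $f_t=\zz+(1-t)\ell_0+t\ell_1$ and all $f_t$ are $L$-Lipschitz perturbations of $\zz$; by Proposition \ref{vc} every critical point of any $f_t$ lies in $\bar B(0,L/m)$, so $\CC_{f_0,f_1}\subset\bar B(0,L/m)$, and by Proposition \ref{CCcompact} it is compact. Mollify to get equi-$L$-Lipschitz $\cc^\infty$ functions $\ell_0^n\to\ell_0$, $\ell_1^n\to\ell_1$ in $\cc^1_{loc}$; set $f_i^n=\zz+\ell_i^n$. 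The smooth Consequence \ref{contan} gives
\[\inf_{\CC_{f_0^n,f_1^n}}(f_0^n-f_1^n)\le\sigma(f_0^n)-\sigma(f_1^n)\le\sup_{\CC_{f_0^n,f_1^n}}(f_0^n-f_1^n).\]
The left and right sides are controlled by $\sup_{\bar B(0,L/m)}|(\ell_0^n-\ell_1^n)-(\ell_0-\ell_1)|$ away from $\sup$/$\inf$ over $\CC_{f_0^n,f_1^n}$ of $\ell_0-\ell_1$; and one shows $\CC_{f_0^n,f_1^n}$ converges (in the sense of upper semicontinuity of the set, or just: any limit point of points in $\CC_{f_0^n,f_1^n}$ lies in $\CC_{f_0,f_1}$, by $\cc^1_{loc}$ convergence of $df_t^n\to df_t$ uniformly on $\bar B(0,L/m)$). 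Combined with continuity of $f_0-f_1$ on the compact ball this yields $\limsup_n\sup_{\CC_{f_0^n,f_1^n}}(f_0^n-f_1^n)\le\sup_{\CC_{f_0,f_1}}(f_0-f_1)$ and the symmetric statement for $\inf$. Since $\sigma(f_i^n)\to\sigma(f_i)$ by Consequence \ref{contan}, passing to the limit gives the claimed inequalities.

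\textbf{Main obstacle.} The delicate point is the second item: the critical sets $\CC_{f_0^n,f_1^n}$ move with $n$, and one needs that they do not ``leak'' outside a neighbourhood of $\CC_{f_0,f_1}$ in the limit. The clean way is to fix an arbitrary open neighbourhood $U\supset\CC_{f_0,f_1}$ and argue that for $n$ large $\CC_{f_0^n,f_1^n}\subset U$: if not, there are $x_n\notin U$, $t_n\in[0,1]$ with $df_{t_n}^n(x_n)=0$; all such $x_n$ lie in the fixed compact ball, so up to subsequence $x_n\to x\notin U$, $t_n\to t$, and uniform $\cc^1$ convergence on the ball forces $df_t(x)=0$, i.e. $x\in\CC_{f_0,f_1}\subset U$, a contradiction. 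Then $\sup_{\CC_{f_0^n,f_1^n}}(f_0^n-f_1^n)\le\sup_U(f_0^n-f_1^n)\to\sup_U(f_0-f_1)$; finally intersect over all such $U$ and use continuity of $f_0-f_1$ together with compactness of $\CC_{f_0,f_1}$ to replace $\sup_U$ by $\sup_{\CC_{f_0,f_1}}$, exactly as in the boundedness-reduction step of the proof of Proposition \ref{mon}. Everything else is routine bookkeeping with mollifiers and the already-established continuity estimate.
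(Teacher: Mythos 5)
Your proof of item (1) matches the paper's almost word for word: mollify $\ell$ keeping the Lipschitz constant, locate the critical points of the approximants in $\bar B(0,L/m)$ via Proposition \ref{vc}, extract a convergent subsequence, and pass to the limit using local $\cc^1$ convergence. This is the route the paper takes.

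For item (2) you take a genuinely different route. The paper mollifies only $f^0$ and then sets $f^t_n=\zz+\ell^0_n+t\,(f^1-f^0)$, so that the difference $f^1_n-f^0_n=f^1-f^0$ is \emph{exactly} constant in $n$; it then picks the point $x_n$ of $\CC_{f^0_n,f^1_n}$ achieving the supremum of $f^1-f^0$, extracts a convergent subsequence $(t_n,x_n)\to(t,x)$, checks $x\in\CC_{f^0,f^1}$ by uniform $\cc^1$ convergence on the fixed ball, and is done with no further estimate. You instead mollify $f_0$ and $f_1$ separately (so the difference moves with $n$) and control the critical sets by a neighborhood-shrinking argument: fix an open $U\supset\CC_{f_0,f_1}$, show $\CC_{f_0^n,f_1^n}\subset U$ for $n$ large by a compactness/contradiction argument, pass to the limit over $U$, then shrink $U$ using compactness of $\CC_{f_0,f_1}$. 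Your argument is correct (the needed upper semicontinuity of the critical set is exactly the contradiction step you describe, and the passage from $\sup_U$ to $\sup_{\CC_{f_0,f_1}}$ is standard for a continuous function on a compact set) and is in fact the same mechanism the paper uses to prove the smooth monotonicity Proposition \ref{mon}. The trade-off: the paper's asymmetric mollification is shorter because the quantity being estimated does not depend on $n$, while your symmetric version is slightly more robust and more closely mirrors the bookkeeping already present in the proof of Proposition \ref{mon}, at the cost of the extra ``critical sets do not leak out of $U$'' step.
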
	

\begin{proof}	
	\begin{enumerate}
		\item If $f=\zz+\ell$ is $\cc^1$, then $\ell$ is $\cc^1$ and there exists an equi-Lipschitz sequence $(\ell_n)$  of $\cc^\infty$ functions such that $\ell_n$ uniformly converges towards $\ell$ and $d\ell_n$ converge uniformly towards $d\ell$, hence $\zz+\ell_n$ (resp. $d\zz+d \ell_n)$) uniformly converges towards $f$ (resp. $df$).
		
		For all $n$, $\sigma(\zz+\ell_n)$ is a critical value of $\zz+\ell_n$, hence there exists $x_n$ in $\rr^m$ such that
		$d\zz(x_n)+\ell_n(x_n)=0$ and $\sigma(\zz+\ell_n)=(\zz+\ell_n)(x_n)$.
		
		Since the sequence $(\ell_n)$ is equi-Lipschitz, the sequence $(x_n)$ is contained in the closed ball $\bar{B}(L/m)$ where $L$ denotes a Lipschitz constant suiting all $\ell_n$ and $m=\inf_{\|x\|=1} \|d\zz(x)\|$, see Proposition \ref{vc}.
		
		Hence $x_n$ admits a subsequence converging to some $x$ in $\rr^m$. On the one hand, since $d(\zz+\ell_n)$ converges uniformly towards $df$ and $d(\zz+\ell_n)(x_n)=0$, $x$ is a critical point of $f$. On the other hand, since  $\zz+\ell_n$ converges uniformly towards $f$ and $(\zz+\ell_n)(x_n)=\sigma(\zz+\ell_n)$, $f(x)=\sigma(f)$. Thus $\sigma(f)$ is a critical value of $f$.
		
		\item Take $f^0$ and $f^1$ in $\mathcal{Q}_m$, $\cc^1$ and with Lipschitz difference. There exists an equi-Lipschitz sequence $(\ell_n^0)$  of 
		$\cc^\infty$ functions such that $\zz+\ell_n^0$ (resp. $d\zz+d\ell_n^0$) converges uniformly to $f^0$ (resp. $df^0$). Note that if $t$ is in $[0,1]$, the sequence $(\ell_n^t)=(\ell_n^0+t(f^1-f^0))$ is equi-Lipschitz uniformly with respect to $t$, and $f_n^t=\zz+\ell_n^t$ converges uniformly to $f^t=(1-t)f^0+tf^1$, and the derivative sequence $(df_n^t)$ converges uniformly to $df^t$.
		
		For all $n$, Consequence \ref{contan} states that:\[
		\inf_{\CC_{f^0_n,f^1_n}} (f^1_n-f^0_n) \leq \sigma(f^1_n)-\sigma(f^0_n) \leq \sup_{\CC_{f^0_n,f^1_n}} (f^1_n-f^0_n) .
		\]
		Let us focus on the second inequality.
		Since $\CC_{f^0_n,f^1_n}$ is compact (Proposition \ref{CCcompact}), the supremum is attained at some $x_n$ in $\CC_{f^0_n,f^1_n}$. By definition of $\CC_{f^0_n,f^1_n}$, there exists a sequence $(t_n)$ of $[0,1]$ such that $x_n$ is a critical point of $f^{t_n}_n$.
		
		Now, since the sequence $(\ell^t_n)_n$ is equi-Lipschitz uniformly with respect to $t$, there exists a ball $B(0,R)$, where $R$ depends only on the Lipschitz constants and on $\zz$, containing $\CC_{f^0_n,f^1_n}$ for all $n$. The sequence $(t_n, x_n)$ is hence bounded and we may assume it converges to some $(t,x)$. Since $df^{t_n}_n$ converges uniformly towards $df^t$, the fact that $x_n$ is a critical point of $f^{t_n}_n$ implies that $x$ is a critical point of $f^t$, hence $x$ is in $\CC_{f^0,f^1}$. 
		
		But then letting $n$ tend to $\infty$ in \[
		\sigma(f^1_n)-\sigma(f^0_n)\leq \sup_{\CC_{f^0_n,f^1_n}} (f^1_n-f^0_n)=f^1_n(x_n)-f^0_n(x_n)
		\]
		gives that\[
		\sigma(f^1_n)-\sigma(f^0_n) \leq f^1(x)-f^0(x) \leq \sup_{\CC_{f^0,f^1}}(f^1-f^0),
		\]
		using first the uniform convergence of $f^1_n-f^0_n$ towards $f^1-f^0$ and then the fact that $x$ is in $\CC_{f^0,f^1}$.
	\end{enumerate}
\end{proof}

The next proposition is the improved version of Proposition \ref{propc1}-\eqref{monan} that we require in the definition of a critical value selector, see Definition \ref{mmax}.

\begin{prop} \label{locloc} Let $(f_t)_{t\in[0,1]}$ be a $\cc^1$ homotopy of $\mathcal{Q}_m$ such that there exists a nondegenerate quadratic function $\zz$ and an equi-Lipschitz family of $\cc^1$ functions $(\ell_t)_{t\in[0,1]}$ with $f_t=\zz + \ell_t$. Then for all $s\neq t$ in $[0,1]$
	\[ \min_{t \in [0,1]} \min_{x \in Crit(f_t)} \dd_t f_t(x) \leq  \frac{\sigma(f_t)-\sigma(f_s)}{t-s}\leq \max_{t \in [0,1]} \max_{x \in Crit(f_t)}  \dd_t f_t(x).  \]
\end{prop}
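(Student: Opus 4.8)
The plan is to promote the non-local comparison of Proposition \ref{propc1}-\eqref{monan} to the announced differential bound by a Dini-derivative argument, after two reductions. I would first set $A=\max_{t\in[0,1]}\max_{x\in Crit(f_t)}\dd_tf_t(x)$ and $B=\min_{t\in[0,1]}\min_{x\in Crit(f_t)}\dd_tf_t(x)$; these are finite because $Crit(f_t)\neq\emptyset$ for every $t$ ($\sigma(f_t)$ being a critical value by Proposition \ref{propc1}) and $\{(t,x):x\in Crit(f_t)\}$ is compact — bounded by Proposition \ref{vc} since the $\ell_t$ are equi-Lipschitz, and closed since $(t,x)\mapsto d_xf_t(x)$ is continuous. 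After swapping $s$ and $t$ it is enough to prove, for $s\leq t$, the two-sided bound $B(t-s)\leq\sigma(f_t)-\sigma(f_s)\leq A(t-s)$; and since $(-f_\tau)_\tau=(-\zz+(-\ell_\tau))_\tau$ is again a $\cc^1$ homotopy in $\mathcal{Q}_m$ with the same critical points, with $\dd_\tau(-f_\tau)=-\dd_\tau f_\tau$ (so its associated constant is $-B$) and with $\sigma(-f_\tau)=-\sigma(f_\tau)$ by Proposition \ref{propc0}, applying the upper bound to $(-f_\tau)$ yields the lower bound for $(f_\tau)$. So the whole statement reduces to $\sigma(f_t)-\sigma(f_s)\leq A(t-s)$ for $s\leq t$.

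Next I would check that $r\mapsto\sigma(f_r)$ is continuous and bound its upper right Dini derivative by $A$. For continuity: the linear interpolation $\tau\mapsto(1-\tau)f_r+\tau f_{r'}$ has Lipschitz part equi-Lipschitz with the common constant of the $\ell_\tau$'s, so its critical set $\CC_{f_r,f_{r'}}$ sits in a fixed closed ball $\bar B$ (Proposition \ref{vc}), and Proposition \ref{propc1}-\eqref{monan} gives $|\sigma(f_r)-\sigma(f_{r'})|\leq\sup_{\bar B}|f_r-f_{r'}|$, which tends to $0$ as $r'\to r$ since $(\tau,x)\mapsto f_\tau(x)$ is continuous on the compact set $[0,1]\times\bar B$. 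For the Dini bound: fix $r_0\in[0,1)$ and $r_n\downarrow r_0$; Proposition \ref{propc1}-\eqref{monan} and compactness of $\CC_{f_{r_0},f_{r_n}}$ (bounded by Proposition \ref{vc}, closed since the $f_t$ are $\cc^1$) provide $x_n\in\CC_{f_{r_0},f_{r_n}}\cap\bar B$, hence some $\tau_n\in[r_0,r_n]$, with
\[\sigma(f_{r_n})-\sigma(f_{r_0})\ \leq\ f_{r_n}(x_n)-f_{r_0}(x_n)\ =\ \int_{r_0}^{r_n}\dd_\tau f_\tau(x_n)\,d\tau\ \leq\ (r_n-r_0)\,\dd_{\tau_n}f_{\tau_n}(x_n).\]
Passing to a subsequence with $x_n\to x$, the relation $d\zz(x_n)+(1-s_n)d\ell_{r_0}(x_n)+s_nd\ell_{r_n}(x_n)=0$ (valid for some $s_n\in[0,1]$ since $x_n\in\CC_{f_{r_0},f_{r_n}}$) passes to the limit — both $d\ell_{r_0}(x_n)$ and $d\ell_{r_n}(x_n)$ tending to $d\ell_{r_0}(x)$ by joint continuity and $r_n\to r_0$ — giving $d_xf_{r_0}(x)=0$, i.e.\ $x\in Crit(f_{r_0})$; and $\tau_n\to r_0$ forces $\dd_{\tau_n}f_{\tau_n}(x_n)\to\dd_{r_0}f_{r_0}(x)\leq A$. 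The display then yields $\limsup_{r\to r_0^+}\frac{\sigma(f_r)-\sigma(f_{r_0})}{r-r_0}\leq A$.

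To conclude I would invoke the standard mean-value inequality for Dini derivatives: a continuous function $h$ on $[s,t]$ with $D^+h\leq A$ everywhere satisfies $h(t)-h(s)\leq A(t-s)$; with $h(r)=\sigma(f_r)$ this is the sought upper bound, and the reductions above then give the full statement. The step I expect to be the real obstacle is the limiting argument in the previous paragraph — proving that as $r_n\to r_0$ the critical points of the interpolations $(1-s)f_{r_0}+sf_{r_n}$ accumulate only on the single critical set $Crit(f_{r_0})$. That is exactly what lets one replace the crude supremum $\sup_{\CC_{f_{r_0},f_{r_n}}}(f_{r_n}-f_{r_0})$ produced by Proposition \ref{propc1} by a quantity governed by the values of $\dd_t f_t$ on honest critical points of the $f_t$; everything else is routine bookkeeping.
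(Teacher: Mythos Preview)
Your proof is correct and rests on the same key observation as the paper's: applying Proposition \ref{propc1}-\eqref{monan} to $f_{r_0}$ and $f_r$ for $r$ close to $r_0$, and then showing that the relevant point $x_n\in\CC_{f_{r_0},f_{r_n}}$ accumulates on the genuine critical set $Crit(f_{r_0})$, so that $\dd_t f$ at such points is controlled by $A$ (or $B$). The paper does this via a uniform $\ep$--$\delta$ argument (its Lemma \ref{claim}: small gradient forces $\dd_t f$ close to $[a,b]$), obtaining the bound on a short interval and then iterating; you package the same idea as a bound on the upper right Dini derivative followed by the standard Dini mean-value inequality. Both are legitimate and essentially equivalent.

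Two remarks. First, your reduction of the lower bound to the upper one via $\sigma(-f)=-\sigma(f)$ (Proposition \ref{propc0}) is perfectly valid in this appendix, but note that the paper's proof does both inequalities directly and therefore establishes property \eqref{loc2} independently of property \eqref{opp2}; this matters mildly since the general selector axioms of Proposition \ref{mmax} require the former but not the latter. Your argument is easily symmetrized (bound the lower left Dini derivative by $B$) if one cares about this. Second, your passage from the subsequential limit $\dd_{\tau_n}f_{\tau_n}(x_n)\to\dd_{r_0}f_{r_0}(x)\le A$ to the full $\limsup$ bound deserves one more sentence: since $(x_n)\subset\bar B$ and $\tau_n\to r_0$, every subsequential limit of $\dd_{\tau_n}f_{\tau_n}(x_n)$ is of the form $\dd_{r_0}f_{r_0}(x)$ with $x\in Crit(f_{r_0})$, hence $\le A$, so the whole $\limsup$ is $\le A$.
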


Let $(f_t)_{t\in[0,1]}$ be as in the proposition. Note that if $m=\inf_{\|x\|=1}\|d\zz(x)\|$, the critical points of $f_t$ are contained for each $t$ in the compact set $C=\bar{B}(0,L/m)$. The set
$\left\lbrace (t,x) , t \in [0,1], \dd_x f_t(x)=0\right\rbrace$ is also compact: it is contained in the bounded set $[0,1]\times C$ and is closed by continuity of $\dd_x f$ w.r.t. $t$ and $x$.	Both $\min_{t \in [0,1]} \min_{x \in Crit(f_t)} \dd_t f_t(x)$ and $\max_{t \in [0,1]} \max_{x \in Crit(f_t)}  \dd_t f_t(x)$ are hence attained, and we denote them respectively by $a$ and $b$.

\begin{lem}\label{claim}
	For all $\ep >0$, there exists $\alpha>0$ such that for all $t$ in $[0,1]$, $\|\dd_xf_t(x)\|\leq \alpha$ implies $ a-\ep \leq \dd_tf_t(x)\leq b+ \ep$.
\end{lem}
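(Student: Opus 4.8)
The plan is to argue by contradiction, using the compactness of the critical set of the homotopy together with the joint continuity of the derivatives. Recall that, as noted just before the statement, the set $\{(t,x)\in[0,1]\times\rr^m : \dd_xf_t(x)=0\}$ is compact, that $a$ and $b$ are the (attained) minimum and maximum of $(t,x)\mapsto\dd_tf_t(x)$ on it, and hence that $a\le\dd_tf_t(x)\le b$ whenever $\dd_xf_t(x)=0$.

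First I would record a uniform \emph{a priori} localisation of near-critical points. Write $f_t=\zz+\ell_t$ with $(\ell_t)_{t\in[0,1]}$ equi-Lipschitz of constant $L$, and set $m=\inf_{\|x\|=1}\|d\zz(x)\|>0$. If $\|\dd_xf_t(x)\|\le 1$, then $m\|x\|\le\|d\zz(x)\|\le\|\dd_xf_t(x)\|+\|\dd_x\ell_t(x)\|\le 1+L$, so $x$ lies in the fixed compact ball $\bar B(0,(1+L)/m)$, independently of $t$. Next I would negate the claim: if no $\alpha$ works for the given $\ep$, then for each integer $n\ge 1$ the choice $\alpha=1/n$ fails, producing $t_n\in[0,1]$ and $x_n\in\rr^m$ with $\|\dd_xf_{t_n}(x_n)\|\le 1/n$ but $\dd_tf_{t_n}(x_n)\notin[a-\ep,b+\ep]$. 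Since $1/n\le 1$, the localisation step confines every $x_n$ to $\bar B(0,(1+L)/m)$, so $(t_n,x_n)$ ranges over a compact set and, after extraction, converges to some $(t_\star,x_\star)$.

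To conclude I would pass to the limit, using that the $\cc^1$ homotopy of $\qq_m$ makes $(t,x)\mapsto\dd_xf_t(x)$ and $(t,x)\mapsto\dd_tf_t(x)$ jointly continuous on $[0,1]\times\rr^m$. From $\|\dd_xf_{t_n}(x_n)\|\le 1/n$ we get $\dd_xf_{t_\star}(x_\star)=0$, so $x_\star\in\mathrm{Crit}(f_{t_\star})$ and therefore $\dd_tf_{t_\star}(x_\star)\in[a,b]$. On the other hand $\dd_tf_{t_n}(x_n)\to\dd_tf_{t_\star}(x_\star)$, while each $\dd_tf_{t_n}(x_n)$ belongs to the closed set $(-\infty,a-\ep]\cup[b+\ep,+\infty)$, so the limit belongs to it too --- contradicting $\dd_tf_{t_\star}(x_\star)\in[a,b]\subset(a-\ep,b+\ep)$. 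This contradiction yields the required $\alpha$. The only genuinely delicate point is having the joint continuity of $\dd_tf_t(x)$ in $(t,x)$ at our disposal, which is precisely what the $\cc^1$-homotopy assumption provides, together with the equi-Lipschitz hypothesis that keeps near-critical points from escaping to infinity as $t$ varies; the rest is the standard compactness-and-continuity argument.
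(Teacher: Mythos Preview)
Your proof is correct and follows essentially the same approach as the paper's: argue by contradiction, use the equi-Lipschitz structure $f_t=\zz+\ell_t$ to confine the near-critical points $x_n$ to a fixed ball, extract a convergent subsequence of $(t_n,x_n)$, and obtain a contradiction from the joint continuity of $df$. Your write-up is simply more explicit than the paper's (which is a terse three-sentence sketch), but the argument is the same.
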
 
\begin{proof}
	Assume that there exists an $\ep >0$ and a sequence $(t_n,x_n)$ such that $\|\dd_x f_{t_n}(x_n)\|\leq 1/n$ and $\dd_t f_{t_n}(x_n) \notin  (a+\ep,b+\ep)$.
	Since $f_{t_n}=\zz+\ell_{t_n}$, $\|\dd_x f_{t_n}(x_n)\|\geq m\|x_n\|-L$ and the sequence $x_n$ is necessarily bounded. Since $t_n$ is in $[0,1]$, there exists a subsequence of $(t_n,x_n)$ converging to some $(t,x)$. The continuity of $df$ gives then a contradiction at the point $(t,x)$.  
\end{proof}
\begin{proof}[Proof of Proposition \ref{locloc}]	
	Let us define \[
	w(\delta)=\underset{
		x \in C,|t-s|\leq \delta}{\sup} \left\{\dd_t f_s(x)-\dd_t f_t(x), \dd_x f_s(x)-\dd_x f_t(x)\right\}.\]	
	The continuity of $d f$ and the compacity of $C$ grants that $w(\delta) \to 0$ when $\delta \to 0$.
	
	Let us fix $\ep >0$ and prove that $(a-2\ep)(t-s)\leq \sigma(f_t)-\sigma(f_s)\leq (b+2\ep)(t-s)$ for all $s\leq t$ in $[0,1]$. Take $\alpha$ as in Lemma \ref{claim} and $\delta>0$ such that both $w(\delta)<\ep$ and $w(\delta)<\alpha$. We first show the result for $t-s \leq \delta$, and it is immediately extended to large $t-s$ by iteration.
	
	For all $x$ in $\rr^d$, we have\[
	(t-s)\inf_{\tau\in [s,t]} \dd_t f_\tau(x) \leq f_t(x)-f_s(x)\leq (t-s)\sup_{\tau\in [s,t]} \dd_t f_\tau(x) .
	\]
	Now if $\CC_{f_s,f_t}$ denotes the set of critical points of the functions $g_u=(1-u)f_s + u f_t$ for $u$ in $[0,1]$, on the one hand, one has that $\CC_{f_s,f_t} \subset C=\bar{B}(0,L/m)$, while on the other hand Proposition \ref{propc1}-\eqref{monan} states that:\[
	\inf_{\CC_{f_s,f_t}} (f_t-f_s ) \leq \sigma(f_t)-\sigma(f_s) \leq \sup_{\CC_{f_s,f_t}} (f_t-f_s ),
	\]
	which implies
	\[	(t-s) \inf_{\begin{array}{c}
		\tau \in [s,t]\\
		x \in \CC_{f_s,f_t}
		\end{array}}\dd_t f_\tau(x) \leq 	\sigma(f_t)-\sigma(f_s) 	\leq (t-s) \sup_{\begin{array}{c}
		\tau \in [s,t]\\
		x \in \CC_{f_s,f_t}
		\end{array}}\dd_t f_\tau(x).	\]
	Since $\CC_{f_s,f_t}$ and $[s,t]$ are compact, the right hand side supremum is attained for some $\tau$ and $x$, where $x$ is the critical point of a function $g_u=(1-u)f_s + u f_t$, and consequently satisfies $\dd_x f_s(x)=u(\dd_x f_s(x)-\dd_x f_t(x))$. Since $x$ is in $C$ and $u$ is in $[0,1]$, we get $\|\dd_x f_s(x)\|\leq w(|t-s|) \leq \alpha$ by definition of $w$ and $\delta$. Lemma \ref{claim} then implies that $\dd_t f_s(x) \leq b+\ep$.
	
	Now let us estimate $\dd_t f_\tau(x)$ : since $x$ is in $C$ and $w(\delta)\leq \ep$,
	\[\begin{split}
	\dd_t f_\tau(x) \leq \dd_t f_s (x) + w(|\tau-s|) \leq b + 2\ep.
	\end{split}
	\]
	Putting it altogether we get that for all $\ep>0$,\[\sigma(f_t)-\sigma(f_s) \leq f_t(x)-f_s(x)\leq (t-s)\dd_t f_\tau(x) \leq (t-s)(b+2\ep) \]
	for $t-s\leq \delta$, and hence for all $t$ and $s$. The same work for the left hand side infimum gives that for all $\ep>0$, 
	\[(t-s)(a-2\ep)\leq\sigma(f_t)-\sigma(f_s)\leq (t-s)(b+2\ep),\]
	and letting $\ep$ tend to $0$ gives the wanted estimate.\end{proof}

\section{Deformation lemmas}\label{deformation}
\subsection{Global deformation of sublevel sets}
We still work with functions of $\mathcal{Q}^\infty_m$, \textit{i.e.} with functions that can be written as the sum of a nondegenerate quadratic function and of a $\cc^\infty$ Lipschitz function. 

\begin{lem}[Strong deformation retraction] \label{deform} Let $f$ be a function of $\mathcal{Q}_m^\infty$. Take $\ep>0$ and $a<b$ in $\rr$. If $[a-\ep,b+\ep]$ does not contain any critical value of $f$, then there is a strong deformation retraction mapping $f^b$ to $f^a$, \emph{i.e.} a continuous function $\Phi : [0,1]\times \rr^m \to \rr^m$ such that
	\[\left\lbrace\begin{array}{l}
	\Phi_0 = {\rm id}_{\rr^m},\\
	\Phi_1(f^b) \subset f^a,\\
	{\Phi_t}\big|_{f^a}={\rm id}_{f^a} \;\; \forall t \in [0,1]\\
	\Phi_t(f^c) \subset f^c \;\; \forall t \in [0,1], \forall c \in \rr
	\end{array}\right.
	\]
	satisfying the additional requirement $\Phi_t(f^c) = f^c$ for all  $t \in [0,1]$ and $c > b+\ep$.
\end{lem}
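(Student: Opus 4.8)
The plan is to construct the retraction explicitly by flowing along (a truncated, renormalized version of) the negative gradient of $f$, following the classical deformation-lemma argument from Morse theory, but taking care of two features specific to our setting: $f$ is not compactly supported (it is a quadratic form plus a Lipschitz function), and we need the flow to fix every sublevel set $f^c$ globally and to fix $f^c$ pointwise for $c>b+\ep$. First I would recall from Proposition \ref{vc} that the critical set of $f=\zz+\ell$ is contained in the ball $\bar B(0,L/m)$, so all the action happens on a bounded region; outside a slightly larger ball the gradient $df=d\zz+d\ell$ is bounded below in norm by $m\|x\|-L$, which goes to infinity, so there is no issue with completeness of the flow once we truncate appropriately.

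The key steps, in order: (1) Since $[a-\ep,b+\ep]$ contains no critical value, choose $\eta>0$ with $\|df(x)\|\ge\eta$ whenever $a-\ep\le f(x)\le b+\ep$; here one uses that $f$ is $\cc^\infty$, that the critical set is compact, and a compactness/continuity argument on the (closed, but a priori unbounded) band $\{a-\ep\le f\le b+\ep\}$ — the band meets $\bar B(0,L/m+1)$ in a compact set, and outside that ball $\|df\|$ is already bounded below, so such an $\eta$ exists. (2) Pick a smooth cutoff $\chi:\rr^m\to[0,1]$ that equals $1$ on $\{a-\ep/2\le f\le b+\ep/2\}$ and vanishes outside $\{a-\ep\le f\le b+\ep\}$; more precisely build $\chi$ as $\rho\circ f$ for a suitable bump $\rho$ on $\rr$, so that $\chi$ automatically respects the level sets. (3) Define the vector field $X(x)=-\chi(x)\,\|df(x)\|^{-2}\,\nabla f(x)$ (with $X=0$ where $\chi=0$); this is smooth, bounded (by $\eta^{-2}\cdot$ the gradient bound on the band, which is finite since the band-piece where $\chi\ne0$ sits in a bounded region), hence generates a complete flow $\psi_t$ on $\rr^m$. (4) Along this flow, $\frac{d}{dt}f(\psi_t(x))=df(X)=-\chi(\psi_t(x))\le 0$, so $f$ is nonincreasing along trajectories, which gives $\psi_t(f^c)\subset f^c$ for all $c$ and all $t\ge0$; moreover on the region where $f\le a$ (in particular where $f\le a-\ep/2$, but we need $f^a$) the derivative argument combined with the fact that $\chi$ may be nonzero there needs a small adjustment — I would instead arrange $\rho$ so that $\chi\equiv0$ on $\{f\le a\}$, which forces $X=0$ there and hence $\psi_t|_{f^a}=\mathrm{id}$. (5) Check that after time $T=b-a+\ep$ (say) one has $\psi_T(f^b)\subset f^a$: for $x$ with $a<f(x)\le b$, as long as the trajectory stays in $\{f\ge a+\ep/2\}$ we have $\chi=1$ so $f(\psi_t(x))$ decreases at unit rate, hence it reaches $\{f\le a+\ep/2\}\subset f^a$ in time at most $b-a$. (6) Finally set $\Phi_t=\psi_{tT}$ for $t\in[0,1]$; then $\Phi_0=\mathrm{id}$, $\Phi_1(f^b)\subset f^a$, $\Phi_t|_{f^a}=\mathrm{id}$, $\Phi_t(f^c)\subset f^c$ for all $c$, and since $\chi\equiv0$ on $\{f>b+\ep\}$ as well (choose $\rho$ supported in $[a-\ep,b+\ep]$), $X=0$ there so $\Phi_t(f^c)=f^c$ pointwise for $c>b+\ep$.

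The main obstacle I anticipate is not conceptual but bookkeeping: making sure the single cutoff $\rho$ simultaneously (a) equals $1$ on enough of the band to push $f^b$ down into $f^a$ in finite time, (b) vanishes on $\{f\le a\}$ so that $f^a$ is fixed pointwise, and (c) vanishes on $\{f\ge b+\ep\}$ so that high sublevel sets are fixed pointwise — together with verifying that the resulting $X$ is genuinely bounded and smooth so the flow is complete. Boundedness of $X$ is where the non-compactness of $\rr^m$ could bite, but it is resolved by observing that $\{\chi\ne0\}\subset\{a-\ep\le f\le b+\ep\}$ and on that set $f=\zz+\ell$ with $\ell$ Lipschitz forces $\|x\|$ bounded (since $\zz$ is nondegenerate, $|\zz(x)|\to\infty$), so $\chi$ is supported in a compact set and everything in sight is bounded there. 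Once this is in place, the homotopy-equivalence statements used elsewhere (Lemma \ref{infty}, Theorem \ref{vac}, Proposition \ref{prop}, Proposition \ref{mon}) follow from $\Phi$ being a strong deformation retraction onto $f^a$ inducing isomorphisms in (relative) cohomology.
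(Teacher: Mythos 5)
There is a genuine gap in your approach, and it is precisely the issue that the paper's two-step construction is designed to avoid. You propose to fix $f^a$ pointwise by arranging the cutoff $\chi=\rho\circ f$ so that $\chi\equiv 0$ on $\{f\le a\}$, and then hope that the flow still pushes $f^b$ into $f^a$ in finite time. But these two requirements are incompatible for a continuous $\rho$: if $\rho$ vanishes on $(-\infty,a]$ and is positive just above $a$, then along a trajectory $y(t)=f(\psi_t(x))$ you have $\dot y=-\rho(y)$, and $\rho(y)\to 0$ as $y\downarrow a$. Trajectories therefore slow down and approach the level $\{f=a\}$ asymptotically but never reach $f^a$ in finite time (for a $\cc^\infty$ cutoff, $\rho$ is typically flat to infinite order at $a$, which makes this even worse). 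Your step (5) actually reveals this: you claim the flow reaches $\{f\le a+\ep/2\}\subset f^a$, but $a+\ep/2>a$, so that inclusion is false. You can get into $f^{a+\delta}$ for any $\delta>0$, but not into $f^a$ itself.

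The paper resolves this with a two-stage argument that you should compare with. First, it builds a flow $\Psi$ using a cutoff $\phi$ that equals $1$ on all of $(-\infty,b]$ (so the vector field does \emph{not} vanish on $f^a$); since $\phi\equiv 1$ on $f^b\setminus f^{a-\ep}$ and $\|df\|$ is bounded below there, $\Psi_T(f^b)\subset f^a$ after finite time. This $\Psi$ preserves all sublevel sets and fixes $f^c$ pointwise for $c>b+\ep$, but does \emph{not} fix $f^a$. Second, it defines a stopping time $\tau(x)=\inf\{t\in[0,1]:\Psi_t(x)\in f^a\}$ (with $\tau(x)=1$ if the trajectory never enters $f^a$) and sets $\Phi_t(x)=\Psi_{\min(t,\tau(x))}(x)$. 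Monotonicity of $t\mapsto f(\Psi_t(x))$ ensures $\tau$ is continuous, hence $\Phi$ is a continuous strong deformation retraction fixing $f^a$ pointwise. The stopping-time device is the missing idea in your proposal; without it, or some equivalent reparametrization, the naive "make the vector field vanish on $f^a$" scheme cannot simultaneously fix $f^a$ and bring $f^b$ into $f^a$. Your handling of the other two issues (boundedness/completeness of the flow via the structure of $\mathcal{Q}_m^\infty$, and killing the field on $\{f\ge b+\ep\}$) is fine and matches the paper.
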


\begin{proof}\emph{First step.} We build a continuous function $\Psi : [0,1]\times \rr^m \to \rr^m$ such that
	\begin{equation}\label{defret}\left\lbrace\begin{array}{l}
	\Psi_0 = {\rm id}_{\rr^m},\\
	\Psi_1(f^b) \subset f^a,\\
	\Psi_t(f^c) \subset f^c \;\; \forall t \in [0,1],\; \forall c \in \rr\\
	\Psi_t(f^c) = f^c, \;\; \forall t \in [0,1],\; \forall c > b+\ep,\end{array}\right.\end{equation}
	without requiring that $\Psi_t$ is the identity on $f^a$ for all $t$.

	Let $X$ be the locally Lipschitz vector field defined for $x$ in $\rr^m$ by
	\begin{displaymath}
	X(x)=\left\lbrace \begin{array}{l}
	\nabla f(x) \textrm{ si }  \|\nabla f(x)\| \leq 1\\
	\frac{\nabla f(x)}{\|\nabla f(x)\|}\textrm{ si } \|\nabla f(x)\| > 1
	\end{array}\right.
	\end{displaymath}
	Let us take a $\cc^\infty$ function $\phi:\rr \to [0,1]$ satisfying $\phi=1$ on $(-\infty,b]$ and $\phi=0$ on $[b+\ep,\infty)$, and consider the following vector field:
	\begin{displaymath}
	Y(x)=\phi(f(x))X(x),
	\end{displaymath}
	defined such that $Y=X$ on $f^b$ and $Y(x)=0$ if $f(x)\geq b+\ep$.
	
	Let us denote by $\Psi_t(x)$ the flow associated with $-Y$ as follows: \begin{displaymath}
	\left\lbrace \begin{array}{l}
	\dd_t \Psi_t(x)=-Y(\Psi_t(x))\\
	\Psi_0(x)=x.
	\end{array}\right.
	\end{displaymath}
	As $\|Y\|$ is locally Lipschitz and bounded by the constant $1$, $\Psi$ is defined on $\rr^+ \times \rr^m$ and $\Psi_t$ is a homeomorphism of $\rr^m$ for all $t$. Let us check that $t \mapsto f(\Psi_t(x))$ is non-increasing: \begin{displaymath}
	\dd_t \left(f(\Psi_t(x))\right) = - \underbrace{\phi(f(\Psi_t(x)))}_{\geq 0}\underbrace{X(\Psi_t(x))\cdot \nabla f(\Psi_t(x)) }_{\geq \min\left\lbrace \|\nabla f(\Psi_t(x))\|^2,\|\nabla f(\Psi_t(x))\|\right\rbrace\geq 0} \leq 0.
	\end{displaymath}
	In particular, $\Psi_t(f^c) \subset f^c$ for all $t\geq 0$, and $c \in \rr$.
	
	Let us prove that $\Psi_t(f^c)=f^c$ for all $c > b+ \ep$. It is enough to prove that $f^c \subset \Psi_t(f^c)$ since the other inclusion is true for all $c$. Since $Y=0$ on $\rr^m \setminus f^{b+\ep}$, $\Psi_t{\big|_{\rr^m \setminus f^c}}={\rm id}_{\rr^m \setminus f^c}$ for all $c>b+\ep$. Then, if $x \in f^c$, there is a $y \in \rr^m$ such that $\Psi_t(y)=x$ (since $\Psi_t$ is onto), and $y$ cannot be in $\rr^m \setminus f^c$ since $x \in f^c$. Hence, $x$ belongs to $\Psi_t(f^c)$. 
	
	The aim is now to find a $T>0$ such that $\Psi_T(f^b) \subset f^a$.
	
	Let us prove that there is a real constant $M_0>0$ such that:\begin{displaymath}\|df(x)\|\geq M_0 \,\; \forall x \in f^b \setminus f^{a-\ep}.\end{displaymath} Suppose that $(x_n)$ is a sequence of $f^b \setminus f^{a-\ep}$ such that $df(x_n) \to 0$. Since $f=\zz+\ell$ with $\zz$ nondegenerate quadratic and $\ell$ Lipschitz, $(x_n)$ is hence bounded an admits a converging subsequence ; let $x$ be the limit. Since $f$ and $df$ are continuous, $df(x)=0$ and $f(x)$ belongs to $[a-\ep,b]$. As this is excluded, the existence of $M_0$ is proved.				
	
	Let $x$ be in $f^b$. If $t\geq 0$, $\Psi_t(x)$ is in $f^b$ too. Hence, we have $\|\nabla f(\Psi_t(x)\|\geq M_0$ as long as $f(\Psi_t(x))> a-\ep$, and the estimation of $\frac{d\,}{dt} f(\Psi_t(x))$ can be improved: \begin{displaymath}\begin{split}
	\dd_t \left(f(\Psi_t(x))\right) &\leq - \underbrace{\phi(f(\Psi_t(x)))}_{=1 \textrm{ since } \Psi_t(x) \in f^b}\min\left\{ \|\nabla f(\Psi_t(x))\|^2,\|\nabla f(\Psi_t(x))\|\right\} \\
	&\leq -\min\left\{ M_0^2,M_0 \right\} < 0.
	\end{split}		
	\end{displaymath}
	Let $K=\min (M_0,M_0^2) > 0$. As long as $f(\Psi_t(x))> a-\ep$, the previous calculation gives:\begin{displaymath}
	f(\Psi_t(x)) \leq \underbrace{f(\Psi_0(x))}_{=f(x)\leq b} - Kt \leq b-Kt.
	\end{displaymath}
	Let $T=\frac{b-a}{K}$. Assume that for all $t \in [0,T]$, $f(\Psi_t(x))> a$. The previous calculation shows that $f(\Psi_T(x)) \leq b-KT=a$, which is absurd. Hence there exists $t \in [0,T]$ such that $f(\Psi_t(x))\leq a$ and then since $t\mapsto f(\Psi_t(x))$ is non increasing, $\Psi_T(x) \subset f^a$.
	
	Up to a time rescaling sending $T$ to $1$ ($\tilde{\Psi}_t(x)=\Psi_{t/T}(x)$), we have just constructed a deformation retraction satisfying \eqref{defret}.\\		
	\emph{Second step.} Let us now build the strong deformation retraction.
	For all $x$ in $\rr^m$, let $\tau(x)$ be defined by
	\begin{displaymath}
	\tau(x) = \inf \left\{ t \in [0,1] | \Psi_t(x) \in f^a\right\}. 
	\end{displaymath}
	It is a continuous function on $\rr^m$. If $\Psi_t(x)$ stays out of $f^a$ for all $t$ in $[0,1]$ (this is the case for all $x$ in $\rr^m \setminus f^{b+\ep}$), then $\tau(x)$ is by convention equal to $1$. Since $t\mapsto f(\Psi_t(x))$ is non-increasing,
	if $\Psi_t(x)$ is not in $f^a$, $t\leq \tau(x)$.

	Let us define the mapping $\Phi$: \begin{displaymath}
	\begin{array}{r c c l}
	\Phi :& [0,1]\times \rr^m & \to & \rr^m \\
	&  (t,x)          & \mapsto &\Phi_t(x)=
	\Psi_{\min(t,\tau(x))}(x)
	\end{array}
	\end{displaymath}
	so that in particular $\Phi_0=\Psi_0$ and $\Phi_1(x)=\Psi_{\tau(x)}(x)$.
	The continuity of $\tau$ and $\Psi$ implies the continuity of $\Phi$. Let us check that $\Phi$ is as required in the Lemma:
	\begin{itemize}\renewcommand{\labelitemi}{$\bullet$}
		\item $\Phi_0=\Psi_0={\rm id}_{\rr^m}$,
		\item for all $x$ in $f^b$, $\Psi_1(x)$ is in $f^a$, and as a consequence $\Phi_1(x)=\Psi_{\tau(x)}(x)$ is in $f^a$,
		\item since $\tau=0$ on $f^a$, $\Phi_t=\Psi_0={\rm id}$ on $f^a$,
		\item for all $t$ in $[0,1]$, $\Phi_t(f^c)\subset \cup_{u\in[0,1]} \Psi_u(f^c) \subset f^c$.
		\item let us fix $t$ in $[0,1]$ and show that if $c>b+\ep$, $f^c \subset \Phi_t(f^c)$. Since $f^c \subset \Psi_t(f^c)$ for such a $c$, for all $x$ in $f^c$ there exists $y$ in $f^c$ such that $\Psi_t(y)=x$. If $x$ is not in $f^a$, $\tau(y)\geq t$, and hence $x=\Psi_t(y)=\Phi_t(y)$ is in $\Phi_t(f^c)$. If $x$ is in $f^a$, since $\Phi_t$ is the identity on $f^a$, $x=\Phi_t(x)$ is in $\Phi_t(f^a)\subset\Phi_t(f^c)$.
	\end{itemize}
\end{proof}

\subsection{Sending sublevel sets to sublevel sets}

\begin{lem}[Deformation of big sublevel sets of $\mathcal{Q}_m^\infty$ functions with Lipschitz difference]\label{deff}
	Let $\ell_0$ and $\ell_1$ be two $\cc^\infty$ Lipschitz functions, $\zz$ be a nondegenerate quadratic form on $\rr^m$, and define $f_t = \zz+\ell_0 + t(\ell_1-\ell_0)$ the homotopy between $f_0=\zz+\ell_0$ and $f_1=\zz+\ell_1$. Let $U$ be an open and bounded set of $\rr^m$ containing $\CC= \left\{ x \in \rr^m | \exists t \in [0,1], df_t(x)=0\right\}$. There exists a $\cc^\infty$-diffeomorphism $\Psi$ of $\rr^m$ such that:\begin{displaymath}
	\Psi(f_0^c)=f_1^c  \;\;\,
	\begin{array}{l}
	\forall c > \underset{t\in [0,1]}{\max}\underset{U} {\sup}f_t,\\
	\forall c < \underset{t\in [0,1]}{\min}\underset{U}{\inf} f_t.
	\end{array}
	\end{displaymath}
	Moreover, if $f_0 \geq f_1$ on $U$, $\Psi$ can be constructed so that $\Psi(f_0^a) \subset f_1^a$ for all $a\in \rr$.
\end{lem}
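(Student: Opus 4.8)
The plan is to construct the diffeomorphism $\Psi$ by integrating a suitable vector field along the homotopy $(f_t)$, mimicking the classical Moser-type argument but adapted to the fact that outside a compact set all the $f_t$ coincide with the fixed quadratic form $\zz$. First I would observe that since $\ell_1-\ell_0$ is Lipschitz (with some constant $K$) and $\zz$ is nondegenerate, the critical set $\CC$ is compact: indeed $df_t = d\zz + d\ell_0 + t\,d(\ell_1-\ell_0)$ vanishes only where $\|d\zz(x)\|\le L$ for a uniform Lipschitz bound $L$, so $\CC\subset \bar B(0,L/m)$ with $m=\inf_{\|x\|=1}\|d\zz(x)\|$, and it is closed by continuity; this is exactly the statement of Proposition~\ref{CCcompact}. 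Hence the open bounded $U\supset\CC$ exists and $M_+:=\max_{t}\sup_U f_t$ and $M_-:=\min_t\inf_U f_t$ are finite.

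Next I would set up the isotopy. Define, for $(t,x)\in[0,1]\times\rr^m$, the time-dependent vector field $X_t(x)$ to be minus the gradient (with respect to a fixed background metric) of the function $x\mapsto \ell_1(x)-\ell_0(x)$ divided by $\|df_t(x)\|^2$, truncated by a cutoff $\chi$ that is $1$ on a neighbourhood of $\overline{U}\setminus U'$ for a slightly smaller open set $U'\supset\CC$ and $0$ near $\CC$ — more precisely one wants $X_t$ to satisfy $df_t(X_t) = -(\ell_1-\ell_0)$ wherever $df_t\neq 0$, which forces the flow $\Psi^t$ of $X_t$ to carry level sets of $f_0$ to level sets of $f_t$: this is the standard computation $\frac{d}{dt} f_t(\Psi^t(x)) = \dd_t f_t + df_t(X_t) = (\ell_1-\ell_0) - (\ell_1-\ell_0) = 0$. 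On the region where $df_t$ could vanish (i.e.\ near $\CC$) one cannot divide, but there $x$ lies in $U$, so the level values $f_t(x)$ stay inside $[M_-,M_+]$; by choosing the cutoff so that $X_t\equiv 0$ on a neighbourhood of $\CC$ one still gets a well-defined complete flow (completeness follows since $X_t$ is locally Lipschitz and, far from $\CC$, $\|X_t\|$ is controlled because $\|df_t\|$ is bounded below there while $|\ell_1-\ell_0|$ has at most linear growth — actually bounded difference of gradients lets one bound $X_t$ suitably, and in any case one may further multiply by a compactly-supported-in-space cutoff since outside a large ball $f_0=f_1=\zz$ and nothing needs to move). Then $\Psi:=\Psi^1$ is a $\cc^\infty$-diffeomorphism, and for $c>M_+$ or $c<M_-$ we have $\Psi(f_0^c)=f_1^c$: for such $c$ the boundary level set $\{f_0=c\}$ lies entirely in the region where the flow preserves $f_t$-level value $c$, so the sublevel sets match.

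For the last clause, suppose $f_0\ge f_1$ on $U$. I would modify the construction so that the flow \emph{decreases} (or keeps constant) the value of $f$: replace $X_t$ by $\tilde X_t$ solving $df_t(\tilde X_t) = -(f_0-f_1)$ off $\CC$ and cut off near $\CC$ as before; then $\frac{d}{dt} f_t(\tilde\Psi^t(x)) = \dd_t f_t + df_t(\tilde X_t)$. The subtlety is that $\dd_t f_t = \ell_1-\ell_0$ is not globally $-(f_0-f_1)$, so instead I would use the interpolation trick from Proposition~\ref{mon}: work level-set by level-set, or more cleanly, observe that it suffices to push $f_0^a$ into $f_1^a$ for every $a$, and arrange the vector field so that $f_t(\tilde\Psi^t(x))$ is nonincreasing in $t$ on the set where $f_0-f_1\ge 0$. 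Concretely one takes $\tilde X_t = -\psi\cdot \nabla f_t/\|\nabla f_t\|^2$ where $\psi\ge 0$ is a cutoff supported where $f_t$ is between $a$ and $M_+$ and equal to $1$ near $\{f_0=a\}$: this is precisely the combination of the pseudo-gradient flow of Lemma~\ref{deform} with the present setup. The main obstacle I anticipate is exactly this last part — reconciling the ``match level sets exactly'' requirement for large/small $c$ with the ``push down'' requirement $\Psi(f_0^a)\subset f_1^a$ for all $a$ when $f_0\ge f_1$ on $U$, because these pull the vector field in different directions near $\CC$; the resolution is to note that on $U$ (hence near $\CC$) the flow need only \emph{stay inside} $U$'s level band, so one has enough freedom to interpolate, and the two regimes are glued by the spatial/level cutoffs. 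I would present this carefully as two explicit vector fields (one for the general claim, one refined for the monotone case) and verify the four bulleted properties by the same non-increasing-along-the-flow argument used throughout Appendix~\ref{deformation}.
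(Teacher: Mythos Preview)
Your overall strategy is the paper's: define on $\rr^m\setminus\Omega$ (with $\Omega$ a neighbourhood of $\CC$ compactly contained in $U$) the vector field $X_t = -\dd_t f_t\,\nabla f_t/\|\nabla f_t\|^2$, cut it off by a smooth $g$ equal to $0$ on $\bar\Omega$ and $1$ on $\rr^m\setminus U$, and take $\Psi$ to be the time-$1$ flow of $Y_t=g\,X_t$. Two points deserve correction.

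First, your backup argument for completeness is wrong: it is \emph{not} true that $f_0=f_1=\zz$ outside a large ball, since $\ell_0,\ell_1$ are merely Lipschitz, not compactly supported. You cannot truncate $Y_t$ in space. Completeness comes from boundedness of $Y_t$: for $\|x\|\ge 2L/m$ (with $L$ a common Lipschitz bound for $\ell_0,\ell_1,\ell_1-\ell_0$) one has $\|\nabla f_t(x)\|\ge m\|x\|-L\ge L\ge |\ell_1(x)-\ell_0(x)|/\|x\|\cdot\|x\|$, and more simply $\|Y_t(x)\|\le |\dd_t f_t(x)|/\|\nabla f_t(x)\|\le L/(m\|x\|-L)\le 1$; on the compact ball $\|x\|\le 2L/m$ one bounds $Y_t$ directly.

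Second, and more importantly, you are overcomplicating the monotone clause and missing the point that makes it clean. There is no need for a second vector field, for level-by-level arguments, or for the retraction of Lemma~\ref{deform}: the \emph{same} flow $\Psi$ already satisfies $\Psi(f_0^a)\subset f_1^a$ for all $a$ when $f_0\ge f_1$ on $U$. Indeed, along a trajectory $\gamma$ of $Y_t$,
\[
\dd_t\bigl(f_t(\gamma(t))\bigr)=\dd_t f_t(\gamma)+g(\gamma)\,X_t(\gamma)\cdot\nabla f_t(\gamma)=(1-g(\gamma))(\ell_1-\ell_0)(\gamma).
\]
On $\rr^m\setminus U$ this vanishes (since $g=1$), giving the exact level-set matching for $c\notin[C_-,C_+]$; on $U$ it is $\le 0$ (since $1-g\ge 0$ and $\ell_1-\ell_0=f_1-f_0\le 0$ there), so $f_1(\Psi(x))\le f_0(x)$ for every $x$. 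The ``obstacle'' you anticipate between the two requirements does not exist: the cutoff $g$ interpolating between $0$ (near $\CC$) and $1$ (outside $U$) is precisely what makes both hold simultaneously.
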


\begin{proof} Since $\CC$ is compact (see Proposition \ref{CCcompact}), there exists an open set $\Omega$ containing $\CC$ such that $\overline{\Omega} \cap U^c$ is empty ($\Omega$ is an open set which is "strictly included" in the open set $U$). Let $X_t$ be the vector field defined on $\rr^m\setminus \Omega$ by\begin{displaymath}
	X_t(x) = - \dd_t(f_t(x))\frac{\nabla f_t(x)}{\|\nabla f_t(x)\|^2} \,\;\textrm{for}\,\; t \in [0,1].
	\end{displaymath}
	
	\begin{lem}\label{rmpf} If $\gamma(t)$ is a trajectory for the vector field $X_t$, that is if $\gamma(t)$ stays in $\rr^m\setminus \Omega$ and $\dot{\gamma}(t)=X_t(\gamma(t))$, then $f_t(\gamma(t))$ does not depend on $t$.\end{lem} \begin{proof}
		This is proved by the following calculation:\\
		
		\hskip 4cm$	\dd_t (f_t(\gamma(t))) = \underbrace{\dot{\gamma}(t)\cdot \nabla f_t(\gamma(t))}_{=-\dd_t f_t(\gamma(t))} + \dd_t f_t(\gamma(t))=0.$\end{proof}

	Since $\bar{\Omega}$ and $\rr^m\setminus U$ are closed and disjoint, it is possible to find  $g:\rr^m \to [0,1]$ smooth such that $\left\{\begin{array}{l}g=0 \textrm{ on } \bar{\Omega} \\ g=1 \textrm{ on } \rr^m \setminus U \end{array}\right.$. 
	Let us define $Y_t(x) = g(x) X_t(x)$. The vector field $Y$ is well-defined, $\cc^\infty$ on $\rr^m$. It satisfies:\begin{displaymath}
	\left\{\begin{array}{l}
	Y_t=X_t \textrm{ on } \rr^m \setminus U \\
	Y_t=0 \textrm{ on } \Omega.
	\end{array}\right.
	\end{displaymath}
	\begin{lem} The vector field $Y$ is bounded.\end{lem} \begin{proof}
		If $m=\inf_{\|x\|=1} \|d\zz(x)\|$, we get that $\|\nabla f_t(x)\|\geq  m\|x\|-L$ for all $x$ in $\rr^d$. As a consequence, if $\|x\|\geq 2L/m$,
		\[\|Y_t(x)\|\leq \frac{|\dd_t f_t(x)|}{\|  \nabla f_t (x)\|}\leq \frac{L}{m(2L/m)-L}\leq 1.\]
		Now, define \[M=\sup_{\begin{array}{c}
			t\in[0,1]\\
			\|x\|\leq 2L/m
			\end{array}} \|Y_t(x)\|.\] 
		Then $Y$ is bounded by $\max(1,M)$ on $\rr^m$.
	\end{proof}

	The flow $\psi$ of $Y$ is hence defined on $\rr \times \rr^m$ ; it is the $\cc^\infty$ solution of the Cauchy problem:\begin{displaymath}
	\left\{\begin{array}{l} \dd_t \psi(t,x)= Y_t(\psi(t,x))\\
	\psi(0,x)=x.
	\end{array}\right.
	\end{displaymath}
	Let $\Psi$ be the $\cc^\infty$-diffeomorphism mapping $x$ to $\psi(1,x)$.\\
	
	Let us denote by $C_+$ (resp. $C_-$) the quantity $\underset{t\in [0,1]}{\max}\underset{U} {\sup}f_t$ (resp. $ \underset{t\in [0,1]}{\min}\underset{U}{\inf} f_t$), and prove that for $c \in \rr \setminus[C_-,C_+]$, $\Psi\left(\{f_0=c\}\right)= \{f_1=c\}$. 
	Take $x$ and $y$ such that $\Psi(x)=y$, and denote by $\gamma(t)$ the trajectory $t\mapsto \psi(t,x)$. Since $Y$ and $X$ coincide on $\rr^m \setminus U \subset \rr^m \setminus \Omega$, Lemma \ref{rmpf} states that as long as $\gamma(t)$ is in $\rr^m \setminus U$, $f_t(\gamma(t))$ is constant. By definition of 
	$C_+$ and $C_-$, $\{f_t=c\}$ is included in $\rr^m \setminus U$ for all $t$ in $[0,1]$, and as a consequence \[ \exists t \in [0,1],\; f_t(\gamma(t))=c \implies \forall t \in [0,1],\; f_t(\gamma(t))=c. \]
	This means that $f_0(x)=c$ if and only if $f_1(y)=c$, and since $\Psi$ is one-to-one we hence proved that $\Psi\left(\{f_0=c\}\right)= \{f_1=c\}$.	
	
	As a consequence, we obtain applying the previous work to a suitable union of levelsets that $\Psi\left(\{f_0\leq c\}\right) = \{f_1\leq c\}$ for $c < C_-$, and that $\Psi\left(\{f_0> c\}\right) = \{f_1> c\}$ for $c > C_+$. Since $\Psi$ is one-to-one, this implies $\Psi\left(\{f_0\leq c\}\right) = \{f_1\leq c\}$ for $c > C_+$.\\
	
	Finally, assume that $f_0 \geq f_1$ on $U$. Let us again estimate the evolution of $f_t(\gamma(t))$ for a trajectory $\dot{\gamma}(t)=Y_t(\gamma(t))$: \begin{displaymath}\begin{split}
	\dd_t f_t(\gamma(t)) &=\dot{\gamma}(t)\cdot \nabla f_t(\gamma(t)) + \dd_t f_t(\gamma(t))\\
	&= g(\gamma(t))(f_0-f_1)(\gamma(t)) +(f_1-f_0)(\gamma(t))\\
	& = (1-g(\gamma(t)) (f_1-f_0) (\gamma(t)) = \left\{\begin{array}{l}
	=0 \textrm{ if } \gamma(t) \in \rr^m\setminus U\\
	\leq 0 \textrm{ if } \gamma(t) \in U.
	\end{array}\right.
	\end{split}\end{displaymath}		
	since $g=1$ on $\rr^m \setminus U$, $1-g\geq 0$ and $f_1 \leq f_0$ on $U$.
	Now, for $a\in\rr$, let $x \in f_0^a$. Since $t \mapsto f_t(\psi(t,x))$ is non-increasing, $f_1(\Psi(x)) \leq f_0(x)\leq a$ and we have proved that $\Psi(f_0^a) \subset f_1^a$.
\end{proof}

	\section[Uniqueness of viscosity solution]{Uniqueness of viscosity solution: doubling variables}\label{C}
	Let us first recall a possible definition of viscosity solutions in a continuous setting:
	\begin{defi} A continuous function $u$ is a \emph{subsolution} of \eqref{HJ} on the set $(0,T)\times \rr^d$ if for each $\cc^\infty$ function $\phi:(0,T)\times \rr^d\to \rr$ such that $u-\phi$ admits a (strict) local maximum at a point $(t,q)\in(0,T)\times \rr^d$, \[\dd_t \phi(t,q)+H(t,q,\dd_q\phi(t,q))\leq 0.\]  		
		A continuous function $u$ is a \emph{supersolution} of \eqref{HJ} on the set $(0,T)\times \rr^d\to \rr$ if for each $\cc^\infty$ function $\phi:(0,T)\times \rr^d$ such that $u-\phi$ admits a (strict) local minimum at a point $(t,q)\in(0,T)\times \rr^d$, \[\dd_t \phi(t,q)+H(t,q,\dd_q\phi(t,q))\geq 0.\]  		
		A viscosity solution is both a sub- and supersolution of \eqref{HJ}.
	\end{defi} 	
	
	The following proposition justifies the name of \emph{viscosity operator} for an operator satisfying Hypotheses \ref{L-O}.
	\begin{prop}\label{key} Let $H$ be a $\cc^2$ Hamiltonian with uniformly bounded second spatial derivative and $V^t_s : \cc^{0,1} (\rr^d,\rr) \to \cc^{0,1} (\rr^d,\rr)$ be a viscosity operator defined for each $0\leq s \leq t$. Then for each Lipschitz function $u_0~:~\rr^d ~\to ~\rr$,\begin{displaymath}
		u(t,q) = V^t_0 u_0(q)
		\end{displaymath}
		solves the Hamilton-Jacobi equation in the viscosity sense on $(0,\infty)\times \rr^d$.\end{prop}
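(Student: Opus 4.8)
The plan is to show that $u(t,q):=V^t_0u_0(q)$ is at the same time a viscosity subsolution and a viscosity supersolution of \eqref{HJ} on $(0,\infty)\times\rr^d$; the two verifications are symmetric, so I describe the subsolution one. First I set up the ingredients. By the Regularity property \ref{reg}, $u$ is continuous (locally Lipschitz on $(0,\infty)\times\rr^d$ and uniformly Lipschitz in $q$ on each compact time interval), and by the Markov property $u(t,\cdot)=V^t_s u(s,\cdot)$ for $0<s\le t$. The key preliminary fact is that smooth data produce a genuine short-time classical solution: if $w\in\cc^2(\rr^d)$ has bounded first and second derivatives, then since $\|\phi^t_s-\mathrm{id}\|$ and $\|d\phi^t_s-\mathrm{id}\|$ are controlled by $e^{C(t-s)}-1$ (Proposition \ref{gron}), the map $q\mapsto Q^t_s(q,dw(q))$ is a diffeomorphism of $\rr^d$ for $t-s$ small, uniformly with respect to the $\cc^2$-norm of $w$; the function $W$ characterized by $W\bigl(t,Q^t_s(q,dw(q))\bigr)=w(q)+\aaa^t_s(\gamma)$ is then a Lipschitz $\cc^2$ solution of \eqref{HJ} on some $[s,s+\delta_w]\times\rr^d$ with $W(s,\cdot)=w$, where $\delta_w$ depends only on $\sup\|d^2w\|$. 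By the Compatibility property \ref{comp}, $V^t_s w=W(t,\cdot)$ on $[s,s+\delta_w]$, whence by Monotonicity: if $w\ge u(s,\cdot)$ then $u(t,\cdot)\le W(t,\cdot)$ on $[s,s+\delta_w]$.

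Now let $\phi$ be smooth with $u-\phi$ having a local maximum at $(t_0,q_0)$, $t_0>0$, and assume for contradiction that $a:=\dd_t\phi(t_0,q_0)+H(t_0,q_0,\dd_q\phi(t_0,q_0))>0$. By the usual reductions — adding to $\phi$ a nonnegative smooth function vanishing to second order at $(t_0,q_0)$, with bounded first and second derivatives, and growing at infinity, which is legitimate since $u$ is uniformly Lipschitz in $q$ on compact time intervals — I may assume that $\phi$ has bounded first and second derivatives and that $m:=\max_{(t,q)}(u-\phi)=(u-\phi)(t_0,q_0)$; this modification changes neither $\dd_t\phi(t_0,q_0)$ nor $\dd_q\phi(t_0,q_0)$. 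Put $\Phi:=\phi+m$, so that $u(s,\cdot)\le\Phi(s,\cdot)$ for every $s$ and $u(t_0,q_0)=\Phi(t_0,q_0)$. Fix $s=t_0-\delta$ with $\delta$ small, apply the preliminary step to $w=\Phi(s,\cdot)$ (whose $\cc^2$-norm is bounded independently of $s$), and get the classical solution $W$ with $W(s,\cdot)=\Phi(s,\cdot)$ defined up to time $t_0$; monotonicity then gives $u(t_0,q_0)\le W(t_0,q_0)$.

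The heart of the matter is to prove that $W(t_0,q_0)<\Phi(t_0,q_0)$, which contradicts $u(t_0,q_0)=\Phi(t_0,q_0)\le W(t_0,q_0)$. Let $t\mapsto(q(t),p(t))$ be the characteristic of $W$ reaching $q_0$ at time $t_0$, so $p(t)=\dd_qW(t,q(t))$, $\dot q=\dd_pH(t,q,p)$, and $p(s)=\dd_q\Phi(s,q(s))$ because $W$ and $\Phi$ agree at time $s$. Setting $g(t)=\Phi(t,q(t))-W(t,q(t))$, one has $g(s)=0$, and using that $W$ solves \eqref{HJ} together with a first-order Taylor expansion of $H$ in $p$ and the bound $\|\dd^2_{(q,p)}H\|\le C$ one computes
\[
g'(t)=\bigl[\dd_t\Phi+H\bigl(t,q(t),\dd_q\Phi(t,q(t))\bigr)\bigr]+R(t),\qquad |R(t)|\le\tfrac{3C}{2}\,\bigl\|p(t)-\dd_q\Phi(t,q(t))\bigr\|^2 .
\]
Since $p(t)-\dd_q\Phi(t,q(t))$ vanishes at $t=s$ and is $\cc^1$ with derivative bounded uniformly for $s$ near $t_0$ (all the relevant trajectories stay in a fixed compact set), the remainder is $O(\delta^2)$; the bracket equals $\dd_t\phi+H(\cdot,\cdot,\dd_q\phi)$, hence is $\ge a/2$ along the characteristic once $\delta$ is small enough, by continuity. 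Therefore $g'\ge a/4>0$ on $[s,t_0]$ and $g(t_0)\ge(a/4)\delta>0$, i.e. $W(t_0,q_0)<\Phi(t_0,q_0)$, the contradiction sought.

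The supersolution property follows verbatim with the inequalities reversed: $u-\phi$ has a local minimum at $(t_0,q_0)$, one supposes $a<0$, takes $m:=\min_{(t,q)}(u-\phi)$, uses $w=\Phi(s,\cdot)\le u(s,\cdot)$ to get $u(t_0,q_0)\ge W(t_0,q_0)$, and the same computation yields $g'\le a/4<0$, hence $W(t_0,q_0)>\Phi(t_0,q_0)=u(t_0,q_0)$, again impossible. The main obstacle is the last step — producing the strict inequality $W(t_0,q_0)<\Phi(t_0,q_0)$ with all error terms controlled uniformly as $\delta\to0$ — because $\Phi$ only satisfies the relevant differential inequality near $(t_0,q_0)$, so one genuinely has to follow the characteristic of $W$ and use that it stays in a small neighbourhood for $\delta$ small; the construction and $\cc^2$-regularity of the short-time classical solution $W$ from bounded-derivative data is the second, more routine, point that must be handled carefully.
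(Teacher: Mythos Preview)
The paper does not give its own proof of this proposition; it attributes the statement and its proof to \cite{bernard}, Proposition~20 (with related references to \cite{aglm} and \cite{fleming}). Your argument is correct and is precisely the standard one found in those sources: produce a short-time classical solution $W$ from the (suitably modified) test function $\Phi(s,\cdot)$, compare $u$ with $W$ using Markov, Monotonicity and Compatibility, and derive the contradiction by expanding $\Phi-W$ along the characteristic of $W$ arriving at $(t_0,q_0)$.

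Two small points worth tightening. First, when you write ``the bracket equals $\dd_t\phi+H(\cdot,\cdot,\dd_q\phi)$'', this is literally true only at $(t_0,q_0)$; along the characteristic the bracket is $\dd_t\Phi+H(\cdot,\cdot,\dd_q\Phi)$, and your modification of $\phi$ contributes first-order terms. Since the modification vanishes to second order at $(t_0,q_0)$ and the characteristic stays within $O(\delta)$ of $(t_0,q_0)$, those extra terms are $O(\delta)$ and are absorbed in the continuity step giving $\ge a/2$ --- harmless, but worth saying explicitly. Second, the assertion that $W$ is $\cc^2$ (needed to invoke Compatibility~\eqref{comp}) is not entirely automatic with $H$ only $\cc^2$: the flow is then only $\cc^1$. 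It does hold, however, because $\dd_QW(t,Q)=P^t_s\!\bigl(q(t,Q),dw(q(t,Q))\bigr)$ is a composition of $\cc^1$ maps, hence $\cc^1$ in $(t,Q)$, and then $\dd_tW=-H(t,Q,\dd_QW)$ is $\cc^1$ as well; a sentence to that effect would close the gap.
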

	
	This characterization and its proof may be found in \cite{bernard} (Proposition 20). A similar axiomatic description of the viscosity solutions was initially proposed in \cite{aglm} (Theorem 2) for multiscale analysis, see also \cite{fleming} (Theorem 5.1).

	The uniqueness of the viscosity operator for $H$ satisfying Hypothesis \ref{esti} is a consequence of  a stronger uniqueness result for unbounded solutions stated by H. Ishii in \cite{ishii} (Theorem 2.1 with Remark 2.2), see also \cite{lions87}. It is also a consequence of the following finite speed of propagation argument proposed by G. Barles in \cite{barles} (Theorem 5.3). We write the proof here for the sake of completeness, adopting his arguments and notations, and using only the second estimate of Hypothesis \ref{esti}.
	\begin{prop}[Finite speed of propagation]\label{fsp} If $H$ satisfies $\|\dd_{q,p} H\|\leq C(1+\|p\|)$ for some $C>0$, and
		$u$ and $v$ are respectively sub- and supersolutions of \eqref{HJ} on $[0,T]\times \rr^d$ which are $L$-Lipschitz uniformly in time with respect to the space variable, then:
		\[u(0,\cdot)\leq v(0,\cdot) \textrm{ on } B(0,R) \Longrightarrow  
		u \leq v \text{ on } [0,T]\times B(0,R-C(1+2L)T)
		\]		
		as long as $R$ is strictly larger than $C(1+2L)T$.
	\end{prop}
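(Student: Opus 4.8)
\textbf{Proof plan for the finite speed of propagation (Proposition \ref{fsp}).}

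The strategy is the classical doubling-of-variables argument of Crandall--Lions, adapted to produce a \emph{localized} comparison. The plan is to argue by contradiction: assume that $\sup$ of $u-v$ over the shrinking cone is positive, build an auxiliary function penalizing the discrepancy between the two space variables and between the two time variables, and then use the sub/supersolution inequalities at the maximizing point to derive a contradiction with Hypothesis \ref{esti} (only the estimate $\|\dd_{q,p}H\| \le C(1+\|p\|)$ is needed). Concretely, for small parameters $\ep,\alpha>0$, one considers on $[0,T]\times \bar B(0,R) \times \bar B(0,R)$ the function
\[
\psi_{\ep,\alpha}(t,x,y) = u(t,x) - v(t,y) - \frac{\|x-y\|^2}{2\ep} - \alpha\Big(\text{barrier terms in } t \text{ and } \|x\|,\|y\|\Big) - K(t),
\]
where $K(t) = C(1+2L)t$ is the ``cone speed'' correction and the barrier terms (e.g. $\frac{\eta}{T-t}$ or a term forcing the maximum to stay away from the spatial boundary sphere of radius $R$) guarantee that the supremum is attained at an interior point $(\bar t,\bar x,\bar y)$ with $\bar t \in (0,T)$ and $\bar x,\bar y$ inside the relevant balls. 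First I would record the standard facts: because $u,v$ are $L$-Lipschitz in space, the penalization forces $\|\bar x - \bar y\| \le 2L\ep$, hence $\|\bar x-\bar y\|/\ep \le 2L$; and the maximized value is bounded below by $\sup(u-v)$ over the initial cone, which we assumed positive, so $\bar t$ cannot be $0$ once $\ep,\alpha$ are small (this is where $R > C(1+2L)T$ enters, to keep $\bar x,\bar y$ away from the boundary).

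Next I would invoke the sub/supersolution definitions. At $(\bar t,\bar x,\bar y)$, the map $x \mapsto u(t,x) - (\text{rest})$ has a local max, so with the test function $\phi(t,x) = v(\bar t,\bar y) + \frac{\|x-\bar y\|^2}{2\ep} + (\text{barriers}) + K(t)$ the subsolution inequality gives
\[
K'(\bar t) + (\text{barrier time-derivative} \ge 0) + H\!\left(\bar t,\bar x,\tfrac{\bar x-\bar y}{\ep} + (\text{barrier gradient})\right) \le 0;
\]
symmetrically the supersolution inequality for $v$ at $\bar y$ gives a lower bound involving $H(\bar t,\bar y,\tfrac{\bar x-\bar y}{\ep} - (\text{barrier gradient}))$. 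Subtracting, the time-derivative terms contribute at least $K'(\bar t) = C(1+2L)$ up to $O(\alpha)$, and the difference of the two Hamiltonian values is controlled by $\|\dd_{q,p}H\| \le C(1+\|p\|)$ applied along the segment joining $(\bar x, p)$ and $(\bar y,p)$ with $p = \tfrac{\bar x-\bar y}{\ep}$, whose norm is $\le 2L + O(\alpha)$; this yields $|H(\bar t,\bar x,p) - H(\bar t,\bar y,p)| \le C(1+2L)\|\bar x-\bar y\| + O(\alpha) \to 0$ as $\ep\to0$ (since $\|\bar x - \bar y\| \le 2L\ep$). Putting the inequalities together produces $C(1+2L) \le o(1) + O(\alpha)$, which is absurd for $\ep,\alpha$ small enough; hence the assumed positive supremum is impossible, giving $u\le v$ on the cone.

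The delicate point I expect to fight with is the boundary bookkeeping: since we only have a \emph{local} comparison on balls rather than a global one on all of $\rr^d$, one must choose the barrier terms so that the penalized maximum provably lands in the open interior $(0,T)\times B(0,R)\times B(0,R)$ before one is allowed to use the viscosity inequalities, and this is exactly where the hypothesis $R > C(1+2L)T$ is consumed — the correction $K(t)$ must be steep enough to push the maximum inward along the characteristic cone while the spatial barrier keeps it off the sphere of radius $R$. Making the dependence of all error terms on $\ep$ and $\alpha$ explicit (and checking they vanish in the right order: first $\ep\to 0$, then $\alpha\to0$, or a single coupled limit) is the main technical care required; everything else is the routine template. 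I would follow the arrangement of Barles \cite{barles}, Theorem 5.3, filling in these estimates with the bound $\|\dd_{q,p}H\|\le C(1+\|p\|)$ from Hypothesis \ref{esti}.
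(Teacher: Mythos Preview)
Your plan is in the right family (doubling of variables, following Barles), but both the organization and one key choice differ from the paper, and that choice creates a real gap.

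The paper does \emph{not} run a single penalized comparison. It first uses doubling of variables to prove an intermediate lemma: $w=u-v$ is a viscosity subsolution of the simpler equation $\dd_t w - C(1+2L)\|\dd_q w\|=0$ on $(0,T]\times\rr^d$ (this is where the estimates $\|\bar q-\bar p\|/\ep^2\le L$ and $\|\dd_{q,p}H\|\le C(1+\|p\|)$ are consumed, exactly as you outline). Only then does it run the localized comparison, and at that stage no doubling is needed: one compares $w$ with the explicit smooth \emph{solution} $\phi_\delta(t,q)=\chi_\delta(\|q\|+C(1+2L)t)$ of the simpler equation (where $\chi_\delta$ is $0$ on $(-\infty,R-\delta]$ and equal to $M=\max_{[0,T]\times\bar B(0,R)}w$ on $[R,\infty)$), plus a strict term $\delta t$. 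The max of $w-\phi_\delta-\delta t$ cannot be interior (subsolution property gives $\delta\le 0$), cannot be on $\{\|q\|=R\}$ (there $\phi_\delta=M\ge w$), and at $t=0$ is $\le 0$; hence $w\le\phi_\delta+\delta t$, which vanishes on $B(0,R-C(1+2L)T-\delta)$ as $\delta\to0$. This two-step split is precisely what makes the ``boundary bookkeeping'' you worry about clean: the cone speed $C(1+2L)$ lives in the \emph{barrier} $\phi_\delta$, not in a subtracted time term.

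Your subtracted term $K(t)=C(1+2L)t$ is the problematic choice. If $\sup(u-v)=M_0>0$ is attained at some $(t_0,q_0)$, then your auxiliary function at $(t_0,t_0,q_0,q_0)$ equals $M_0-C(1+2L)t_0-O(\alpha)$, which can be negative; you then cannot conclude that the penalized maximum is positive, so you cannot rule out $\bar t=0$, and the contradiction argument does not start. What one needs instead is a \emph{small} strict term $\delta t$ (to manufacture the contradiction $\delta\le o(1)$) together with a cone-shaped barrier carrying the speed $C(1+2L)$ (so that the max is pushed off the sphere $\|q\|=R$ without introducing an unbounded gradient into the test functions). A separate minor point: you say you penalize the two time variables but then write $\psi_{\ep,\alpha}(t,x,y)$ with a single $t$; without doubling time you cannot get a legitimate smooth test function for $v$, since the would-be test function involves $u(t,\bar x)$.
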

	
	\begin{csq}\label{viscspeed}
		If $u$ and $v$ are two viscosity solutions of \eqref{HJ} which are $L$-Lipschitz with respect to $q$ on $[0,T]\times \rr^d$, then for each $t$ in $[0,T]$:
		\[|u(t,q)-v(t,q)|\leq \|u(0,\cdot)-v(0,\cdot)\|_{\bar{B}\left(q,C(1+2L)t\right)}\] 
	\end{csq}
	
	\begin{proof} 
		We apply Proposition \ref{fsp} with $R=C(1+2L)t+\delta$ to the subsolution $u$ and the supersolution $v+\|u(0,\cdot)-v(0,\cdot)\|_{\bar{B}(q,R)}$, use the symmetry and let $\delta$ tend to $0$.
	\end{proof}
	
	\begin{csq}\label{viscuniq} If $u$ and $v$ are both viscosity solutions on $[0,T]\times \rr^d$ that satisfy $u(0,\cdot)=v(0,\cdot)$ on $\rr^d$ and are Lipschitz uniformly in time, they coincide on $[0,T]\times \rr^d$. In particular, there exists at most one viscosity operator.
	\end{csq}
	
	\begin{lem}\label{extvisc}
		If $u$ is a continuous function of $(0,T]\times \rr^d$ and also a subsolution of \eqref{HJ} on $(0,T)\times \rr^d$, then it is a subsolution on $(0,T]\times \rr^d$, meaning that if $u-\phi$ attains a strict maximum on $(0,T]\times \rr^d$ at some $(T,q_0)$, the derivatives of $\phi$ satisfy the required inequality. 
	\end{lem}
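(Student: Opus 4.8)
The plan is to use the classical penalization trick that forces the contact point of a test function off the terminal time slice $\{t=T\}$ and into the open slab $(0,T)\times\rr^d$, where the subsolution property is available by hypothesis, and then to pass to the limit.

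First I would localize. Since $u-\phi$ attains a strict maximum at $(T,q_0)$, pick $r>0$ and work on the compact set $K=\overline{B}\big((T,q_0),r\big)\cap\{t\le T\}$ on which $(T,q_0)$ is the unique maximizer of $u-\phi$; set $M=(u-\phi)(T,q_0)$. By compactness and strictness there is $\delta>0$ with $u-\phi\le M-\delta$ on the lateral part $\partial B\big((T,q_0),r\big)\cap\{t\le T\}$. For $\varepsilon>0$ introduce $\psi_\varepsilon(t,q)=\phi(t,q)+\frac{\varepsilon}{T-t}$, which is $\cc^\infty$ on $\{t<T\}$, and consider $u-\psi_\varepsilon$ on $K\cap\{t<T\}$. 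Since $\psi_\varepsilon(t,q)\to+\infty$ as $t\to T^-$, this function tends to $-\infty$ near the terminal face; evaluating it at the point $(T-\sqrt\varepsilon,q_0)$ (which lies in $K$ for $\varepsilon$ small) gives a value $(u-\phi)(T-\sqrt\varepsilon,q_0)-\sqrt\varepsilon\to M$, so its supremum over $K$ is at least $M-\sqrt\varepsilon+o(1)$, which exceeds $M-\delta$ once $\varepsilon$ is small enough. Hence the supremum is attained at some $(t_\varepsilon,q_\varepsilon)$ with $t_\varepsilon<T$ and $(t_\varepsilon,q_\varepsilon)$ interior to $K$, not on the lateral boundary.

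At such a point $u-\psi_\varepsilon$ has a genuine local maximum over $(0,T)\times\rr^d$ and $\psi_\varepsilon$ is $\cc^\infty$ in a neighborhood of it, so the subsolution property on $(0,T)\times\rr^d$ yields
\[\dd_t\psi_\varepsilon(t_\varepsilon,q_\varepsilon)+H\big(t_\varepsilon,q_\varepsilon,\dd_q\psi_\varepsilon(t_\varepsilon,q_\varepsilon)\big)\le 0.\]
Since $\dd_t\psi_\varepsilon=\dd_t\phi+\frac{\varepsilon}{(T-t)^2}$ and $\dd_q\psi_\varepsilon=\dd_q\phi$, this gives
\[\dd_t\phi(t_\varepsilon,q_\varepsilon)+H\big(t_\varepsilon,q_\varepsilon,\dd_q\phi(t_\varepsilon,q_\varepsilon)\big)\le -\frac{\varepsilon}{(T-t_\varepsilon)^2}\le 0.\]
Finally, letting $\varepsilon\to 0$: from $(u-\phi)(t_\varepsilon,q_\varepsilon)\ge (u-\psi_\varepsilon)(t_\varepsilon,q_\varepsilon)\ge M-\sqrt\varepsilon$ together with $u-\phi\le M$ on $K$ and uniqueness of the maximizer, one gets $(t_\varepsilon,q_\varepsilon)\to(T,q_0)$; continuity of $\dd_t\phi$, $\dd_q\phi$ and of $H$ then gives $\dd_t\phi(T,q_0)+H\big(T,q_0,\dd_q\phi(T,q_0)\big)\le 0$, which is the desired inequality.

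The main obstacle is the second step, namely verifying that the maximizer of the penalized function lands in the interior of $K$ and does not escape to the lateral boundary; this is precisely where the strictness of the maximum is used, through the gap $\delta$, and it is the only point requiring care — everything else is routine differential calculus and a continuity argument. (The same reasoning, with minima and the opposite penalization, handles the supersolution case if needed.)
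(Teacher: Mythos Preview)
Your proof is correct and follows essentially the same route as the paper: the penalization $\phi+\frac{\varepsilon}{T-t}$ to push the contact point into $\{t<T\}$, the subsolution inequality there, dropping the nonnegative penalty term, and passing to the limit by continuity. You are in fact more careful than the paper, which simply asserts the existence of a sequence of local maxima $(t_\eta,q_\eta)\to(T,q_0)$ without spelling out the localization and the lateral-boundary argument that you provide.
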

	
	\begin{proof}
		Take $\phi$ $\cc^\infty$ on $(0,T]\times \rr^d$ such that $u-\phi$ attains its strict maximum at some $(T,q_0)$. Let us consider the functions $(t,q)\mapsto u(t,q)-\phi(t,q)-\frac{\eta}{T-t}$ for small $\eta>0$. Since $u-\phi$ attains a strict maximum at $(T,q_0)$, there exists a sequence $(t_\eta,q_\eta)$ in $(0,T)\times \rr^d$ of local maximal points of $u-\phi-\frac{\eta}{T-t}$ such that $(t_\eta,q_\eta)$ tends to $(T,q_0)$ when $\eta\to0$.
		
		Since $u$ is a subsolution on $(0,T)\times \rr^d$, this implies that:\[
		\dd_t \left(\phi(t,q)+\frac{\eta}{T-t}\right)+H\left(t_\eta,q_\eta,\dd_q\left(\phi(t,q)+\frac{\eta}{T-t}\right)\right)\leq 0\]
		hence\[
		\dd_t \phi(t_\eta,q_\eta)+\frac{\eta}{(T-t_\eta)^2}+H\left(t_\eta,q_\eta,\dd_q\phi(t_\eta,q_\eta)\right)\leq 0.\]
		The positive term $\frac{\eta}{(T-t_\eta)^2}$ may be dropped, and then the continuity of $\phi$ gives that:
		\[
		\dd_t \phi(T,q_0)+H\left(T,q_0,\dd_q\phi(T,q_0)\right)\leq 0.\]
	\end{proof}
	
	\begin{lem}\label{lemvisc}
		If the assumptions of Proposition \ref{fsp} are satisfied, the function $w=u-v$ is a subsolution on $(0,T]\times \rr^d$ of\[\dd_t w - C(1+2L) \|\dd_q w\|=0.\]
	\end{lem}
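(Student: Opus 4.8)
The plan is to establish the viscosity subsolution property directly, by the classical \emph{doubling of variables} argument, the only extra ingredient being the uniform spatial Lipschitz bounds on $u$ and $v$, which will confine the momenta appearing in the sub/supersolution inequalities and convert the growth hypothesis $\|\dd_{(q,p)}H\|\leq C(1+\|p\|)$ of Hypothesis \ref{esti} into the coefficient $C(1+2L)$. Concretely, I would fix a $\cc^\infty$ test function $\phi$ for which $w-\phi=u-v-\phi$ has a strict local maximum at some $(t_0,q_0)\in(0,T]\times\rr^d$, and aim to prove $\dd_t\phi(t_0,q_0)-C(1+2L)\|\dd_q\phi(t_0,q_0)\|\leq 0$. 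Invoking Lemma \ref{extvisc} and its straightforward mirror for supersolutions (obtained by the same $\tfrac{\eta}{T-s}$-penalization), one may use that $u$ is a subsolution and $v$ a supersolution whenever the relevant time variable lies in $(0,T]$, so the endpoint $t_0=T$ causes no extra trouble.

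Next, on a small closed ball $\bar{B}$ around $(t_0,q_0)$ on which $(t_0,q_0)$ is the unique maximizer of $w-\phi$, I would introduce for $\ep>0$
\[
\Phi_\ep(t,q,s,y)=u(t,q)-v(s,y)-\phi(t,q)-\tfrac{1}{2\ep}\bigl(\|q-y\|^2+|t-s|^2\bigr)
\]
on $\bar{B}\times\bar{B}$, pick a maximizer $(t_\ep,q_\ep,s_\ep,y_\ep)$, and recall the standard facts (bounding $\Phi_\ep$ below by its value at the diagonal point and above by $\sup_{\bar{B}\times\bar{B}}(u-v-\phi)$): $\|q_\ep-y_\ep\|,|t_\ep-s_\ep|\to0$ and, by strictness, $(t_\ep,q_\ep),(s_\ep,y_\ep)\to(t_0,q_0)$, so for small $\ep$ both points are interior to $\bar B$. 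Freezing the other pair of variables turns the subsolution inequality for $u$ at $(t_\ep,q_\ep)$ and the supersolution inequality for $v$ at $(s_\ep,y_\ep)$ into
\[
\dd_t\phi(t_\ep,q_\ep)+\tfrac{t_\ep-s_\ep}{\ep}+H\bigl(t_\ep,q_\ep,P_\ep\bigr)\leq 0,\qquad \tfrac{t_\ep-s_\ep}{\ep}+H(s_\ep,y_\ep,p_\ep)\geq 0,
\]
where $p_\ep=\tfrac{q_\ep-y_\ep}{\ep}$ and $P_\ep=\dd_q\phi(t_\ep,q_\ep)+p_\ep$; subtracting kills the time penalty and leaves $\dd_t\phi(t_\ep,q_\ep)+H(t_\ep,q_\ep,P_\ep)-H(s_\ep,y_\ep,p_\ep)\leq 0$.

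The heart of the matter is then the momentum bookkeeping. Since $P_\ep$ is the spatial gradient of a smooth function touching the $L$-Lipschitz function $u(t_\ep,\cdot)$ from above at $q_\ep$, one has $\|P_\ep\|\leq L$; comparing $v(s_\ep,\cdot)$ at $q_\ep$ and at $y_\ep$ in the minimality of $v(s_\ep,\cdot)+\tfrac{1}{2\ep}\|q_\ep-\cdot\|^2$ gives $\|p_\ep\|\leq 2L$. Hence the whole segment $[p_\ep,P_\ep]$ lies in $\bar{B}(0,2L)$, and since $P_\ep-p_\ep=\dd_q\phi(t_\ep,q_\ep)$, the mean value theorem together with $\|\dd_p H\|\leq\|\dd_{(q,p)}H\|\leq C(1+\|p\|)$ gives $|H(t_\ep,q_\ep,P_\ep)-H(t_\ep,q_\ep,p_\ep)|\leq C(1+2L)\|\dd_q\phi(t_\ep,q_\ep)\|$. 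Inserting this bound leaves
\[
\dd_t\phi(t_\ep,q_\ep)-C(1+2L)\|\dd_q\phi(t_\ep,q_\ep)\|+\bigl[H(t_\ep,q_\ep,p_\ep)-H(s_\ep,y_\ep,p_\ep)\bigr]\leq 0,
\]
and, $(p_\ep)$ being bounded, I would pass to a subsequence along which $p_\ep\to p_\star$: continuity of $H$ and $\dd\phi$ and the convergences $(t_\ep,q_\ep),(s_\ep,y_\ep)\to(t_0,q_0)$ make the bracket vanish, yielding $\dd_t\phi(t_0,q_0)-C(1+2L)\|\dd_q\phi(t_0,q_0)\|\leq 0$, which is exactly the subsolution inequality for $\dd_t w-C(1+2L)\|\dd_q w\|=0$.

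The main obstacle is precisely this bookkeeping: one must observe that the Lipschitz bounds on \emph{both} $u$ and $v$ pin down $P_\ep$ and $p_\ep$ individually — to balls of radius $L$ and $2L$ — rather than merely keeping $p_\ep$ bounded; otherwise the term $\dd_q\phi(t_\ep,q_\ep)$ would re-enter the argument of $\dd_p H$ through $P_\ep$ and produce a remainder quadratic in $\|\dd_q\phi\|$ that would spoil the clean constant $C(1+2L)$. The standard doubling estimates, the interior-point claim, the passage to a subsequence, and the $t_0=T$ extension via Lemma \ref{extvisc} are otherwise routine.
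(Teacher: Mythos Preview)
Your proposal is correct and follows essentially the same doubling-of-variables argument as the paper: the paper uses two penalty parameters $\ep,\alpha$ and normalization $\|q-p\|^2/\ep^2$ where you use a single $\ep$ and $\tfrac{1}{2\ep}\|q-y\|^2$, and it proves the subsolution property on $(0,T)\times\rr^d$ first and then invokes Lemma \ref{extvisc} on $w$ at the end, whereas you extend $u$ and $v$ to $(0,T]$ at the start, but these are purely cosmetic differences. The key momentum bookkeeping---$\|P_\ep\|\leq L$ from the Lipschitz bound on $u$ and $\|p_\ep\|\leq 2L$ from the Lipschitz bound on $v$, placing the segment in $\bar B(0,2L)$ so that $\|\dd_p H\|\leq C(1+2L)$ there---is identical to the paper's.
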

	
	\begin{proof}
		Let us assume that $\phi$ is a $\cc^\infty$ function such that $w-\phi$ attains a strict local maximum at a point $(t_0,q_0)$ in $(0,T)\times\rr^d$.	The aim is to show that 
		\[\dd_t \phi(t_0,q_0) \leq C(1+2L) \|\dd_q \phi(t_0,q_0)\|.\]
		Here is where the variables are doubled: let us define the function 
		\[\Psi_{\ep,\alpha}:(t,q,s,p) \mapsto u(t,q)-v(s,p)-\frac{\|q-p\|^2}{\ep^2} - \frac{|t-s|^2}{\alpha^2} - \phi(t,q).\] In particular $\Psi_{\ep,\alpha}(t_0,q_0,t_0,q_0)=w(t_0,q_0)-\phi(t_0,q_0)$ is the local maximum of $w-\phi$ for all $\ep>0$ and $\alpha>0$.
		
		Take $r>0$ such that the maximum of $w-\phi$ on $\bar{B}((t_0,q_0),r)$ is attained only at $(t_0,q_0)$. Then $\Psi_{\ep,\alpha}$ attains a maximum on the compact set $\bar{B}((t_0,q_0),r)\times \bar{B}((t_0,q_0),r)$, and we denote by $(\bar{t},\bar{q},\bar{s},\bar{p})$ a point reaching this maximum, without forgetting that these quantities depend on $\ep$ and $\alpha$. 
		\begin{lem}\label{doubling} The point $(\bar{t},\bar{q},\bar{s},\bar{p})$ satisfies:
			\begin{enumerate} 
				\item  $(\bar{t},\bar{q}),(\bar{s},\bar{p}) \to (t_0,q_0)$ when $\ep,\alpha \to 0$,
				\item  $\frac{\|\bar{q}-\bar{p}\|}{\ep^2}\leq L$.
			\end{enumerate}
		\end{lem}
		\begin{proof}
			\begin{enumerate}
				\item Since $(\bar{t},\bar{q},\bar{s},\bar{p})$ belongs to the compact set  $\bar{B}((t_0,q_0),r)\times \bar{B}((t_0,q_0),r)$, accumulation points $(t,q,s,p)$ exist when $\ep$ and $\alpha$ tend to zero. 
				These accumulation points must satisfy $(t,q)=(s,p)$: else, the value of $\Psi_{\ep,\alpha}(\bar{t},\bar{q},\bar{s},\bar{p})$ explodes towards $-\infty$ while it is supposed to remain larger than $\Psi_{\ep,\alpha}(t_0,q_0,t_0,q_0)$ which is the maximum of $w-\phi$ and does not therefore depend on $\ep$ and $\alpha$. 
				
				Now, let us denote by $(t,q)\in \bar{B}((t_0,q_0),r)$ an accumulation point of both $(\bar{t},\bar{q})$ and $(\bar{s},\bar{p})$. Since $\Psi_{\ep,\alpha}(\bar{t},\bar{q},\bar{s},\bar{p})\geq \Psi_{\ep,\alpha}(t_0,q_0,t_0,q_0)=w(t_0,q_0)-\phi(t_0,q_0)$, we also have using the sign of $-\frac{\|\bar{q}-\bar{p}\|^2}{\ep^2} - \frac{|\bar{t}-\bar{s}|^2}{\alpha^2}$ that
				\[u(\bar{t},\bar{q})-v(\bar{s},\bar{p}) - \phi(\bar{t},\bar{q}) \geq w(t_0,q_0)-\phi(t_0,q_0).\]
				Hence if $\ep$ and $\alpha$ tend to zero,
				\[w(t,q)-\phi(t,q)\geq w(t_0,q_0)-\phi(t_0,q_0),\]
				and the fact that $(t_0,q_0)$ is the only point of $\bar{B}((t_0,q_0),r)$ where the maximum is attained concludes the proof.
				
				\item Since $(\bar{t},\bar{q},\bar{s},\bar{q})$ is in the set $\bar{B}((t_0,q_0),r)\times \bar{B}((t_0,q_0),r)$, \[\Psi_{\ep,\alpha}(\bar{t},\bar{q},\bar{s},\bar{q}) \leq \Psi_{\ep,\alpha}(\bar{t},\bar{q},\bar{s},\bar{p})\]
				hence 
				\[u(\bar{t},\bar{q})-v(\bar{s},\bar{q}) - \frac{|\bar{t}-\bar{s}|^2}{\alpha^2} - \phi(\bar{t},\bar{q}) \leq u(\bar{t},\bar{q})-v(\bar{s},\bar{p})-\frac{\|\bar{q}-\bar{p}\|^2}{\ep^2} - \frac{|\bar{t}-\bar{s}|^2}{\alpha^2} - \phi(\bar{t},\bar{q}) \]
				and since $v$ is $L$-Lipschitz,
				\[\frac{\|\bar{q}-\bar{p}\|^2}{\ep^2} \leq v(\bar{s},\bar{q}) -v(\bar{s},\bar{p}) \leq L \|\bar{q}-\bar{p}\|. \]
			\end{enumerate}
		\end{proof}
		
		Now, since $(\bar{t},\bar{q},\bar{s},\bar{p})$ converge to $(t_0,q_0,t_0,q_0)$, it is in $B((t_0,q_0),r)\times B((t_0,q_0),r)$ for $\ep$ and $\alpha$ small enough, and the fact that it maximizes $\Psi_{\ep,\alpha}$ tells us that:
		
		\begin{itemize}
			\item $(\bar{t},\bar{q})$ is a maximum point of \[(t,q) \mapsto u(t,q) - \underbrace{\left(\phi(t,q) + v(\bar{s},\bar{p})+\frac{\|q-\bar{p}\|^2}{\ep^2} + \frac{|t-\bar{s}|^2}{\alpha^2}\right)}_{=\phi_1(t,q)},\]		
			and since $u$ is a subsolution, the derivatives of $\phi_1$ satisfy
			\[\dd_t \phi_1(\bar{t},\bar{q})+H(\bar{t},\bar{q},\dd_q\phi_1(\bar{t},\bar{q}))\leq 0,    \]
			hence
			\[\dd_t \phi(\bar{t},\bar{q})+ 2\cdot \frac{\bar{t}-\bar{s}}{\ep^2}+H\left(\bar{t},\bar{q},\dd_q\phi(\bar{t},\bar{q})+ 2 \cdot\frac{\bar{q}-\bar{p}}{\ep^2} \right)\leq 0.\]
			Note also that since $u$ is $L$-Lipschitz with respect to $q$, the $q$-derivative of $\phi_1$ at a point of maximum of $u-\phi$ is necessarily bounded by $L$, hence:\begin{equation}\label{majdiff}
			\|\dd_q\phi(\bar{t},\bar{q})+ 2 \cdot\frac{\bar{q}-\bar{p}}{\ep^2}\|\leq L.
			\end{equation}
			
			\item $(\bar{s},\bar{p})$ is a minimum point of \[(s,p) \mapsto v(s,p) - \underbrace{\left(u(\bar{t},\bar{q})-\phi(\bar{t},\bar{q}) -\frac{\|\bar{q}-p\|^2}{\ep^2} - \frac{|\bar{t}-s|^2}{\alpha^2}\right)}_{=\phi_2(s,p)},\]		
			and since $v$ is a supersolution, the derivatives of $\phi_2$ satisfy
			\[\dd_s \phi_1(\bar{s},\bar{p})+H(\bar{s},\bar{p},\dd_p\phi_1(\bar{s},\bar{p}))\leq 0,    \]
			hence
			\[ 2\cdot \frac{\bar{t}-\bar{s}}{\ep^2}+H\left(\bar{s},\bar{p},2\cdot \frac{\bar{q}-\bar{p}}{\ep^2} \right)\geq 0.\]
		\end{itemize}
		Combining the two previous points gives that\[\begin{split}
		\dd_t \phi(\bar{t},\bar{q}) \leq &\, H\left(\bar{s},\bar{p},2\cdot \frac{\bar{q}-\bar{p}}{\ep^2} \right) - H\left(\bar{t},\bar{q},\dd_q\phi(\bar{t},\bar{q})+ 2 \cdot\frac{\bar{q}-\bar{p}}{\ep^2} \right) \\ 
		\leq & \, H\left(\bar{s},\bar{p}, 2\cdot \frac{\bar{q}-\bar{p}}{\ep^2} \right)-H\left(\bar{t},\bar{q},2\cdot \frac{\bar{q}-\bar{p}}{\ep^2} \right)\\& +\underbrace{H\left(\bar{t},\bar{q},2\cdot \frac{\bar{q}-\bar{p}}{\ep^2} \right) - H\left(\bar{t},\bar{q},\dd_q\phi(\bar{t},\bar{q})+ 2 \cdot\frac{\bar{q}-\bar{p}}{\ep^2} \right)}.\\
		& \;\;\;\;\;\;\,\,\,\,\,\,\,\,\,\,\,\,\,\,\,\,\,\,\,\,\,\,\,\,\,\,\,\,\,\,\,\,\,\,\,\,\,\,\, \leq C(1+2L)\|\dd_q\phi(\bar{t},\bar{q})\|
		\end{split}\]
		Let us explain the last point: the estimate \eqref{majdiff} and the second result of Lemma \ref{doubling} state that both $\dd_q\phi(\bar{t},\bar{q})+ 2 \cdot\frac{\bar{q}-\bar{p}}{\ep^2}$ and $ 2 \cdot\frac{\bar{q}-\bar{p}}{\ep^2}$ are bounded by $2L$. The assumption made on $\|\dd_{p,q}H\|$ implies that $\dd_p H$ is bounded by $C(1+2L)$ on the set $[0,T]\times \rr^d \times \bar{B}(0,2L)$, and hence
		\[\left|H\left(\bar{t},\bar{q},2\cdot \frac{\bar{q}-\bar{p}}{\ep^2} \right) - H\left(\bar{t},\bar{q},\dd_q\phi(\bar{t},\bar{q})+ 2 \cdot\frac{\bar{q}-\bar{p}}{\ep^2} \right)  \right|\leq C(1+2L)\|\dd_q\phi(\bar{t},\bar{q})\|.\]

		Lemma \ref{doubling} implies that the quantity $H\left(\bar{s},\bar{p}, 2\cdot \frac{\bar{q}-\bar{p}}{\ep^2} \right)-H\left(\bar{t},\bar{q},2\cdot \frac{\bar{q}-\bar{p}}{\ep^2} \right)$ tends to $0$ when $\ep$ and $\alpha$ tend to $0$. To finish, since $(\bar{t},\bar{q})$ tends to $(t_0,q_0)$:
		
		\[\dd_t \phi(t_0,q_0) \leq C(1+2L) \|\dd_q \phi(t_0,q_0)\|.\]
		
		We then extend the subsolution property to $\{T\}\times \rr^d$ with Lemma \ref{extvisc}.
	\end{proof}
	
	\begin{proof}[Proof of Proposition \ref{fsp}.]
		Take $R>C(1+2L)T$ and let us denote by $M$ the maximum of $w$ on the set $[0,T]\times \bar{B}(0,R)$.		
		We are going to prove that for all $\delta>0$ such that $R>\delta + C(1+2L)T$, $w(t,q)\leq \delta t$ on the set $[0,T]\times B(0,R-C(1+2L)T-\delta)$, using a comparison with an ad hoc smooth solution of $\dd_t w - C(1+2L)\|\dd_q w\| =0$.
		
		For such a $\delta>0$, it is possible to find a smooth and increasing function $\chi_\delta : \rr \to \rr$ such that $\chi_\delta(r) =0$ if $r\leq R-\delta$ and $\chi_\delta(r)=M$ if $r \geq R$. Then \[\phi_\delta:(t,q)\mapsto \chi_\delta(\|q\|+C(1+2L)t)\] is a smooth solution of $\dd_t w - C(1+2L)\|\dd_q w\| =0$ on $[0,T]\times \bar{B}(0,R)$. Let us then show that the function $(t,q)\mapsto w(t,q)-\phi_\delta(t,q) - \delta t$ on $[0,T]\times \bar{B}(0,R)$ is non positive. 
		
		The maximum of this function cannot be attained at a point $(t,q)$ of $(0,T]\times B(0,R)$, or else the fact that $w$ is a subsolution on $(0,T]\times B(0,R)$ (Lemma \ref{lemvisc}) gives that:
		\[\dd_t \phi_\delta(t,q)+\delta-C(1+2L)\|\dd_q \phi_\delta(t,q)\|\leq 0.\]
		Since $\phi_\delta$ solves the equation in the classical way and $\delta$ is positive, this is impossible.
		
		So, either the maximum is attained at a point $(0,q)$, or at a point $(t,q)$ with $\|q\|=R$.
		
		In the first case, the maximum is of the form $w(0,q)-\phi_\delta(\|q\|)$ and is hence non positive since $u\leq v$ on $\{0\}\times \rr^d$ and $\phi_\delta$ is non negative.
		
		In the second case, $\phi_\delta(t,q)=M$ and the maximum is of the form $w(t,q)-M-\delta t$. Since $w$ is smaller than $M$ on $[0,T]\times \bar{B}(0,R)$, the maximum is non positive.
		
		Hence, for each $(t,q)$ in $[0,T]\times \bar{B}(0,R)$,
		\[w(t,q)\leq \phi_\delta(t,q)+\delta t.\]
		Since $\phi_\delta(t,q)$ is zero on $[0,T]\times B(0,R-C(1+2L)T-\delta)$, on this set we have:
		\[w(t,q)\leq \delta t.\]
		Letting $\delta$ tend to zero gives that $w=u-v \leq 0$ on $[0,T]\times B(0,R-C(1+2L)T)$.
		
	\end{proof}

\section{Graph selector}\label{graphselector}
In this appendix we present the graph selector notion in the usual symplectic framework and its application to the variational resolution of the evolutive Hamilton-Jacobi equation. The graph selector can also be used to address other dynamical questions, see \cite{paternain}, \cite{MCarnaud} and \cite{santos}.

\subsection{Graph selector}

Let us settle in a usual symplectic framework: we assume that $M$ is a closed Riemannian $d$-manifold and look at its cotangent bundle $\pi:T^\star M \to M$. If $q=(q_1,\cdots,q_d)$ are the coordinates of a chart on $M$, the dual coordinates $p=(p_1,\cdots,p_d) \in T^\star_q M$ are defined by $p_i(e_j)=\delta_{ij}$, where $e_j$ is the $j^{th}$ vector of the canonical basis and $\delta_{i,j}$ is the Kronecker symbol. The manifold $T^\star M$ is endowed with the Liouville $1$-form $\lambda$, which writes $\lambda=pdq$ in this dual chart. The symplectic structure on $T^\star M$ is given by the symplectic form $\omega = d\lambda= dp \wedge dq$ in the dual chart.

A submanifold $\mathcal{L}$ of $T^\star M$ is called \emph{Lagrangian} if it is $d$-dimensional and if $i_\LL^\star w =0$, where $i_\LL : \LL \to T^\star M$ is the inclusion. It is \emph{exact} if $i_\LL^\star \lambda$ is exact, \emph{i.e.} if there exists a smooth function $S: \LL \to \rr$ such that $dS = i_\LL^\star \lambda$. Such a function is called a primitive of $\LL$, and is uniquely determined up to the addition of a constant. If $\LL$ is an exact Lagrangian submanifold, we call \emph{wavefront} for $\LL$ a set of the form $\WW=\{(\pi(x),S(x)), x \in \LL\}$ for $S$ a primitive of $\LL$, see Figure \ref{geomsolwf}. 
\begin{figure}[h!]
	\begin{center}
		\def\svgwidth{.7\columnwidth} 
		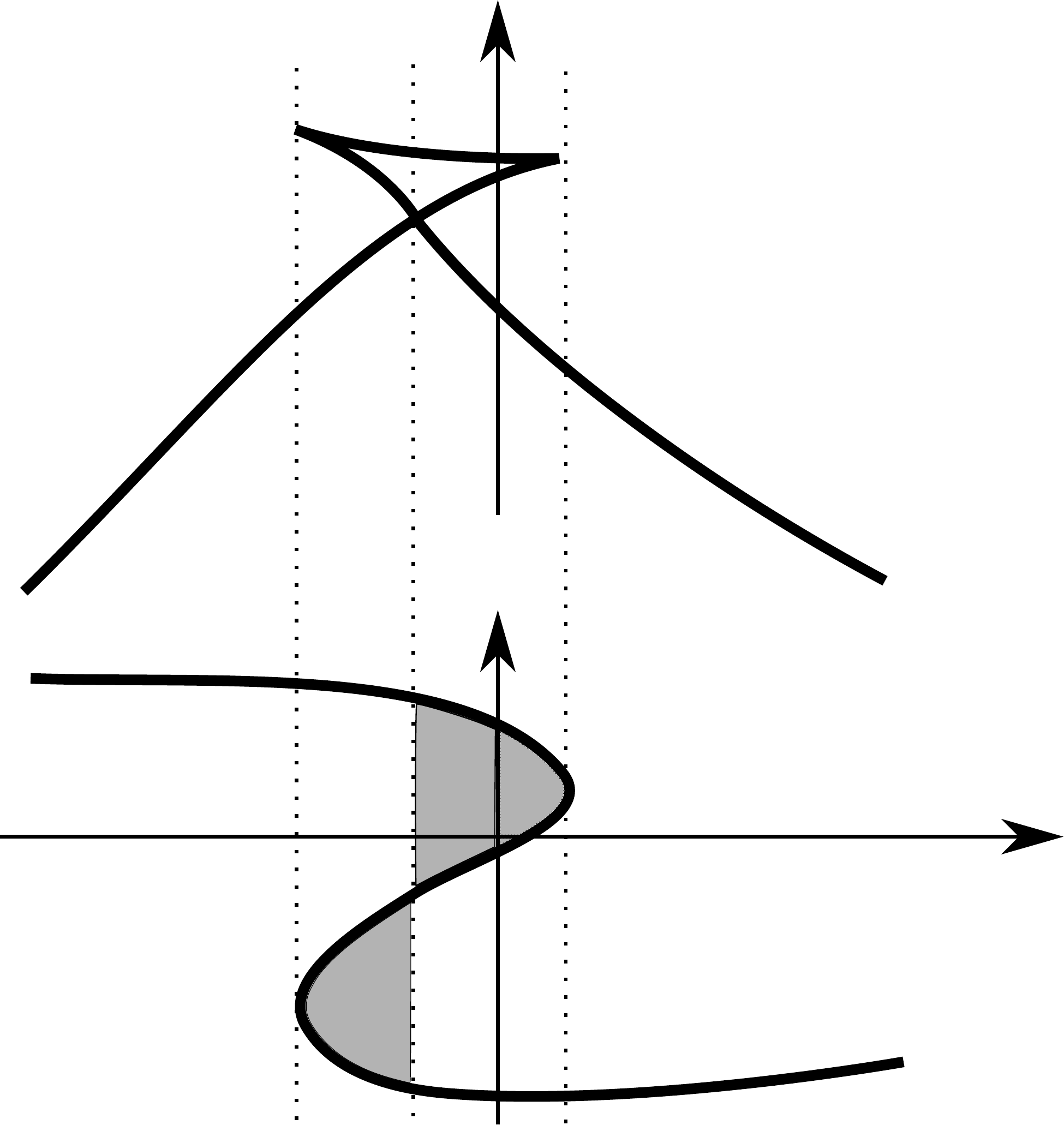 
	\end{center}
	\caption{A Lagrangian submanifold and an associate wavefront. The two greyed domains delimited by the position of the intersection in the wavefront have the same area.}
	\label{geomsolwf}
\end{figure}

If $\mathcal{L}$ is an exact Lagrangian submanifold and $\mathcal{W}$ is a wavefront for $\mathcal{L}$, we call \emph{graph selector} a Lipschitz\footnote{a continuous function with graph is included in $\WW$ is automatically Lipschitz if $\mathcal{L}$ is uniformly bounded in the fiber variable.} function $u$ whose graph is included in $\WW$. 
Since a possible primitive $S$ of the Lagrangian submanifold is given by an underlying action, the existence of a graph selector can be deduced under reasonable hypotheses from the existence of \emph{action selectors}. These action selectors are obtained by using either generating family techniques (see \cite{chap2}), via Floer homology (see \cite{floer} and \cite{oh}) or lately by microlocal sheaf techniques (see \cite{guillermou}).  In \cite{MO2}, the link between  the invariants constructed with generating families and via the Floer homology is studied, which leads to the conclusion that they give the same graph selector under a suitable normalization (see also \cite{MVZ}).

A graph selector provides simultaneously a continuous section of the wavefront and a discontinuous section of the Lagrangian submanifold:
\begin{prop}[Graph selector] \label{graphsel} Let $\mathcal{L}$ be an exact Lagrangian submanifold of $T^\star M$ such that $\pi_{|\mathcal{L}}$ is proper, $\mathcal{W}$ be a wavefront for $\mathcal{L}$, and $u$ be a {graph selector}. Then $(q,du(q)) \in \mathcal{L}$ for almost every $q$.
\end{prop}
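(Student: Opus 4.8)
The plan is to show that a graph selector $u$ is Lipschitz and hence differentiable almost everywhere by Rademacher's theorem, and then to prove that wherever $u$ is differentiable, the point $(q,du(q))$ actually lies on $\mathcal{L}$. The first claim is quick: since $\pi_{|\mathcal{L}}$ is proper and $\mathcal{L}$ is a (closed) Lagrangian submanifold, over each compact set of the base the fiber coordinates of $\mathcal{L}$ stay bounded, so the slopes appearing in the wavefront $\mathcal{W}$ are locally bounded; a continuous function whose graph sits inside such a wavefront is automatically locally Lipschitz, as noted in the footnote. So the differentiability almost everywhere comes for free, and the real content is the identification of the differential.

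The key step is the following local argument. Fix a point $q_0$ where $u$ is differentiable, and work in a dual chart near $q_0$. Because $\pi_{|\mathcal{L}}$ is proper, the fiber $\mathcal{L} \cap \pi^{-1}(q)$ is a finite set for each $q$, and the wavefront over $q$ is the finite set of values $\{S(x) : x \in \mathcal{L}, \pi(x) = q\}$, where $S$ is the chosen primitive with $dS = i_{\mathcal{L}}^\star \lambda = p\,dq$. The function $u(q)$ selects, for each $q$, one of these finitely many values. The crucial observation is that along $\mathcal{L}$ the primitive satisfies, in the dual chart, $d(S\circ (\text{local section})) = p\,dq$: if near $x_0 \in \mathcal{L}$ with $\pi(x_0) = q_0$ the projection $\pi_{|\mathcal{L}}$ is a local diffeomorphism onto a neighborhood of $q_0$, then $\mathcal{L}$ is locally the graph of a closed (hence exact) $1$-form $p = P(q)$, and $S$ restricted there equals a smooth function $s(q)$ with $ds(q) = P(q)\,dq$, i.e. $ds(q) = $ the fiber coordinate. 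So each "branch" of the wavefront over a neighborhood of a regular value is the graph of a $\mathcal{C}^1$ function whose differential is exactly the corresponding branch of $\mathcal{L}$.

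Now I would combine this with a standard selection-of-branches argument. The set of critical values of $\pi_{|\mathcal{L}}$ (the caustic, projected to the base) has measure zero by Sard's theorem, so almost every $q$ is a regular value; discard it. Over the complement, the wavefront $\mathcal{W}$ is, locally, a finite union of graphs of $\mathcal{C}^1$ functions $s_1, \dots, s_k$ with $ds_i(q) = P_i(q)$, where $(q, P_i(q)) \in \mathcal{L}$. At a point $q_0$ where $u$ is differentiable, $u$ agrees with some $s_i$ at $q_0$ and, since $u \le \max_j s_j$ and $u \ge \min_j s_j$ with $u$ taking values among the $s_j$ on a full-measure set, a routine argument (using differentiability of $u$ at $q_0$ and continuity of the $s_j$) forces $du(q_0) = ds_i(q_0) = P_i(q_0)$; hence $(q_0, du(q_0)) = (q_0, P_i(q_0)) \in \mathcal{L}$. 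Thus $(q, du(q)) \in \mathcal{L}$ for almost every $q$.

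The main obstacle I expect is the last matching step: showing that the differential of $u$ at a differentiability point coincides with the differential of the branch whose value it picks. The subtlety is that $u$ may jump between branches on a set accumulating at $q_0$, so one cannot simply say "$u = s_i$ in a neighborhood." The clean way around this is to note that at a point of differentiability of $u$, for any branch $s_j$ with $s_j(q_0) > u(q_0)$ there is a neighborhood where $u < s_j$ (so $u$ never equals $s_j$ there), and for branches with $s_j(q_0) = u(q_0)$ one uses that $u(q) \le s_j(q) + o(|q - q_0|)$ fails unless $du(q_0) = ds_j(q_0)$ — more precisely, picking a full-measure sequence $q_n \to q_0$ along which $u(q_n) = s_{i}(q_n)$ for a fixed branch $i$ (possible after passing to a subsequence since there are finitely many branches), differentiability of both $u$ and $s_i$ at $q_0$ forces the differentials to agree along that direction, and letting the directions vary gives $du(q_0) = ds_i(q_0)$. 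I would also need to be slightly careful that "$u$ takes values in $\mathcal{W}$" is used in the strong pointwise sense, which is exactly the hypothesis.
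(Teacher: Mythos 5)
Your approach is essentially the paper's: Rademacher's theorem for differentiability almost everywhere, Sard's theorem so that almost every $q$ is a regular value of $\pi_{|\mathcal{L}}$, and then a local comparison of $u$ with the finitely many smooth branches $s_1,\dots,s_k$ of the wavefront over a neighbourhood of a regular value. You also correctly flag the real subtlety, namely that $u$ may jump between branches at points accumulating at $q_0$. But your resolution of that subtlety has a gap at precisely the step you call ``the main obstacle.''

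What your sequence argument actually gives is this: for each direction $v\in\mathbb{S}^{d-1}$, take $q_n\to q_0$ with $(q_n-q_0)/\|q_n-q_0\|\to v$, pass to a subsequence along which $u(q_n)=s_i(q_n)$ for a single branch $i$ (possible because the fiber is finite), and conclude $du(q_0)\cdot v=ds_i(q_0)\cdot v$. That is correct. But the branch index $i$ you extract depends on $v$. The sentence ``letting the directions vary gives $du(q_0)=ds_i(q_0)$'' silently assumes a single $i$ works for all directions, which is exactly what needs proving; a priori the directional derivatives of $u$ could match branch $1$ in one direction and branch $2$ in another, and your inequalities $\min_j s_j\le u\le\max_j s_j$ do not rule this out. (The intermediate claim about ``$u(q)\le s_j(q)+o(|q-q_0|)$'' is also unjustified: $u(q_0)=s_j(q_0)$ does not force $u\le s_j$ near $q_0$; your ``more precisely'' sequence version is the right formulation, but it only yields the direction-dependent statement.)

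The paper closes exactly this gap with a short algebraic argument. Let $E_x=\{v\in\mathbb{S}^{d-1}:du(q_0)\cdot v=p_x\cdot v\}$ for $x$ ranging over the finite fiber $\mathcal{L}_{q_0}$. Your sequence argument shows $\bigcup_x E_x=\mathbb{S}^{d-1}$, hence $\bigcup_x \textrm{Vect}(E_x)=\mathbb{R}^d$. A vector space over an infinite field is never a finite union of proper subspaces, so some $\textrm{Vect}(E_x)=\mathbb{R}^d$, i.e.\ $du(q_0)=p_x$ with $(q_0,p_x)\in\mathcal{L}$. Adding this step (or an equivalent one, e.g.\ that a finite union of great hyperplanes has measure zero in the sphere) completes your proof and brings it in line with the paper's.
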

The author was unable to locate the proof of this statement in the literature, yet it is close to Proposition $2.4$ in \cite{paternain} and to Proposition II in \cite{ottolenghi}, which both deal with the graph selector in terms of generating family. We present a proof improved by J.-C. Sikorav.

\begin{proof} 	Let $S: \mathcal{L} \to \rr$ be a primitive of $\mathcal{L}$ and $u$ be a graph selector of the associated wavefront. If $x$ is in $\mathcal{L}$, we will denote by $p_x \in T^\star_{\pi(x)} \LL$ the second coordinate of $x=(\pi(x),p_x)$.
	
	We are going to prove that if $q\in M$ is a regular value of $\pi_{|\mathcal{L}}$ and a point of differentiability of $u$, $(q,du(q))$ is in $\LL$. Then combining Rademacher's theorem (on $u$) and Sard's theorem (on $\pi_{|\mathcal{L}}$)  imply that the statement holds for almost every $q$. 
	
	Let us fix such a point $q$. We denote by $\LL_q$ the fiber $\pi_{|\mathcal{L}}^{-1}(\{q\})$, which is finite set since $q$ is a regular value of the proper map $\pi_{|\mathcal{L}}$.	
	We are going to prove that for all $v$ in $\SSS^{d-1}$, there exists $x=(q,p) \in \LL_q$ such that $du(q).v=p.v$. 
	
	Let $v\in \SSS^{d-1}$. We work in a local chart in the neighbourhood of $q\in M$: take a sequence $q_n$ such that $\lim_{n\to \infty} \frac{q_n-q}{\|q_n-q\|} =v$. For all $n$, there exists $x_n$ in $\mathcal{L}_{q_n}$ such that $u(q_n)=S(x_n)$. Since $\pi_{|\mathcal{L}}$ is proper, we may assume without loss of generality that $x_n$ admits a limit $x$ in $\LL$. We again work in the local chart to write $x_n=x + x_n-x$, where $x_n-x$ is a sequence of $T_x \LL$ converging to zero. We have on one hand
	\[u(q_n)-u(q)= du(q)(q_n-q) + o(\|q_n-q\|) = \|q_n-q\|du(q)v + o(\|q_n-q\|)\]
	and on the other hand
	\[u(q_n)-u(q)= S(x_n)-S(x)=dS(x)(x_n-x) + o(\|x_n-x\|) = p_x d\pi(x)(x_n-x) + o(\|x_n-x\|).     \]
	Now, since $\pi(x_n)=q_n$ for each $n$, we have since $d\pi_{|\mathcal{L}}(x)$ is invertible
	\[d\pi(x)(x_n-x)= q_n- q + o(\|q_n-q\|) =  \|q_n-q\|v + o(\|q_n-q\|). \]
	Putting these three equations together we get
	\[ \|q_n-q\|du(q)v = \|q_n-q\|p_x v + o(\|q_n-q\|), \]
	and dividing by $\|q_n-q\|$ and letting $n$ tend to $+\infty$ gives that $du(q).v=p_x.v$. 
	
	Now we define $E_x = \{v \in \SSS^{d-1} | \, du(q)v= p_xv \}$. The previous result implies that $\{E_x\}_{x\in \LL_q}$ is a finite cover of $\SSS^{d-1}$, hence $\{\textrm{Vect}(E_x)\}_{x\in \LL_q}$ is a finite cover of	$\rr^d$ made of vector subspaces: one of them is hence the whole space $\rr^d$, and the corresponding $x\in \LL_q$ hence satisfies $du(q)=p_x$. 
\end{proof}

\subsection{Application to the evolutive Hamilton-Jacobi equation}

We follow \cite{viterboX} to explicit the link between the variational operator and the graph selector introduced in the previous paragraph for a $\cc^2$ initial condition $u_0$.
We define the autonomous suspension of $H$ by $K(t,s,q,p)=s+ H(t,q,p)$ on the cotangent $T^\star(\rr \times\rr^{d})$, identified with $T^\star \rr \times T^\star \rr^d$, and denote by $\Phi$ its Hamiltonian flow. 
The Hamiltonian system for $K$ writes
\[\left\{\begin{array}{ll}
\dot{t}=1,&\dot{q}= \dd_p H(t,q,p),\\
\dot{s}=-\dd_t H(t,q,p),&\dot{p}=-\dd_q H(t,q,p),
\end{array}\right.\]
hence $t$ can be taken as the time variable. 

The submanifold ${\Gamma}_0=\{(0,-H(0,q_0,du_0(q_0)),q_0,du_0(q_0)),q_0 \in \rr^d\}$ is contained in the level set $K^{-1}(\{0\})$, and since $K$ is autonomous, it is constant along its trajectories, and as a consequence ${\Phi}^t({\Gamma}_0)=\left\{(t,-H(t,\phi^t_0 (q_0,du_0(q_0))),\phi^t_0(q_0,du_0(q_0))),q_0 \in \rr^d\right\}$. We call \emph{suspended geometric solution} of the Cauchy problem the Lagrangian submanifold ${\mathcal{L}}=\cup_{t\in\rr} {\Phi}^t ({\Gamma}_0)$, and the following set is a wavefront for $\mathcal{L}$:
\[\mathcal{W}=\left\{\left(t,q,u_0(q_0)+ \aaa^t_0(\phi^\tau_0(q_0,du_0(q_0)))\right) \left|\begin{array}{c}
t\in \rr, q\in \rr^d,q_0 \in \rr^d,\\
Q^t_0(q_0,du_0(q_0))=q.
\end{array} \right\}\right.\]
\begin{proof}[Proof of Proposition \ref{solpp}]The axioms required to be a variational operator implies that the function $u:(t,q)\mapsto R^t_0 u_0(q)$ is a graph selector for $\mathcal{L}$: it is Lipschitz, and the variational property asks that its graph is contained in $\mathcal{W}$. Also,
Proposition \ref{graphsel} states that for almost every $(t,q)$, $(t,\dd_t u(t,q), q, \dd_q u(t,q))$ belongs to ${\mathcal{L}}\subset  K^{-1}(\{0\})$, which proves Proposition \ref{solpp}. \end{proof}

\end{appendices}
	\bibliographystyle{alpha}
	\bibliography{biblio} 

\end{document}